\theoremstyle{plain}
\newtheorem{thm}{Theorem}[section]
\newtheorem{cor}[thm]{Corollary}
\newtheorem{lem}[thm]{Lemma}
\newtheorem{prop}[thm]{Proposition}
\newtheorem*{quotepfthm}{Theorem A} 
\newtheorem*{quoteappbound}{Theorem B} 
\newtheorem*{quoteappasympt}{Theorem C}
\theoremstyle{definition}
\newtheorem{defn}[thm]{Definition}
\newtheorem{claim*}{Claim}
\newtheorem{eg}[thm]{Example}
\newtheorem*{eg*}{Example}
\newtheorem{rem}[thm]{Remark}
\theoremstyle{remark}
\newcommand{\R}{\mathbb{R}}
\newcommand{\C}{\mathbb{C}}
\newcommand{\N}{\mathbb{N}}
\newcommand{\Z}{\mathbb{Z}}
\newcommand{\cA}{\mathcal{A}}
\newcommand{\cB}{\mathcal{B}}
\newcommand{\cC}{\mathcal{C}}
\newcommand{\cG}{\mathcal{G}}
\newcommand{\cJ}{\mathcal{J}}
\newcommand{\cM}{\mathcal{M}}
\newcommand{\cP}{\mathcal{P}}
\newcommand{\cQ}{\mathcal{Q}}
\newcommand{\abs}[1]{\left\lvert #1 \right\rvert}
\newcommand{\norm}[1]{\left\lVert #1 \right\rVert}
\newcommand{\set}[2]{\left\{#1 \ : \ #2\right\}}
\newcommand{\conv}[1]{\underset{#1}\longrightarrow}
\DeclareMathOperator{\id}{id}
\DeclareMathOperator{\subspan}{span}
\DeclareMathOperator{\diam}{diam}
\DeclareMathOperator{\Var}{Var}
\DeclareMathOperator*{\esssup}{ess\,sup}
\DeclareMathOperator*{\essinf}{ess\,inf}
\newcommand\restr[2]{{% we make the whole thing an ordinary symbol
  \left.\kern-\nulldelimiterspace % automatically resize the bar with \right
  #1 % the function
  \vphantom{\big|} % pretend it's a little taller at normal size
  \right|_{#2} % this is the delimiter
  }}
\begin{document}

\title[Asymptotics for some second-largest Lyapunov exponents]{Asymptotics for the second-largest Lyapunov exponent for some Perron-Frobenius operator cocycles.} % use a short title too

\author{Joseph Horan}
\address{Department of Mathematics and Statistics, University of Victoria, 3800 Finnerty Road, Victoria BC, V8P 5C2, Canada}
\email{jahoran@uvic.ca}
\urladdr{http://www.math.uvic.ca/~jahoran/}
\thanks{The author was partially funded by NSERC during the completion of this research.} %The author would like to thank his supervisors, Dr. Anthony Quas and Dr. Christopher Bose, for their support.}

\date{\today}

\subjclass[2010]{Primary 37H15 ; Secondary 37A30}
\keywords{Multiplicative ergodic theory, Lyapunov exponents, random dynamical systems}

\begin{abstract}
Given a discrete-time random dynamical system represented by a cocycle of non-singular measurable maps, we may obtain information on dynamical quantities by studying the cocycle of Perron-Frobenius operators associated to the maps. Of particular interest is the second-largest Lyapunov exponent, $\lambda_2$, which can tell us about mixing rates and decay of correlations in the system. We prove a generalized Perron-Frobenius theorem for cocycles of bounded linear operators on Banach spaces that preserve and occasionally contract a cone; this theorem shows that the top Oseledets space for the cocycle is one-dimensional, and there is an readily computed lower bound for the gap between the largest Lyapunov exponent $\lambda_1$ and $\lambda_2$ (that is, an upper bound for $\lambda_2$ which is strictly less than $\lambda_1$). We then apply this theorem to the case of cocycles of Perron-Frobenius operators arising from a parametrized family of maps to obtain an upper bound on $\lambda_2$; to the best of our knowledge, this is the first time $\lambda_2$ has been upper-bounded for a family of maps. To do this, we utilize a new balanced Lasota-Yorke inequality. We also examine random perturbations of a fixed map with two invariant densities and show that as the perturbation is scaled back down to the unperturbed map, $\lambda_2$ is asymptotically linear in the scale parameter. Our estimates are sharp, in the sense that there is a sequence of scaled perturbations of the fixed map that are all Markov, such that $\lambda_2$ is asymptotic to $-2$ times the scale parameter.
\end{abstract}

\maketitle

\section{Introduction}
\label{sect:intro}

Let $(\Omega,\cB,\mu,\sigma)$ be an ergodic invertible probability-preserving transformation. Our focus will be discrete-time random dynamical systems governed by the cocycle of maps $T_{\omega}^{(n)} = T_{\sigma^{n-1}(\omega)}\circ \dots \circ T_{\omega}$ over $(\Omega,\cB,\mu,\sigma)$. Assuming that each map $T_{\omega}$ is measurable and non-singular over the measure space $(Y,\cA,\lambda)$, then each $T_{\omega}$ gives rise to a bounded linear operator on $L^1(\lambda)$ called the Perron-Frobenius operator associated to $T_{\omega}$, which for now we will denote by $L_{\omega}$. The Perron-Frobenius operator describes what happens to densities on $Y$ under the action of $T_{\omega}$, and the Perron-Frobenius operators associated to the cocycle $T_{\omega}^{(n)}$ are themselves a cocycle, $L_{\omega}^{(n)} = L_{\sigma^{n-1}(\omega)}\circ \dots \circ L_{\omega}$, acting on $L^1(\lambda)$.

When studying random dynamical systems, one might ask a number of questions: does there exist some sort of random invariant density? If so, is it unique? If so, do initial states of the system converge to it at some rate, and is that rate exponential? Can that rate be estimated, in some way? In the case of a single primitive matrix representing a Markov chain (thinking of the transition matrix as changing the probability density on the state space), the answer to all of the above questions is ``yes'', thanks to the classical Perron-Frobenius theorem \cite{frobenius,perron}. The theorem is applied to the (powers of the) single transition matrix, and the spectral data of the matrix yields the desired properties of the invariant density and the exponential rate of convergence to that density. For a single map on a space where the Perron-Frobenius operator acts on an infinite-dimensional space of densities, there are similar general theorems, often called Krein-Rutman theorems, after the generalization of the Perron-Frobenius theorem for compact positive operators by Krein and Rutman \cite{krein-rutman}. 

The key aspect in all of these theorems is the notion of \emph{positivity}. Roughly, the positive direction is indicated by a cone, and operators which preserve and contract the cone have very nice structure: a vector in the cone playing the role of the direction of largest growth, and a complementary subspace of non-positive vectors which grow more slowly than anything in the positive direction. This insight was notably abstracted by Garrett Birkhoff in the 1950s \cite{birkhoff-jentzsch}, and the study of abstract cones has played a role in the study of general topological vector spaces (for example, \cite{peressini,schaefer}).

In 1984, Keller \cite{keller-rate} discussed the link between the spectral theory of Perron-Frobenius operators acting on bounded variation functions and the rate of convergence of the underlying dynamical systems to an equilibrium. In 1994, Arnold et al.\ \cite{arnold-random-pf-matrices} proved a cocycle Perron-Frobenius theorem for a class of positive matrix cocycles arising in evolutionary biology, based on the Birkhoff cone technique, for the purpose of obtaining a random invariant density. In 1995, Liverani \cite{liverani-annals-doc,liverani-bv-doc} applied Birkhoff's technique to the study of piecewise-expanding $C^2$ dynamical systems, obtaining invariant densities and explicit decay of correlations for powers of a single map. In 1999, Buzzi \cite{buzzi-doc} extended Liverani's application to cocycles of maps, instead of a single map, to obtain decay of correlations for certain random dynamical systems where not every Perron-Frobenius operator is required to preserve a cone. In this case, the largest Lyapunov exponent is equal to $0$ and corresponds to an equivariant family of densities, which is the random generalization of an invariant density. Then, the decay of correlations is related to the second-largest Lyapunov exponent for the cocycle of operators, and its magnitude gives the logarithm of the rate of decay of correlations. However, the constants involved in the proof of Buzzi's result are not easily identifiable.

For random dynamical systems that are quasi-compact, the Multiplicative Ergodic Theorem (for example, \cite{flq-2,gt-quas-semi-inv}) yields equivariant families of subspaces, called Oseledets spaces, on which the cocycle of Perron-Frobenius operators has well-defined growth rates, given by the Lyapunov exponents. In the above situations, the Oseledets space corresponding to the zero Lyapunov exponent turns out to be one-dimensional, and there is a complementary equivariant family of subspaces on which the Perron-Frobenius operator cocycle, restricted to the spaces, has the second-largest Lyapunov exponent. Here, the Oseledets spaces can be interpreted as indicating ``coherent structures'' in the system, as described in \cite{flq-1}. When the spaces correspond to large Lyapunov exponents of the cocycle (when compared to local expansion and dispersion), the Lyapunov exponents describe how these parts of the system are ``slowly exponentially mixing'', in the sense that they are not invariant sets but they mix with the rest of the space more slowly than one would expect from local expansion.

We are therefore interested in finding an upper bound for the second Lyapunov exponent for the Perron-Frobenius cocycle corresponding to a random dynamical system $T_{\omega}^{(n)}$, as that would give us a minimal mixing rate for the system (or decay of correlations). The approach we take is to prove a generalized Perron-Frobenius theorem for a fairly general class of cocycles of operators on a Banach space preserving a cone. In particular, no compactness is required (as in \cite{mierczynski-pos}); however, we do require that almost every operator preserves the cone (which is a stronger restriction than what Buzzi has in the specific cases in \cite{buzzi-doc}). The setup we use allows us to obtain a measurable equivariant decomposition of the Banach space into an equivariant positive direction with the largest growth rate and an equivariant family of subspaces of non-positive vectors growing at the next largest growth rate. An important outcome of the theorem is a rigorous quantitative bound on the second-largest Lyapunov exponent for such cocycles that is computable without tracing constants along in the proof of the theorem itself. Another important aspect of the theorem is that the proof is completely independent of any Multiplicative Ergodic Theorem; hence, the hypotheses provide a checkable condition for quasi-compactness and thus a full Oseledets decomposition for the cocycle after applying an MET in the appropriate setting. A summarized version of the theorem follows; see Section \ref{sect:pf-thm} for more details, including required definitions and what is meant by measurable in our context. Moreover, Corollary \ref{cor:suff-cond-primitive} provides even simpler sufficient conditions for the existence of the quantities listed here in the hypotheses.

\begin{quotepfthm}
% hardcoded numbering so far
Let $(\Omega,\cB,\mu,\sigma,X,\norm{\cdot},L)$ be either a strongly measurable random dynamical system or a $\mu$-continuous random dynamical system, such that the function $\log^+\norm{L(1,\cdot)}$ is integrable. Let $\cC \subset X$ be a nice cone such that $L(1,\omega)\cC \subset \cC$ for all $\omega$. Suppose that there exists a positive measure subset $G_P$ of $\Omega$, a positive integer $k_P$, and a positive real number $D_P$ such that for all $\omega \in G_P$, $\diam_\theta\big(L(k_P,\omega)\cC\big) \leq D_P.$ Then there exists a $\sigma$-invariant set of full measure $\tilde{\Omega} \subset \Omega$ on which the following statements are true:
\begin{enumerate}
\item There exist measurable functions $v(\omega) \in X$ and $\eta(\omega,\cdot) \in X^*$ such that \[ X = \subspan_{\R}\{v(\omega)\} \oplus \ker(\eta(\omega,\cdot)) \] is a measurable equivariant decomposition, the Lyapunov exponent for $v(\omega)$ is $\lambda_1$, and all vectors in $\ker(\eta(\omega,\cdot))$ have Lyapunov exponent strictly less than $\lambda_1$, unless $\lambda_1 = -\infty$.
\item When $\lambda_1 > -\infty$, we have $\displaystyle \lambda_2 \leq \lambda_1 - \frac{\mu(G_P)}{k_P}\log\left( \tanh(\tfrac{1}{4}D_P)^{-1} \right).$
\end{enumerate}
\end{quotepfthm}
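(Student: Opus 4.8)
The plan is to leverage the Birkhoff cone-contraction machinery to turn the hypothesis "$L(k_P,\omega)$ contracts the cone to bounded $\theta$-diameter on a positive-measure set" into a genuine exponential gap between $\lambda_1$ and $\lambda_2$, entirely by hand and without invoking any Multiplicative Ergodic Theorem. The starting point is the classical Birkhoff inequality: if a positive operator maps the cone into a set of Hilbert (projective) diameter $D$, then it is a contraction of the Hilbert projective metric with factor at most $\tanh(\tfrac14 D)$. So on $G_P$ the operator $L(k_P,\omega)$ contracts $\theta$-distances by a factor $\le \tau := \tanh(\tfrac14 D_P) < 1$, while off $G_P$ (and for the leftover iterates not divisible by $k_P$) the operators are merely nonexpansive in $\theta$ since they preserve $\cC$. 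By the ergodic theorem, a $\mu$-generic $\omega$ sees the good set $G_P$ with asymptotic frequency $\mu(G_P)$, so along the orbit the product $L(n,\omega)$ contracts the Hilbert metric on $\cC$ by a factor decaying like $\tau^{\,n\mu(G_P)/k_P}$; taking logs gives an exponential rate $-\tfrac{\mu(G_P)}{k_P}\log(\tau^{-1})$. This is exactly the quantity appearing on the right-hand side.

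Next I would convert contraction in the Hilbert metric into a statement about the norm growth of vectors transverse to the top Oseledets direction. Part (1) of Theorem A — which I may assume, as it is established earlier — gives the splitting $X = \mathrm{span}_\R\{v(\omega)\}\oplus\ker(\eta(\omega,\cdot))$ with $v(\omega)$ in the cone realizing growth rate $\lambda_1$. The key analytic link is the standard comparison: for $f,g$ in the interior of the cone, $\norm{ \frac{f}{\ell(f)} - \frac{g}{\ell(g)} }$ is controlled by $\theta(f,g)$ for a suitable normalizing functional $\ell$ (this is where "nice cone" is used — the cone must be, say, normal and have nonempty interior, so the Hilbert metric dominates the normalized norm distance). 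Applying this with $g = v(\sigma^n\omega)$ and $f = L(n,\omega)u$ for $u$ in the cone, the Hilbert-metric contraction forces $L(n,\omega)u$, after rescaling, to converge to the $v$-direction at the exponential rate above; decomposing a generic cone vector and then a generic vector of $X$ along the splitting, the $\ker(\eta)$-component must therefore have growth rate at most $\lambda_1$ minus that rate. Since every vector in $\ker(\eta(\omega,\cdot))$ is a difference of two cone vectors (again using the interior is nonempty and the cone is generating), this bounds $\lambda_2 = \sup\{\lambda(u): u\in\ker(\eta(\omega,\cdot)),\ u\ne 0\}$ by $\lambda_1 - \frac{\mu(G_P)}{k_P}\log(\tanh(\tfrac14 D_P)^{-1})$.

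Concretely, the steps in order are: (i) recall Birkhoff's theorem to get the per-step contraction factor $\tau$ on $G_P$ and nonexpansiveness elsewhere; (ii) use Birkhoff ergodicity / Kingman to count visits to $G_P$ and get that $\theta\big(L(n,\omega)f, L(n,\omega)g\big) \le \tau^{\,\lfloor N_n/k_P\rfloor}\,\theta(f,g)$ where $N_n$ is the number of good indices in $\{0,\dots,n-1\}$, with $N_n/n \to \mu(G_P)$ a.e.; (iii) translate $\theta$-contraction into norm estimates via the "nice cone" comparison lemma, producing a Cauchy sequence of normalized images whose limit direction is forced to be $v(\sigma^n\omega)$ (consistency with part (1)); (iv) decompose an arbitrary $u\in X$ as $c\,v(\omega) + w$ with $w\in\ker(\eta(\omega,\cdot))$, write $w$ as a difference of cone elements, and read off that $\limsup_n \tfrac1n\log\norm{L(n,\omega)w} \le \lambda_1 - \tfrac{\mu(G_P)}{k_P}\log(\tau^{-1})$; (v) take the supremum over such $w$ to identify this as the bound on $\lambda_2$, valid whenever $\lambda_1 > -\infty$.

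The main obstacle I anticipate is step (iii): making the passage from the projective (Hilbert) metric to the actual $L^1$-type norm fully rigorous and uniform. One must ensure that the normalizing functional can be chosen measurably in $\omega$ and bounded away from zero along a.e.\ orbit (so that dividing by $\ell(L(n,\omega)u)$ is harmless), and that the cone's normality constant enters only as a subexponential prefactor that the $\liminf \tfrac1n\log$ kills. A related subtlety is that the hypothesis only controls the diameter of $L(k_P,\omega)\cC$, not of $L(k_P,\omega)$ applied to an arbitrary already-contracted set, so one should be slightly careful to apply the contraction factor using the monotonicity of $\theta$-diameter under cone-preserving maps rather than naive composition; this is routine but worth stating cleanly. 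Finally, one should double-check the edge case $\lambda_1 = -\infty$ is genuinely excluded in part (2), as the bound is then vacuous or ill-posed, which the statement already accounts for.
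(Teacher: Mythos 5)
There is a genuine gap: you treat part (1) of Theorem A as something you ``may assume, as it is established earlier,'' but part (1) is itself half of the statement to be proven, and in the paper it is where most of the work lies. Nothing earlier in the paper provides the splitting $X = \subspan_{\R}\{v(\omega)\}\oplus\ker(\eta(\omega,\cdot))$; it must be built. The construction of $v(\omega)$ you do essentially sketch (normalized pullback images $L(n,\sigma^{-n}\omega)g/\norm{L(n,\sigma^{-n}\omega)g}$ form a $\theta$-Cauchy, hence norm-Cauchy, sequence by the $D$-adapted comparison), but you never construct $\eta(\omega,\cdot)$ at all. In the paper this is done by showing that $\alpha\big(v(\sigma^n\omega),\tilde L(n,\omega)g\big)$ and $\beta\big(v(\sigma^n\omega),\tilde L(n,\omega)g\big)$ are monotone, eventually finite, and squeeze together because $e^{\diam_\theta(L(n,\omega)\cC)}\to 1$; their common limit $\eta(\omega,g)$ is additive and homogeneous on $\cC$, extends to a bounded linear functional on $X=\cC-\cC$ via And\^o's lemma and an extension lemma, and satisfies the equivariance $\eta(\sigma\omega,L(1,\omega)x)=\phi(\omega)\eta(\omega,x)$. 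This construction is exactly the ``normalizing functional $\ell$'' whose measurable, uniform choice you flag as your main obstacle in step (iii): without it you cannot identify the coefficient of the $v$-direction, cannot define the slow space $\ker(\eta(\omega,\cdot))$, and cannot state the key quantitative estimate $\norm{\tilde L(n,\omega)g-\eta(\omega,g)v(\sigma^n\omega)}\le C_\omega\norm{g}\big(e^{\diam_\theta(L(n,\omega)\cC)}-1\big)$ on which the bound for $\lambda_2$ rests. The measurability/$\mu$-continuity assertions of part (1), the integrability of $\log^+\phi$, and the identification of the exponent of $v$ with $\lambda_1$ are likewise untouched.

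Where your proposal does overlap with what is needed for part (2), it follows the paper's own route: Poincar\'e recurrence/Birkhoff to count visits to $G_P$ spaced at least $k_P$ apart (the paper's $j^+(\omega,n)\gtrsim l^+(\omega,n)/k_P$ is your $\lfloor N_n/k_P\rfloor$), Birkhoff's contraction factor $\tanh(\tfrac14 D_P)$ per good block with mere nonexpansiveness elsewhere, the $D$-adapted (``nice cone'') comparison to pass from $\theta$ to the norm, and And\^o's lemma to write a kernel vector as a difference of two cone vectors whose images both converge to the $v$-direction. Two small technical cautions if you complete the argument: the contraction should be run on the diameter bound $\tanh(\tfrac14 D_P)^{j-1}D_P$ (the first good block only makes the diameter finite; $\theta(f,g)$ itself may be infinite), and the $\omega$-dependent constants $C_\omega$ are harmless only because they are fixed along each orbit and disappear under $\limsup_n \tfrac1n\log$. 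As written, though, the proposal proves (a correct outline of) part (2) conditionally on part (1), so it does not establish the theorem.
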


We remark specifically on the quantitative bound on the second-largest Lyapunov exponent. The set $G_P$ and the quantities $k_P$ and $D_P$ are dependent on the cone and the cocycle and can therefore be computed outside of the proof of the theorem. The theorem is therefore something of a black box for the bound and, subsequently, a minimal mixing rate or decay of correlations. Outside of cocycles of positive operators, this problem is quite difficult.

To demonstrate the use of the theorem, we apply it to the situation of a cocycle of piecewise expanding maps with a specific form; all of the maps $T_{\omega}$ are `` paired tent maps'' that act on $[-1,1]$ and leave $[-1,0]$, $[0,1]$ mostly invariant, except for ``leaking'' mass of size $\epsilon_1(\omega) \geq 0$ from $[-1,0]$ to $[0,1]$ and mass of size $\epsilon_2(\omega) \geq 0$ in the other direction (see Section \ref{sect:app-pf-ops} for the precise definitions, and Figure \ref{fig:leaking} for a simple schematic diagram). Maps like these have been considered by Gonz\'alez-Tokman et al.\ in \cite{gt-metastable}; in that work, the authors fix a single perturbation of a map that leaves two sets invariant and investigate properties of the invariant density and the eigenvector for the second-largest eigenvalue for the perturbed map, in terms of the two invariant densities for the unperturbed map. They find that the eigenvector corresponding to the second-largest eigenvalue is asymptotically a scalar multiple of the difference of the two invariant densities for the unperturbed map, which indicates a coherent structure related to the transfer of mass between the two parts of the space. In \cite{dolgopyat-wright}, Dolgopyat and Wright take a similar situation but analyze the restrictions of the map to parts of the space, where the ``leaking'' of mass is seen as holes in the system. Looking at these open systems, the largest eigenvalues have a particular form related directly to the sizes of the mass transfer/leaking (which are framed as transition probabilities of a related Markov chain).In our case, we are interested in generalizing these ideas to the non-autonomous setting, to see how random mass transfer impacts the value of the second-largest Lyapunov exponent for the cocycle of Perron-Frobenius operators (instead of just a single map).
\begin{figure}[tb]
\begin{tikzpicture}
\node[ellipse,minimum height = 1cm,draw,fill=black!20] (left) at (-2,0) [text centered] {$[-1,0]$};
\node[ellipse,minimum height = 1cm,draw,fill=gray!20] (right) at (2,0) [text centered] {$[0,1]$};
\draw (left) edge [->,bend left=30,"$\epsilon_1(\omega)$"] (right);
\draw (right) edge [->,bend left=30,"$\epsilon_2(\omega)$"] (left);
\end{tikzpicture}
\caption{Schematic of ``leaking'' behaviour, where $\epsilon_1(\omega)$ and $\epsilon_2(\omega)$ are generally small.}
\label{fig:leaking}
\end{figure}
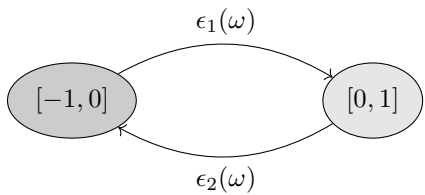

In the setting of these cocycles of paired tent maps, we are able to show that the hypotheses of Theorem A are true, taking the Banach space to be ($L^{\infty}$ equivalence classes of) bounded variation functions and finding a suitable cone that is preserved by all of the associated Perron-Frobenius operators. Thus we obtain an equivariant density for the cocycle, and an upper bound for the second-largest Lyapunov exponent in terms of $\epsilon_1$ and $\epsilon_2$ and quantities related to both the map and the cone. Next, we study the response of the system upon scaling $\epsilon_1$ and $\epsilon_2$ by some parameter $\kappa$ and taking $\kappa$ to $0$, which simulates shrinking a perturbation of the map $T_{0,0}$ back towards the original map. In this way, we can see how the second Lyapunov exponent behaves under perturbations; one might hope that it shrinks as a nice function of $\kappa$ (linear, say), until at $\kappa = 0$ the top Oseledets space becomes two-dimensional (spanned by the two invariant densities of $T_{0,0}$) and the zero Lyapunov exponent obtains multiplicity two. Our results are outlined in the following theorems, with precise statements to follow in the body of the paper. %For notation, we say that $a(x) \lesssim_{0} b(x)$ when there exist $c(x)$ and $d(x)$ such that $\lim\limits_{x\to 0^+} \tfrac{a(x)}{c(x)} = 1$, $\lim\limits_{x\to 0^+} \tfrac{b(x)}{d(x)} = 1$, and $c(x) \leq d(x)$.

\begin{quoteappbound}
% this environment is hardcoded to have the appropriate numbering.
Let $(\Omega,\cB,\mu,\sigma)$ be an ergodic, invertible, probability-preserving transformation, and let $\epsilon_1, \epsilon_2 : \Omega \to [0,1]$ be measurable functions which are both not $\mu$-a.e.\ equal to $0$ and which both have countable range. Let $T_{\omega} = T_{\epsilon_1(\omega),\epsilon_2(\omega)}$ be defined as above. Then there exists a readily computed number $C$ such that \[ \lambda_2 \leq C < 0 = \lambda_1, \] where $\lambda_1$ and $\lambda_2$ are the largest and second-largest Lyapunov exponents for the cocycle of Perron-Frobenius operators associated to $T_{\omega}^{(n)}$.
\end{quoteappbound}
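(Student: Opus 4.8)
The plan is to verify the hypotheses of Theorem A for the cocycle of Perron-Frobenius operators associated to $T_{\omega}^{(n)}$, working in the Banach space $X$ of bounded variation functions on $[-1,1]$ (modulo $\lambda$-null sets) with the standard norm $\norm{f}_{BV} = \Var(f) + \norm{f}_1$. First I would exhibit a concrete cone $\cC \subset X$ that is preserved by every Perron-Frobenius operator $L_{\omega} = L_{\epsilon_1(\omega),\epsilon_2(\omega)}$: since each $T_{\omega}$ is a paired tent map with uniformly bounded expansion and a uniformly bounded number of branches, a Lasota-Yorke-type cone of the form $\cC = \{ f \ge 0 \ : \ \Var_{[-1,1]}(f) \le a \int f \, d\lambda,\ \text{plus a suitable ``balanced'' constraint on the two halves} \}$ should work, with the balance condition ($\int_{[-1,0]} f$ and $\int_{[0,1]} f$ being comparable) ensuring that the mass-transfer structure cannot push a density too far towards being supported on one side. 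The new balanced Lasota-Yorke inequality advertised in the abstract is exactly what is needed here to check $L_{\omega}\cC \subset \cC$ uniformly in $\omega$; this is the technical heart of the argument and the step I expect to be the main obstacle, because one must choose the cone parameters so that both the variation bound and the balance bound are contracted (or at worst preserved) by every map in the family simultaneously, using only $\epsilon_i \in [0,1]$ and the fixed tent-map geometry.

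Next I would establish the uniform diameter bound: there is a positive-measure set $G_P$, an integer $k_P$, and a finite $D_P$ with $\diam_{\theta}(L(k_P,\omega)\cC) \le D_P$ for $\omega \in G_P$. Here I would use the countable-range hypothesis on $\epsilon_1,\epsilon_2$ together with the fact that neither is a.e.\ zero: there is some pair of values $(e_1,e_2)$ with $e_1 > 0$ or $e_2 > 0$ (in fact one can arrange genuine two-way leaking, or handle one-way leaking separately) attained on a positive-measure set $A \subset \Omega$; on the set $G_P = A \cap \sigma^{-1}(A) \cap \dots \cap \sigma^{-(k_P-1)}(A)$, which has positive measure for appropriate $k_P$ by ergodicity/positivity of $\mu$, the composed operator $L(k_P,\omega)$ is a fixed operator $L_{e_1,e_2}^{k_P}$ whose image of $\cC$ has finite Birkhoff diameter. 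The finiteness of $\diam_\theta\big(L_{e_1,e_2}^{k_P}\cC\big)$ follows from the standard fact that a single piecewise-expanding map with positive leaking, iterated enough times to make the relevant partition refine and the mixing kick in, maps the BV cone strictly inside itself with bounded diameter — this is essentially Liverani's argument, and $k_P$ is chosen large enough that $T_{e_1,e_2}^{k_P}$ has the covering/mixing property. The value $D_P$ is then explicitly computable from the tent-map slopes and $e_1,e_2$.

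With the hypotheses verified, Theorem A applies directly: $\log^+\norm{L(1,\omega)}$ is integrable because $\norm{L_{\omega}}$ is uniformly bounded (the paired tent maps have uniformly bounded distortion), and the random dynamical system is $\mu$-continuous or strongly measurable since $\epsilon_1,\epsilon_2$ are measurable with countable range (so $\omega \mapsto L_{\omega}$ takes countably many values, each a bounded operator). Theorem A then gives the one-dimensional top Oseledets space with exponent $\lambda_1$ and the bound $\lambda_2 \le \lambda_1 - \frac{\mu(G_P)}{k_P}\log\big(\tanh(\tfrac14 D_P)^{-1}\big)$. It remains to identify $\lambda_1 = 0$: the equivariant vector $v(\omega)$ spans the top Oseledets direction, and since each $L_{\omega}$ is a genuine Perron-Frobenius operator it preserves the integral $\int f \, d\lambda$ and the positive cone of $L^1$, so the equivariant family consists of genuine densities with $\int v(\omega)\,d\lambda$ constant and nonzero, forcing $\lambda_1 = 0$. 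Finally I would set $C := -\frac{\mu(G_P)}{k_P}\log\big(\tanh(\tfrac14 D_P)^{-1}\big)$, note $D_P < \infty$ makes the logarithm strictly positive so $C < 0$, observe that all of $\mu(G_P)$, $k_P$, $D_P$ are computed from the data of the maps and the cone outside the proof of Theorem A, and conclude $\lambda_2 \le C < 0 = \lambda_1$.
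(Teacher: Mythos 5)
Your overall strategy (verify the hypotheses of Theorem A on $BV[-1,1]$ with a Liverani-type cone, get $\lambda_1=0$ from integral preservation, and read off $C$ from $G_P,k_P,D_P$) is the paper's strategy, but two of your concrete steps have genuine gaps. First, cone preservation at the level of the \emph{first} iterate cannot be obtained the way you propose: the family contains maps with $\epsilon_1(\omega),\epsilon_2(\omega)$ equal to (or arbitrarily near) $0$, whose branches have slope of magnitude only $2$, and for these any Lasota--Yorke inequality has variation coefficient $\geq 1$; hence no choice of $a$ gives $L_\omega\cC_a\subset\cC_{\nu a}$ (or even $\subset \cC_a$) uniformly in $\omega$ by this route. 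The paper resolves this by passing to the second-iterate cocycle $P_\omega=L_{\sigma(\omega)}L_\omega$ (slopes at least $4$), proving the uniform inequality $\Var(P_\omega f)\leq \tfrac34\Var(f)+6\norm{f}_1$ via the balanced inequality, and then dealing with the extra bookkeeping this forces: $\sigma^2$ need not be ergodic (cyclic decomposition) and the exponents of the second-iterate cocycle must be converted back (they are $2\lambda_i$). Relatedly, ``balanced'' in the paper refers to balancing the variation and $L^1$ coefficients uniformly over the family by a judicious choice of intervals covering the hanging points, not to a mass-balance constraint between $[-1,0]$ and $[0,1]$; a cone with a two-sided comparability constraint on $\int_{[-1,0]}f$ and $\int_{[0,1]}f$ would in fact \emph{not} be preserved by maps with one-way leaking (e.g.\ $\epsilon_1(\omega)>0=\epsilon_2(\omega)$), which the hypotheses permit on sets of positive measure. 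The plain cone $\cC_a$ suffices.

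Second, your construction of $G_P$ does not work as stated. Ergodicity does not imply that $A\cap\sigma^{-1}(A)\cap\cdots\cap\sigma^{-(k_P-1)}(A)$ has positive measure (take $\sigma$ a rotation of a two-point space and $A$ one atom: $A\cap\sigma^{-1}(A)=\emptyset$), and there need not exist any positive-measure set on which $\epsilon_1$ and $\epsilon_2$ are \emph{simultaneously} positive, so you cannot in general reduce to powers of a single two-way-leaking Markov-type map. Nor can one-way leaking be ``handled separately'': under $T_{e_1,0}$ a density supported in $[0,1]$ stays supported in $[0,1]$, so its image is at infinite projective distance from the image of $\mathds{1}$, and $\diam_\theta(L^k\cC_a)=\infty$ for all $k$; finite diameter genuinely requires leaking in both directions within the block of length $k_P$. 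The paper gets this by defining sets $G_1,G_2$ where $\epsilon_1$, resp.\ $\epsilon_2$, exceeds half its integral along a $2$-block, invoking ergodicity of $\sigma^2$ to produce a finite $d$ and a positive-measure set $\cG$ on which both directions of leakage occur within $d$ steps, and then combining interval-expansion and covering lemmas with $\essinf/\esssup$ estimates (rather than spectral properties of a fixed operator) to obtain the explicit $D_P$. Your closing steps --- integrability of $\log^+\norm{L(1,\cdot)}$, measurability/$\mu$-continuity from the countable range, $\lambda_1=0$ from integral preservation and $\norm{\cdot}_1\leq\norm{\cdot}_{BV}\leq(1+a)\norm{\cdot}_1$ on the cone, and the formula for $C$ --- are fine and match the paper.
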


\begin{quoteappasympt}
% this environment is also hardcoded to have the appropriate numbering.
Let $(\Omega,\mu,\sigma)$, $\epsilon_1$, and $\epsilon_2$ be as in Theorem B. Let $\kappa \in (0,1]$, and consider the cocycle of maps $T^{(n)}_{\kappa\epsilon_1(\omega),\kappa\epsilon_2(\omega)}$. Then there exists $c > 0$ such that for sufficiently small $\kappa$, the second-largest Lyapunov exponent $\lambda_2(\kappa)$ for the cocycle of Perron-Frobenius operators satisfies \[ \lambda_2(\kappa) \leq -c\kappa. \] This estimate is sharp, in the following sense. Set $\epsilon_1 = \epsilon_2 = 1$ for all $\omega$. Then there is a sequence $(\kappa_n)_{n=1}^{\infty} \subset (0,1/2)$ such that $\kappa_n \to 0$, each $T_{\kappa_n,\kappa_n}$ is Markov, and $\lambda_2(\kappa_n)$ is asymptotically equivalent to $-2\kappa_n$.
\end{quoteappasympt}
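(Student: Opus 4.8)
The plan is to prove the two assertions separately. The bound $\lambda_2(\kappa)\le -c\kappa$ I would get by applying Theorem~A to the scaled cocycle and tracking how the cone data depends on $\kappa$; the sharpness statement I would get by reducing the Markov maps $T_{\kappa_n,\kappa_n}$ to finite matrices and then using classical analytic perturbation theory. Throughout, write $L_{\kappa,\omega}$ for the Perron--Frobenius operator of $T_{\kappa\epsilon_1(\omega),\kappa\epsilon_2(\omega)}$, let $\cC\subset\mathrm{BV}$ be the cone used for the application in Section~\ref{sect:app-pf-ops}, and recall from there that $\lambda_1(\kappa)=0$ for every $\kappa$, with equivariant density close, for small $\kappa$, to the normalized sum of the two invariant densities $h_-,h_+$ of $T_{0,0}$ (supported essentially on $[-1,0]$ and $[0,1]$).

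For the upper bound I would rerun the argument already used to prove Theorem~B, now for the family $\{L_{\kappa,\omega}\}$, verifying the hypotheses of Theorem~A via Corollary~\ref{cor:suff-cond-primitive} with the same cone $\cC$, a number of steps $k_P$ that may be chosen independent of $\kappa$, a good set $G_P(\kappa)$ with $\inf_{\kappa}\mu(G_P(\kappa))>0$ over all small $\kappa$, and a contraction width $D_P(\kappa)$. The quantitative heart of this step is the estimate
\[
D_P(\kappa)\ \le\ 2\log(1/\kappa)+O(1)\qquad\text{as }\kappa\to0 .
\]
To prove it I would split the action of $k_P$ steps of the cocycle on $\cC$ into the tent-map expansion, which --- as in the classical Liverani argument --- contracts the ``shape on each half'' part of the cone to a Hilbert-metric diameter bounded independently of $\kappa$, and the cross-leaking of sizes $\kappa\epsilon_i(\omega)$, which behaves like the positive $2\times2$ matrix $\left(\begin{smallmatrix}1-\kappa\epsilon_1 & \kappa\epsilon_2\\ \kappa\epsilon_1 & 1-\kappa\epsilon_2\end{smallmatrix}\right)$ and so spreads the image over a Hilbert-metric diameter at most $2\log(1/\kappa)+O(1)$. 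Substituting $D_P(\kappa)$ into part~(2) of Theorem~A with $\lambda_1=0$, and using that $\log\coth(t)\gtrsim e^{-2t}$ as $t\to\infty$ (hence $\log\coth\!\big(\tfrac14 D_P(\kappa)\big)\gtrsim\kappa$), gives $\lambda_2(\kappa)\le -\tfrac{\mu(G_P(\kappa))}{k_P}\log\coth\!\big(\tfrac14 D_P(\kappa)\big)\le -c\kappa$ for a suitable $c>0$ and all small $\kappa$.

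For the sharpness statement, with $\epsilon_1\equiv\epsilon_2\equiv1$ the cocycle is autonomous, so $\lambda_i(\kappa)=\log|\rho_i(\kappa)|$, where $\rho_i(\kappa)$ is the $i$-th largest modulus in the spectrum of the single operator $L_\kappa$ of $T_{\kappa,\kappa}$ acting on $\mathrm{BV}$. I would first choose $\kappa_n\downarrow0$ so that the finitely many breakpoints of $T_{\kappa_n,\kappa_n}$ have finite forward orbit; then $T_{\kappa_n,\kappa_n}$ is Markov for a finite partition $\mathcal P_n$, $L_{\kappa_n}$ preserves the finite-dimensional space $V_n$ of functions constant on the atoms of $\mathcal P_n$ and is there a nonnegative, mass-conserving matrix $M_n$, and by the balanced Lasota--Yorke inequality the spectrum of $L_{\kappa_n}$ outside the disc of radius $\theta$ (for a $\kappa$-independent $\theta<1$) coincides with that of $M_n$. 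Ordering the atoms by half shows $M_n$ is nearly reducible, its two off-diagonal blocks having norm $O(\kappa_n)$, and each diagonal block has a simple Perron eigenvalue $1+O(\kappa_n)$ with the rest of its spectrum inside radius $\theta+O(\kappa_n)$; hence $M_n$ has exactly two eigenvalues within $O(\kappa_n)$ of $1$, separated from the rest by a gap bounded below uniformly in $n$. First-order perturbation theory, with resolvent bounds uniform in $n$, identifies these two eigenvalues as $1+\kappa_n\nu+o(\kappa_n)$ with $\nu$ ranging over the eigenvalues $\{0,-2\}$ of the reduced mass-transfer matrix $\left(\begin{smallmatrix}-1 & 1\\ 1 & -1\end{smallmatrix}\right)$; the value $0$ recovers the mass-conservation eigenvalue $1$, and the other gives $\rho_2(\kappa_n)=1-2\kappa_n+o(\kappa_n)$, which exceeds $\theta$ for large $n$. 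Therefore $\lambda_2(\kappa_n)=\log\!\big(1-2\kappa_n+o(\kappa_n)\big)\sim-2\kappa_n$.

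The routine ingredients --- the autonomous reduction, $\lambda_1=0$, the near-reducibility of $M_n$, and the value $\left(\begin{smallmatrix}-1&1\\1&-1\end{smallmatrix}\right)$ of the reduced matrix (which is just ``total mass conserved, transferred both ways at rate $\kappa_n$'') --- are in hand from Section~\ref{sect:app-pf-ops} or immediate. The two load-bearing steps, where I expect the work to concentrate, are: (i) proving $D_P(\kappa)\le2\log(1/\kappa)+O(1)$ inside the genuine $\mathrm{BV}$ cone, which mixes variation and mass and does not literally collapse to the $2\times2$ model; and (ii) locating the Markov values $\kappa_n$ from the explicit formula for $T_{\kappa,\kappa}$ and checking that the perturbation-theoretic constants for $M_n$ are uniform in $n$ while $\dim V_n\to\infty$. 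I expect (i) to be the main obstacle.
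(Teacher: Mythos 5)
Your upper-bound step fails at exactly the point you flagged, and not merely for technical reasons: with $k_P$ chosen independent of $\kappa$, the hypothesis of Theorem \ref{thm:pf} cannot be verified for small $\kappa$, because $\diam_{\theta_a}\big(P^{(k_P)}_{\omega}\cC_a\big)=\infty$. Indeed, take $f=2\cdot\mathds{1}_{[-1,0]}$ and $g=2\cdot\mathds{1}_{[0,1]}$, both in $\cC_a$ for the cone used in Section \ref{sect:app-pf-ops}. After a fixed number $k_P$ of steps of the (second-iterate) cocycle, the image of $[-1,0]$ is $[-1,0]$ together with finitely many intervals of total length $O(C^{k_P}\kappa)$, so $P^{(k_P)}_{\omega}f$ vanishes on a subset of $[0,1]$ of positive measure on which $P^{(k_P)}_{\omega}g$ is bounded below by a positive constant; since comparability in $\cC_a$ requires pointwise domination, $\alpha\big(P^{(k_P)}_{\omega}g,\,P^{(k_P)}_{\omega}f\big)=0$ and the two images are at infinite $\theta_a$-distance. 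So the genuine $BV$ cone does not reduce to your $2\times2$ mass-transfer model: to get a finite diameter at all one must wait until the leaked interval of length $\sim\kappa$ has expanded to cover the opposite half, i.e.\ $k_P$ must grow like $\log_4(1/\kappa)$. But then a direct application of Theorem \ref{thm:pf}(2), with $D_P(\kappa)\approx 2\log(1/\kappa)+O(1)$, only yields $\lambda_2(\kappa)\lesssim -\kappa/\log(1/\kappa)$ --- which is precisely what the cruder bound of Theorem \ref{thm:spec-gap} already gives --- and not $-c\kappa$. The paper closes this gap by a mechanism absent from your proposal: it takes $k_P(\kappa)=m_1+2m_3(\kappa)$ with $m_3(\kappa)\sim\log_4(1/\kappa)$, chooses $G_P'$ by Birkhoff's theorem so that along the relevant orbit segment there are at least $f\,m_3(\kappa)$ leakage times in each direction, and then counts the corresponding preimage itineraries with the operators $Q^{\pm}_{\omega}$ of Lemma \ref{lem:Q-omega-props}; this improves the essential-infimum bound to $\gamma(\kappa)\gtrsim \kappa\, m_3(\kappa)$, and the extra factor $m_3(\kappa)$ exactly cancels the $1/k_P(\kappa)$ in the exponent estimate, producing $-c\kappa$. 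Without some analogue of this leakage-counting, your argument proves at best a bound of order $\kappa/\log(1/\kappa)$.

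On sharpness your route genuinely differs from the paper's. The paper picks $\kappa_n$ solving $(2+2\kappa)^n\kappa=1$, writes down the $(2n+4)$-dimensional matrix $(2+2\kappa_n)^{-1}A_n$ explicitly, and factors the characteristic polynomial as $x^2\big(x^n(x-2)-2\big)\big(x^n(x-2)+2\big)$, reading off the second eigenvalue $\approx 1-2\kappa_n$ directly; see Proposition \ref{prop:markov-tent-LE}. Your near-reducibility-plus-perturbation scheme could plausibly be made rigorous, but two of its claims need repair rather than routine checking: the off-diagonal blocks of $M_n$ are \emph{not} $O(\kappa_n)$ in the natural coefficient or $L^1$ operator norms (their nonzero entries are $\approx\tfrac12$; the smallness lives only in the fact that the coupling occurs through atoms of length $O(\kappa_n)$, so the perturbation is small only in a suitably weighted norm), and the ``unperturbed'' diagonal blocks are a sequence of matrices of dimension tending to infinity, so the uniform spectral gap and uniform resolvent bounds you invoke are themselves a nontrivial Lasota--Yorke-type argument. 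The explicit factorization is what lets the paper avoid both issues; if you pursue your route, those two points are where the real work lies.
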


We emphasize that these results apply to an entire parametrized family of maps, and thus they give a general statement on the asymptotic properties of the second-largest Lyapunov exponent for these maps; to the best of our knowledge, this is the first time $\lambda_2$ has been upper-bounded for a family of maps, with an asymptotic estimate on the order of the bound in the scaling parameter.. Note also that Theorems B and C are consequences of Theorem A, applied to different quantities $k_P,G_P,D_P$. The primary work done, outside of showing that the hypotheses of Theorem A are satisfied, is to obtain expressions for each of those quantities.

In the process of applying Theorem A to the cocycle of Perron-Frobenius operators associated to the paired tent maps, we happen to require a new Lasota-Yorke-type inequality for Perron-Frobenius operators acting on bounded variation functions. Its utility comes from being sufficiently strong to force small coefficients of the variation terms, but balanced in such a way as to provide uniform bounds on both terms over a family of maps, not just one map individually. The inequality is based on Rychlik's work \cite{rychlik}; we prove the inequality in a similar level of generality, to provide a tool for future work. For details, see Section \ref{sect:ly-ineq}.

The remainder of the paper is as follows. In Section \ref{sect:pf-thm}, we give some required background on cones and measurability before stating and proving our cocycle Perron-Frobenius theorem. In Section \ref{sect:ly-ineq}, we briefly set up, state, and prove a new balanced Lasota-Yorke-type inequality. In Section \ref{sect:app-pf-ops}, we use that new Lasota-Yorke inequality to apply our cocycle Perron-Frobenius theorem to cocycles of paired tent maps as described above, to prove the aforementioned bound in Theorem B on the second-largest Lyapunov exponents for the Perron-Frobenius operators, and then find the perturbation estimate in Theorem C. For the proof of the sharpness of that estimate, we outline an example that provides a partial answer to a related but possibly harder question: what is a \emph{lower} bound for the second-largest Lyapunov exponent? The example is a specific class of Markov paired tent maps that turn out to be very amenable to analysis via standard finite-dimensional linear algebra techniques, and allow for explicit computation of the second-largest Lyapunov exponents (through eigenvalues). There is an appendix containing miscellaneous technical results that are used in various places in the paper.

\section{Cocycle Perron-Frobenius Theorem}
\label{sect:pf-thm}

In this section, our goal is to generalize the classical Perron-Frobenius theorem to the setting of measurable cocycles of bounded linear operators on a Banach space that preserve and contract a cone. The classical theorem is often stated for primitive matrices (those with non-negative entries and such that some power of the matrix has all positive entries) and without explicit mention of a cone; the cone being preserved and contracted is, in this special case, those vectors with non-negative entries, and this cone has many nice properties. Before we state and prove the theorem, we will briefly describe the types of cones that we use and recall some related quantities and results. In addition, because we are in a measurable dynamics setting, we will describe the choices of topologies and $\sigma$-algebras on the spaces being considered and for the related spaces of linear operators and functionals.

\subsection{Preliminaries on Cones and Measurability}
\label{subsect:prelims-cones}

\begin{defn}
\label{defn:cones}
Let $(X,\norm{\cdot})$ be a real Banach space. A \emph{cone} is a set $\cC \subset X$ that is closed under scalar multiplication by positive numbers, i.e.\ $\lambda\cC \subset \cC$ for all $\lambda > 0$. In this paper, a \emph{nice} cone is a cone $\cC \subset X$ that has the following properties, mimicking the positive orthant in $\R^d$:
\begin{itemize}
\item $\cC$ is convex (equivalently, closed under addition);
\item $\cC$ is \emph{blunt}, i.e.\ $0\notin \cC$;
\item $\cC$ is \emph{salient}, i.e.\ $\cC\cap (-\cC) = \emptyset$ (more generally $\subset \{0\}$);
\item $\cC\cup\{0\}$ is closed;
\item $\cC$ is \emph{generating} (or \emph{generates} $X$), i.e.\ $\cC - \cC = X$;
\item $\cC$ is $D$-\emph{adapted}, i.e.\ there exists $D \in \R_{\geq 1}$ such that for $x\in X$ and $y\in \cC$, if $y\pm x \in \cC\cup\{0\}$, then $\norm{x} \leq D\norm{y}$.
\end{itemize}
Note that a convex cone $\cC$ with $\cC\cup\{0\}$ closed and salient induces a partial order on $X$, denoted $\preceq_{\cC}$ (or $\preceq$ when the choice of $\cC$ is clear), by $x \preceq y$ if and only if $y-x\in \cC\cup\{0\}$. Then the $D$-adapted condition is rephrased as saying if $-y \preceq x \preceq y$, then $\norm{x} \leq D\norm{y}$ (note that this inequality forces $y\in \cC$).
\end{defn}

\begin{rem}
\label{rem:cones}
Note that if $\cC$ is any cone in $X$ with an interior point $x$, then $\cC$ generates $X$ by taking $z = \epsilon^{-1}(x+\epsilon z)- \epsilon^{-1}x$ for sufficiently small positive $\epsilon$. If $\cC$ induces a lattice order on $X$, then $\cC$ also generates $X$, by taking $z = z^+ - z^-$.

In the literature, if there is some $D\geq 1$ such that $\cC$ is $D$-adapted in $(X,\norm{\cdot})$, then $\cC$ is \emph{normal} and there exists an equivalent norm to $\norm{\cdot}$ such that $\cC$ is $1$-adapted with respect to that norm \cite[Section V.3]{schaefer}. We will not use this feature, opting instead to run the proofs using the existing norm (the equivalent norm is a Minkowski functional for an appropriate saturated convex set).
\end{rem}

\begin{eg}
\label{eg:nice-cone-fin-dim}
For $X = \R^d$ equipped with $\norm{x}_1 = \sum_i \abs{x_i}$, the positive orthant $\cC = \R_{\geq 0}^d\setminus \{0\}$ is a nice cone. It is easy to see that $\cC$ is $1$-adapted: if $-y \preceq x \preceq y$, then $-y_i \leq x_i \leq y_i$ for all $i$. Thus $\abs{x_i} \leq y_i$ and summing over $i$ yields $\norm{x}_1 \leq \norm{y}_1$.
\end{eg}

\begin{eg}
\label{eg:nice-cone-bv}
Let $X = BV(I) \subset L^{\infty}(I)$ be the space of (almost-everywhere equivalence classes of) bounded variation functions on some totally ordered, order-complete set $I$ equipped with a probability measure; for example, $I = [c,d] \subset \R$ with normalized Lebesgue measure. Equip $X$ with the norm $\norm{f}_{BV} = \norm{f}_1 + \tilde{\Var}(g)$, where $\tilde{\Var}(f) = \inf_{g=f\ a.e.} \Var(g)$, and for $a>0$ let \[ \cC_a = \set{f\in BV(I)}{f\geq 0,\ \tilde{\Var}(f) \leq a\norm{f}_1}\setminus\{0\}. \] Then $\cC_a$ is a nice cone, and is $D$-adapted with $D = 2a+1$. The only non-trivial properties to see are that $\cC_a$ generates $X$ and that $\cC_a$ is $(2a+1)$-adapted. For the former, observe that $\mathds{1}$ is an interior point for $\cC_a$. For the latter, note that $\tilde{\Var}$ satisfies the triangle inequality on $BV(I)$ and that if $-f \preceq g \preceq f$, then $\norm{g}_1 \leq \norm{f}_1$ and $f-g \in \cC_a$.
\end{eg}

\begin{defn}
\label{defn:proj-metric}
Let $(X,\norm{\cdot},\cC)$ be a real Banach space with a nice cone. For $v,w\in \cC$, define:
\begin{gather*}
\alpha(v,w) = \sup\set{\lambda \geq 0}{\lambda v \preceq w}; \\
\beta(v,w) = \inf\set{\mu \geq 0}{w \preceq \mu v}; \\
\theta(v,w) = \log\left(\frac{\beta}{\alpha}\right).
\end{gather*}
The quantity $\theta(v,w)$ is the \emph{projective metric} on $\cC$, and is sometimes called a \emph{Hilbert} metric on $\cC$. If $f,g\in \cC$ have $\theta(f,g) < \infty$, then we say that $f$ and $g$ are \emph{comparable}.
\end{defn}

The quantity $\theta$ is symmetric and satisfies the triangle inequality, but $\theta(v,w) = 0$ if and only if $v$ and $w$ are collinear. Moreover, $\theta$ can be infinite: if $v,w\in \cC$, then $\theta(v,w-\alpha(v,w)v) = \infty$. Thus $\theta$ is a pseudo-metric that can be infinite, so on $\theta$-connected components of $\cC$ it restricts to a metric, but in general $\diam_\theta(\cC) = \infty$. We summarize the facts that we will use about $\alpha$, $\beta$, and $\theta$ in the next lemma; the facts about $\alpha$ and $\beta$ are straightforward to prove from the definitions, and proof of the contraction inequality for $\theta$ can be found in \cite[Section 1]{liverani-annals-doc}.

\begin{lem}
\label{lem:proj-metric}
Let $(X,\norm{\cdot},\cC)$ be a real Banach space with a nice $D$-adapted cone. Then in the second component, $\alpha$ is super-additive, $\beta$ is sub-additive, and both are positive-scalar-homogeneous, i.e.\ for all $v,w,z\in \cC$ and $c > 0$:
\begin{gather*}
\alpha(v,w+z) \geq \alpha(v,w) + \alpha(v,z), \\
\beta(v,w+z) \leq \beta(v,w) + \beta(v,z), \\
\alpha(v,cw) = c\alpha(v,w), \quad \beta(v,cw) = c\beta(v,w).
\end{gather*}
We also have symmetry properties of $\alpha,\beta,\theta$; for all $v,w\in\cC$:
\begin{gather*}
\alpha(v,w) = \beta(w,v)^{-1}, \\
\theta(v,w) = \theta(w,v).
\end{gather*}
If $v,w\in \cC$ with $\theta(v,w) = 0$, then $v$ and $w$ are collinear. For all $v,w\in \cC$ we have \[ \alpha(v,w) \leq D\frac{\norm{w}}{\norm{v}}, \quad \beta(v,w) \geq \frac{1}{D}\frac{\norm{v}}{\norm{w}}. \] Now, let $L \in \cB(X)$ be a bounded linear operator such that $L\cC \subset \cC$. Then for all $v,w\in \cC$:
\begin{gather*}
\alpha(L(v),L(w)) \geq \alpha(v,w), \\
\beta(L(v),L(w)) \leq \beta(v,w), \\
\theta(L(v),L(w)) \leq \tanh\left(\tfrac{1}{4}\diam_\theta\left(L\cC\right)\right)\theta(v,w),
\end{gather*}
where if the $\theta$-diameter of $L\cC$ is infinite the scale factor is $1$. If $v,w\in\cC$ are comparable, then $\alpha(v,w)$ and $\alpha(w,v) > 0$, and $\beta(v,w)$ and $\beta(w,v) < \infty$. Finally, considering $\cC\times\cC$ as a subset of $X\times X$ with the norm $\norm{(x,y)}_1 = \norm{x}+\norm{y}$ and equipping $\cC\times\cC$ with the restriction of the Borel $\sigma$-algebra on $X\times X$, we see that $\alpha$ is upper-semi-continuous, $\beta$ and $\theta$ are lower-semi-continuous, and all three are Borel measurable, into $([0,\infty],\sigma(\tau_{\abs{\cdot}}))$.
\end{lem}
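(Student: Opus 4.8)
The plan is to dispatch the assertions roughly in the stated order, the only part needing genuine care being the semicontinuity and Borel measurability at the end. Most of the algebraic facts follow directly from the definitions once we record one structural observation: for fixed $v,w\in\cC$ the set $S_\alpha(v,w)=\{\lambda\ge 0:\lambda v\preceq w\}$ is a closed interval $[0,\alpha(v,w)]$ with $\alpha(v,w)$ finite, and $S_\beta(v,w)=\{\mu\ge 0:w\preceq\mu v\}$ is a closed interval $[\beta(v,w),\infty)$ (possibly empty). Closedness holds because $\cC\cup\{0\}$ is closed and $\lambda\mapsto w-\lambda v$, $\mu\mapsto\mu v-w$ are continuous; the interval property is convexity of $\cC\cup\{0\}$; finiteness of $\alpha$ is the $D$-adapted bound proved below. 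In particular the $\sup$ and $\inf$ defining $\alpha$ and $\beta$ are attained whenever finite. Given this, super-additivity of $\alpha$ and sub-additivity of $\beta$ in the second slot follow by adding the defining relations (e.g.\ $\lambda_1 v\preceq w$ and $\lambda_2 v\preceq z$ give $(\lambda_1+\lambda_2)v\preceq w+z$), positive homogeneity is immediate by rescaling, and $\alpha(v,w)=\beta(w,v)^{-1}$ is the one-line equivalence $\lambda v\preceq w\Leftrightarrow v\preceq\lambda^{-1}w$ for $\lambda>0$; symmetry of $\theta$ follows. For collinearity, $\theta(v,w)=0$ forces $\alpha(v,w)=\beta(v,w)=:c$, and attainment gives $cv\preceq w\preceq cv$, so $w-cv\in(\cC\cup\{0\})\cap(-(\cC\cup\{0\}))=\{0\}$ by salience and bluntness, i.e.\ $w=cv$.

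The norm bounds come straight from $D$-adaptedness: if $\lambda v\preceq w$ then, since $v,w\in\cC$, we have $-w\preceq 0\preceq\lambda v\preceq w$, hence $\norm{\lambda v}\le D\norm{w}$ and so $\alpha(v,w)\le D\norm{w}/\norm{v}$; the stated lower bound for $\beta$ follows dually from $\beta(v,w)=\alpha(w,v)^{-1}$. The monotonicity estimates under $L$ are immediate from $L\cC\subset\cC$ and $L0=0$: if $w-\lambda v\in\cC\cup\{0\}$ then $Lw-\lambda Lv=L(w-\lambda v)\in\cC\cup\{0\}$, giving $\alpha(Lv,Lw)\ge\alpha(v,w)$, and dually $\beta(Lv,Lw)\le\beta(v,w)$; the Birkhoff contraction inequality for $\theta$ I would simply quote from \cite[Section 1]{liverani-annals-doc}. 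Comparability is then bookkeeping: one first notes $\beta(v,w)\ge\alpha(v,w)$ (salience applied to $(\beta-\alpha)v\succeq 0$), so $\theta(v,w)<\infty$ forces $0<\alpha(v,w)\le\beta(v,w)<\infty$, and the symmetry identities transfer positivity and finiteness to $\alpha(w,v)=\beta(v,w)^{-1}$ and $\beta(w,v)=\alpha(v,w)^{-1}$.

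The part I expect to require the most care is the semicontinuity and measurability on $\cC\times\cC$ (with the subspace topology induced by $\norm{(x,y)}_1=\norm{x}+\norm{y}$), and the key point is that the relevant sub/superlevel sets have a clean closed form: for every real $c\ge 0$ one has
\[ \{(v,w)\in\cC\times\cC:\alpha(v,w)\ge c\}=\{(v,w):w-cv\in\cC\cup\{0\}\},\qquad \{(v,w):\beta(v,w)\le c\}=\{(v,w):cv-w\in\cC\cup\{0\}\}, \]
where $\supseteq$ is trivial and $\subseteq$ uses the interval description above, i.e.\ that $\cC\cup\{0\}$ is both topologically closed and closed under addition. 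Since $\cC\cup\{0\}$ is closed and $(v,w)\mapsto w-cv$, $(v,w)\mapsto cv-w$ are continuous, these are closed subsets of $\cC\times\cC$; for $c<0$ the corresponding sets are all of $\cC\times\cC$ (for $\alpha$, as $\alpha\ge 0$) or empty (for $\beta$, as $\beta>0$). Hence $\alpha$ is upper semicontinuous and $\beta$ is lower semicontinuous. For $\theta$, using that $\alpha$ is finite-valued and $\beta>0$ everywhere, one has for finite $c$ that $\{\theta\le c\}=\{(v,w):\beta(v,w)\le e^{c}\alpha(v,w)\}=\{e^{c}\alpha-\beta\ge 0\}$, the superlevel set of the upper semicontinuous $[-\infty,\infty)$-valued function $e^{c}\alpha-\beta$, hence closed; together with $\theta\ge 0$ (so $\{\theta\le c\}=\emptyset$ for $c<0$) this gives lower semicontinuity of $\theta$. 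Borel measurability of all three into $[0,\infty]$ is then automatic, as sub/superlevel sets of semicontinuous functions are open or closed and the families $\{f<c\}$, $\{f>c\}$ generate the Borel $\sigma$-algebra of $[0,\infty]$. The only real friction is keeping the extended-real arithmetic honest near the values $0$ and $\infty$ — which is why I would argue $\theta$'s semicontinuity through $\{\theta\le c\}=\{e^{c}\alpha-\beta\ge 0\}$ rather than by manipulating $\beta/\alpha$ directly — and checking the displayed level-set identities as genuine equalities.
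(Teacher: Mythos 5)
Your proposal is correct in substance and, to the extent the paper proves this lemma at all, takes the same route: the paper gives no written proof, declaring the facts about $\alpha$ and $\beta$ straightforward from the definitions and citing \cite[Section 1]{liverani-annals-doc} for the Birkhoff contraction inequality, exactly as you do. What you add — the closed-interval description of $\set{\lambda\ge 0}{\lambda v\preceq w}$ and $\set{\mu\ge 0}{w\preceq\mu v}$ (hence attainment of $\alpha$ and $\beta$, with finiteness of $\alpha$ from $D$-adaptedness), the salience argument for collinearity, and the level-set identities $\{\alpha\ge c\}=\{(v,w): w-cv\in\cC\cup\{0\}\}$, $\{\beta\le c\}=\{(v,w): cv-w\in\cC\cup\{0\}\}$, $\{\theta\le c\}=\{e^{c}\alpha-\beta\ge 0\}$ for the semicontinuity and measurability — is sound and is the right way to make the ``straightforward'' part honest.

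One point to flag: the lower bound for $\beta$. Your duality argument, $\beta(v,w)=\alpha(w,v)^{-1}$ combined with $\alpha(w,v)\le D\norm{v}/\norm{w}$, yields $\beta(v,w)\ge\frac{1}{D}\frac{\norm{w}}{\norm{v}}$, which is \emph{not} the inequality printed in the lemma, namely $\beta(v,w)\ge\frac{1}{D}\frac{\norm{v}}{\norm{w}}$. The printed form is in fact false in general (take $X=\R$, $\cC=(0,\infty)$, $D=1$: then $\beta(v,w)=w/v$, which fails to dominate $v/w$ when $w<v$), so your claim that ``the stated lower bound follows dually'' is literally inaccurate — what follows dually is the corrected bound with the norms swapped, and the displayed inequality should be read as a typo. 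Since this bound is never invoked elsewhere in the paper (only the upper bound for $\alpha$ is used, e.g.\ in Proposition \ref{prop:D-adapt-Cauchy} and Lemma \ref{lem:norm-eta-conv}), nothing downstream is affected; but in your write-up you should state and prove the bound in its correct form rather than assert the printed one.
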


Because we will be considering functions from a measure space to spaces which may be equipped with multiple topologies, we must decide on the topologies and $\sigma$-algebras on the relevant spaces. 
\begin{itemize}
\item For a Banach space $(X,\norm{\cdot})$, we use the norm topology and the associated Borel $\sigma$-algebra.
\item For the dual of a Banach space, $X^*$, the two main options for topologies are the weak-* topology $\tau_{w^*}$ and the norm topology, where the norm is denoted by $\norm{\cdot}_{X^*}$ if clarity is required.
\item For the bounded linear operators on a Banach space, $\cB(X)$, we will use either the strong operator topology $\tau_{\mathrm{SOT}}$ or the norm topology, $\norm{\cdot}_{\mathrm{op}}$.
\end{itemize}
In general, we will write $\sigma(\tau)$ to denote the Borel $\sigma$-algebra generated by a topology $\tau$. Observe that the weak-* topology and the strong operator topology are both topologies of pointwise convergence.

We will need a standard extension lemma like the following.

\begin{lem}
\label{lem:ext-pos}
Let $\cC \subset X$ be a closed convex cone in a real Banach space, and let $\eta : \cC \to \R$ be a positive, positive-scalar-homogeneous, additive function on $\cC$. Then $\eta$ extends uniquely to a linear functional on $\cC - \cC \subset X$, by setting $\eta(g_1-g_2) = \eta(g_1) - \eta(g_2)$.
\end{lem}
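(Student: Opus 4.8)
This is a purely algebraic statement: no topology is involved (in particular, boundedness of the extension is not asserted), and neither the closedness of $\cC$ nor the positivity of $\eta$ is used — only that $\cC$ is a convex cone and that $\eta$ is additive and positively homogeneous on it. The plan is to (i) check that $\cC - \cC$ is a linear subspace and that the proposed formula is well-defined, (ii) verify linearity, and (iii) note uniqueness.

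For (i), first observe that $\cC - \cC$ is a subspace of $X$: it contains $0 = g - g$ for any $g \in \cC$, it is closed under multiplication by $c > 0$ (since $c(g_1 - g_2) = (cg_1) - (cg_2)$ and $\cC$ is a cone) and by $c < 0$ (rewrite $c(g_1-g_2)$ as $(-c)g_2 - (-c)g_1$), and it is closed under addition because $(g_1 - g_2) + (h_1 - h_2) = (g_1 + h_1) - (g_2 + h_2)$ with both bracketed sums in $\cC$ by convexity. Applying additivity of $\eta$ to $0 = 0 + 0$ shows $\eta(0) = 0$. The key point is well-definedness: if $g_1 - g_2 = h_1 - h_2$ with $g_i, h_i \in \cC$, rearrange to the identity $g_1 + h_2 = h_1 + g_2$ between two elements of $\cC$; additivity of $\eta$ on $\cC$ then gives $\eta(g_1) + \eta(h_2) = \eta(h_1) + \eta(g_2)$, i.e. $\eta(g_1) - \eta(g_2) = \eta(h_1) - \eta(h_2)$. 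Hence $\eta(g_1 - g_2) := \eta(g_1) - \eta(g_2)$ unambiguously defines a function on $\cC - \cC$.

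For (ii), additivity of the extension is the computation in (i) read forward, using the decomposition $(g_1 + h_1) - (g_2 + h_2)$ of a sum of two differences. Homogeneity splits into three cases: for $c > 0$ use $cx = (cg_1) - (cg_2)$ together with positive homogeneity of $\eta$ on $\cC$; the case $c = 0$ is $\eta(0) = 0$; and for $c < 0$ reduce to the positive case via $\eta(-x) = \eta(g_2 - g_1) = -\eta(x)$. For (iii), any linear functional $\tilde\eta$ on $\cC - \cC$ restricting to $\eta$ on $\cC$ must satisfy $\tilde\eta(g_1 - g_2) = \tilde\eta(g_1) - \tilde\eta(g_2) = \eta(g_1) - \eta(g_2)$, so it coincides with the map just constructed.

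I do not anticipate any real obstacle here; the single step that deserves care is the well-definedness argument, and it rests entirely on $\cC$ being closed under addition, so that $g_1 + h_2$ and $h_1 + g_2$ lie in $\cC$ where additivity of $\eta$ may be invoked.
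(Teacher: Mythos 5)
Your argument is correct and is precisely the standard extension argument the paper itself relies on (the lemma is stated in the paper without proof, as a "standard extension lemma"): well-definedness via the rearrangement $g_1+h_2=h_1+g_2$ inside $\cC$, then additivity, homogeneity by cases on the sign of the scalar, and uniqueness from the formula. No gaps; your observation that neither closedness of $\cC$ nor positivity of $\eta$ is needed for the purely algebraic extension is accurate (positivity only matters later in the paper, where the extended functional is shown to be bounded using the $D$-adapted condition and And\^o's Lemma).
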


The next Proposition provides the key relations between $\theta$ and $\norm{\cdot}$ on $\cC$. The first part is an inequality that will be used repeatedly, and the second is the generalization of the same statement but for $D=1$, found in \cite{liverani-annals-doc} as Lemma 1.3. 

\begin{prop}
\label{prop:D-adapt-Cauchy}
Let $(X,\norm{\cdot},\cC)$ be a real Banach space with a nice $D$-adapted cone. 
\begin{enumerate}
\item If $f,g\in\cC$ are comparable, then \[ \norm{f-\left( \frac{\alpha(g,f)+\beta(g,f)}{2} \right)g} \leq \frac{D}{2}\norm{g}\alpha(g,f)\left( e^{\theta(f,g)}-1 \right). \] 
\item If $f,g\in\cC$ with $\norm{f} = \norm{g} = r$, then $\displaystyle \norm{f-g} \leq D^2r\left( e^{\theta(f,g)}-1 \right).$ 
\end{enumerate}
Thus if $(f_n)_n \subset \cC$ is a $\theta$-Cauchy sequence of elements with the same norm, then $(f_n)_n$ is Cauchy in norm, hence convergent.
\end{prop}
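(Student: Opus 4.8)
The plan is to prove both parts by bounding $f - g$ (or $f - \tfrac{\alpha+\beta}{2} g$) using elements of the cone whose norms we control via the $D$-adapted property. For part (1), write $m = \tfrac{\alpha(g,f)+\beta(g,f)}{2}$ and set $h = f - mg$. The definitions of $\alpha = \alpha(g,f)$ and $\beta = \beta(g,f)$ give $\alpha g \preceq f \preceq \beta g$, hence $(\alpha - m)g \preceq h \preceq (\beta - m)g$, i.e.\ $-\tfrac{\beta-\alpha}{2}g \preceq h \preceq \tfrac{\beta-\alpha}{2}g$. Since $\cC$ is $D$-adapted (and the witnessing element here is $\tfrac{\beta-\alpha}{2}g \in \cC$, noting $\beta \geq \alpha$), we get $\norm{h} \leq D\,\tfrac{\beta-\alpha}{2}\norm{g} = \tfrac{D}{2}\norm{g}\,\alpha\big(\tfrac{\beta}{\alpha} - 1\big) = \tfrac{D}{2}\norm{g}\,\alpha\big(e^{\theta(f,g)} - 1\big)$, using $\theta(f,g) = \log(\beta/\alpha)$ from Definition \ref{defn:proj-metric}. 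This is exactly the claimed inequality.

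For part (2), the idea is to run part (1) with the roles symmetrized and use the normalization $\norm{f} = \norm{g} = r$ to control $\alpha(g,f)$. By part (1), $\norm{f - mg} \leq \tfrac{D}{2}r\,\alpha(g,f)(e^{\theta(f,g)} - 1)$. Now I would estimate $m\norm{g}$ versus $\norm{f}$: since $f - mg$ and $f$ have comparable norms via the triangle inequality, $|m|\,r = |m|\norm{g} \leq \norm{f} + \norm{f - mg} \leq r + \tfrac{D}{2}r\,\alpha(e^{\theta}-1)$, but more directly I want a clean bound on $\alpha(g,f)$ itself. From Lemma \ref{lem:proj-metric} we have $\alpha(g,f) \leq D\norm{f}/\norm{g} = D$. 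Substituting, $\norm{f - mg} \leq \tfrac{D^2}{2}r(e^{\theta(f,g)} - 1)$. Then I would write $f - g = (f - mg) + (m-1)g$ and bound $|m - 1|$: since $\alpha(g,f) \leq 1 \leq \beta(g,f)$ is \emph{not} automatic, instead use that $\norm{mg} \leq \norm{f} + \norm{f-mg} \leq r + \tfrac{D^2}{2}r(e^\theta - 1)$ and $\norm{mg} \geq \norm{f} - \norm{f - mg} \geq r - \tfrac{D^2}{2}r(e^\theta-1)$, so $|m - 1|\,r = |\,m\norm{g} - \norm{g}\,| \leq \tfrac{D^2}{2}r(e^\theta - 1)$. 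Hence $\norm{f - g} \leq \norm{f - mg} + |m-1|\norm{g} \leq D^2 r(e^{\theta(f,g)} - 1)$, as claimed. (This is the argument of Liverani's Lemma 1.3 with $D$ carried through; one should double-check whether the two halves combine to $D^2$ or whether a slightly more careful accounting of which term carries the $D$ and which the $D^2$ is needed — I expect the bound as stated is what falls out.)

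For the final sentence about $\theta$-Cauchy sequences of fixed norm: given $(f_n) \subset \cC$ with $\norm{f_n} = r$ for all $n$ and $\theta(f_n, f_m) \to 0$ as $n, m \to \infty$, apply part (2) to the pair $f_n, f_m$ to get $\norm{f_n - f_m} \leq D^2 r(e^{\theta(f_n,f_m)} - 1) \to 0$. So $(f_n)$ is norm-Cauchy, and since $X$ is a Banach space it converges; the limit lies in $\cC \cup \{0\}$ since that set is closed, and has norm $r > 0$ so in fact lies in $\cC$.

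The main obstacle I anticipate is purely bookkeeping in part (2): making sure the constant really is $D^2$ and not, say, $\tfrac{3D^2}{2}$ or $D^2 + D$, since one combines a bound on $\norm{f - mg}$ with a bound on $|m-1|\norm{g}$ and each could a priori carry its own power of $D$. The resolution is to be economical — bound $\alpha(g,f) \leq D$ \emph{once}, absorb one factor of $D$ from the $D$-adapted step into that same estimate, and note that $|m - 1|\norm{g}$ is controlled by $\norm{f - mg}$ itself (no new factor of $D$), so the two contributions are each at most $\tfrac{D^2}{2}r(e^\theta - 1)$ and sum to $D^2 r(e^\theta - 1)$. Everything else — positivity of $\beta - \alpha$, the identity $\beta/\alpha = e^\theta$, comparability guaranteeing $\alpha > 0$ and $\beta < \infty$ — is immediate from Definition \ref{defn:proj-metric} and Lemma \ref{lem:proj-metric}.
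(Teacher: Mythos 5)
Your proposal is correct and follows essentially the same route as the paper: subtract the midpoint $\tfrac{\alpha+\beta}{2}g$, apply the $D$-adapted condition to get part (1), then in part (2) use the reverse triangle inequality to control $\abs{\tfrac{\alpha+\beta}{2}-1}\,r$ by $\norm{f-\tfrac{\alpha+\beta}{2}g}$ and the bound $\alpha(g,f)\leq D$ from Lemma \ref{lem:proj-metric}, so the two contributions sum to $D^2 r\left(e^{\theta(f,g)}-1\right)$ exactly as in the paper (which applies the $\alpha\leq D$ step at the end rather than up front). The only cosmetic difference is that the paper also notes explicitly that the non-comparable case of part (2) is trivial since the right-hand side is then infinite.
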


\begin{proof}
First, suppose that $f,g\in\cC$ are comparable. Subtract $\tfrac{1}{2}(\alpha(g,f)+\beta(g,f))g$ from the inequality \[ \alpha(g,f)g \preceq f \preceq \beta(g,f)g \] to get \[ -\frac{\beta(g,f)-\alpha(g,f)}{2} g \preceq f- \frac{\alpha(g,f)+\beta(g,f)}{2} g \preceq \frac{\beta(g,f)-\alpha(g,f)}{2} g. \] Use the $D$-adapted condition to obtain:
\begin{align*}
\norm{f-\left( \frac{\alpha(g,f)+\beta(g,f)}{2} \right)g} & \leq D\norm{\frac{\beta(g,f)-\alpha(g,f)}{2} g} \\
& = \frac{D}{2}\norm{g}\alpha(g,f)\left( e^{\theta(g,f)}-1 \right).
\end{align*}

Now, suppose that $f,g\in\cC$ with $\norm{f} = \norm{g} = r$. If $f$ and $g$ are not comparable, then the norm bound in the Proposition is trivial, since the right-hand-side is infinite. Thus, assume that $f$ and $g$ are comparable. Using the reverse triangle inequality, we have:
\begin{align*} 
r\cdot\abs{1-\frac{\alpha(g,f)+\beta(g,f)}{2}} & = \abs{\norm{f} - \norm{\left( \frac{\alpha(g,f)+\beta(g,f)}{2} \right)g}} \\
& \leq \norm{ f-\left( \frac{\alpha(g,f)+\beta(g,f)}{2} \right)g }.
\end{align*} 
We apply the first part of the Proposition, the upper bound for $\alpha$ in Lemma \ref{lem:proj-metric}, and the triangle inequality to obtain the desired inequality:
\begin{align*}
\norm{f-g} & \leq \norm{f-\left( \frac{\alpha(g,f)+\beta(g,f)}{2} \right)g} + \norm{\left( \frac{\alpha(g,f)+\beta(g,f)}{2} \right)g-g} \\
& = \norm{f-\left( \frac{\alpha(g,f)+\beta(g,f)}{2} \right)g} + r\cdot\abs{1-\frac{\alpha(g,f)+\beta(g,f)}{2}} \\
& \leq 2\norm{f-\left( \frac{\alpha(g,f)+\beta(g,f)}{2} \right)g} \\
& \leq D\norm{g}\alpha(g,f)\left( e^{\theta(g,f)}-1 \right) \leq D^2r\left( e^{\theta(g,f)}-1 \right).
\end{align*}

From this inequality, it is clear that if $(f_n)_n$ is sequence of cone elements that is Cauchy in $\theta$, then it is Cauchy in norm.
\end{proof}

We will need to consider measurability and continuity of maps from a probability space (with and without a topology) into the dual space of $X$. Because we have a cone in $X$ that generates $X$, we can obtain these properties by looking at a space of functions on the cone that contains $X^*$ and has an appropriate norm. This space is essentially bounded functions on $S(0,1)\cap \cC$.

\begin{defn}
\label{defn:bpsh}
Let $(X,\norm{\cdot},\cC)$ be a real Banach space with a nice cone. The set of norm-bounded, positive-scalar-homogeneous functions $f : \cC \to \R$ is denoted by $\mathrm{BPSH}(\R^{\cC})$, and precisely given by \[ \mathrm{BPSH}(\R^{\cC}) := \set{f : \cC \to \R}{f(cx) = cf(x) \text{ for all } x\in \cC, c>0; \norm{f}_{\R^\cC} < \infty} \] where $\norm{f}_{\R^{\cC}} = \sup\set{\abs{f(x)}}{x\in\cC,\norm{x} \leq 1}$.
\end{defn}

The following lemma, by And\^o \cite[Lemma 1]{ando}, indicates that when a cone generates the Banach space, every vector has a bounded (not necessarily continuous or even measurable) decomposition into cone vectors. This fact will be used multiple times throughout the paper, and is the main tool in the easy proof of the immediately subsequent lemma.

\begin{lem}[And\^{o}]
\label{lem:ando}
Let $X$ be a Banach space, and let $\cC$ be a closed convex cone in $X$. Then the following are equivalent:
\begin{enumerate}
\item $X = \cC - \cC$, i.e.\ $\cC$ generates $X$; 
%\item $\overline{\cC\cap U - \cC\cap U}^{\norm{\cdot}}$, where $U = B(0,1)$, contains a ball centered at the origin;
%\item $\cC\cap U - \cC\cap U$ contains a ball centered at the origin;
\item there exists $K > 0$ such that for all $x\in X$, there are $x^+,x^- \in \cC$ such that $x = x^+ - x^-$ and $\norm{x^{\pm}} \leq K\norm{x}$.
\end{enumerate}
\end{lem}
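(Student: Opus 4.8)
The implication (2) $\Rightarrow$ (1) is immediate: if every $x \in X$ can be written $x = x^+ - x^-$ with $x^{\pm} \in \cC$, then in particular $X = \cC - \cC$. So the content is (1) $\Rightarrow$ (2), and my plan is to run a Baire-category argument in the style of the open mapping theorem.

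First I would introduce the convex, symmetric, bounded set
$B = \set{u - v}{u,v \in \cC\cup\{0\},\ \norm{u}\leq 1,\ \norm{v}\leq 1}$.
Since $\cC$ is a cone, $\cC\cup\{0\}$ is a closed convex cone, so $B$ is indeed convex and balanced. The hypothesis $X = \cC - \cC$ gives $X = \bigcup_{n\geq 1} nB$: given $x = u - v$ with $u,v\in\cC\cup\{0\}$, scale by $1/\max(1,\norm{u},\norm{v})$ to land in $B$. Completeness of $X$ and the Baire category theorem then force some $\overline{nB} = n\overline{B}$ to have nonempty interior, hence $\overline{B}$ has nonempty interior; since $\overline{B}$ is convex and symmetric about $0$, the origin is an interior point, so there is $\delta > 0$ with $\delta B_X \subseteq \overline{B}$, where $B_X$ denotes the closed unit ball of $X$.

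Next comes the successive-approximation step. Given $x$ with $\norm{x} < \delta$, we have $x \in \overline{B}$, so there is $z_1 \in B$ with $\norm{x - z_1} < \delta/2$; applying the same inclusion to $2(x - z_1)$ produces $z_2 \in 2^{-1}B$ with $\norm{x - z_1 - z_2} < \delta/4$, and inductively $z_k \in 2^{-(k-1)}B$ with $\norm{x - \sum_{j\leq k} z_j} < 2^{-k}\delta$. Write $z_k = z_k^+ - z_k^-$ with $z_k^{\pm} \in \cC\cup\{0\}$ and $\norm{z_k^{\pm}} \leq 2^{-(k-1)}$. The series $\sum_k z_k^+$ and $\sum_k z_k^-$ converge absolutely in the Banach space $X$; each partial sum lies in the convex cone $\cC\cup\{0\}$, which is closed, so the limits $x^+ := \sum_k z_k^+$ and $x^- := \sum_k z_k^-$ lie in $\cC\cup\{0\}$, satisfy $x = x^+ - x^-$, and obey $\norm{x^{\pm}} \leq \sum_k 2^{-(k-1)} = 2$. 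Rescaling — apply this to $\tfrac{\delta}{2\norm{x}}x$ for $x \neq 0$, and note $x = 0$ is trivial — yields $\norm{x^{\pm}} \leq \tfrac{4}{\delta}\norm{x}$, so $K = 4/\delta$ works.

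The main obstacle, and really the only nontrivial point, is the passage from the Baire-category conclusion (that $\overline{B}$ is a neighbourhood of the origin) to an actual bounded decomposition: this is precisely the iteration behind the open mapping theorem, and its correctness hinges on completeness of $X$ (so the geometric series converge) together with $\cC$ being closed and convex (so the limits of the partial sums genuinely lie in the cone rather than merely in its closure). One should also record the minor bookkeeping that, when $\cC$ is blunt, the conclusion implicitly permits $x^{\pm} = 0$; this is harmless and is already accommodated by working throughout with $\cC\cup\{0\}$.
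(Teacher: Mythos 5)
Your proof is correct. The paper does not actually prove this lemma — it is quoted from And\^o \cite{ando} (Lemma 1) and used as a black box — and your Baire-category/open-mapping argument (show $\overline{B}$ is a neighbourhood of $0$ for $B$ the set of differences of unit-ball cone elements, then run the geometric-series iteration, using completeness of $X$ and closedness of $\cC\cup\{0\}$ to keep the limits in the cone) is precisely the standard proof of that cited result, including the sensible bookkeeping about allowing $x^{\pm}=0$ when the cone is blunt.
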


\begin{lem}
\label{lem:bpsh-dual}
Let $(X,\norm{\cdot},\cC)$ be a real Banach space with a nice cone. Then $(\mathrm{BPSH}(\R^{\cC}),\norm{\cdot}_{\R^{\cC}})$ is a normed vector space that contains $(X^*,\norm{\cdot}_{X^*})$. If $\phi \in X^*$, then \[ \norm{\phi}_{\R^\cC} \leq \norm{\phi}_{X^*} \leq 2K\norm{\phi}_{\R^\cC}, \] where $K$ is the constant from And\^o's Lemma. The norm topology on $X^*$ is the same as the restriction of the norm topology on $\mathrm{BPSH}(\R^{\cC})$. 
\end{lem}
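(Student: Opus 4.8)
The plan is to verify the three assertions in turn, each following directly from the definitions together with Andô's Lemma (Lemma~\ref{lem:ando}). First, to see that $(\mathrm{BPSH}(\R^\cC),\norm{\cdot}_{\R^\cC})$ is a normed vector space: pointwise sums and scalar multiples of norm-bounded positive-scalar-homogeneous functions on $\cC$ are again of that form, so $\mathrm{BPSH}(\R^\cC)$ is a vector space; $\norm{\cdot}_{\R^\cC}$ is absolutely homogeneous and subadditive because it is a supremum of absolute values, and it is positive-definite since positive-scalar-homogeneity forces $f\equiv 0$ on $\cC$ whenever $f$ vanishes on $S(0,1)\cap\cC$.

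Next, for the containment $X^*\subseteq\mathrm{BPSH}(\R^\cC)$, I would restrict a given $\phi\in X^*$ to $\cC$: the restriction is additive and (positively) homogeneous, and $\sup\{\abs{\phi(x)} : x\in\cC,\ \norm{x}\le 1\}\le\norm{\phi}_{X^*}<\infty$, so $\phi|_\cC\in\mathrm{BPSH}(\R^\cC)$ with $\norm{\phi}_{\R^\cC}\le\norm{\phi}_{X^*}$. One should note that this restriction map $X^*\to\mathrm{BPSH}(\R^\cC)$ is injective exactly because $\cC$ generates $X$, so that $\phi$ is determined by $\phi|_\cC$; identifying $\phi$ with its restriction is what is meant by the containment.

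For the reverse inequality, fix $x\in X$ with $\norm{x}\le 1$. By Andô's Lemma there are $x^+,x^-\in\cC$ with $x=x^+-x^-$ and $\norm{x^\pm}\le K\norm{x}\le K$. Using positive-scalar-homogeneity to renormalize each nonzero $x^\pm$ (the case $x^\pm = 0$ being trivial), one gets $\abs{\phi(x^\pm)}\le\norm{x^\pm}\,\norm{\phi}_{\R^\cC}\le K\norm{\phi}_{\R^\cC}$, hence $\abs{\phi(x)}\le\abs{\phi(x^+)}+\abs{\phi(x^-)}\le 2K\norm{\phi}_{\R^\cC}$. Taking the supremum over such $x$ yields $\norm{\phi}_{X^*}\le 2K\norm{\phi}_{\R^\cC}$. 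Finally, the two-sided estimate $\norm{\phi}_{\R^\cC}\le\norm{\phi}_{X^*}\le 2K\norm{\phi}_{\R^\cC}$ shows the two norms are equivalent on $X^*$, so they induce the same topology on $X^*$, giving the last claim.

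There is no genuine obstacle here; the only points requiring a little care are keeping track of positive-scalar-homogeneity when renormalizing the Andô decomposition, and recording that the embedding $X^*\hookrightarrow\mathrm{BPSH}(\R^\cC)$ is via restriction, which is injective precisely because $\cC$ is generating.
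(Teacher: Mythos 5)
Your argument is correct and matches the paper's intended route: the paper leaves this lemma unproved, noting only that And\^o's Lemma is the main tool in its ``easy proof,'' which is precisely the decomposition $x = x^+ - x^-$ with $\norm{x^\pm}\leq K\norm{x}$ that you use to get $\norm{\phi}_{X^*}\leq 2K\norm{\phi}_{\R^\cC}$. Your additional care about the restriction map being injective because $\cC$ generates $X$, and about renormalizing nonzero $x^\pm$ via positive-scalar-homogeneity, is exactly the right bookkeeping.
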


We will be using two measurability hypotheses for our main theorem: strong measurability in the case where our Banach space $(X,\norm{\cdot})$ is separable, and $\mu$-continuity in the case where $(X,\norm{\cdot})$ is not.

\begin{defn}
\label{defn:strong-meas}
Let $(\Omega,\cB,\mu)$ be a probability space, and let $(X,\norm{\cdot}_X)$, $(Y,\norm{\cdot}_Y)$ be normed linear spaces. A family of bounded linear operators $L : \Omega \to \cB(X,Y)$ is \emph{strongly measurable} when it is measurable with respect to the Borel $\sigma$-algebra generated by the strong operator topology on $\cB(X,Y)$ (that is, pointwise convergence). When $(X,\norm{\cdot}_X)$ and $(Y,\norm{\cdot}_Y)$ are separable, then $L$ is strongly measurable if and only if $L_{\omega}(x)$ is Borel measurable for all $x\in X$.
\end{defn}

The definition of strong measurability specifically applies to two main cases: $Y = \R$, so that $\cB(X,Y) = X^*$, and $Y = X$, so that $\cB(X,X) = \cB(X)$.

\begin{defn}
\label{defn:mu-cts}
Let $(\Omega,\cB,\mu)$ be a Borel probability space over a Borel subset of a Polish space, let $(Y,\tau)$ be a topological space, and let $f: \Omega \to Y$ be a function. We say that $f$ is $\mu$-\emph{continuous} when there exists an increasing sequence of compact sets $K_n$ such that $\mu\left(\bigcup_n K_n\right) = 1$ and on each $K_n$, $f$ is continuous. 
\end{defn}

For Borel probability spaces over a Borel subset of a Polish space, this definition is equivalent to requesting that there are only measurable sets $A_n$ such that $\mu\left(\bigcup_n A_n\right) = 1$ and on which $f$ is continuous, because $\mu$ is tight (measurable sets can be approximated from inside by compact sets) and $\Omega$ is normal. This weaker condition is usually taken to be the definition of $\mu$-continuity.

The following lemma is a reworking of the well-known fact that a limit of measurable functions into a metric space is measurable. The proof is essentially the same as for Egoroff's theorem.

\begin{lem}
\label{lem:limit-mu-cts}
Let $(\Omega,\cB,\mu)$ be a Borel probability space over a Borel subset of a Polish space, let $(X,d)$ be a metric space, and let $f_n : \Omega \to X$ be a sequence of $\mu$-continuous functions. Suppose that $f_n$ converges pointwise to $f : \Omega \to X$. Then $f$ is also $\mu$-continuous.
\end{lem}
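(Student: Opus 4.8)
The plan is to follow the proof of Egoroff's theorem, tracking compactness and continuity throughout rather than just measurability. The one point that is not completely automatic is that, before Egoroff can be invoked, one must first pass to a single compact set on which \emph{every} $f_n$ is simultaneously continuous; once that reduction is made, the uniform convergence Egoroff produces is exactly what upgrades the pointwise limit $f$ back to a continuous function, and the rest is tightness bookkeeping.

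Fix $\varepsilon > 0$; the goal is a compact $K \subseteq \Omega$ with $\mu(\Omega \setminus K) < \varepsilon$ and $f|_K$ continuous. Since each $f_n$ is $\mu$-continuous, I would choose compact sets $E_n$ with $\mu(\Omega \setminus E_n) < \varepsilon\,2^{-n}/3$ and $f_n|_{E_n}$ continuous, and set $E = \bigcap_{n \geq 1} E_n$. As $\Omega$ is a subset of a Polish space it is Hausdorff, so $E$ is compact; moreover $\mu(\Omega \setminus E) < \varepsilon/3$, and each $f_n$ is continuous on $E \subseteq E_n$. Now work on $E$: for $n \leq k$ the function $g_{n,k}(x) = \max\{ d(f_i(x), f_j(x)) : n \leq i,j \leq k \}$ is continuous on $E$ (a finite maximum of continuous functions, using continuity of $d$), and $g_{n,k} \uparrow g_n := \sup_{i,j \geq n} d(f_i, f_j)$ as $k \to \infty$, so $g_n$ is Borel on $E$; since $(f_n(x))_n$ is Cauchy for each $x \in E$, we have $g_n \downarrow 0$ pointwise on $E$. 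Hence $\{x \in E : g_n(x) \leq 1/j\} \uparrow E$ as $n \to \infty$ for each $j$, so by continuity of measure I can pick $n_j$ with $\mu\big(E \setminus \{g_{n_j} \leq 1/j\}\big) < \varepsilon\,2^{-j}/3$. Let $F = \bigcap_{j \geq 1}\{x \in E : g_{n_j}(x) \leq 1/j\}$; then $\mu(E \setminus F) < \varepsilon/3$, and $f_n \to f$ uniformly on $F$ (for $x \in F$ and $i, l \geq n_j$ one has $d(f_i(x), f_l(x)) \leq g_{n_j}(x) \leq 1/j$, and letting $l \to \infty$ gives $d(f_i(x), f(x)) \leq 1/j$, uniformly in $x \in F$). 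A uniform limit of the continuous functions $f_n|_F$ is continuous, so $f|_F$ is continuous. Finally, by tightness of $\mu$, choose a compact $K \subseteq F$ with $\mu(F \setminus K) < \varepsilon/3$; then $\mu(\Omega \setminus K) < \varepsilon$ and $f|_K$ is continuous as a restriction of $f|_F$.

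To conclude, apply the above with $\varepsilon = 1/m$ to obtain compact sets $K_m$, and replace $K_m$ by $K_1 \cup \cdots \cup K_m$: this is still compact, it is now increasing in $m$, and $f$ is continuous on it by the pasting lemma (each $K_i$ is closed in $\Omega$, hence in the union). Since $\mu\big(\Omega \setminus (K_1 \cup \cdots \cup K_m)\big) \leq \mu(\Omega \setminus K_m) < 1/m \to 0$, these sets have full-measure union, so $f$ is $\mu$-continuous. I do not expect a serious obstacle here — the argument is standard — but the step most easily overlooked is the initial reduction to a compact set carrying continuity of all $f_n$ at once; skipping it yields only measurability of $f$, not $\mu$-continuity.
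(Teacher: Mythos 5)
Your proof is correct and is exactly the Egoroff-with-compactness argument the paper has in mind (the paper gives no details, saying only that the proof is "essentially the same as for Egoroff's theorem"). One small remark: since each $g_{n}$ is a supremum of continuous functions on $E$, it is lower semicontinuous, so the sublevel sets $\{g_{n_j}\leq 1/j\}$ are closed and $F$ is already compact — the final appeal to tightness is harmless but unnecessary.
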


Lastly, we need the notion of tempered functions. The name comes from tempered distributions, which have subexponential growth.

\begin{defn}
\label{defn:tempered}
Let $(\Omega,\cB,\mu,\sigma)$ be a invertible probability-preserving transformation, and let $f: \Omega\to \R$. We say that $f$ is \emph{tempered} when \[ \lim_{n\to\pm\infty} \frac{1}{n}\log\abs{f(\sigma^{n}(\omega))} = 0. \]
\end{defn}

\subsection{Statement of the Main Theorem}
\label{subsect:pf-thm-statement}

The main theorem is actually two theorems with the same conclusion outside of measurability, and drastically different measurability assumptions. Due to this fact, we will describe the two sets of measurability hypotheses first, and then state the theorem.

\begin{defn}[Measurability Assumptions on Cocycles]
\label{defn:meas-assump}
Let $(\Omega,\cB,\mu,\sigma)$ be an ergodic, invertible, probability-preserving transformation.
\begin{enumerate}
\item Let $(X,\norm{\cdot})$ be a real separable Banach space, with bounded linear operators $\cB(X)$, and $L : \Omega \to \cB(X)$ a strongly measurable map. Then the cocycle $L(n,\omega) = L(1,\sigma^{n-1}(\omega))\cdots L(1,\omega)$ is called a \emph{strongly measurable} cocycle over $(\Omega,\cB,\mu,\sigma)$. We will call the tuple $(\Omega,\cB,\mu,\sigma,X,\norm{\cdot},L)$ a \emph{strongly measurable random dynamical system}.
\item Suppose that $(\Omega,\tau)$ is homeomorphic to a Borel subset of a Polish space, $\mu$ is a Borel measure on $(\Omega,\sigma(\tau))$, and $\sigma : \Omega \to \Omega$ is a homeomorphism. Let $(X,\norm{\cdot})$ be a real Banach space (not necessarily separable), and let $L : \Omega \to \cB(X)$ be $\mu$-continuous with respect to the norm topology on $\cB(X)$. Then the cocycle $L(n,\omega) = L(1,\sigma^{n-1}(\omega))\cdots L(1,\omega)$ is called a $\mu$-\emph{continuous cocycle} over $(\Omega,\sigma(\tau),\mu,\sigma)$. We will call the tuple $(\Omega,\cB,\mu,\sigma,X,\norm{\cdot},L)$ a $\mu$-\emph{continuous random dynamical system}.
\end{enumerate}
\end{defn}

\begin{thm} 
\label{thm:pf}
Let $(\Omega,\cB,\mu,\sigma,X,\norm{\cdot},L)$ be either a strongly measurable random dynamical system or a $\mu$-continuous random dynamical system, such that $\log^+\norm{L(1,\cdot)} \in L^{1}(\mu)$. Let $\cC \subset X$ be a nice cone such that $L(1,\omega)\cC \subset \cC$ for all $\omega$. Suppose that there exists a positive measure subset $G_P$ of $\Omega$, a positive integer $k_P$, and a positive real number $D_P$ such that for all $\omega \in G_P$, $\diam_\theta\big(L(k_P,\omega)\cC\big) \leq D_P.$ Then there exists a $\sigma$-invariant set of full measure $\tilde{\Omega} \subset \Omega$ on which the following statements are true:

\begin{enumerate}
\item \label{thm:pf-v} There exists a unique function $v : \tilde{\Omega} \to \cC$, and a positive measurable function $\phi : \tilde{\Omega} \to \R_{> 0}$ such that
\begin{itemize}
\item $\norm{v(\omega)} = 1$,
\item $\log^+(\phi)\in L^1(\mu)$, and
\item $L(1,\omega)v(\omega) = \phi(\omega)v(\sigma(\omega)).$
\end{itemize} 

\item \label{thm:pf-eta} There exists a family of bounded linear functionals $\omega \mapsto \eta(\omega,\cdot) : \tilde{\Omega} \to X^*$ such that
\begin{itemize}
\item $\eta(\omega,\cdot)$ is strictly positive with respect to $\cC$,
\item $\eta(\sigma(\omega),L(1,\omega)x) = \phi(\omega)\eta(\omega,x)$ for all $x\in X$ and $\omega\in \tilde{\Omega}$,and
\item $\eta(\omega,v(\omega)) = 1$.
\end{itemize} 
Thus $X = \subspan_{\R}\{v(\omega)\} \oplus \ker(\eta(\omega,\cdot))$ is an equivariant decomposition of $X$ with respect to $L(n,\omega)$, and $\pi_{\omega} : X\to X$ given by $\pi_{\omega}(x) = \eta(\omega,x)v(\omega)$ is the continuous projection onto $\subspan_{\R}\{v(\omega)\}$.

\item \label{thm:pf-meas} If $(\Omega,\cB,\mu,\sigma,X,\norm{\cdot},L)$ is strongly measurable, then:
\begin{itemize}
\item $v$ is $\cB$-$\sigma(\norm{\cdot})$ measurable,
\item $\eta(\omega,\cdot)$ is strongly measurable,
\item the projection operators $\pi_\omega$, $I-\pi_\omega$ are strongly measurable, and
\item $\subspan_{\R}\{v(\omega)\}$ is $\cB$-$\sigma(d)$ measurable.
\end{itemize}
If $(\Omega,\cB,\mu,\sigma,X,\norm{\cdot},L)$ is $\mu$-continuous, then:
\begin{itemize}
\item $v$ is $\mu$-continuous with respect to $\norm{\cdot} = \norm{\cdot}_{X}$,
\item $\eta(\omega,\cdot)$ is $\mu$-continuous with respect to $\norm{\cdot}_{X^*}$,
\item the projection operators $\pi_\omega$ and $I-\pi_\omega$ are $\mu$-continuous with respect to $\norm{\cdot}_{\text{op}}$, and
\item $\subspan_{\R}\{v(\omega)\}$ and $\ker(\eta(\omega,\cdot))$ are $\mu$-continuous with respect to the distance on the Grassmannian.
\end{itemize}

\item \label{thm:pf-le} The Lyapunov exponent for $v(\omega)$ is 
\begin{align*} 
\lim_{n\to\infty} \frac{1}{n}\log\norm{L(n,\omega)v(\omega)} & = \lim_{n\to\infty} \frac{1}{n}\sum_{i=0}^{n-1} \log(\phi(\sigma^i(\omega))) \\ 
& = \lambda_1 = \lim_{n\to\infty} \frac{1}{n}\log\norm{L(n,\omega)},
\end{align*} 
where $\lambda_1$ is the maximal Lyapunov exponent for $L(n,\omega)$ (possibly $-\infty$), and we have 
\begin{equation*}
\begin{split}
\limsup_{n\to\infty} \frac{1}{n}&\log\norm{\restr{L(n,\omega)}{\ker(\eta(\omega,\cdot))}} - \frac{1}{n}\sum_{i=0}^{n-1} \log(\phi(\sigma^i(\omega))) \\
& \leq -\frac{\mu(G_P)}{k_P}\log\left(\tanh\left(\frac{1}{4}D_P\right)^{-1}\right) < 0.
\end{split}
\end{equation*} If $\lambda_1 > -\infty$, then $\lambda_1$ has multiplicity $1$ with $\displaystyle \lambda_1 = \int_\Omega \log\phi\ d\mu$, and the projection operators $\pi_\omega$ and $I-\pi_\omega$ are norm-tempered in $\omega$.
\end{enumerate}
\end{thm}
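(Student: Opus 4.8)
The plan is to build the equivariant data $v(\omega)$ by a contraction argument using the Hilbert metric, following Liverani's approach to the classical cone technique but carried through the cocycle. First I would observe that the hypothesis gives a ``good return time'' structure: by ergodicity of $\sigma$, almost every $\omega$ visits $G_P$ infinitely often in both time directions, and by the Birkhoff ergodic theorem the asymptotic frequency of visits is $\mu(G_P)$. Pulling back, for a.e.\ $\omega$ the backward iterates $L(n,\sigma^{-n}(\omega))\cC$ form a nested decreasing sequence of cone images whose $\theta$-diameters, by Lemma \ref{lem:proj-metric}, are contracted by a factor of $\tanh(\tfrac14 D_P)<1$ each time the orbit of $\sigma^{-n}(\omega)$ hits a time at which $k_P$ further steps land using a $G_P$-point; since this happens with positive frequency, the diameters go to $0$. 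Normalizing in $\norm{\cdot}$ and invoking Proposition \ref{prop:D-adapt-Cauchy} to convert $\theta$-Cauchy-ness into norm-Cauchy-ness, the intersection $\bigcap_n \overline{L(n,\sigma^{-n}(\omega))\cC}$ (intersected with the unit sphere) is a single ray, giving $v(\omega)$; equivariance $L(1,\omega)v(\omega) = \phi(\omega)v(\sigma(\omega))$ and positivity of $\phi$ are then immediate from the construction, and uniqueness follows because any two cone vectors fixed by the whole backward cocycle would have to be at $\theta$-distance $0$, hence collinear.

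Next I would produce $\eta(\omega,\cdot)$ by the dual construction: apply the same contraction argument to the adjoint cocycle acting on the dual cone (or, more concretely, define $\eta(\omega,\cdot)$ as a limit of suitably normalized functionals $x \mapsto \text{(cone-coordinate of } L(n,\sigma^{-n}\omega)x \text{ relative to } v(\omega))$ and use Lemma \ref{lem:ext-pos} together with And\^o's Lemma \ref{lem:ando} to extend the positive homogeneous additive functional on $\cC$ to a bounded linear functional on $X = \cC - \cC$, with norm control coming from Lemma \ref{lem:bpsh-dual}). The normalization $\eta(\omega,v(\omega)) = 1$ is arranged by scaling, the cocycle relation $\eta(\sigma\omega, L(1,\omega)x) = \phi(\omega)\eta(\omega,x)$ is forced by construction, and strict positivity follows since $\eta(\omega,\cdot)$ evaluates positively on any comparable cone element. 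This gives the splitting $X = \subspan_\R\{v(\omega)\}\oplus\ker\eta(\omega,\cdot)$ with the stated projection $\pi_\omega$; equivariance of the splitting is just the two cocycle relations combined.

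For the measurability claims (item 3) I would note that all the objects above are pointwise limits of expressions that are manifestly measurable (resp.\ $\mu$-continuous) in $\omega$ --- the cone-image diameters, the normalizing scalars, the functionals --- and then invoke Lemma \ref{lem:limit-mu-cts} in the $\mu$-continuous case and the standard fact that pointwise limits of strongly measurable maps are strongly measurable in the separable case; the Grassmannian/distance-$d$ statements follow since a one-dimensional subspace depends continuously on a nonzero spanning vector and a closed hyperplane depends continuously on a nonzero functional. For the Lyapunov exponent statement (item 4), the chain of equalities $\tfrac1n\log\norm{L(n,\omega)v(\omega)} = \tfrac1n\sum_{i<n}\log\phi(\sigma^i\omega) \to \int\log\phi\,d\mu$ is immediate from the equivariance relation telescoping and Birkhoff's theorem (using $\log^+\phi\in L^1$, plus a Kingman-type argument to handle the possibility $\lambda_1 = -\infty$), and $\lambda_1 = \lim\tfrac1n\log\norm{L(n,\omega)}$ because $v(\omega)$ lies in the cone which is $D$-adapted, so $\norm{L(n,\omega)}$ and $\norm{L(n,\omega)v(\omega)}$ are comparable up to the factor $D$ together with And\^o's constant. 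For the gap estimate I would take any $x\in\ker\eta(\omega,\cdot)$, decompose $x = x^+ - x^-$ with $x^\pm\in\cC$ via And\^o, and track $\norm{L(n,\omega)x}$ via Proposition \ref{prop:D-adapt-Cauchy}(1): since $L(n,\omega)x^+$ and $L(n,\omega)x^-$ both converge projectively to the ray $v(\sigma^n\omega)$ at rate governed by the product of $\tanh(\tfrac14 D_P)$-factors accumulated over the $\Theta(n\,\mu(G_P)/k_P)$ good times, the component of $L(n,\omega)x$ along $v(\sigma^n\omega)$ is killed by $\eta$, leaving only the small transverse part, whose norm decays relative to $\prod\phi(\sigma^i\omega)$ at exponential rate at least $\tfrac{\mu(G_P)}{k_P}\log\tanh(\tfrac14 D_P)^{-1}$. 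Finally, multiplicity one and temperedness of $\pi_\omega$: multiplicity one is exactly the strict gap just established (no other Lyapunov exponent can equal $\lambda_1$ since everything transverse to $v$ grows strictly slower), and $\norm{\pi_\omega} = \norm{\eta(\omega,\cdot)}\cdot\norm{v(\omega)}$ is tempered because $\norm{v(\omega)}=1$ and $\log\norm{\eta(\omega,\cdot)}$ is tempered --- this last point I expect to be the main technical obstacle, since it requires showing the normalizing scalars defining $\eta$ do not grow or decay exponentially along the orbit, which I would get from the cocycle relation $\eta(\sigma\omega,L(1,\omega)\cdot) = \phi(\omega)\eta(\omega,\cdot)$ combined with $\tfrac1n\sum\log\phi \to \lambda_1$ finite and a Borel--Cantelli / tempering lemma argument (the abstract fact that a positive measurable $f$ with $\tfrac1n\log f(\sigma^n\omega)\to 0$ in an appropriate averaged sense is tempered).
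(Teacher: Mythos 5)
Your overall route is the one the paper takes: pull-back vectors $v_n(\omega) = L(n,\sigma^{-n}(\omega))g/\norm{L(n,\sigma^{-n}(\omega))g}$, contraction of the cone counted along returns to $G_P$ spaced at least $k_P$ apart, Proposition \ref{prop:D-adapt-Cauchy} to pass from $\theta$-Cauchy to norm-Cauchy, And\^o's Lemma plus Lemma \ref{lem:ext-pos} to extend a positive additive functional, and the gap estimate obtained by comparing $\norm{\tilde{L}(n,\omega)x}$ for $x\in\ker\eta(\omega,\cdot)$ with $e^{\diam_\theta(L(n,\omega)\cC)}-1$ and invoking Birkhoff for the visit frequency $\mu(G_P)$. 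However, two steps as you have written them would not go through. First, your concrete recipe for $\eta(\omega,\cdot)$ points time the wrong way: the expression ``cone-coordinate of $L(n,\sigma^{-n}(\omega))x$ relative to $v(\omega)$'' is a functional of a vector fed into $L(n,\sigma^{-n}(\omega))$, i.e.\ it lives over the fibre at $\sigma^{-n}(\omega)$, and the compositions $L(n,\sigma^{-(n-1)}(\omega))\circ L(1,\omega)$ do not telescope into the same family, so the equivariance $\eta(\sigma(\omega),L(1,\omega)x) = \phi(\omega)\eta(\omega,x)$ is not ``forced by construction.'' The paper instead defines $\eta(\omega,g)$ with \emph{forward} iterates, as the common limit of the monotone, squeezed sequences $\alpha(v(\sigma^n(\omega)),\tilde{L}(n,\omega)g)$ and $\beta(v(\sigma^n(\omega)),\tilde{L}(n,\omega)g)$, where the squeeze factor $e^{\diam_\theta(L(n,\omega)\cC)}\to 1$ comes from Proposition \ref{prop:exp-bound}; with that definition equivariance really is a one-line computation. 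Your alternative of running the contraction on the adjoint cocycle and the dual cone could be made to work, but you would then owe a verification that the dual cone is nice and that $L(k_P,\omega)^*$ contracts it uniformly on $G_P$, which you do not address.

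Second, in the $\mu$-continuous case the claim that ``the functionals'' are manifestly $\mu$-continuous is precisely the delicate point: $\alpha$ and $\beta$ are only semi-continuous on $\cC\times\cC$, so the $\alpha/\beta$-approximants are not obviously $\mu$-continuous and Lemma \ref{lem:limit-mu-cts} cannot be applied to them directly. The paper's fix is to show that $\omega\mapsto \norm{\tilde{L}(n,\omega)\,\cdot\,}$ is $\mu$-continuous into $(\mathrm{BPSH}(\R^{\cC}),\norm{\cdot}_{\R^{\cC}})$ and converges to $\eta(\omega,\cdot)$ uniformly on the unit ball of the cone via Lemma \ref{lem:norm-eta-conv}, and then to transfer back to $X^*$ through Lemma \ref{lem:bpsh-dual}; some version of this detour is needed in your write-up. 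Relatedly, your temperedness argument for $\pi_\omega$ is only a gesture: the cocycle relation for $\eta$ alone does not bound $\norm{\eta(\omega,\cdot)}$, and the paper instead uses the explicit estimate $\norm{\eta(\omega,\cdot)} \leq 2KD\,e^{D_P}\norm{\tilde{L}(N(\omega),\omega)}$ with $N(\omega)$ the first entry time into $G_P$, killing $\tfrac1n\log\norm{\tilde{L}(N(\sigma^n(\omega)),\sigma^n(\omega))}$ by the sublinearity of first-entry times and a Birkhoff-sum argument (Lemmas \ref{lem:first-entry-conv} and \ref{lem:weird-birk-sum}). The rest of your sketch (construction and uniqueness of $v$, maximality of its exponent, the counting $\Theta(n\mu(G_P)/k_P)$ of contraction blocks, multiplicity one) matches the paper.
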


\begin{rem}
\label{rem:pf-met-link}
Theorem \ref{thm:pf} will be proved without any reference to any version of the Multiplicative Ergodic Theorem. It therefore provides a way to show quasi-compactness of certain dynamical systems and verify the hypotheses of the MET in order to use it. Moreover, in this case, the ``top'' or ``fast'' space, the equivariant space along which all non-zero vectors grow at the fastest rate $\lambda^*$, is the one-dimensional span of $v(\omega)$, and the ``slow'' space, the equivariant space along which all vectors grow at a rate slower than $\lambda^*$, is the kernel of $\eta(\omega,\cdot)$.
\end{rem}

In addition, we state a corollary that provides a sufficient condition for the existence of the set $G_P$ and quantities $k_P$ and $D_P$ in the hypotheses of Theorem \ref{thm:pf}. This condition is a generalization of the primitivity condition in the classical Perron-Frobenius theorem, where a power of a matrix with non-negative entries has all positive entries. Instead, we require that over a positive measure set of $\omega$, $L(n,\omega)$ eventually strictly contracts a cone.

\begin{cor}
\label{cor:suff-cond-primitive}
Let $(\Omega,\cB,\mu,\sigma,X,\norm{\cdot},L)$ be either a strongly measurable random dynamical system or a $\mu$-continuous random dynamical system. Let $\cC \subset X$ be a nice cone such that $L(1,\omega)\cC \subset \cC$ for all $\omega$. Suppose that \[ n_P(\omega) = \inf\set{k \geq 1}{\diam_\theta(L(k,\omega)\cC) < \infty} \] is finite on a set of positive measure. Then $n_P$ is finite $\mu$-almost everywhere, and there exists a positive measure subset $G_P$ of $\Omega$, a positive integer $k_P$, and a positive real number $D_P$ such that for all $\omega \in G_P$, \[ \diam_\theta\big(L(k_P,\omega)\cC\big) \leq D_P. \] Thus, if in addition $\log^+\norm{L(1,\cdot)} \in L^{1}(\mu)$, then Theorem \ref{thm:pf} applies.
\end{cor}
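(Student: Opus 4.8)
The final claim is simply Theorem~\ref{thm:pf} applied once the data $G_P,k_P,D_P$ have been produced, so the whole task is to manufacture these from the hypothesis on $n_P$. I would do this in three steps: (i) show $n_P < \infty$ $\mu$-a.e.; (ii) pass to a fixed exponent $k_P$ and a fixed finite bound $D_P$ on a positive-measure set $G_P$; and (iii) check that the set $G_P$ so defined is measurable.

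For (i), the key observations are that $\omega \mapsto \diam_\theta(L(k,\omega)\cC)$ is non-increasing in $k$ and that $n_P(\omega) \le n_P(\sigma(\omega)) + 1$; both follow from $L(1,\omega)\cC \subseteq \cC$ together with the fact (Lemma~\ref{lem:proj-metric}) that positive operators do not increase $\theta$, since $L(k+1,\omega)\cC = L(1,\sigma^k(\omega))\big(L(k,\omega)\cC\big)$ while $L(k+1,\omega)\cC = L(k,\sigma(\omega))\big(L(1,\omega)\cC\big) \subseteq L(k,\sigma(\omega))\cC$. The inequality $n_P(\omega) \le n_P(\sigma(\omega)) + 1$ says $\sigma^{-1}A \subseteq A$ for $A := \set{\omega}{n_P(\omega) < \infty}$; since $\mu$ is $\sigma$-invariant, $A$ differs by a null set from the strictly $\sigma$-invariant set $\bigcap_{n \ge 0}\sigma^{-n}A$, and ergodicity together with $\mu(A) > 0$ forces $\mu(A) = 1$.

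For (ii), monotonicity in $k$ shows the sets $\set{\omega}{\diam_\theta(L(k,\omega)\cC) < \infty}$ increase to $A$ as $k \to \infty$, so by continuity from below of $\mu$ some $k_P$ makes $\mu\big(\set{\omega}{\diam_\theta(L(k_P,\omega)\cC) < \infty}\big) > 0$; writing this set as the increasing union $\bigcup_{D \in \N}\set{\omega}{\diam_\theta(L(k_P,\omega)\cC) \le D}$ and applying continuity from below again yields an integer $D_P$ with $\mu(G_P) > 0$, where $G_P := \set{\omega}{\diam_\theta(L(k_P,\omega)\cC) \le D_P}$.

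Step (iii) is, I expect, the only genuine obstacle: a priori $\omega \mapsto \diam_\theta(L(k,\omega)\cC)$ is an \emph{uncountable} supremum, over $v,w \in \cC$, of the functions $\omega \mapsto \theta(L(k,\omega)v, L(k,\omega)w)$, each of which is measurable because the cocycle is (strongly) measurable and $\theta$ is Borel on $\cC \times \cC$ by Lemma~\ref{lem:proj-metric}. In the strongly measurable case $X$ is separable, so $\cC$ has a countable norm-dense subset $Q$; since each $L(k,\omega)$ is norm-continuous and $\theta$ is lower-semicontinuous on $\cC \times \cC$, approximating $v,w$ by elements of $Q$ shows $\diam_\theta(L(k,\omega)\cC) = \sup_{(v,w) \in Q \times Q}\theta(L(k,\omega)v, L(k,\omega)w)$, a countable supremum of measurable functions and hence measurable. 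In the $\mu$-continuous case, $\omega \mapsto L(k,\omega)$ is again $\mu$-continuous (operator multiplication is norm-continuous and each $\omega \mapsto L(1,\sigma^j(\omega))$ is $\mu$-continuous because $\sigma^j$ is a homeomorphism), so on each compact set of continuity $\omega \mapsto \theta(L(k,\omega)v, L(k,\omega)w)$ is lower-semicontinuous; a supremum of lower-semicontinuous functions is lower-semicontinuous, hence Borel, so the sublevel sets of $\diam_\theta(L(k,\cdot)\cC)$ are measurable on a full-measure set, which is all that (ii) requires. Finally, with the additional assumption $\log^+\norm{L(1,\cdot)} \in L^1(\mu)$, every hypothesis of Theorem~\ref{thm:pf} is in place, so it applies.
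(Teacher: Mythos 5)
Your proposal is correct and follows essentially the same route as the paper: your step (iii) reproduces the paper's Lemma \ref{lem:diam-meas} on the measurability of $\omega\mapsto\diam_\theta(L(k,\omega)\cC)$ in both the strongly measurable and $\mu$-continuous settings, and your step (ii) is the paper's one-line pigeonhole over $k$ and $D$ (via monotonicity of the diameter in $k$ and continuity from below) made explicit. Your step (i), deducing that $n_P$ is finite $\mu$-a.e.\ from the sub-invariance $\sigma^{-1}\set{\omega}{n_P(\omega)<\infty}\subseteq\set{\omega}{n_P(\omega)<\infty}$ together with ergodicity, correctly supplies a detail the paper's proof leaves implicit.
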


The proof of the corollary is simply to observe that $\omega \mapsto \diam_\theta(L(k,\omega)\cC)$ is measurable for each $k$ (allowing the value $\infty$). Then, thanks to the assumption on $n_P$, for some $k_P \geq 1$ it is bounded above by $D_P$ on a set $G_P$ of positive measure, and this choice of constants satisfies the hypotheses of Theorem \ref{thm:pf}.

\subsection{Proof of Theorem \ref{thm:pf}} % and Corollary \ref{cor:suff-cond-primitive}}
\label{subsect:proof-pf-thm}

The key ingredient of most Perron-Frobenius-type theorems is contraction of a cone, or a family of cones. The first lemma shows that the $\theta$-diameter of the image of the cone under an iterate of the cocycle is a measurable function. The following proposition gives a quantitative estimate on the contraction of the cone in terms of its $\theta$-diameter along both forwards and backwards orbits of $\sigma$. The lemma afterwards establishes a minimum rate of contraction. The first proof relies on the hypotheses placed on the random dynamical system. The latter two proofs are entirely combinatorial and ergodic theoretic, in the sense that we only use the ergodic properties of $\sigma$ and the algebraic and order-theoretic properties of the cone. 

\begin{lem}
\label{lem:diam-meas}
Let $(\Omega,\cB,\mu,\sigma,X,\norm{\cdot},L)$ be either a strongly measurable random dynamical system or a $\mu$-continuous random dynamical system. Then the function $\omega \mapsto \diam_\theta(L(k,\omega)\cC)$ is measurable into $\R_{\geq 1}\cup\{\infty\}$ for all $k\geq 1$.
\end{lem}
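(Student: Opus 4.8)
The plan is to write $\diam_\theta(L(k,\omega)\cC)$ as a supremum over pairs $v,w$ in the cone of the quantities $\theta(L(k,\omega)v, L(k,\omega)w)$ and then exploit the semicontinuity and Borel measurability of $\theta$ recorded in Lemma~\ref{lem:proj-metric}. Since $L(1,\omega)\cC\subset\cC$ for every $\omega$ and $\cC$ is blunt, $L(k,\omega)$ maps $\cC$ into $\cC$, and since $\theta$ is positive-scalar-homogeneous in each argument we may normalize:
\[
\diam_\theta\big(L(k,\omega)\cC\big) \;=\; \sup_{v,w\,\in\,\cC\cap S(0,1)} \theta\big(L(k,\omega)v,\; L(k,\omega)w\big),
\]
where $S(0,1)$ is the unit sphere of $X$. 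For each fixed $\omega$, the map $(v,w)\mapsto\theta(L(k,\omega)v, L(k,\omega)w)$ is lower-semicontinuous on $\big(\cC\cap S(0,1)\big)^2$, being the composition of the norm-continuous map $L(k,\omega)\times L(k,\omega)$ with the lower-semicontinuous $\theta$.

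In the strongly measurable case $X$ is separable, so $\cC\cap S(0,1)$ is a separable metric space; I would fix a countable dense subset $\{u_j\}$. Lower-semicontinuity of $(v,w)\mapsto\theta(L(k,\omega)v, L(k,\omega)w)$ implies its supremum over the dense set $\{(u_j,u_l)\}$ equals its supremum over all of $\big(\cC\cap S(0,1)\big)^2$, so $\diam_\theta(L(k,\omega)\cC) = \sup_{j,l}\theta(L(k,\omega)u_j, L(k,\omega)u_l)$, a countable supremum. It then remains to see that each $\omega\mapsto\theta(L(k,\omega)u_j, L(k,\omega)u_l)$ is measurable. A short induction on $k$ shows $\omega\mapsto L(k,\omega)x$ is Borel into $(X,\norm{\cdot})$ for every $x$: writing $L(k,\omega)x = L(1,\sigma^{k-1}(\omega))\big(L(k-1,\omega)x\big)$, the inner map is Borel by the inductive hypothesis, $\omega\mapsto L(1,\sigma^{k-1}(\omega))z$ is Borel for each $z$ (strong measurability precomposed with the measurable $\sigma^{k-1}$), and applying a strongly measurable operator-valued map to a Borel $X$-valued map yields a Borel map (approximate the $X$-valued map by simple functions and use continuity of each operator, invoking separability of $X$). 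Hence $\omega\mapsto(L(k,\omega)u_j, L(k,\omega)u_l)$ is Borel into $\cC\times\cC\subset X\times X$, and composing with the Borel-measurable $\theta$ of Lemma~\ref{lem:proj-metric} gives the claim; the countable supremum is then measurable.

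In the $\mu$-continuous case $X$ need not be separable, so there is no countable dense subset to reduce to; instead I would localize to the compact sets on which the cocycle is norm-continuous. Since $L(1,\cdot)$ is $\mu$-continuous for the norm topology on $\cB(X)$, $\sigma$ is a homeomorphism, and $\mu$ is $\sigma$-invariant, each $\omega\mapsto L(1,\sigma^{j}(\omega))$ is $\mu$-continuous (the compacts witnessing $\mu$-continuity of $L(1,\cdot)$ have compact $\sigma^{-j}$-preimages of the same full measure), and a finite product of $\mu$-continuous $\cB(X)$-valued maps is $\mu$-continuous because multiplication is jointly continuous on norm-bounded sets; hence $L(k,\cdot)$ is $\mu$-continuous for $\norm{\cdot}_{\mathrm{op}}$. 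Fix an increasing sequence of compacts $\Omega_m$ with $\mu(\bigcup_m\Omega_m)=1$ on each of which $L(k,\cdot)$ is norm-continuous. On $\Omega_m$, for fixed $v,w\in\cC$ the map $\omega\mapsto(L(k,\omega)v, L(k,\omega)w)$ is norm-continuous into $\cC\times\cC$ (evaluation is norm-continuous), so $\omega\mapsto\theta(L(k,\omega)v, L(k,\omega)w)$ is lower-semicontinuous there; consequently $\omega\mapsto\diam_\theta(L(k,\omega)\cC)$, a supremum of lower-semicontinuous functions, is itself lower-semicontinuous on $\Omega_m$, hence Borel measurable on $\Omega_m$. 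Taking the union over $m$ yields measurability on a set of full measure, and we extend arbitrarily on the null complement.

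The main obstacle is the non-separable case: without a countable dense subset of the cone the defining supremum cannot be reduced to a countable one, so measurability must come from the structural fact that a supremum of lower-semicontinuous functions is lower-semicontinuous, which first requires localizing to the compact sets furnished by $\mu$-continuity. A secondary technical point, present in both cases, is that $L(k,\cdot)$ must be shown to inherit the relevant measurability or continuity from $L(1,\cdot)$ — via a ``strongly measurable operator applied to a measurable vector is measurable'' lemma in the separable setting, and via joint norm-continuity of operator multiplication on bounded sets in the $\mu$-continuous setting.
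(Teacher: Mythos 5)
Your proposal is correct and follows essentially the same route as the paper: in the separable (strongly measurable) case both arguments reduce the diameter to countably many values $\theta(L(k,\omega)x_i,L(k,\omega)x_j)$ over a dense subset of the cone using lower-semicontinuity of $\theta$ and strong measurability of the cocycle, and in the $\mu$-continuous case both localize to compacta where $L(k,\cdot)$ is norm-continuous and exploit that the $\theta$-diameter is a lower-semicontinuous function of the operator (your "supremum of l.s.c.\ functions" phrasing is the same liminf argument the paper runs explicitly). The only differences are cosmetic: a countable supremum versus countable intersections of sublevel sets, and a slightly more detailed justification that $L(k,\cdot)$ inherits measurability/$\mu$-continuity from $L(1,\cdot)$, which the paper handles by citation and by continuity of multiplication in $\cB(X)$.
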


\begin{proof}
Fix $k\geq 1$. First assume that we are in the strongly measurable case. Observe that $\cC$ is separable, with some countable dense subset $\{x_n\}_{n=1}^{\infty}$. For any $M > 1$, we have that \[ \set{\omega\in \Omega}{ \diam_\theta(L(k,\omega)\cC) \leq M } = \bigcap_{i,j\geq 1} \set{\omega\in\Omega}{\theta(L(k,\omega)x_i,L(k,\omega)x_j) \leq M}, \] because $L(k,\omega)$ is a continuous map on $X$ and $\theta$ is lower-semi-continuous on $\cC\times\cC$. Since the map $\omega \mapsto L(k,\omega)$ is strongly measurable, we see that $\omega \mapsto \theta(L(k,\omega)x_i,L(k,\omega)x_j)$ is measurable for each $i,j$. Thus $\omega \mapsto \diam_\theta(L(k,\omega)\cC)$ is measurable.

Next, assume that we are in the $\mu$-continuous case. Find a sequence of disjoint compact sets $K_n \subset \Omega$ on which $L(k,\omega)$ is continuous. Consider the set of operators $\cP \subset \cB(X)$ which preserve $\cC$, equipped with the subspace norm topology, and define $D: \cP \to [0,\infty]$ by $D(L) = \diam_\theta(L\cC)$. We claim that $D$ is lower-semi-continuous. To see this, suppose that $L_n \conv{n\to\infty} L$ in $\cP$, let $M = \liminf_{n\to\infty} D(L_n)$, and find a subsequence $L_{n_k}$ such that $M = \lim_{k\to\infty} D(L_{n_k})$. By the lower-semi-continuity of $\theta$, for any $x,y\in \cC$ we have:
\begin{align*}
\theta(Lx,Ly) & \leq \liminf_{n\to\infty} \theta(L_n x, L_n y) \\
& \leq \liminf_{k\to\infty} \theta(L_{n_k} x, L_{n_k} y) \\
& \leq \liminf_{k\to\infty} D(L_{n_k}) = M.
\end{align*}
Taking a supremum over all $x$ and $y$ yields lower-semi-continuity of $D$. Then $\diam_\theta(L(k,\omega)\cC) = D(L(k,\omega))$ is the composition of a continuous function and a lower-semi-continuous function, which is lower-semi-continuous and thus measurable on each compact $K_n$, therefore measurable on $\Omega$.
\end{proof}

\begin{prop}
\label{prop:exp-bound}
Let $(\Omega,\cB,\mu,\sigma,X,\norm{\cdot},L)$ be either a strongly measurable random dynamical system or a $\mu$-continuous random dynamical system, such that $\log^+\norm{L(1,\cdot)} \in L^{1}(\mu)$. Let $\cC \subset X$ be a nice cone such that $L(1,\omega)\cC \subset \cC$ for all $\omega$. Suppose that there exists a positive measure subset $G_P$ of $\Omega$, a positive integer $k_P$, and a positive real number $D_P$ such that for all $\omega \in G_P$, \[ \diam_\theta\big(L(k_P,\omega)\cC\big) \leq D_P. \] Then there exists a $\sigma$-invariant set of full measure $\tilde{\Omega} \subset \Omega$ and measurable functions $j^{\pm} : \tilde{\Omega}\times\Z_{\geq 0} \to \Z_{\geq 0}$ that are non-decreasing and tend to infinity in $n$ for all fixed $\omega$, such that for any $\omega\in \tilde{\Omega}$, and $n \geq k_P + \min\set{n\geq 0}{j^{\pm}(\omega,n) \geq 1}$, we have: 
\begin{gather*}
\diam_\theta(L(n,\omega)\cC) \leq \tanh\left( \frac{D_P}{4} \right)^{j^+(\omega,n)-1}\cdot D_P, \\
\diam_\theta(L(n,\sigma^{-n}(\omega))\cC) \leq \tanh\left( \frac{D_P}{4} \right)^{j^-(\omega,n)-1}\cdot D_P.
\end{gather*}
\end{prop}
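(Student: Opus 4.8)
The plan is to track how often the good set $G_P$ is visited along orbits of $\sigma$, and to convert each such visit into one application of the contraction factor $\tanh(D_P/4)$ coming from Lemma~\ref{lem:proj-metric}. First I would define, for $\omega \in \Omega$ and $n \geq 0$, the counting functions
\[
j^+(\omega,n) = \#\set{0 \leq i \leq n - k_P}{\sigma^i(\omega) \in G_P}, \qquad j^-(\omega,n) = \#\set{1 \leq i \leq n}{\sigma^{-i}(\omega) \in G_P},
\]
or some bookkeeping variant thereof that ensures the indices $i$ at which a hit is counted are spaced at least $k_P$ apart and lie in the correct range so that $L(k_P,\sigma^i(\omega))$ is a genuine block inside $L(n,\omega)$. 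These are manifestly non-decreasing in $n$. Since $(\Omega,\cB,\mu,\sigma)$ is ergodic and $\mu(G_P) > 0$, Birkhoff's ergodic theorem gives a $\sigma$-invariant full-measure set $\tilde\Omega$ on which $\frac1n j^{\pm}(\omega,n) \to \mu(G_P)$; in particular $j^{\pm}(\omega,n) \to \infty$ for every $\omega \in \tilde\Omega$, and each $j^{\pm}(\omega,\cdot)$ eventually reaches $1$, which is what pins down the threshold $k_P + \min\set{n \geq 0}{j^{\pm}(\omega,n) \geq 1}$ in the statement. Measurability of $j^{\pm}$ in $\omega$ is immediate since $G_P$ is measurable and $\sigma$ is measurable (indeed measure-preserving and invertible).

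Next I would establish the forward diameter bound by a telescoping/factorization argument. Write $n = k_P + m$ and peel off from $L(n,\omega)$ the blocks $L(k_P,\sigma^{i}(\omega))$ for those indices $i$ counted by $j^+(\omega,n)$; between consecutive such blocks, and at the two ends, sit cone-preserving operators $L(\cdot,\cdot)$ which by Lemma~\ref{lem:proj-metric} are $\theta$-nonexpansive (the factor is $\leq 1$ since we only know $\diam_\theta \leq \infty$ there). Each block $L(k_P,\sigma^i(\omega))$ with $\sigma^i(\omega) \in G_P$ has image of $\theta$-diameter $\leq D_P$, hence contracts $\theta$ on $\cC$ by the factor $\tanh(\frac14 D_P)$ (using the inequality $\theta(Lv,Lw) \leq \tanh(\tfrac14 \diam_\theta(L\cC))\theta(v,w)$ from Lemma~\ref{lem:proj-metric}, applied to the restriction of the later operators to the already-contracted sub-cone). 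The first such block also caps the diameter at $D_P$ outright, and each subsequent one multiplies the current diameter bound by $\tanh(\frac14 D_P)$; after $j^+(\omega,n)$ hits this yields $\diam_\theta(L(n,\omega)\cC) \leq \tanh(\frac14 D_P)^{j^+(\omega,n)-1} D_P$, exactly as claimed. The backward estimate is the same computation applied to $L(n,\sigma^{-n}(\omega)) = L(1,\sigma^{-1}(\omega)) \circ \cdots \circ L(1,\sigma^{-n}(\omega))$, with the hits now counted by $j^-(\omega,n)$ among the negative iterates; invertibility of $\sigma$ is what makes this well-defined.

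The main obstacle I anticipate is purely combinatorial rather than analytic: getting the index bookkeeping exactly right so that the blocks $L(k_P,\sigma^i(\omega))$ one peels off are genuinely disjoint sub-blocks of the composition $L(n,\omega)$ (no overlap, since each occupies $k_P$ consecutive factors) while still counting essentially every visit to $G_P$ — naively one loses a factor of up to $k_P$ in the count. One clean way is to only count a visit at time $i$ if $i \geq$ (the time of the previously counted visit) $+\, k_P$, i.e. define $j^+$ greedily; then the $j^+(\omega,n)$ blocks are automatically non-overlapping and fit inside the first $n-k_P+1$ factors, and ergodicity still forces $j^+(\omega,n) \to \infty$ (though now with asymptotic density at least $\mu(G_P)/k_P$ rather than $\mu(G_P)$ — which is consistent with the $\mu(G_P)/k_P$ appearing in Theorem~\ref{thm:pf}\eqref{thm:pf-le}). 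A secondary, minor point is to double-check that "$\theta$-diameter $\leq D_P$ of the image" is preserved, and indeed improved by the factor $\tanh(\tfrac14 D_P) < 1$, under postcomposition with further cone-preserving maps — but this is immediate from the contraction inequality in Lemma~\ref{lem:proj-metric} together with the elementary fact that $\diam_\theta$ of a smaller set is smaller. Once the counting functions are fixed, the rest is a routine induction on the number of peeled blocks.
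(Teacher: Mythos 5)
Your proposal is correct and follows essentially the same route as the paper: a greedy selection of visit times to $G_P$ spaced at least $k_P$ apart (exactly the paper's sequences $m_j^{\pm}$), a factorization of $L(n,\omega)$ into contracting blocks $L(k_P,\sigma^{m_j^+(\omega)}(\omega))$ separated by merely cone-preserving operators, and iterated use of the contraction inequality of Lemma~\ref{lem:proj-metric}, with the first block capping the diameter at $D_P$ and each later block contributing a factor $\tanh(\tfrac{1}{4}D_P)$. The only cosmetic difference is that you obtain the full-measure $\sigma$-invariant set and the divergence of $j^{\pm}$ via Birkhoff's theorem, whereas the paper uses Poincar\'e recurrence (in both time directions) plus ergodicity, reserving Birkhoff for the later density estimate; both are fine.
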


\begin{proof}
By Poincar\'e Recurrence applied to both $\sigma$ and $\sigma^{-1}$, $\mu$-almost every point in $G_P$ returns infinitely often to $G_P$ both forward and backward in time; call the set of these points $G$, and let $\tilde{\Omega} = \bigcup_{n=-\infty}^{\infty} \sigma^{-n}(G)$. We have that $\tilde{\Omega}$ is $\sigma$-invariant, with measure $1$ by ergodicity of $(\mu,\sigma)$.

If $\omega \in \tilde{\Omega}$, then $\omega$ is in the orbit of a point in $G$, which means that $\sigma^{n}(\omega)$ enters $G_P$ infinitely often in both the forward and backward directions. Let $\{n_l^+(\omega)\}_{l\geq 1}$ be the sequence of non-negative indices such that $\sigma^{n_l^+(\omega)}(\omega) \in G_P$, and similarly we let $\{n_l^-(\omega)\}_{l\geq 1}$ be the sequence of positive indices such that $\sigma^{-n_l^-(\omega)}(\omega) \in G_P$. For notational purposes, we set
\begin{gather*} 
l^+(\omega,n) = \abs{\set{n_l^+(\omega)}{n_l^+(\omega) \leq n}}, \\
l^-(\omega,n) = \abs{\set{n_l^-(\omega)}{n_l^-(\omega)\leq n}}
\end{gather*} 
to denote the numbers of these indices.

When $\sigma^n(\omega) \in G_P$, we know that $L(k_P,\sigma^n(\omega))$ contracts the cone $\cC$ by a minimum amount, by the assumption on $G_P,k_P,D_P$. We want to count the number of times this event happens, going both forwards and backwards. We therefore define two new sequences $\{m_j^+(\omega)\}_{j\geq 1}$ and $\{m_j^-(\omega)\}_{j\geq 1}$ by taking subsequences of $n_l^+$ and $n_l^-$ where consecutive terms are at least $k_P$ apart. Specifically, we set
\begin{gather*}
m_1^+(\omega) = n_1^+(\omega), \\
m_j^+(\omega) = \min\set{n_l^+(\omega)}{n_l^+(\omega)\geq m_{j-1}^+(\omega)+k_P}\ (j>1); \\
m_1^-(\omega) = \min\set{n_l^-(\omega)}{n_l^-(\omega)\geq k_P}, \\
m_j^-(\omega) = \min\set{n_l^-(\omega)}{n_l^-(\omega) \geq m_{j-1}^-(\omega)+k_P}\ (j>1).
\end{gather*}
In the forward direction, we count the number of cone contractions in $n$ steps by counting the number of terms of $m_j^+$ that are at most $n-k_P$, to allow for the $k_P$ steps afterwards. In the backward direction, we count the number of cone contractions in $n$ steps by simply counting the number of $m_k^-$ terms that are at most $n$, because we have already accounted for the $k_P$ steps in the definition of $m_1^-$. In notation:
\begin{gather*}
j^+(\omega,n) = \abs{\set{m_j^+(\omega)}{m_j^+(\omega)\leq n-k_P}}, \\
j^-(\omega,n) = \abs{\set{m_j^-(\omega)}{m_j^+(\omega)\leq n}}.
\end{gather*}
It is straightforward to see that for fixed $\omega\in \tilde{\Omega}$, $j^{\pm}(\omega,n)$ is non-decreasing in $n$ and tends to infinity as $n$ grows arbitrarily large.

By the definition of $m_j^+(\omega)$, we have that \[ \diam\left(L(k_P,\sigma^{m_j^+(\omega)}(\omega))\cC\right) \leq D_P. \] We then let $C_j = L(k_P,\sigma^{m_j^+(\omega)}(\omega))$. Then, using the cocycle property we can write, for $n \geq m_1^++k_P$, \[ L(n,\omega) = B_{j^+(\omega,n)+1} C_{j^+(\omega,n)} B_{j^+(\omega,n)}\cdots C_2 B_2 C_1 B_1, \] where the $B$ terms do not necessarily contract distances in $\cC$ but still preserve them (using Lemma \ref{lem:proj-metric}). We now repeatedly utilize Lemma \ref{lem:proj-metric}, to see that for any $v,w\in \cC$:
\begin{align*}
\theta(L(n,\omega)v, L(n,\omega)w) & = \theta\left( B_{j^+(\omega,n)+1} C_{j^+(\omega,n)} B_{j^+(\omega,n)}\cdots C_2 B_2 C_1 B_1 v, \right. \\
& \hspace{30pt} \left. B_{j^+(\omega,n)+1} C_{j^+(\omega,n)} B_{j^+(\omega,n)}\cdots C_2 B_2 C_1 B_1 w \right) \\
& \leq \tanh\left( \frac{D_P}{4} \right)^{j^+(\omega,n)-1} \theta\left( C_1 B_1v, C_1 B_1w \right) \\
& \leq \tanh\left( \frac{D_P}{4} \right)^{j^+(\omega,n)-1} \cdot D_P.
\end{align*}
The multiple powers of $\tanh\left( \frac{D_P}{4} \right)$ arise from each block $C_i$ in the product, since that block contracts the $\cC$ to $\theta$-diameter at most $D_P$. Taking a supremum over all $v,w\in \cC$ yields the statement for the forward direction. The proof for the backward direction is completely analogous, instead using the backward direction indices $m_j^-$ and the counting function $j^-$.
\end{proof}

\begin{lem}
\label{lem:j-estimate}
For $\omega\in\tilde{\Omega}$, we have \[ j^+(\omega,n)+1 \geq \frac{l^+(\omega,n)}{k_P}, \quad j^-(\omega,n)+1 \geq \frac{l^-(\omega,n)+1}{k_P}. \]
\end{lem}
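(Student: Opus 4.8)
The plan is to derive both inequalities from a single elementary counting argument that exploits the greedy way in which $m_j^{\pm}(\omega)$ was extracted from $n_l^{\pm}(\omega)$. The first thing I would record is the following ``at most $k_P$ per block'' property: for each $j \geq 1$, the half-open interval $[m_j^{+}(\omega), m_{j+1}^{+}(\omega))$ contains at most $k_P$ of the indices $n_l^{+}(\omega)$. Indeed, $m_{j+1}^{+}$ is by definition the least $n_l^{+}$ with $n_l^{+} \geq m_j^{+} + k_P$, so every $n_l^{+}$ strictly below $m_{j+1}^{+}$ lies below $m_j^{+} + k_P$, hence in the $k_P$-integer window $[m_j^{+}, m_j^{+}+k_P)$. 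The identical statement holds for the backward intervals $[m_j^{-}(\omega), m_{j+1}^{-}(\omega))$.

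For the forward bound, note next that since $m_1^{+} = n_1^{+}$ is the smallest of the $n_l^{+}$ and $m_j^{+}\to\infty$, the intervals $[m_j^{+}, m_{j+1}^{+})$ partition the set $\{n_l^{+}(\omega) : l \geq 1\}$. Splitting $l^{+}(\omega,n) = \abs{\{n_l^{+} \leq n\}}$ according to which block each index lies in, only blocks with $m_j^{+}(\omega) \leq n$ contribute, and each contributes at most $k_P$, so $l^{+}(\omega,n) \leq k_P\cdot\abs{\{j : m_j^{+}(\omega) \leq n\}}$. Finally, since consecutive $m_j^{+}$ differ by at least $k_P$, the window $(n-k_P, n]$ contains at most one of them, so $\abs{\{j : m_j^{+}(\omega) \leq n\}} \leq j^{+}(\omega,n) + 1$, recalling that $j^{+}(\omega,n)$ counts the $m_j^{+}$ with $m_j^{+} \leq n-k_P$. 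Combining the two bounds gives $l^{+}(\omega,n) \leq k_P\big(j^{+}(\omega,n)+1\big)$, which is the first claimed inequality.

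The backward bound runs the same way, with one bookkeeping wrinkle: $m_1^{-}$ is the least $n_l^{-}$ with $n_l^{-} \geq k_P$ rather than simply $n_1^{-}$, so the indices $n_l^{-}(\omega)$ lying in $\{1,\dots,k_P-1\}$ are covered by no block $[m_j^{-}, m_{j+1}^{-})$; there are at most $k_P - 1$ of them. Every remaining index $\leq n$ lies in a block with $m_j^{-}(\omega) \leq n$, and there are exactly $j^{-}(\omega,n)$ such blocks, each contributing at most $k_P$. Hence $l^{-}(\omega,n) \leq (k_P-1) + k_P\, j^{-}(\omega,n) = k_P\big(j^{-}(\omega,n)+1\big) - 1$, which rearranges to $j^{-}(\omega,n)+1 \geq \tfrac{1}{k_P}\big(l^{-}(\omega,n)+1\big)$.

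There is no analytic or measure-theoretic obstacle here: everything is a finite count carried out for each fixed $\omega \in \tilde{\Omega}$. The only points demanding care are extracting the ``at most $k_P$ per block'' property cleanly from the greedy recursion defining $m_j^{\pm}$, and the off-by-one bookkeeping — in the forward case the extra $+1$ absorbs the at-most-one index $m_j^{+}$ in $(n-k_P,n]$ that $j^{+}$ fails to count, and in the backward case the $+1$ inside the numerator is precisely the contribution of the initial segment $\{1,\dots,k_P-1\}$ that falls outside all blocks.
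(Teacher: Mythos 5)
Your proof is correct and follows essentially the same route as the paper: covering the relevant indices $n_l^{\pm}\leq n$ by blocks of length $k_P$ based at the greedily chosen $m_j^{\pm}$ and counting, with the same off-by-one bookkeeping (the extra $m_j^{+}$ in $(n-k_P,n]$ forward, the initial segment $\{1,\dots,k_P-1\}$ backward). The paper just states this as a set inclusion followed by taking cardinalities, so no substantive difference.
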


\begin{proof}
By the definition of $m_j^+(\omega)$, we have: \[ \set{n_l^+(\omega)}{n_l^+(\omega)\leq n} \subset \bigcup_{j=1}^{j^+(\omega,n)+1} \{ m_j^+(\omega),m_j^+(\omega)+1,\cdots,m_j^+(\omega)+k_P-1 \}. \] The first inequality follows by taking cardinalities.

By the definition of $m_j^-(\omega)$, we have \[ \set{n_i^-(\omega)}{ k_P \leq n_i^- \leq n} \subset \bigcup_{j=1}^{j^-(\omega,n)} \{ m_j^-(\omega),m_j^-(\omega)+1,\cdots,m_j^-(\omega)+k_P-1 \}, \] and $l^-(\omega,n)-k_P+1$ is at most the cardinality of the smaller set. The second inequality follows by taking inequalities and rearranging.
\end{proof}

By Birkhoff's Theorem, we know that $n^{-1}l^+(\omega,n)$ and $n^{-1}l^+(\omega,n)$ both converge $\mu$-almost everywhere to $\mu(G) = \mu(G_P)$. This fact provides the exponential rate of contraction of the cone by $L(n,\omega)$ in $n$, in both directions.

We now give the proof of the theorem. Parts \ref{thm:pf-v}, \ref{thm:pf-eta}, and \ref{thm:pf-le} will be proven in order, and statements in part \ref{thm:pf-meas} will be proven throughout as appropriate.

\begin{proof}[Proof of Theorem \ref{thm:pf}(\ref{thm:pf-v}).] %wonder how much to cite...
We construct $v(\omega) \in X$ by constructing a Cauchy sequence and proving it converges to something with the correct properties; we use similar ideas to those found in \cite{arnold-random-pf-matrices,flq-2,gt-quas-semi-inv}. Given $\omega \in \tilde{\Omega}$, choose some $g\in \cC$ and define $v_n(\omega) \in X$ for $n\geq 0$ by \[ v_n(\omega) = \frac{L(n,\sigma^{-n}(\omega))g}{\norm{L(n,\sigma^{-n}(\omega))g}}. \] Each $v_n(\omega)$ has unit norm. By Proposition \ref{prop:exp-bound} and the scale-invariance of $\theta$, we see that for $n\geq m$:
\begin{align*} 
\theta(v_m(\omega),v_n(\omega)) & = \theta(L(m,\sigma^{-m}(\omega))g,L(n,\sigma^{-n}(\omega))g) \\
& = \theta(L(m,\sigma^{-m}(\omega))g,L(m,\sigma^{-m}(\omega))L(n-m,\sigma^{-(n-m)}(\omega))g) \\
& \leq \tanh\left( \frac{D_P}{4} \right)^{j^-(\omega,m)-1}\cdot D_P.
\end{align*}
Since $j^-$ tends to infinity in $n$ and $\theta$ is symmetric, we see that $\{v_n(\omega)\}_n$ is a Cauchy sequence in $\theta$, and hence Cauchy in norm by Proposition \ref{prop:D-adapt-Cauchy}. Let $v(\omega)$ be the limit of this sequence. Then $\norm{v(\omega)} = 1$ and $v(\omega) \in \cC$. Moreover, observe that $L(n,\omega)\cC$ is a decreasing chain of sets with $\theta$-diameter decreasing to zero. Since $\theta$ is lower-semi-continuous, we see that \[ \diam_\theta\left( \overline{L(n,\sigma^{-n}(\omega))\cC}^{\norm{\cdot}} \right) = \diam_\theta(L(n,\sigma^{-n}(\omega))\cC). \] Thus we see that $\bigcap_{n=0}^{\infty} \overline{L(n,\sigma^{-n}(\omega))\cC}^{\norm{\cdot}}$ is a norm-closed set with $\theta$-diameter zero; this set contains $v(\omega)$, as $v_n(\omega) \in L(n,\omega)\cC$ for each $n$, and so it is the positive ray containing $v(\omega)$. If we had used a different initial vector $g'$ and obtained the vector $v'(\omega)$, we would have $v'(\omega)$ collinear with $v(\omega)$ and having the same norm, which shows they are equal. Hence $v(\omega)$ is independent of the initial choice of $g$.

Suppose that $(\Omega,\cB,\mu,\sigma,X,\norm{\cdot},L)$ is strongly measurable. As per Appendix A in \cite{gt-quas-semi-inv}, we see that each $L(n,\sigma^{-n}(\omega))$ is strongly measurable, which implies that $L(n,\sigma^{-n}(\omega))g$ is measurable with respect to the norm on $X$. By measurability of the norm, we see that each $v_n(\omega)$ is measurable, and so the limit function is also measurable, as $(X,\norm{\cdot})$ is a metric space. 

Now, suppose that $(\Omega,\cB,\mu,\sigma,X,\norm{\cdot},L)$ is $\mu$-continuous. By continuity of the Banach algebra multiplication on $\cB(X)$ and the operator norm, each of the functions $L(n,\sigma^{-n}(\omega))$, $L(n,\sigma^{-n}(\omega))g$, and $\norm{L(n,\sigma^{-n}(\omega))}$ are also $\mu$-continuous. In addition, $\norm{L(n,\sigma^{-n}(\omega))}$ is bounded away from zero on compact sets where it is continuous, so that each $v_n(\omega)$ is $\mu$-continuous, and thus by Lemma \ref{lem:limit-mu-cts}, $v(\omega)$ is $\mu$-continuous.

To see that $v(\omega)$ is equivariant, we have (since $L(1,\omega)$ is continuous):
\begin{align*}
L(1,\omega)v(\omega) & \in L(1,\omega) \left( \bigcap_{n=0} \overline{L(n,\sigma^{-n}(\omega))\cC}^{\norm{\cdot}} \right) \\
& \subset \bigcap_{n=0} L(1,\omega)\left( \overline{L(n,\sigma^{-n}(\omega))\cC}^{\norm{\cdot}} \right) \\
& \subset \bigcap_{n=0} \overline{L(n+1,\sigma^{-{n+1}}(\sigma(\omega)))\cC}^{\norm{\cdot}}
\end{align*}
This last set has $\theta$-diameter equal to $0$ and contains $v(\sigma(\omega))$, as shown above. Thus we see that \[ L(1,\omega)v(\omega) = \norm{L(1,\omega)v(\omega)}v(\sigma(\omega)); \] set $\phi(\omega) = \norm{L(1,\omega)v(\omega)}$, so that $L(1,\omega)v(\omega) = \phi(\omega)v(\omega)$. It is clear that $\phi(\omega) \leq \norm{L(1,\omega)}$ and that $\phi(\omega)$ is measurable (in either set of hypotheses), so $\log^+\phi \in L^1(\mu)$. 
\end{proof}

\begin{proof}[Proof of Theorem \ref{thm:pf}(\ref{thm:pf-eta})]
For $\omega\in \tilde{\Omega}$, let $\tilde{L}(1,\omega) = \phi(\omega)^{-1}L(1,\omega)$. Then $\tilde{L}(1,\omega)$ preserves $\cC$ and satisfies $\tilde{L}(1,\omega)v(\omega) = v(\sigma(\omega))$. We will construct, for each $\omega$, a linear functional on $X$. So fix $\omega$, and for any $g\in \cC$ and $n\geq 0$, we apply Lemma \ref{lem:proj-metric} to see that
\begin{align*} 
\alpha(v(\sigma^n(\omega)),\tilde{L}(n,\omega)g) & \leq \alpha(v(\sigma^{n+1}(\omega)),\tilde{L}(n+1,\omega)g) \\
& \leq \beta(v(\sigma^{n+1}(\omega)),\tilde{L}(n+1,\omega)g) \leq \beta(v(\sigma^n(\omega)),\tilde{L}(n,\omega)g).
\end{align*} 
By Proposition \ref{prop:exp-bound}, we know that there is some $N = N(\omega)$ such that the $\theta$-diameter of $L(N,\omega)\cC$ is finite, which implies that \[ 0 < \alpha(v(\sigma^N(\omega)),\tilde{L}(N,\omega)g) \leq \beta(v(\sigma^N(\omega)),\tilde{L}(N,\omega)g) < \infty, \] and so the two sequences are monotonic and bounded, thus convergent. Moreover, for $n\geq N$ we have \[ 1 \leq \frac{\beta(v(\sigma^n(\omega)),\tilde{L}(n,\omega)g)}{\alpha(v(\sigma^n(\omega)),\tilde{L}(n,\omega)g)} = e^{\theta(\tilde{L}(n,\omega)v(\omega),\tilde{L}(n,\omega)g)} = e^{\diam_\theta(L(n,\omega)\cC)}, \] and Proposition \ref{prop:exp-bound} shows that the right side of the equation converges to $1$. Let $\eta(\omega,g)$ be the shared limit of $\alpha(v(\sigma^n(\omega)),\tilde{L}(n,\omega)g)$ and $\beta(v(\sigma^n(\omega)),\tilde{L}(n,\omega)g)$.

We now show that $\eta(\omega,\cdot)$ extends to a bounded linear functional on $X$. By Lemma \ref{lem:proj-metric} and linearity of $\tilde{L}(n,\omega)$, we see that $\alpha(v(\sigma^n(\omega)),\tilde{L}(n,\omega)g)$ is positive-scalar-homogeneous in $g$, so that $\eta(\omega,g)$ is also, and $\eta(\omega,g)$ is positive because it is larger than $\alpha(v(\sigma^n(\omega)),\tilde{L}(n,\omega)g)$. We also see that $\eta(\omega,\cdot)$ is additive on $\cC$, by using super-additivity of $\alpha$ and sub-additivity of $\beta$ and taking limits. We then appeal to Lemma \ref{lem:ext-pos} to extend $\eta(\omega,\cdot)$ uniquely to a linear functional on $X$.

To see that $\eta(\omega,\cdot)$ is bounded, let $g\in \cC$, and let $N$ be such that $\diam_\theta(L(N,\omega)\cC)$ is finite. We have:
\begin{align*}
\eta(\omega,g) & \leq \beta(v(\sigma^N(\omega)),\tilde{L}(N,\omega)g) \\
& = \beta\left(v(\sigma^N(\omega)),\frac{\tilde{L}(N,\omega)g}{\norm{\tilde{L}(N,\omega)g}}\right) \cdot \norm{\tilde{L}(N,\omega)g} \\
& \leq \alpha\left(v(\sigma^N(\omega)),\frac{\tilde{L}(N,\omega)g}{\norm{\tilde{L}(N,\omega)g}}\right) e^{\diam_\theta(\tilde{L}(N,\omega)\cC)} \norm{\tilde{L}(N,\omega)}\norm{g} \\
& \leq D e^{\diam_\theta(\tilde{L}(N,\omega)\cC)} \norm{\tilde{L}(N,\omega)}\norm{g}.
\end{align*}
We now use the fact that $\cC$ generates $X$. If $x \in X$, by And\^o's Lemma we may write $x = g_1 - g_2$, such that $\norm{g_i} \leq K\norm{x}$. Then we have:
\begin{align*}
\abs{\eta(\omega,x)} & \leq \eta(\omega,g_1) + \eta(\omega,g_2) \\
& \leq D e^{\diam_\theta(\tilde{L}(N,\omega)\cC)} \norm{\tilde{L}(N,\omega)}(\norm{g_1} + \norm{g_2}) \\
& \leq 2KD e^{\diam_\theta(\tilde{L}(N,\omega)\cC)} \norm{\tilde{L}(N,\omega)}\norm{x}.
\end{align*}
Thus $\eta(\omega,\cdot)$ is bounded.

Directly by the definition, we have \[ \eta(\omega,v(\omega)) = \lim_{n\to\infty} \alpha(v(\sigma^n(\omega)),\tilde{L}(n,\omega)v(\omega)) = \lim_{n\to\infty} \alpha(v(\sigma^n(\omega)),v(\sigma^n(\omega))) = 1. \] For $g\in \cC$, we have
\begin{align*} 
\eta(\sigma(\omega),L(1,\omega)g) & = \phi(\omega)\lim_{n\to\infty} \alpha(v(\sigma^n(\sigma(\omega))),\tilde{L}(n,\sigma(\omega))\tilde{L}(1,\omega)g) \\
& = \phi(\omega)\lim_{n\to\infty} \alpha(v(\sigma^{n+1}(\omega)),\tilde{L}(n+1,\omega)g) \\
& = \phi(\omega)\eta(\omega,g),
\end{align*} 
and by linearity this equality extends to $g\in X$. We have already seen that $\eta(\omega,\cdot)$ is strictly positive on $\cC$. 

Equivariance of the decomposition $X = \subspan_{\R}\{v(\omega)\} \oplus \ker(\eta(\omega,\cdot))$ follows from the equivariance properties of $v(\omega)$ and $\eta(\omega,\cdot)$. It is clear to see that $\pi_\omega(\cdot) = \eta(\omega,\cdot)v(\omega)$ is the projection with range $\subspan_{\R}\{v(\omega)\}$ and kernel $\ker(\eta(\omega,\cdot))$, and it is continuous because $\eta(\omega,\cdot)$ is.
\end{proof}

We now prove two technical lemmas that allow us to prove the two remaining parts of the theorem.

\begin{lem}
\label{lem:order-bound}
In the setting of Theorem \ref{thm:pf}, if $\omega\in \tilde{\Omega}$ then there exists $N = N(\omega)$ such that for all $g\in \cC$ and $n\geq N$, \[ \tilde{L}(n,\omega)g \preceq \beta(v(\sigma^N(\omega)),\tilde{L}(N,\omega)g)v(\sigma^n(\omega)). \]
\end{lem}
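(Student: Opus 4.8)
The plan is to produce the claim as a clean consequence of the monotonicity of $\beta(v(\sigma^n(\omega)),\tilde L(n,\omega)g)$ in $n$ that was already established in the proof of part \ref{thm:pf-eta}. Recall that for fixed $\omega\in\tilde\Omega$ and $g\in\cC$, we showed
\begin{align*}
\alpha(v(\sigma^n(\omega)),\tilde L(n,\omega)g)&\leq \alpha(v(\sigma^{n+1}(\omega)),\tilde L(n+1,\omega)g)\\
&\leq \beta(v(\sigma^{n+1}(\omega)),\tilde L(n+1,\omega)g)\leq \beta(v(\sigma^n(\omega)),\tilde L(n,\omega)g),
\end{align*}
and that for $n\geq N(\omega)$ (the first time the $\theta$-diameter of $L(n,\omega)\cC$ is finite) all of these quantities are finite and positive. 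So the sequence $n\mapsto \beta(v(\sigma^n(\omega)),\tilde L(n,\omega)g)$ is non-increasing for $n\geq N$, hence bounded above by its value at $n=N$.

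First I would fix $\omega\in\tilde\Omega$, let $N=N(\omega)$ be as above, and take any $g\in\cC$ and any $n\geq N$. By definition of $\beta$, we have $\tilde L(n,\omega)g\preceq \beta(v(\sigma^n(\omega)),\tilde L(n,\omega)g)\,v(\sigma^n(\omega))$; this is the order relation I want, but with the wrong coefficient. Then I would invoke the monotonicity above to replace $\beta(v(\sigma^n(\omega)),\tilde L(n,\omega)g)$ with the larger quantity $\beta(v(\sigma^N(\omega)),\tilde L(N,\omega)g)$ and use the fact that if $x\preceq y$ and $c\geq 0$ then $x\preceq y + (\text{something in }\cC\cup\{0\})$ — more precisely, that $c\, v(\sigma^n(\omega)) \preceq c'\, v(\sigma^n(\omega))$ whenever $0\leq c\leq c'$, since the difference $(c'-c)v(\sigma^n(\omega))\in\cC\cup\{0\}$ as $v(\sigma^n(\omega))\in\cC$. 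Chaining the two relations via transitivity of $\preceq$ gives $\tilde L(n,\omega)g\preceq \beta(v(\sigma^N(\omega)),\tilde L(N,\omega)g)\,v(\sigma^n(\omega))$, which is exactly the assertion.

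There is essentially no obstacle here; the only point requiring a word of care is that $\beta(v(\sigma^N(\omega)),\tilde L(N,\omega)g)$ is genuinely finite (so that the right-hand side is a well-defined element of $X$ and the inequality is not vacuous), which is precisely why we must take $n\geq N$ with $N$ chosen so that $\diam_\theta(L(N,\omega)\cC)<\infty$ — this is guaranteed by Proposition \ref{prop:exp-bound}. One should also note that $N(\omega)$ can be taken to be the same $N$ appearing in the construction of $\eta(\omega,\cdot)$, so no new measurable selection is needed. I would keep the proof to three or four lines: state the choice of $N$, write down the defining inequality for $\beta$ at level $n$, apply monotonicity in $n$ to bound the coefficient by its value at $N$, and conclude by transitivity.
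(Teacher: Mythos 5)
Your argument is correct. It differs from the paper's proof only in organization: the paper takes the attained order relation at time $N$, namely $\tilde{L}(N,\omega)g \preceq \beta(v(\sigma^N(\omega)),\tilde{L}(N,\omega)g)\,v(\sigma^N(\omega))$, and pushes it forward by applying the cone-preserving operator $\tilde{L}(n-N,\sigma^N(\omega))$ to both sides, using the equivariance $\tilde{L}(n-N,\sigma^N(\omega))v(\sigma^N(\omega)) = v(\sigma^n(\omega))$; you instead start from the attained relation at time $n$ and enlarge the coefficient via the monotonicity of $n\mapsto\beta(v(\sigma^n(\omega)),\tilde{L}(n,\omega)g)$ recorded in the proof of part (\ref{thm:pf-eta}). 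Since that monotonicity is itself obtained by applying Lemma \ref{lem:proj-metric}'s contraction property $\beta(Lv,Lw)\leq\beta(v,w)$ for the cone-preserving operator $\tilde{L}(1,\sigma^n(\omega))$, the two proofs rest on the same mechanism; yours buys a slightly shorter write-up by reusing an already established inequality chain, while the paper's is self-contained at this point and makes the role of equivariance of $v$ explicit. One shared fine point worth stating (you use it at time $n$, the paper at time $N$) is that the infimum defining $\beta$ is attained, so that $w \preceq \beta(v,w)v$ genuinely holds; this follows because $\cC\cup\{0\}$ is closed and $\mu\mapsto \mu v - w$ is continuous. With that remark, your transitivity step ($c\,v(\sigma^n(\omega)) \preceq c'\,v(\sigma^n(\omega))$ for $0\leq c\leq c'$, plus convexity of $\cC$ for transitivity of $\preceq$) closes the argument correctly, and your choice of $N(\omega)$ as the first time $\diam_\theta(L(N,\omega)\cC)<\infty$ matches the paper's and works uniformly in $g$.
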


\begin{proof}
Find $N(\omega)$ such that $\diam_\theta(L(N,\omega)\cC)$ is finite. Then $\beta(v(\sigma^N(\omega)),\tilde{L}(N,\omega)g)$ is finite, and \[ \tilde{L}(N,\omega)g \preceq \beta(v(\sigma^N(\omega)),\tilde{L}(N,\omega)g)v(\sigma^N(\omega)). \] Apply $\tilde{L}(n-N,\sigma^N(\omega))$ to both sides to obtain the conclusion of the lemma.
\end{proof}

\begin{lem}
\label{lem:norm-eta-conv}
In the setting of Theorem \ref{thm:pf}, if $\omega\in \tilde{\Omega}$, then there exists $N = N(\omega)$ and $C_{\omega} > 0$ such that for all $g\in \cC$ and $n\geq N$, $\diam_\theta(L(n,\omega)\cC)$ is finite and \[ \norm{\tilde{L}(n,\omega)g-\eta(\omega,g)v(\sigma^n(\omega))} \leq C_{\omega} \norm{g}\left( e^{\diam_\theta(L(n,\omega)\cC)}-1 \right). \] The constant is $C_{\omega} = \frac{(D+1)D^2}{2}\ e^{\diam_\theta(L(N,\omega)\cC)}\norm{\tilde{L}(N,\omega)}.$
\end{lem}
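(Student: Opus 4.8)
The plan is to combine Proposition \ref{prop:D-adapt-Cauchy}(1) with the monotone sandwich that defines $\eta(\omega,\cdot)$. First I would fix $\omega\in\tilde\Omega$ and, via Proposition \ref{prop:exp-bound}, choose $N=N(\omega)$ with $\diam_\theta(L(N,\omega)\cC)<\infty$; since applying a further iterate of the (positive) cocycle cannot increase $\theta$-distances (the contraction factor in Lemma \ref{lem:proj-metric} is at most $1$), the quantity $\diam_\theta(L(n,\omega)\cC)$ stays finite and non-increasing for $n\ge N$, which settles the first assertion. Now fix $g\in\cC$ and $n\ge N$, and write $w=v(\sigma^n(\omega))$ and $h=\tilde L(n,\omega)g$, both of which lie in $\cC$ with $\norm w=1$. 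Because $w=\tilde L(n,\omega)v(\omega)$ and the image cone has finite $\theta$-diameter, $h$ and $w$ are comparable with $\theta(h,w)\le\diam_\theta(\tilde L(n,\omega)\cC)=\diam_\theta(L(n,\omega)\cC)$, using that $\tilde L(n,\omega)$ is a positive scalar multiple of $L(n,\omega)$ and $\theta$ is scale-invariant.

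Next I would apply Proposition \ref{prop:D-adapt-Cauchy}(1) to the pair $(h,w)$ to obtain
\[ \norm{h-\tfrac{\alpha(w,h)+\beta(w,h)}{2}\,w}\le\tfrac{D}{2}\,\alpha(w,h)\bigl(e^{\theta(h,w)}-1\bigr). \]
To replace the midpoint coefficient by $\eta(\omega,g)$, I would recall from the construction in the proof of part (\ref{thm:pf-eta}) that $\alpha(v(\sigma^n(\omega)),\tilde L(n,\omega)g)$ increases to $\eta(\omega,g)$ while $\beta(v(\sigma^n(\omega)),\tilde L(n,\omega)g)$ decreases to it, so for every $n\ge N$ one has $\alpha(w,h)\le\eta(\omega,g)\le\beta(w,h)$. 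Hence the midpoint differs from $\eta(\omega,g)$ by at most $\tfrac12\bigl(\beta(w,h)-\alpha(w,h)\bigr)=\tfrac12\alpha(w,h)\bigl(e^{\theta(h,w)}-1\bigr)$, using $\beta(w,h)/\alpha(w,h)=e^{\theta(h,w)}$. A triangle inequality then yields
\[ \norm{h-\eta(\omega,g)w}\le\tfrac{D+1}{2}\,\alpha(w,h)\bigl(e^{\theta(h,w)}-1\bigr)\le\tfrac{D+1}{2}\,\alpha(w,h)\bigl(e^{\diam_\theta(L(n,\omega)\cC)}-1\bigr). \]

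It remains to bound $\alpha(w,h)$ uniformly in $n$. By the monotonicity just recalled, $\alpha(w,h)=\alpha(v(\sigma^n(\omega)),\tilde L(n,\omega)g)\le\eta(\omega,g)\le\beta(v(\sigma^N(\omega)),\tilde L(N,\omega)g)$, and this last quantity is estimated exactly as in the proof of part (\ref{thm:pf-eta}): normalizing $\tilde L(N,\omega)g$, using positive-scalar-homogeneity of $\beta$, the identity $\beta=\alpha\,e^{\theta}$, the bound $\alpha(v(\sigma^N(\omega)),\cdot)\le D$ for unit vectors from Lemma \ref{lem:proj-metric}, and $\theta(\tilde L(N,\omega)g,v(\sigma^N(\omega)))\le\diam_\theta(L(N,\omega)\cC)$, one gets $\alpha(w,h)\le D\,e^{\diam_\theta(L(N,\omega)\cC)}\norm{\tilde L(N,\omega)}\,\norm g$. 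Substituting and using $D\ge1$ produces the claimed bound with $C_\omega=\tfrac{(D+1)D^2}{2}\,e^{\diam_\theta(L(N,\omega)\cC)}\norm{\tilde L(N,\omega)}$ (in fact $\tfrac{(D+1)D}{2}$ already suffices). The only real content is the observation that $\eta(\omega,g)$ is sandwiched strictly between the $\alpha$ and $\beta$ values at every finite level $n\ge N$, not merely in the limit; the rest is bookkeeping of constants and the identification of the image cones of $L(n,\omega)$ and $\tilde L(n,\omega)$, so I do not anticipate a genuine obstacle.
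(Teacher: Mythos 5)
Your proposal is correct and follows essentially the same line as the paper's proof: apply Proposition~\ref{prop:D-adapt-Cauchy}(1) to $\tilde L(n,\omega)g$ and $v(\sigma^n(\omega))$, use the sandwich $\alpha(v(\sigma^n(\omega)),\tilde L(n,\omega)g)\le\eta(\omega,g)\le\beta(v(\sigma^n(\omega)),\tilde L(n,\omega)g)$ to control the distance from $\eta(\omega,g)$ to the midpoint, and then bound $\alpha$ uniformly in $n$ via the level-$N$ quantity. The only (minor) difference is that you bound $\alpha(v(\sigma^n(\omega)),\tilde L(n,\omega)g)$ directly by $\eta(\omega,g)\le\beta(v(\sigma^N(\omega)),\tilde L(N,\omega)g)$ using the monotone sandwich, whereas the paper first passes to $\alpha\le D\norm{\tilde L(n,\omega)g}$ and then invokes Lemma~\ref{lem:order-bound} to control that norm; your routing avoids Lemma~\ref{lem:order-bound} and, as you observe, yields the slightly smaller constant $\tfrac{(D+1)D}{2}e^{\diam_\theta(L(N,\omega)\cC)}\norm{\tilde L(N,\omega)}$, which of course also satisfies the stated bound since $D\ge1$.
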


\begin{proof}
Let $\omega\in \tilde{\Omega}$, and let $g \in\cC$. Let $a(n,\omega) = \alpha(v(\sigma^n(\omega)),\tilde{L}(n,\omega)g)$, $b(n,\omega) = \beta(v(\sigma^n(\omega)),\tilde{L}(n,\omega)g)$. Since $a(n,\omega) \leq \eta(\omega,g) \leq b(n,\omega)$ for all $n$, the distance from $\eta(\omega,g)$ to the midpoint
of $[a(n,\omega), b(n,\omega)]$ is at most half the length of the interval. In addition, by Lemma \ref{lem:proj-metric}, we have $a(n,\omega) = \leq D\norm{\tilde{L}(n,\omega)g}$. By the first part of Proposition \ref{prop:D-adapt-Cauchy}, we have:
\begin{align*}
\lVert \tilde{L}&(n,\omega)g-\eta(\omega,g)v(\sigma^n(\omega)) \rVert \\
& \leq \norm{\tilde{L}(n,\omega)g- \left(\frac{1}{2}(a(n,\omega)+b(n,\omega))\right)v(\sigma^n(\omega))} \\
& \hspace{30pt} + \norm{\left(\frac{1}{2}(a(n,\omega)+b(n,\omega))\right)v(\sigma^n(\omega))-\eta(\omega,g)v(\sigma^n(\omega))} \\
& \leq \frac{D}{2}\ a(n,\omega)\left( e^{\theta(v(\sigma^n(\omega)),\tilde{L}(n,\omega)g)}-1 \right) + \abs{\frac{a(n,\omega)+b(n,\omega)}{2}-\eta(\omega,g)} \\
& \leq \frac{D+1}{2} a(n,\omega)\left( e^{\theta(v(\sigma^n(\omega)),\tilde{L}(n,\omega)g)}-1 \right) \\
& \leq \frac{(D+1)D}{2}\norm{\tilde{L}(n,\omega)g}\left( e^{\diam_\theta(L(n,\omega)\cC)}-1 \right).
\end{align*}
By Lemma \ref{lem:order-bound} and the $D$-adapted condition, we have that $\norm{\tilde{L}(n,\omega)g} \leq D\cdot \beta(v(\sigma^N(\omega)),\tilde{L}(N,\omega)g)$. By definition, $\beta = \alpha e^{\theta}$ on $\cC\times\cC$, and so we get 
\begin{align*} 
\norm{\tilde{L}(n,\omega)g} & \leq \alpha(v(\sigma^N(\omega)),\tilde{L}(N,\omega))e^{\diam_\theta(L(N,\omega)\cC)} \\
& \leq De^{\diam_\theta(L(N,\omega)\cC)}\norm{\tilde{L}(N,\omega)}\norm{g}.
\end{align*}
The proof is complete upon substitution into the above inequality.
\end{proof}

We now prove part \ref{thm:pf-meas} of the main theorem. The primary difficulty lies in the $\mu$-continuous case, because the definition of $\eta(\omega,\cdot)$ is in terms of $\alpha$ and $\beta$ terms, and these two functions are not continuous on the entirety of $\cC\times\cC$, only on $\mathrm{int}(\cC)\times\cC$. Instead, we prove that $\eta(\omega,\cdot)$ is a well-behaved limit of a much nicer function when restricted to $\cC$. The strongly measurable case is much simpler.

\begin{proof}[Proof of Theorem \ref{thm:pf}(\ref{thm:pf-meas})]
First, assume that $(\Omega,\cB,\mu,\sigma,X,\norm{\cdot},L)$ is strongly measurable. We have already seen that $v(\omega)$ is measurable into $(X,\norm{\cdot})$. To see that $\eta(\omega,\cdot)$ is strongly measurable (measurable with respect to the weak\textsuperscript{*} $\sigma$-algebra), let $g\in \cC$. Then the map $\omega \mapsto \alpha(v(\sigma^n(\omega)),\tilde{L}(n,\omega)g)$ is measurable into $(\R,\abs{\cdot})$, by Lemma \ref{lem:proj-metric} and the strong measurability of $L$. Then $\omega \mapsto \eta(\omega,g)$ is the pointwise limit of these functions, taking values in a metric space, and hence measurable. If $x\in X$, then write $\eta(\omega,x) = \eta(\omega,g_1) - \eta(\omega,g_2)$ for some $g_1,g_2 \in \cC$, and observe that $\eta(\omega,x)$ is the difference of two measurable functions, hence measurable. Thus $\eta(\omega,\cdot)$ is strongly measurable. The operators $\pi_\omega$ and $I-\pi_\omega$ are strongly measurable (that is, measurable with respect to the strong operator $\sigma$-algebra) because $\eta(\omega,\cdot)$ is strongly measurable. The map from $S(0,1) \subset X$ to the Grassmannian $\cG(X)$ given by $v\mapsto \subspan_{\R}\{v\}$ is continuous, so that $\omega \mapsto \subspan_{\R}\{v(\omega)\}$ is measurable.

Next, assume that $(\Omega,\cB,\mu,\sigma,X,\norm{\cdot},L)$ is $\mu$-continuous. By Lemma \ref{lem:limit-mu-cts}, we see that $v(\omega)$ is $\mu$-continuous, since each $v_n(\omega)$ is $\mu$-continuous. As just mentioned, taking the span of $v(\omega)$ is continuous, so that $\subspan_{\R}\{v(\omega)\}$ is $\mu$-continuous as well.

To see that $\omega \mapsto \eta(\omega,\cdot)$ is $\mu$-continuous, we use the machinery developed to relate functions on the cone to linear functionals. Since $L(1,\omega)$ is $\mu$-continuous, for fixed $n$ we see that $L(n,\omega)$ is continuous on compact subsets of $\Omega$ with arbitrarily large measure. Applying operators to norm-bounded vectors is a norm-continuous operations on $\cB(X)$, and so because $L(n,\omega)v(\omega)$ is never zero, we see that \[ \omega \mapsto \tilde{L}(n,\omega)= \frac{L(n,\omega)}{\norm{L(n,\omega)v(\omega)}} \] is a $\mu$-continuous map into $(\cB(X),\norm{\cdot})$. Suppose that $\tilde{L}(n,\omega)$ is continuous on some large compact $K\subset \tilde{\Omega}$. For $\omega_1,\omega_2 \in K$ and $g\in \cC$ with $\norm{g} \leq 1$, we then have:
\begin{align*}
\abs{\ \norm{\tilde{L}(n,\omega_1)g} - \norm{\tilde{L}(n,\omega_2)g}\ } & \leq \norm{\tilde{L}(n,\omega_1)g-\tilde{L}(n,\omega_2)g} \\
& \leq \norm{\tilde{L}(n,\omega_1)-\tilde{L}(n,\omega_2)},
\end{align*}
which can be made as small as desired by taking $\omega_2$ arbitrarily close to $\omega_1$. Thus $\omega \mapsto \norm{\tilde{L}(n,\omega)\cdot}$ is a $\mu$-continuous map into $(\mathrm{BPSH}(\R^{\cC}),\norm{\cdot}_{\R^{\cC}})$.

Then, by the reverse triangle inequality and Lemma \ref{lem:norm-eta-conv}, we see that for $\omega \in \tilde{\Omega}$, there exists $N = N(\omega)$ such that for all $g\in \cC$ with $\norm{g} \leq 1$ and $n\geq N$, we have:
\begin{align*}
\abs{\ \norm{\tilde{L}(n,\omega)g}-\eta(\omega,g)} & \leq \norm{\tilde{L}(n,\omega)g-\eta(\omega,g)v(\sigma^n(\omega))} \\
& \leq C_{\omega} \norm{g}\left( e^{\diam_\theta(L(n,\omega)\cC)}-1 \right),
\end{align*}
where $C_{\omega} = \frac{1}{2}(D+1)D^2 e^{\diam_\theta(L(N,\omega)\cC)} \norm{\tilde{L}(N,\omega)}$. As $n$ tends to infinity, we have that $\left( e^{\diam_\theta(L(n,\omega)\cC)}-1 \right)$ tends to $0$ by Proposition \ref{prop:exp-bound}. Therefore $\norm{\tilde{L}(n,\omega)\cdot}$ converges pointwise in $\omega$ to $\eta(\omega,\cdot)$ in $(\mathrm{BPSH}(\R^{\cC}),\norm{\cdot}_{\R^{\cC}})$. By Lemma \ref{lem:limit-mu-cts}, $\omega \mapsto \eta(\omega,\cdot)$ is $\mu$-continuous into the space $(\mathrm{BPSH}(\R^{\cC}),\norm{\cdot}_{\R^{\cC}})$; since the topology on $(X^*,\norm{\cdot}_{C^*})$ is the restriction of the topology from $(\mathrm{BPSH}(\R^{\cC}),\norm{\cdot}_{\R^{\cC}})$ by Lemma \ref{lem:bpsh-dual}, we see that $\omega \mapsto \eta(\omega,\cdot)$ is $\mu$-continuous into $X^*$.

The operators $\pi_\omega$ and $I-\pi_\omega$ are $\mu$-continuous because $\eta(\omega,\cdot)$ is. By Proposition B.3.2 in \cite{thieullen-mu-cts}, the subspaces $\subspan_{\R}\{v(\omega)\} = \ker(I-\pi_\omega)$ and $\ker(\eta(\omega,\cdot)) = \ker(\pi_\omega)$ are both $\mu$-continuous with respect to the Grassmannian distance, since the kernel map is norm-continuous on projections.
\end{proof}

\begin{proof}[Proof of Theorem \ref{thm:pf}(\ref{thm:pf-le})]
First, we prove that for $\omega \in \tilde{\Omega}$, $v(\omega)$ has the largest Lyapunov exponent of any vector in $X$ for $L(n,\omega)$. Let $x\in X\setminus\{0\}$, and by And\^o's Lemma (\ref{lem:ando}), find $g_1,g_2\in \cC$ such that $x = g_1 - g_2$ and $\norm{g_i} \leq K\norm{x}$. By Lemma \ref{lem:order-bound} and multiplying by $\norm{L(n,\omega)v(\omega)}$, there exists $N = N(\omega)$ such that for all $n\geq N$ and $i=1,2$,
\begin{align*} 
L(n,\omega)g_i & \preceq \beta(v(\sigma^N(\omega)),\tilde{L}(N,\omega)g_i)\norm{L(n,\omega)v(\omega)}v(\sigma^n(\omega)) \\
& = \beta(v(\sigma^N(\omega)),\tilde{L}(N,\omega)g_i)L(n,\omega)v(\omega).
\end{align*} 
By the $D$-adapted condition, we see that \[ \norm{L(n,\omega)g_i} \leq D\cdot \beta(v(\sigma^N(\omega)),\tilde{L}(N,\omega)g_i)\norm{L(n,\omega)v(\omega)}. \] We may then bound above the Lyapunov exponent for $x$ over $\omega$:
\begin{align*}
\limsup_{n\to\infty} & \frac{1}{n}\log\norm{L(n,\omega)x} \leq \limsup_{n\to\infty} \frac{1}{n}\log\left( \norm{L(n,\omega)g_1} + \norm{L(n,\omega)g_2} \right) \\
& \leq \limsup_{n\to\infty} \left( \frac{1}{n}\log\left( \beta(v(\sigma^N(\omega)),\tilde{L}(N,\omega)g_1) + \beta(v(\sigma^N(\omega)),\tilde{L}(N,\omega)g_2) \right) \right. \\
& \hspace{50pt} \left. + \frac{1}{n}\log\norm{L(n,\omega)v(\omega)} \right) \\
& = \limsup_{n\to\infty} \frac{1}{n}\log\norm{L(n,\omega)v(\omega)}.
\end{align*}

We now show that $\tilde{L}(n,\omega)$ restricted to the kernel of $\eta(\omega,\cdot)$ has an exponential growth rate strictly less than $0$. By Lemma \ref{lem:norm-eta-conv}, for any $\omega\in\tilde{\Omega}$ there exists $N = N(\omega)$ and $C_{\omega}$ such that for all $n\geq N$ and $g\in\cC$, we have \[ \norm{\tilde{L}(n,\omega)g-\eta(\omega,g)v(\sigma^n(\omega))} \leq C_{\omega} \norm{g}\left( e^{\diam_\theta(L(n,\omega)\cC)}-1 \right). \] Since $X = \cC-\cC$, we apply And\^o's Lemma to get $K \geq 1$ such that for any $x\in X$, there exist $g_1,g_2\in \cC$ such that $x=g_1-g_2$ and $\norm{g_i} \leq K\norm{x}$. Suppose that $x\in\ker(\eta(\omega,\cdot))$. Then, by the triangle inequality we obtain:
\begin{align*} 
\norm{\tilde{L}(n,\omega)x} & = \norm{\tilde{L}(n,\omega)x-\eta(\omega,x)v(\sigma^n(\omega))} \\ 
& \leq 2KC_{\omega}\norm{x}\left( e^{\diam_\theta(L(n,\omega)\cC)}-1 \right).
\end{align*}
Only the diameter term depends on $n$. By Proposition \ref{prop:exp-bound} and Lemma \ref{lem:j-estimate} applied in order, we have (because $\tanh\left( \frac{D_P}{4} \right) \in (0,1)$):
\begin{align*}
\norm{\tilde{L}(n,\omega)x} & \leq 2KC_{\omega} \left( e^{\tanh\left( \frac{D_P}{4} \right)^{j^+(\omega,n)-1}\cdot D_P} - 1 \right) \\
& \leq 2KC_{\omega} \left( e^{\tanh\left( \frac{D_P}{4} \right)^{l^+(\omega,n)/k_P-2} \cdot D_P} - 1 \right).
\end{align*}
Note that $\log(e^x-1)$ is asymptotically equivalent to $\log(x)$ as $x$ tends to $0$ and that $n^{-1}l^+(\omega,n)$ tends to $\mu(G_P)$ for all $\omega\in \tilde{G}$, by Birkhoff's theorem. Taking logarithms, dividing by $n$, and taking a $\limsup$, we thus have:
\begin{align*}
\limsup_{n\to\infty} \frac{1}{n}\log\norm{\tilde{L}(n,\omega)x} & \leq \limsup_{n\to\infty} \frac{1}{n}\log\left( \frac{2KC_{\omega}D_P}{\tanh\left( \frac{D_P}{4} \right)^2} \right) \\
& \hspace{50pt} + \frac{1}{n}\log\left( \tanh\left( \frac{D_P}{4} \right)^{l^+(\omega,n)/k_P} \right) \\
& = \limsup_{n\to\infty} \frac{1}{k_P}\cdot \frac{l^+(\omega,n)}{n}\log \left( \tanh\left( \frac{D_P}{4} \right) \right) \\
& = -\frac{\mu(G_P)}{k_P}\log \left( \tanh\left( \frac{D_P}{4} \right)^{-1} \right),
\end{align*}
where we use an explicit negative sign to indicate the sign of the quantity. The inequality in the theorem statement follows by rewriting $\norm{\tilde{L}(n,\omega)}$.

Next, assume that $\lambda^* = \lim_{n\to\infty} \frac{1}{n}\log\norm{L(n,\omega)}$ is finite. Then by Proposition 14 in \cite{flq-2}, we see that the Lyapunov exponent $\lambda_1(\omega)$ for $v(\omega)$ is equal to $\lambda^*$ for all $\omega \in \tilde{\Omega}$. Moreover, Lemma \ref{lem:birk-sums-conv-L1} tells us that because the Birkhoff sums converge $\mu$-almost everywhere, $\log(\phi) \in L^{1}(\mu)$, and so $\lambda^* = \int_{\Omega} \log(\phi)\ d\mu$. As well, the bound on the Lyapunov exponent for $\tilde{L}(n,\omega)$ on $\ker(\eta(\omega,\cdot))$ and the equivariance of the decomposition show that the top exponent $\lambda_1$ has multiplicity one (corresponding to $\subspan_{\R}\{v(\omega)\}$).

Finally, we want to show that the projections are norm-tempered, that is, $\norm{\pi_{\omega}}$ and $\norm{I-\pi_{\omega}}$ are tempered functions. To do this, we observe that \[ 1 \leq \norm{\pi_\omega} = \norm{\eta(\omega,\cdot)} \leq 2KD\ e^{\diam_\theta(\tilde{L}(N,\omega)\cC)} \norm{\tilde{L}(N,\omega)}, \] where we may choose $N = N(\omega)$ to be the first time that $\sigma^n(\omega)$ enters $G_P$ (for $n\geq 1$). Then the exponential term is at most $e^{D_P}$ for every $\omega$, and it remains to deal with the $\norm{\tilde{L}(N(\omega),\omega)}$ term. We have:
\begin{align*}
0 & \leq \frac{1}{n}\log\left( \frac{\norm{L(N(\sigma^n(\omega)),\sigma^n(\omega))}}{\norm{L(N(\sigma^n(\omega)),\sigma^n(\omega))v(\sigma^n(\omega))}} \right) \\
& \leq \frac{1}{n} \sum_{i=0}^{N(\sigma^n(\omega))-1} \left( \log\norm{L(1,\sigma^{n+i}(\omega))}-\log(\phi(\sigma^{n+i}(\omega))) \right).
\end{align*}
By Lemma \ref{lem:weird-birk-sum}, this last sum converges to $0$ for $\mu$-almost every $\omega$, because the Birkhoff sums for the two individual sums converge to the same finite quantity and $N$ satisfies the hypothesis of the lemma, by Lemma \ref{lem:first-entry-conv}. Thus we have that $n^{-1}\log\norm{\tilde{L}(N(\sigma^n(\omega)),\sigma^n(\omega))}$ converges to $0$, so that the norm of $\pi_{\omega}$ is tempered. Clearly, $\norm{I-\pi_{\omega}} \leq 1 + \norm{\pi_{\omega}}$, so the norm of $I-\pi_{\omega}$ is also tempered. The proof of Theorem \ref{thm:pf} is complete. 
\end{proof}

\section{A Balanced Lasota-Yorke-type Inequality}
\label{sect:ly-ineq}

\subsection{Setting and Bounded Variation}
\label{subsect:bv-setting}

The Banach space setting for the application in Section \ref{sect:app-pf-ops} is $BV[-1,1] \subset L^{\infty}([-1,1],\lambda)$, where $\lambda$ is the normalized Lebesgue measure on $[-1,1]$. For the purposes of providing a more transferable result, we will state and prove the result for more general spaces than closed subintervals of the real line, along the same lines as Rychlik \cite{rychlik}. We will also use an idea from Eslami and G\'ora, who considered points they informally called ``hanging'' \cite{eslami-gora}. 

Let $(X,\leq) = ([a_X,b_X],\leq)$ be a totally ordered order-complete set equipped with its order topology, and equip $X$ with a complete regular Borel probability measure $\lambda$. Let $\{I_n\}_{n\in N}$ be a countable cover of $X$ by closed intervals $I_n = [a_n,b_n]$, with $a_n < b_n$; $N$ may be finite or countably infinite. Denote by $I_n^o$ the interval $(a_n,b_n)$. (In general, $I_n^o$ is not the topological interior, because there could exist isolated points in the order topology, but we will assume no isolated points exist.) Suppose that we have: 
\begin{itemize}
\item $I_n^o \cap I_m^o = \emptyset$ for all $n\ne m$;
\item $\bigcup_n I_n^o$ is dense and has measure $1$.
\end{itemize}
Let $T : X\to X$ be a map such that both of the following hold:
\begin{itemize}
\item $\restr{T}{I_n^o}$ is continuous and extends to a homeomorphism $T_n : I_n \to T_n(I_n) \subset X$;
\item There exists a bounded measurable function $g : X\to [0,\infty)$ such that $L(f)(x) := \sum_{y\in T^{-1}(x)} g(y)f(y)$ defines an operator that preserves $\lambda$ (that is, $\lambda(L(f)) = \lambda(f)$ for all $f\in L^1(X)$), $g$ has one-sided limits at the endpoints of each $I_n$, and $g$ is $0$ at the endpoints of each $I_n$.
\end{itemize}
We make two further assumptions:
\begin{itemize}
\item The intervals $I_n$ are as large as possible (so that the endpoints are places where $T$ is not continuous or not monotone);
\item There are no isolated points in $X$.
\end{itemize}
Denote the collection of assumptions about $X$ and $T$ by $(\cM)$. The intervals $I_n$ are sometimes called intervals of monotonicity. Let $K_n = T_n(I_n)$ be the image of the homeomorphism $T_n$. Setting $g$ to be equal to $0$ at endpoints of intervals of monotonicity simplifies the computations with $L$ while also not actually affecting any of the calculations to be done later; note that there is no requirement that $g$ is continuous at endpoints; in most cases the one-sided limits are non-zero.

The following lemma is mostly Rychlik's Remark 2 in \cite{rychlik}; it is necessary because the assumptions made on $L$ do not explicitly give these properties. Stating the situation in this manner also allows us to collect easily-referenced results, as well as do some basic computations regarding the operator $L$; in addition, we record the important fact that our assumptions rule out the case of $\lambda$ giving positive measure to a singleton.

\begin{lem}
\label{lem:L-props}
Let $X$ and $T$ satisfy the assumptions in $(\cM)$. The operator $L$ has the following properties:
\begin{enumerate}
\item $L(f\circ T\cdot h) = f\cdot L(h)$ for $f\in L^{\infty}(X)$ and $h\in L^{1}(X)$;
\item $L(\mathds{1}_{A}) = g\circ \bar{T}_n^{-1}\cdot \mathds{1}_{T_n(A)}$, for $n\in N$ and $A\subset I_n$ measurable.
\end{enumerate} Moreover, we have:
\begin{enumerate}
\setcounter{enumi}{2}
\item $T$ is non-singular with respect to $\lambda$;
\item $g$ is almost everywhere non-zero, $\mathds{1}_{I_n}\frac{1}{g} \in L^{1}(X)$ for each $n\in N$, and $L\left(\mathds{1}_{I_n}\frac{1}{g}\right) = \mathds{1}_{K_n}$ for each $n\in N$;
\item $\frac{1}{g}$ is the Jacobian of $T$, in the sense that for each $n$ and $f\in L^{\infty}(X)$, \[ \int_{K_n} f\ d\lambda = \int_{I_n} f\circ T\cdot \frac{1}{g}\ d\lambda; \]
\item the image of a measure zero set under $T$ is a measure zero set;
\item $\lambda\{x\} = 0$ for all $x\in X$.
\end{enumerate} Finally, $L$ is the Perron-Frobenius operator on $L^{1}(X)$ corresponding to $T$.
\end{lem}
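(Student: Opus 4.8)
The plan is to derive all seven items, and the final identification, from the single pointwise identity asserted in~(1) together with the assumed $\lambda$-invariance of $L$; the rest is essentially formal bookkeeping. First I would verify~(1) at the level of honest Borel representatives: for every $x$, $L\big((f\circ T)\cdot h\big)(x)=\sum_{y\in T^{-1}(x)}g(y)\,f(T(y))\,h(y)=f(x)\sum_{y\in T^{-1}(x)}g(y)h(y)=f(x)\,L(h)(x)$, simply because $T(y)=x$ for each $y\in T^{-1}(x)$; here $T^{-1}(x)$ is at most countable ($T_n$ is injective on $I_n^o$, and there are at most countably many endpoints), so the sums are legitimate. Since $g\ge 0$ and $\lambda$ is preserved, $L$ is an $L^1$-contraction ($\|Lf\|_1\le\|L|f|\,\|_1=\int L|f|\,d\lambda=\|f\|_1$). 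Integrating~(1) against $\lambda$ and using $\lambda$-invariance on the $L^1$ function $(f\circ T)\,h$ then gives the workhorse duality $\int_X (f\circ T)\,h\,d\lambda=\int_X f\cdot L(h)\,d\lambda$ for $f\in L^\infty$, $h\in L^1$. (Here $f\circ T$ is well-defined on $L^\infty$-classes because $T$ is measurable --- it is continuous off the null set $X\setminus\bigcup_n I_n^o$ and $\lambda$ is complete --- and non-singular by~(3); so there is a small loop between~(1) and~(3) which one closes by first reading~(1) representative-wise.)

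I would then do~(3) and~(2). For~(3): if $\lambda(A)=0$, apply the duality with $f=\mathds 1_A$, $h=\mathds 1_{T^{-1}A}$ to get $\lambda(T^{-1}A)=\int_A L(\mathds 1_{T^{-1}A})\,d\lambda=0$, since $L(\mathds 1_{T^{-1}A})\in L^1$. For~(2): note first that every interval endpoint lies in the null set $X\setminus\bigcup_n I_n^o$ (otherwise two interiors $I_n^o,I_m^o$ would overlap), so all boundary contributions to $L(\mathds 1_A)$ are $\lambda$-a.e.\ irrelevant; then for $A\subseteq I_n$ the only surviving term in $L(\mathds 1_A)(x)=\sum_{y\in T^{-1}(x)}g(y)\mathds 1_A(y)$ is $y=\bar T_n^{-1}(x)$ for $x\in T_n(A)$ (endpoints have $g=0$, and $T_n$ is injective on $I_n^o$), giving $L(\mathds 1_A)=g\circ\bar T_n^{-1}\cdot\mathds 1_{T_n(A)}$. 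It is cleanest to prove this first for Borel $A$ (all that is needed downstream) and to upgrade to arbitrary measurable $A$ only after~(6), using that $T_n$ is a homeomorphism and sends null sets to null sets.

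Items~(4)--(6) and the final claim then fall in order. For~(4): if some Borel $B\subseteq I_n^o$ had $\lambda(B)>0$ with $g|_B\equiv 0$, the duality applied to $f=\mathds 1_{T_n(B)}$, $h=\mathds 1_{I_n}$ would give $\lambda(B)=\int_{T_n(B)}g\circ\bar T_n^{-1}\,d\lambda=0$, a contradiction, so $\{g=0\}$ is $\lambda$-null; then the truncations $f_M:=\mathds 1_{I_n}\min(1/g,M)\in L^\infty$ satisfy $L(f_M)=\big(\min(1,Mg)\big)\circ\bar T_n^{-1}\cdot\mathds 1_{K_n}\uparrow\mathds 1_{K_n}$ a.e., whence $\int_{I_n}\tfrac1g\,d\lambda=\lim_M\int f_M\,d\lambda=\lim_M\int L(f_M)\,d\lambda=\lambda(K_n)<\infty$, and continuity of $L$ on $L^1$ identifies $L(\mathds 1_{I_n}/g)=\mathds 1_{K_n}$. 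For~(5): apply the duality with $h=\mathds 1_{I_n}/g$ and $f\in L^\infty$, using $L(\mathds 1_{I_n}/g)=\mathds 1_{K_n}$ on the right. For~(6): reduce to $Z$ Borel, write $T(Z)=\bigcup_n T_n(Z\cap I_n)$ up to countably many endpoint images, and apply~(5) to $f=\mathds 1_{T_n(Z\cap I_n^o)}$ to get $\lambda\big(T_n(Z\cap I_n^o)\big)=\int_{Z\cap I_n^o}\tfrac1g\,d\lambda=0$. Finally, once~(3) holds $T$ is $\lambda$-non-singular, so its transfer operator $\mathcal L_T$ is characterized by $\int_X(\mathcal L_T h)\,f\,d\lambda=\int_X h\,(f\circ T)\,d\lambda$; the duality from the first step is exactly this relation for $L$, so $L=\mathcal L_T$.

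The step I expect to be the main obstacle is~(7), the absence of atoms: it is the only place that genuinely needs the ``no isolated points'' hypothesis and the density/full-measure conditions, and the book-keeping is fiddly. My plan there is to argue by contradiction: if $\lambda(\{x_0\})>0$, then the direct identity $L(\mathds 1_{\{x_0\}})=g(x_0)\,\mathds 1_{\{T(x_0)\}}$ together with $\lambda$-invariance forces $g(x_0)>0$, so $x_0$ is not an endpoint and hence lies in a unique $I_n^o$, on which $T$ is a monotone homeomorphism; using that $X$ has no isolated points, at least one one-sided neighbourhood of $x_0$ in $I_n^o$ is nonempty, and I would then try to derive a contradiction by tracking $x_0$ and its forward $T$-orbit and playing the change-of-variables identity of~(5) and the push-forward relation $(T_n)_*(\lambda|_{I_n})=g\circ\bar T_n^{-1}\cdot\lambda$ against the boundedness of $g$ and the finiteness of $\lambda$. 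Packaging this cleanly --- and keeping the measurability reductions in~(2) and~(6) honestly non-circular --- is where the care lies.
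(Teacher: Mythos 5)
Your overall strategy --- read item (1) representative-wise, integrate against $\lambda$-invariance to get the duality $\int (f\circ T)\,h\,d\lambda=\int f\,L(h)\,d\lambda$, and then peel off (2)--(6) and the identification of $L$ with the transfer operator --- is the standard Rychlik-style argument; the paper itself gives no proof of this lemma (it only cites Rychlik's Remark 2), and your treatment of (1), (2), (3), the integrability half of (4), and the deduction of (5) and of the final claim is sound. The genuine gap is in the second half of (4), and it propagates. The monotone convergence $L(f_M)=\min(1,Mg)\circ \bar T_n^{-1}\cdot\mathds{1}_{K_n}\uparrow \mathds{1}_{K_n}$ a.e.\ is exactly the assertion that $g\circ \bar T_n^{-1}>0$ $\lambda$-a.e.\ on $K_n$, i.e.\ that $T_n$ maps the $\lambda$-null set $\{g=0\}\cap I_n$ to a $\lambda$-null set. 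That is precisely (a case of) item (6), which in your ordering is only obtained later from (5), which you deduce from (4): the argument is circular, and the circle cannot be broken by reordering, because the duality only ever yields the pull-back identity $\lambda(T_n^{-1}B)=\int_B g\circ \bar T_n^{-1}\,d\lambda$; it controls $\lambda\circ T_n^{-1}$, never $\lambda\circ T_n$, and absolute continuity of $\lambda|_{K_n}$ with respect to $(g\circ \bar T_n^{-1})\lambda$ is genuinely extra information. Indeed, under $(\cM)$ alone it can fail: take one branch $T_1$ a homeomorphism carrying a Lebesgue-null Cantor set $C\subset I_1^o$ onto a fat Cantor set $P$ (affine with slope $3/2$ on the complementary gaps), with $g\equiv 2/3$ on $I_1\setminus C$ and $g\equiv 0$ on $C$, plus a second affine full branch with $g\equiv 1/2$; then $L$ is well defined on $L^1$ and preserves $\lambda$, yet $L(\mathds{1}_{I_1}/g)=\mathds{1}_{K_1\setminus P}\neq\mathds{1}_{K_1}$ and (5), (6) fail. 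So this step silently uses structure not contained in $(\cM)$ --- in Rychlik's setting $g$ has bounded variation, and in the paper's application $g$ is piecewise constant and bounded below on branch interiors --- and you must either import such a hypothesis or prove $\lambda(T_n(N))=0$ for null $N\subset I_n$ directly before running (4).

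The second gap is item (7), which you do not prove, and the sketched orbit-tracking plan will not close. From $\lambda(\{x_0\})>0$ the preservation identity gives $g(x_0)\,\lambda(\{T(x_0)\})=\lambda(\{x_0\})$, so the forward orbit consists of atoms with masses $m_{k+1}=m_k/g(x_k)$; since $(\cM)$ imposes no expansion ($g$ may exceed $1$), these masses can be summable and neither the boundedness of $g$ nor the finiteness of $\lambda$ produces a contradiction. In fact $(\cM)$ as literally written does not exclude atoms at all: $T=\mathrm{id}$ on $[0,1]$ with $\lambda=\tfrac12(\mathrm{Leb}+\delta_{1/2})$, a single interval of monotonicity and $g=\mathds{1}_{(0,1)}$, satisfies every listed condition and $L$ preserves $\lambda$. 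So any honest proof of (7) has to invoke features beyond the duality (e.g.\ genuinely expanding branches, or $\lambda$ Lebesgue as in Rychlik and in the paper's application), and your write-up should say explicitly where that input enters rather than hoping the bookkeeping works out. (A small point in the other direction: in (2) you do not need the claim that every endpoint lies in the null set $X\setminus\bigcup_n I_n^o$ --- which is not entirely obvious in a general ordered $X$ --- since the endpoint contributions vanish simply because $g=0$ there.)
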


To rigorously describe the notion of ``hanging'' points, we will use another definition.

\begin{defn}
\label{defn:one-tailed}
Let $(X,\leq)$ be a totally ordered set equipped with its order topology; for $x\in X$, denote $U_x = \set{y\in X}{y > x}$ and $L_x = \set{y\in X}{y < x}$. For a point $x\in X$ and a closed interval $I\subset X$, we say that $(x,+) \in I$ when $x\in I$ and $x$ is in the closure of $I\cap U_x$. Similarly, we say that $(x,-) \in I$ when $x\in I$ and $x$ is in the closure of $I\cap L_x$. For $x\in X$ and $x\in \{+,1\}$, $(x,s)$ is called a \emph{one-tailed point}.
\end{defn}

One may also say that $(x,+) \in I$ when $x\in I$ and $x$ is adherent to $I\cap U_x$ (to use different terminology), and similarly for $(x,-)$. The broad interpretation is that $(x,s)$ is in some closed interval $I$ when it is possible to approach $x$ from the $s$ direction from within $I$; this approaching phenomenon is like the point $x$ having a tail in $I$. Observe that for two closed intervals $[c,x]$ and $[x,d]$, $(x,s)$ can be in at most one of the two intervals, so each one-tailed point is in at most one interval in a partition of $X$ via closed intervals.

For the next definition, recall the notation $f(x^s) := \lim_{y \to x^s} f(t)$, for $s\in \{+,-\}$ and functions $f$ where the limit exists.

\begin{defn}
\label{defn:hanging}
Consider a space and map $X$ and $T$ as above. Let \[ E = \set{(x,s) \in X}{x\in \{a_n,b_n\}_n, s\in \{+,-\}} \] be the collection of one-tailed points of $X$ located at endpoints of intervals of monotonicity for $T$, and let $(x,s) \in E$. We say that $(x,s)$ is a \emph{hanging point} for $T$ when $T(x^s) \notin \{a_X,b_X\}$, or alternatively $\mathds{1}_{\{a_X,b_X\}}(T(x^s)) = 0$. Denote the set of all hanging points of $T$ by $H$.
\end{defn}

\begin{eg}
\label{eg:hanging-tent}
The paired tent map in Figure \ref{fig:tent-graph} has four hanging points: $(-0.5,-)$, $(-0.5,+)$, $(0.5,-)$, and $(0.5,+)$. Here, $X = [-1,1]$, and we have that \[ E = \left\{ (-1,+),(-0.5,-),(-0.5,+),(0,-),(0,+),(0.5,-),(0.5+),(1,-) \right\}. \] For the one-tailed points $(x,s) \in E$ where $x \in \{-1,0,1\}$, we have $T(x^s) \in \{-1,1\}$, and in the other four cases we have $T(x^s) \in \{-1,1\}$, again by simply looking at the picture. Note that $T$ is continuous at $\pm 0.5$ (where there are hanging points), whereas it is not continuous at $0$ (where there are no hanging points). Jumps or lack thereof do not affect whether or not a one-tailed point is hanging.
\end{eg}

In our given situation, recall that the variation of a function $f : X\to \R$ over a non-empty set $C\subset X$ is given by \[ \Var_C(f) = \sup\set{\sum_{i=1}^m \abs{f(x_i)-f(x_{i-1})}}{x_0 < x_1 < \dots < x_m, x_i\in C}. \] Write $\Var(f) = \Var_X(f)$. For a $\lambda$-equivalence class of functions $[f]$ and a set $C\subset X$ with positive measure, let \[ \overline{\Var}_C[f] = \inf\set{\Var_C(g)}{g = f\ \lambda-\text{a.e.}}. \] Let $BV(X,\lambda)$ be given by \[ BV(X,\lambda) = \set{[f]\in L^{\infty}(X,\lambda)}{\overline{\Var}[f] < \infty}. \] Then $BV(X,\lambda)$ is a Banach space, with the norm $\norm{\cdot}_{BV} = \norm{\cdot}_1 + \overline{\Var}(\cdot)$. (The $1$-norm could be replaced with either the essential supremum $\norm{\cdot}_{\infty}$ or the essential infimum of $[\abs{f}]$.)

We list all of the properties of $BV(X,\lambda)$ and $\overline{\Var}$ that will be used in the remainder of the work in the following lemma. In particular, all of the estimates we will make can be made $\lambda$-almost everywhere, where for some estimates it is very important that no singletons are assigned positive measure by $\lambda$. For the remainder of the article we will abuse notation and write $f\in BV(X,\lambda)$ and $\Var(f)$ instead of $[f]$ and $\overline{\Var}[f]$.

\begin{lem}
\label{lem:bv-properties}
Let $X$, $\lambda$, and $T$ be as described above ($\cM$).
\begin{enumerate}
\item If $f\in BV(X,\lambda)$ and $C\subset X$ with non-zero measure, then \[ \esssup_C(\abs{f}) \leq \essinf_C(\abs{f}) + \Var(f) \leq \frac{1}{\lambda(C)}\int_C\abs{f}\ d\lambda + \Var(f). \]
\item If $f,g\in BV(X,\lambda)$ and $C\subset X$, then $\Var_C(fg) \leq \norm{\restr{f}{C}}_{\infty}\Var_C(g) + \norm{\restr{g}{C}}_{\infty}\Var_C(f)$.
\item If $f\in BV(X,\lambda)$, $s\in \{+,-\}$, $C$ is a closed interval with non-zero measure, and $(x,s)\in C$, then $f(x^s)$ is well-defined (independent of version of $f$) and satisfies \[ \abs{f(x^s)} \leq \essinf_C(\abs{f}) + \Var_C(f) \leq \frac{1}{\lambda(C)}\int_C\abs{f}\ d\lambda + \Var_C(f). \]
\item If $f\in BV(X,\lambda)$ and $X = \bigcup_{m=1}^M [x_{m-1},x_m]$ with $x_m < x_{m+1}$, then we have \[ \Var(f) = \sum_{m=1}^M \Var_{(x_{m-1},x_m)}(f) + \sum_{m=1}^{M-1} \abs{f(x^+)-f(x^-)}. \] 
\item If $(f_n)_{n=1}^{\infty} \subset BV(X,\lambda)$ with $\sum_n f_n = f$ converging $\lambda$-almost everywhere, then $\Var(f) \leq \sum_n \Var(f_n).$ 
\end{enumerate}
Now, assume that $H = [c,d] \subset X$ is a closed interval with non-zero measure such that $(c,+), (d,-)\in H$. For $f\in BV(X,\lambda)$, define $F_H(f)$ by (the equivalence class of) \[ F_H(f) = \begin{cases}
f(x), & x\in (c,d), \\
0, & x\notin H, \\
f(c^{+}), & x = c, \\
f(d^{-}), & x = d.
\end{cases} \] Then $F_H(f) \in BV(X,\lambda)$, $F_H$ is multiplicative, and we have \[ \Var(F_H(f)) = \Var_{(c,d)}(f) + \abs{f(c^+)}\cdot (1-\mathds{1}_{\{a_X\}}(c)) + \abs{f(d^-)}\cdot (1-\mathds{1}_{\{b_X\}}(d)), \] where $X=[a_X,b_X]$. Finally, for all $f\in L^1(X,\lambda)$, the operator $L$ has the form \[ L(f) = \sum_{n} F_{K_n}\left( g\circ T_n^{-1} \right) F_{K_n}\left( f\circ T_n^{-1} \right). \]
\end{lem}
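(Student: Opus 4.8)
The plan is to develop the pointwise theory of bounded variation on the order-complete totally ordered set $X$, with care about how one-sided limits behave on $\lambda$-equivalence classes, and then to read off every remaining assertion by bookkeeping. First I would record that any version $g$ of an element of $BV(X,\lambda)$ admits the usual Jordan-type decomposition $g=g_1-g_2$ with $g_i$ bounded and monotone — this goes through verbatim because $X$ carries its order topology and a monotone function on an order-complete totally ordered set has one-sided limits everywhere — so $g$ has well-defined one-sided limits $g(x^+),g(x^-)$ at each point and $\Var_C(g)<\infty$ on every interval $C$. The structural point to isolate, and the step I expect to be the main obstacle, is that \emph{every non-degenerate order-interval of $X$ has positive $\lambda$-measure}; this follows by combining the hypotheses in $(\cM)$ (no isolated points, $\bigcup_n I_n^o$ dense and of full measure) with Lemma~\ref{lem:L-props}(7) (singletons are $\lambda$-null), and it is what forces two $BV$ versions that agree $\lambda$-a.e.\ to have identical one-sided limits at every point — otherwise they would disagree on a whole order-interval, hence on a positive-measure set. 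Thus $f(x^s)$ depends only on the class $[f]$, which is exactly what makes the statement of the lemma (and Definition~\ref{defn:hanging}) well posed; the same positive-measure fact is what lets partition arguments be run through $\lambda$-a.e.\ points. Everything past this is one-variable bounded-variation calculus transcribed to the order topology.

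Granting that bridge, parts (1) and (3) come from $\bigl|\,\abs{g(x)}-\abs{g(y)}\,\bigr|\le\abs{g(x)-g(y)}\le\Var_C(g)$, applied at $\lambda$-a.e.\ points (resp.\ along a sequence of $\lambda$-a.e.\ points approaching $x$ from the $s$-side within the closed interval $C$), then taking essential infima and finally the infimum over versions; the second inequality in each is just $\essinf_C\abs f\le\lambda(C)^{-1}\int_C\abs f\,d\lambda$. Part (2) is the telescoping identity $f(x_i)g(x_i)-f(x_{i-1})g(x_{i-1})=f(x_i)(g(x_i)-g(x_{i-1}))+g(x_{i-1})(f(x_i)-f(x_{i-1}))$ plus the triangle inequality and $L^\infty$ bounds, passing to $\overline{\Var}$ at the end. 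Part (4) is additivity of variation under refinement of partitions: break $X$ at the cut points $x_1,\dots,x_{M-1}$, note that over the closed piece $[x_{m-1},x_m]$ the variation is $\Var_{(x_{m-1},x_m)}(g)$ plus the two endpoint discrepancies $\abs{g(x^{\pm})-g(x)}$, and observe that one may minimize the version independently on each open piece and place $g(x_m)$ between the two (version-independent) one-sided limits at each $x_m$, leaving exactly $\sum_m\Var_{(x_{m-1},x_m)}(f)+\sum_m\abs{f(x_m^+)-f(x_m^-)}$. Part (5) follows from lower semicontinuity of $\overline{\Var}$ under $\lambda$-a.e.\ convergence (once again using positivity of interval measures) together with finite subadditivity from the triangle inequality: if $h_N=\sum_{n\le N}f_n\to f$ $\lambda$-a.e.\ then $\Var(f)\le\liminf_N\Var(h_N)\le\liminf_N\sum_{n\le N}\Var(f_n)=\sum_n\Var(f_n)$.

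For the statements about $F_H$: since $(c,+),(d,-)\in H$, the limits $f(c^+),f(d^-)$ are well defined by part (3), so $F_H(f)$ is an unambiguous $L^\infty$-class with $\norm{F_H(f)}_1\le\norm f_1$, and applying part (4) to the partition of $X=[a_X,b_X]$ at the points $a_X,c,d,b_X$ (discarding degenerate or repeated pieces) yields its variation as $\Var_{(c,d)}(f)$, where $F_H(f)$ agrees with $f$, plus the jump at $c$ — which equals $\abs{f(c^+)-0}$ and is present exactly when there is a piece to its left, i.e.\ when $c\ne a_X$ — plus the analogous jump $\abs{f(d^-)}$ at $d$; off $H$ the function is $0$ and contributes nothing, giving the claimed formula and in particular $F_H(f)\in BV(X,\lambda)$. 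Multiplicativity $F_H(fg)=F_H(f)F_H(g)$ is then immediate pointwise from $(fg)(x^s)=f(x^s)g(x^s)$.

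Finally, for the identity $L(f)=\sum_n F_{K_n}(g\circ T_n^{-1})\,F_{K_n}(f\circ T_n^{-1})$, I would start from $L(f)(x)=\sum_{y\in T^{-1}(x)}g(y)f(y)$ and use that the preimage of $x$ inside $I_n^o$ is exactly $\{T_n^{-1}(x)\}$ when $x\in K_n^o$ and empty otherwise, so that for every $x$ outside the countable — hence $\lambda$-null — set of endpoint images and branch boundaries, $L(f)(x)=\sum_{n:\,x\in K_n^o}(g\circ T_n^{-1})(x)\,(f\circ T_n^{-1})(x)$. Since $F_{K_n}$ of either factor vanishes off $K_n$ and equals the composed function on $K_n^o$, this matches the claimed series term by term on a set of full measure, with convergence in $L^1(X,\lambda)$ because $L$ is $L^1$-bounded; the only discrepancy — at the $\lambda$-null set of endpoint images, where $g$ vanishes by hypothesis while $F_{K_n}$ records a one-sided limit of $g$ — is invisible in $L^1$.
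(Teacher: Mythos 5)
Your overall route is the natural one, and since the paper itself supplies no proof of this lemma (it is presented as a list of Rychlik-type standard facts), there is nothing to compare against except correctness: your treatments of (1), (2), (4), (5), the $F_H$ variation formula, and the $\lambda$-a.e.\ identification of $L(f)$ with $\sum_n F_{K_n}(g\circ T_n^{-1})F_{K_n}(f\circ T_n^{-1})$ are the standard one-variable bounded-variation arguments, correctly organized (the small glosses -- truncating a near-optimal version so that sup norms become essential sup norms in (2), gluing near-optimal versions of the open pieces in (4), extending a BV function from a full-measure set in (5) -- are routine). You have also correctly isolated the one structural fact on which everything hinges, namely that $\lambda$ charges every non-degenerate order-interval, which is what makes $f(x^s)$ version-independent and hence makes (3), (4), $F_H$, and Definition~\ref{defn:hanging} well posed.

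The gap is that your justification of that fact is not valid: it does \emph{not} follow from $(\cM)$ together with atomlessness. Take $X=[0,2]$ with $\lambda$ equal to Lebesgue measure on $[0,1]$ and zero on $(1,2]$, $I_1=[0,1/2]$, $I_2=[1/2,1]$, $I_3=[1,2]$, $T(x)=2x$ on $I_1$, $T(x)=2x-1$ on $I_2$, $T(x)=x-1$ on $I_3$, and $g=1/2$ on $I_1^o\cup I_2^o$, $g=0$ elsewhere. Then $X$ has no isolated points, $\bigcup_n I_n^o$ is dense with measure $1$, singletons are null, $T$ is piecewise a homeomorphism with maximal branches, and $L$ preserves $\lambda$; yet the non-degenerate interval $(1,2)$ is $\lambda$-null. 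In such a situation one-sided limits are genuinely not version-independent: $0$ and $\mathds{1}_{(1.2,1.4)}$ are $\lambda$-a.e.\ equal BV versions with different right limits at $x=1.2$, even though $C=[0.9,1.5]$ is a closed interval of positive measure containing $(1.2,+)$, so part (3) as stated fails there. So the positivity of $\lambda$ on non-degenerate intervals (equivalently, full support of $\lambda$) must be imposed as an additional hypothesis, or the limits $f(x^s)$ must be redefined as essential one-sided limits; it cannot be "derived by combining the hypotheses in $(\cM)$" as you claim. This is harmless for the paper's application, where $\lambda$ is normalized Lebesgue measure on $[-1,1]$ and full support is automatic, and with that assumption granted the rest of your argument goes through; but as written, the key step of your proof asserts an implication that is false.
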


\subsection{Statement and Proof of the Inequality}
\label{subsect:proof-ly-ineq}

\begin{prop}[Balanced Lasota-Yorke Inequality]
\label{prop:ly-ineq}
Let $X$ and $T$ satisfy the assumptions in $(\cM)$. Suppose further that \[ \sup_{n\in N}\left\{ \frac{\Var_{I_n^o}(g)}{\lambda(I_n)} \right\} < \infty, \quad \text{and} \quad \sum_{(z,s)\in H} g(z^s) < \infty. \] Then for any $f\in BV(X,\lambda)$ and any finite collection of closed intervals $\cJ = \{ J_m \}_{m=1}^M$ with disjoint non-empty interiors such that $\bigcup_{m=1}^M J_m$ contains $H$, we have:
\begin{align*}
\Var(L(f)) & \leq \left( \sup_n\left\{ \norm{\restr{g}{I_n^o}}_\infty + \Var_{I_n^o}(g) \right\} + \max_m\{ h_{\cJ}(m) \} \right) \Var(f) \\
& \hspace{20pt} + \left( \sup_n\left\{ \frac{\Var_{I_n^o}(g)}{\lambda(I_n)} \right\} + \max_m\left\{ \frac{h_{\cJ}(m)}{\lambda(J_m)} \right\} \right) \norm{f}_1,
\end{align*}
where $\displaystyle h_{\cJ}(m) := \sum_{(z,s)\in H\cap J_m} g(z^{s})$.
\end{prop}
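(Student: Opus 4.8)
The plan is to decompose $L(f)$ along the intervals of monotonicity of $T$, bound the variation of each piece using the product rule for $\Var$ together with the variation formula for the operators $F_{K_n}$, and then reassemble, keeping the essential-supremum and variation contributions of $g$ grouped so that the coefficient of $\Var(f)$ emerges as a single supremum over $n$ (this is what makes the inequality ``balanced''). Concretely, I would start from the representation $L(f) = \sum_n F_{K_n}\!\big(g\circ T_n^{-1}\big)F_{K_n}\!\big(f\circ T_n^{-1}\big)$ supplied by Lemma \ref{lem:bv-properties}, use the multiplicativity of $F_{K_n}$ to rewrite the $n$-th summand as $F_{K_n}\!\big((gf)\circ T_n^{-1}\big)$, and apply subadditivity of $\Var$ on $\lambda$-a.e.\ convergent series (Lemma \ref{lem:bv-properties}(5)) to obtain $\Var(L(f)) \le \sum_n \Var\!\big(F_{K_n}((gf)\circ T_n^{-1})\big)$.

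Next I would apply the variation formula for $F_{K_n}$ from Lemma \ref{lem:bv-properties}, with $K_n = T_n(I_n)$. Its ``bulk'' term $\Var_{K_n^o}\!\big((gf)\circ T_n^{-1}\big)$ equals $\Var_{I_n^o}(gf)$ because $T_n$ is a monotone homeomorphism of $I_n$ onto $K_n$. Each of its two endpoint terms equals $g(z^s)\,\abs{f(z^s)}$ weighted by $1-\mathds{1}_{\{a_X,b_X\}}(T(z^s))$, where $(z,s)$ is whichever of $(a_n,+)$, $(b_n,-)$ is carried by $T_n$ to that endpoint of $K_n$; since $K_n$ is non-degenerate, this weight is exactly the indicator that $(z,s)$ is a hanging point. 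As $n$ ranges over $N$, the pairs $\{(a_n,+),(b_n,-)\}$ exhaust the set $E$ of one-tailed points at endpoints of intervals of monotonicity, so summing over $n$ gathers the endpoint terms precisely into a sum over the hanging set, and I arrive at
\[ \Var(L(f)) \le \sum_n \Var_{I_n^o}(gf) \;+\; \sum_{(z,s)\in H} g(z^s)\,\abs{f(z^s)}. \]

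It then remains to estimate the two sums. For the first, the product rule (Lemma \ref{lem:bv-properties}(2)) together with $\norm{\restr{f}{I_n^o}}_\infty \le \essinf_{I_n^o}\abs{f} + \Var_{I_n^o}(f) \le \lambda(I_n)^{-1}\int_{I_n}\abs{f}\,d\lambda + \Var_{I_n^o}(f)$ (Lemma \ref{lem:bv-properties}(1)) gives, \emph{before} summing, $\Var_{I_n^o}(gf) \le \big(\norm{\restr{g}{I_n^o}}_\infty + \Var_{I_n^o}(g)\big)\Var_{I_n^o}(f) + \Var_{I_n^o}(g)\,\lambda(I_n)^{-1}\int_{I_n}\abs{f}\,d\lambda$; summing over $n$ and using $\sum_n \Var_{I_n^o}(f) \le \Var(f)$ and $\sum_n \int_{I_n}\abs{f}\,d\lambda = \norm{f}_1$ (which follow from Lemma \ref{lem:bv-properties}(4) applied to finite sub-collections completed to partitions of $X$, since the $I_n$ tile $X$ up to a $\lambda$-null set) yields the first bracketed coefficient times $\Var(f)$ plus $\sup_n \lambda(I_n)^{-1}\Var_{I_n^o}(g)$ times $\norm{f}_1$. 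For the second sum, $\bigcup_m J_m \supseteq H$ together with the disjointness of the $J_m^o$ forces each hanging point into exactly one $J_m$; discarding any $J_m$ with $h_{\cJ}(m)=0$ (the bound being vacuous if some remaining $J_m$ is $\lambda$-null), I would group the sum by $m$, bound $\abs{f(z^s)} \le \essinf_{J_m}\abs{f} + \Var_{J_m}(f) \le \lambda(J_m)^{-1}\int_{J_m}\abs{f}\,d\lambda + \Var_{J_m}(f)$ via Lemma \ref{lem:bv-properties}(3) for $(z,s)\in H\cap J_m$, factor out $h_{\cJ}(m) = \sum_{(z,s)\in H\cap J_m} g(z^s)$, and apply $\sum_m \int_{J_m}\abs{f}\,d\lambda \le \norm{f}_1$ and $\sum_m \Var_{J_m}(f) \le \Var(f)$ (the latter again from Lemma \ref{lem:bv-properties}(4), since in the $\overline{\Var}$ convention the variation is unchanged by including or excluding the endpoints of a closed interval). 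Adding the two estimates is the proposition.

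I expect the main obstacle to be the bookkeeping in the second step: correctly matching, for each interval of monotonicity, the two endpoints of $K_n$ with the correct one-tailed points of $I_n^o$ under the possibly orientation-reversing homeomorphism $T_n$, and verifying that the weight $1-\mathds{1}_{\{a_X,b_X\}}(T(z^s))$ coming out of the variation formula for $F_{K_n}$ coincides with the indicator of $(z,s)\in H$ — so that the endpoint contributions reassemble into a sum over the hanging set, not over all of $E$. The ``balanced'' feature, getting $\sup_n\{\norm{\restr{g}{I_n^o}}_\infty + \Var_{I_n^o}(g)\}$ as a single supremum rather than a sum of two, is then merely a matter of not splitting the product-rule estimate prematurely; everything else is a routine assembly of the facts collected in Lemma \ref{lem:bv-properties}.
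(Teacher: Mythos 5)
Your proposal follows essentially the same route as the paper's proof: the decomposition of $L(f)$ via the $F_{K_n}$ representation from Lemma \ref{lem:bv-properties}, the identification of the endpoint weights $1-\mathds{1}_{\{a_X,b_X\}}(T(z^s))$ with the hanging-point indicator so the boundary contributions collapse to a sum over $H$, the grouped product-rule estimate on each $\Var_{I_n^o}(gf)$ that keeps $\norm{\restr{g}{I_n^o}}_\infty + \Var_{I_n^o}(g)$ together as the coefficient of $\Var_{I_n^o}(f)$, and the covering of $H$ by the intervals $J_m$ with the pointwise bound $\abs{f(z^s)} \leq \lambda(J_m)^{-1}\int_{J_m}\abs{f}\,d\lambda + \Var_{J_m}(f)$. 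The argument is correct and matches the paper's proof step for step, with only cosmetic differences (e.g.\ invoking multiplicativity of $F_{K_n}$ before applying its variation formula).
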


\begin{proof}
Recall that $K_n = T_n(I_n)$ and $K_n^o = T(I_n^o)$; let $K_n = [c_n,d_n]$. By Lemma \ref{lem:bv-properties}, we know that for $f\in BV(X,\lambda)$, we have: \[ \Var(L(f)) \leq \sum_{n} \Var\left( F_{K_n}\left( g\circ T_n^{-1}\cdot f\circ T_n^{-1} \right) \right). \] 
Each of the variation terms breaks into three parts: the variation for the interior and the two endpoint terms. Suppose that for $n$, $T_n$ is increasing. Then $T_n(a_n) = c_n$ and $T_n(b_n) = d_n$, and the $1-\mathds{1}_{\{a_X\}}(c_n)$, $1-\mathds{1}_{\{b_X\}}(d_n)$ terms are non-zero exactly when $(a_n,+)$, $(b_n,-)$ are hanging points for $T$. Thus we have 
\begin{equation*}
\begin{split}
g\circ T_n^{-1}(c_n^+)&\abs{f\circ T_n^{-1}(c_n^+)}\cdot (1-\mathds{1}_{\{a_X\}}(c_n)) \\
& + g\circ T_n^{-1}(d_n^-)\abs{f\circ T_n^{-1}(d_n^-)}\cdot (1-\mathds{1}_{\{b_X\}}(d_n)) \\
& = g(a_n^+)\abs{f(a_n^+)}\cdot \mathds{1}_H(a_n,+) + g(b_n^-)\abs{f(b_n^-)}\cdot \mathds{1}_H(b_n,-).
\end{split}
\end{equation*}
The case where $T_n$ is decreasing yields the same equation. For the interior term, since $T_n$ is either increasing or decreasing, we have that \[ \Var_{T(I_n^o)}\left( g\circ T_n^{-1}\cdot f\circ T_n^{-1} \right) = \Var_{I_n^o}(g\cdot f). \] Putting these estimates together, we have \[ \Var(L(f)) \leq \sum_n \Var_{I_n^o}(g\cdot f) + g(a_n^+)\abs{f(a_n^+)}\cdot \mathds{1}_H(a_n,+) + g(b_n^-)\abs{f(b_n^-)}\cdot \mathds{1}_H(b_n,-). \]

For the sum of the variation terms, we use the product inequality and the estimates on the essential supremum to see that \[ \Var_{I_n^o}(g\cdot f) \leq \left( \norm{\restr{g}{I_n^o}}_{\infty} + \Var_{I_n^o}(g) \right)\cdot \Var_{I_n^o}(f) + \frac{\Var_{I_n^o}(g)}{\lambda(I_n^o)}\int_{I_n^o} \abs{f}\ d\lambda. \] We have assumed that the ratio of the variation of $g$ on intervals $I_n^o$ over the measure of $I_n^o$ is bounded in $n$, so we obtain 
\begin{align*} 
\sum_{n} \Var_{I_n^o}(g\cdot f) & \leq \sup_{n\in N} \left\{ \norm{\restr{g}{I_n^o}}_{\infty} + \Var_{I_n^o}(g) \right\} \sum_n \Var_{I_n^o}(f) \\ 
& \hspace{50pt} + \sup_{n\in N}\left\{ \frac{\Var_{I_n^o}(g)}{\lambda(I_n^o)} \right\} \int_X \abs{f}\ d\lambda.
\end{align*}

For the sum of the hanging point terms, we see that it is simply a sum over all of the hanging points for $T$. Again, using Lemma \ref{lem:bv-properties}, for a hanging point $(x,s)$ in the interval $J_m$ we obtain \[ \abs{f(x^s)} \leq \frac{1}{\lambda(J_m)}\int_{J_m} \abs{f}\ d\lambda + \Var_{J_m}(f). \] For $H$ the set of hanging points, we have 
\begin{equation*}
\begin{split}
\sum_{n\in N} \left( g(b_n^+)\abs{f(b_n^-)}\cdot \mathds{1}_H(b_n,+) \right. & \left. + g(a_n^+)\abs{f(a_n^+)}\cdot \mathds{1}_H(a_n,+) \right) \\
& = \sum_{m=1}^M \sum_{(x,s)\in H\cap J_m} g(x^s)\abs{f(x^s)}.
\end{split}
\end{equation*}
We then bound each $\abs{f(x^s)}$ term above by the estimate using the containing interval, and obtain:
\begin{align*}
\sum_{m=1}^M & \sum_{(x,s)\in H\cap J_m} g(x^s)\abs{f(x^s)} \\
& \leq \sum_{m=1}^M \frac{1}{\lambda(J_m)}\left( \sum_{(x,s)\in H\cap J_m} g(x^s) \right)\int_{J_m} \abs{f}\ d\lambda \\
& \hspace{50pt} + \sum_{m=1}^M \left( \sum_{(x,s)\in H\cap J_m} g(x^s) \right) \Var_{J_m}(f) \\
& \leq \max_m\left\{ \frac{h_{\cJ}(m)}{\lambda(J_m)} \right\} \norm{f}_1 + \max_m\left\{ h_{\cJ}(m) \right\} \Var(f),
\end{align*}
where we use the notation $h_{\cJ}(m)$ for the sum of the $g(x^s)$ terms over $(x,s)$ in $J_m$. The proposition statement follows by utilizing the two upper bounds simultaneously.
\end{proof}

\section{Application to Cocycles of Perron-Frobenius Operators}
\label{sect:app-pf-ops}

Let $(\Omega,\cB,\mu,\sigma)$ be a fixed ergodic invertible probability-preserving system. The goal of this section is to use Theorem \ref{thm:pf} to examine random dynamical systems based on the following class of maps.

\begin{defn}
\label{defn:paired-tent}
Let $\epsilon_1,\epsilon_1 \in [0,1]$ be given. Let $T_{\epsilon_1,\epsilon_2} : [-1,1] \to [-1,1]$ be given by:
\[ T_{\epsilon_1,\epsilon_2}(x) = \begin{cases}
2(1+\epsilon_1)(x+1) - 1, & x\in [-1,-1/2], \\
-2(1+\epsilon_1)x - 1, & x\in [-1/2,0), \\
0, & x = 0, \\
-2(1+\epsilon_2)x + 1, & x\in (0,1/2], \\
2(1+\epsilon_2)(x-1) + 1, & x\in [1/2,1].
\end{cases} \]
The map $T_{\epsilon_1,\epsilon_2}$ will be called a \emph{paired tent map}.
\end{defn}

The name arises because there are two tent maps paired together to create a single map. See Figure \ref{fig:tent-graph} for an example. 
\begin{figure}[tb]
\includegraphics[width=0.75\textwidth]{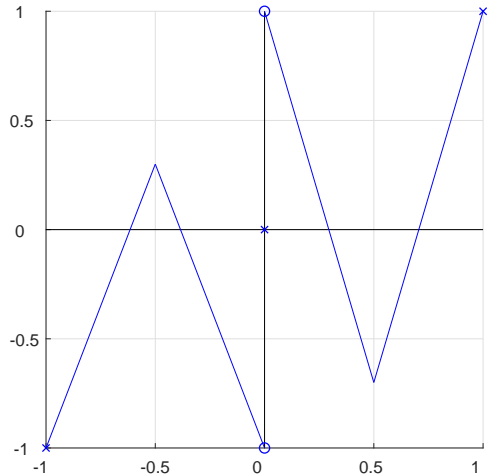}
\caption{The paired tent map, with parameters $\epsilon_1 = 0.3$ and $\epsilon_2 = 0.7$.}
\label{fig:tent-graph}
\end{figure}

Fix measurable functions $\epsilon_1, \epsilon_2 : \Omega \to [0,1]$. For each $\omega \in \Omega$ let $T_{\omega} := T_{\epsilon_1(\omega),\epsilon_2(\omega)}$, and let $L_{\omega}$ be the Perron-Frobenius operator corresponding to $T_\omega$. This generates a random dynamical system: a cocycle of maps $T_{\omega}^{(n)}$ and the associated cocycle of Perron-Frobenius operators $L_{\omega}^{(n)}$ over the base timing system $(\Omega,\cB,\mu,\sigma)$. For shorthand, we will write $T_{\omega}^{(n)}$ for the system, since the base timing system is given.

Our approach towards proving Theorem B involves utilizing our cocycle Perron-Frobenius theorem for bounded operators on a Banach space that preserve and occasionally contract a cone. This means that we need to find a Banach space and a cone, decide what our operators are, show that the operators actually preserve and sometimes contract the cone, and conclude the argument by applying the theorem with concrete estimates on the necessary quantities. This will show that the top Lyapunov space is one-dimensional and that the second Lyapunov exponent is bounded away from $0$ by some quantity computable from $\epsilon_1$ and $\epsilon_2$. To prove Corollary C, we will perform a similar but more intricate calculation making use of the scaling parameter.

For our Banach space, we will use functions of bounded variation, which is a non-separable space. Following Buzzi \cite{buzzi-doc} and as in Section \ref{sect:ly-ineq}, we view $BV[-1,1]$ as a subspace of $L^{\infty}([-1,1],\lambda)$, where the elements of $BV[-1,1]$ are equivalence classes of functions where there exists one member of the class with bounded variation and $\lambda$ is normalized Lebesgue measure. To apply our cocycle Perron-Frobenius theorem in the setting of bounded variation, we must ensure $\mu$-continuity of the cocycle of operators; to do this, we need to assume that $\epsilon_1$ and $\epsilon_2$ have countable range. This is because generally, Perron-Frobenius operators corresponding to different maps on the same space are uniformly far apart in the operator norm on $BV[-1,1]$; this is certainly the case in our situation.

For our cone, it turns out that the space $BV[-1,1]$ has a particularly nice cone, described by Liverani in \cite{liverani-annals-doc}. Given a positive real number $a$, we let the cone $\cC_a$ be given by \[ \cC_a = \set{f\in BV[-1,1]}{f \geq 0,\ \Var(f) \leq a\norm{f}_1}. \] It is easy to see that $\cC_a$ is convex, closed in the $BV[-1,1]$ norm topology, and closed under non-negative scalar multiples. The other properties of $\cC_a$ that are necessary to show for $\cC_a$ to be nice as in Definition \ref{defn:cones} can be found in Appendix \ref{append:d-adapt}.

The question of how to choose the correct cone $\cC_a$ may be approached by using a Lasota-Yorke inequality which holds for the situation. To apply our Cocycle Perron-Frobenius theorem, we want $L_{\omega}(\cC_a) \subset \cC_{\nu a}$ for some $\nu \in (0,1)$ and (almost) all $\omega$. If $L_{\omega}$ satisfies the Lasota-Yorke inequality \[ \Var(L_{\omega}(f)) \leq C_{\Var}\Var(f) + C_1\norm{f}_1 \] uniformly in $\omega$, then for $f\in \cC_a$, we have:
\begin{align*}
\Var(L_{\omega}(f)) & \leq C_{\Var}\Var(f) + C_1\norm{f}_1 \\
& \leq C_{\Var}a\norm{f}_1 + C_1\norm{f}_1 = \left( C_{\Var}a + C_1 \right) \norm{f}_1.
\end{align*}
Then, we find $a > 0$ and $\nu \in (0,1)$ that solve $C_{\Var}a + C_1 \leq \nu a$, which formally rearranges to \[ \frac{C_1}{\nu - C_{\Var}} \leq a. \] For this inequality to make sense with $\nu \in (0,1)$, we must have $C_{\Var} \in [0,1)$, and we must choose $\nu \in (C_{\Var},1)$. Then $a$ may be chosen to be the minimum value. The resulting cone $\cC_a$ would hence be preserved by (almost) every map $T_{\omega}$, and moreover would be mapped into $\cC_{\nu a}$. The cone distance on $\cC_a$ will be denoted by $\theta_a$.

Note that if the $L_{\omega}$ do not satisfy a uniform Lasota-Yorke-type inequality in $\omega$, then we may run into two separate problems. First, it could be that $C_{\Var}$ tends close to $1$, which means we cannot choose a $\nu$ as above, as every fixed $\nu$ would be smaller than some $C_{\Var}$. Second, it could be that $C_1$ tends to infinity, and so any fixed $a > 0$ would be smaller than the minimum possible value allowed by the map (using this method). These two issues are the basis for the requirement of uniformity in $\omega$ of the Lasota-Yorke-type inequality.

In addition, the first problem poses a challenge for our particular cocycle of maps: because we allow $T_{0,0}$ as a map and this map has derivatives with magnitude $2$ almost everywhere, we cannot only consider the first iterate $L_{\omega}$. All Lasota-Yorke-type inequalities will yield $C_{\Var} \geq 1$ for these instances of the map. Instead, we will generalize the idea of taking powers of a map to taking iterates of the cocycle, and use the second iterate cocycle. This allows use of our Lasota-Yorke inequality, though we have to deal with the added complexity of the maps. Moreover, the second iterate is a cocycle over $\sigma^2$, not $\sigma$; this turns out not to be an issue, thanks to standard ergodic theory techniques.

The remainder of this section is organized as follows: first, we briefly explain the reduction from the full cocycle to the second iterate cocycle, and then we use Proposition \ref{prop:ly-ineq} to obtain a uniform-in-$\omega$ Lasota-Yorke-type inequality, and choose our cone accordingly. We subsequently use the idea of covering, following both Liverani \cite{liverani-annals-doc} and Buzzi \cite{buzzi-doc}, to show that iterates of $L^{(2)}_{\omega}$ strictly contract the cone and thus obtain an explicit upper bound for the second-largest Lyapunov exponent for the cocycle. Finally, we investigate what happens when we scale the $\epsilon_1,\epsilon_2$ parameters by $\kappa$ and shrink $\kappa$ to zero, imitating a perturbation of the map $T_{0,0}$, and look at a specific collection of $\kappa$ to see that our bound is in some sense optimal.

\subsection{Reduction to Second-Iterate Cocycle}
\label{subsect:second-iterate}

We will state two well-known general lemmas: the first is a decomposition for maps $\sigma^k$ where $\sigma$ is ergodic, and the second indicates how the Lyapunov exponents of the first and second iterate cocycles are related. The application of the lemmas will occur in Section \ref{subsect:app-contr-cone}.

\begin{lem}[Cyclic Decomposition]
\label{lem:cyclic-decomp}
Let $k\in \Z_{\geq 1}$ and let $(\Omega,\mu,T)$ be an ergodic probability-preserving transformation. Then there exists an integer $l$ dividing $k$ and $A\subset \Omega$ measurable such that $\mu(A) = \frac{1}{l}$, the sets $\left\{ T^{-i}(A) \right\}_{i=0}^{l-1}$ are disjoint, $A = T^{-l}(A)$, and $(A,\restr{\mu}{A},T^k)$ is ergodic. Moreover, if $T$ is invertible, so is $T^k$ on $A$.
\end{lem}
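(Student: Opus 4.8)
The plan is to analyze the $\sigma$-algebra $\cI_k$ of (mod $\mu$) $T^k$-invariant sets and to show it is purely atomic with at most $k$ atoms, which $T$ permutes in a single cycle. Observe first that $E\in\cI_k$ if and only if $T^{-1}E\in\cI_k$, so $T$ acts (through $T^{-1}$) on $\cI_k$, while $T^{-k}$ acts on $\cI_k$ as the identity. The main step is to quantize the measures of sets in $\cI_k$: for $E\in\cI_k$ the function $\psi=\sum_{i=0}^{k-1}\mathds{1}_{T^{-i}E}$ satisfies $\psi\circ T=\psi$ (using $T^{-k}E=E$), hence is $\mu$-a.e.\ equal to a constant by ergodicity of $T$; since $\psi$ is integer-valued in $[0,k]$ and $\int\psi\,d\mu=k\mu(E)$, we get $\mu(E)\in\{0,1/k,2/k,\dots,1\}$. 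Because the values of $\mu$ on $\cI_k$ form a finite set, repeatedly removing a set of minimal positive measure (each such set is automatically an atom of $\cI_k$, of measure at least $1/k$) exhibits a partition $\Omega=A_1\sqcup\cdots\sqcup A_r$ (mod $\mu$) into atoms of $\cI_k$ with $r\le k$.

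Next I would pin down the action of $T$ on these atoms. Through $T^{-1}$, $T$ induces an automorphism of the finite Boolean algebra generated by $A_1,\dots,A_r$, i.e.\ a permutation of the atoms; the union of the atoms in any single cycle of that permutation is a $T$-invariant set, so ergodicity of $T$ forces the permutation to be one $r$-cycle. Set $l:=r$ and relabel so that the atoms are $A,T^{-1}A,\dots,T^{-(l-1)}A$ with $A:=A_1$. These are distinct atoms, hence pairwise disjoint, their union is $\Omega$, and $\mu$-invariance gives $\mu(A)=1/l$; moreover $T^{-l}A=A$ (mod $\mu$). Since $T^{-k}$ fixes every member of $\cI_k$, in particular $T^{-k}A=A$, which with $T^{-l}A=A$ and minimality of the cycle length gives $l\mid k$.

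It then remains to verify that $T^k$ is ergodic on $A$ and to settle invertibility. If $B\subseteq A$ is $T^k$-invariant (mod $\mu$), then $B\in\cI_k$ and $B$ lies inside the atom $A$, so $B$ is $\mu$-null or equals $A$ mod $\mu$; thus $(A,\restr{\mu}{A},T^k)$ is ergodic. If $T$ is invertible then $T^k$ is a $\mu$-preserving bijection of $\Omega$ with $T^kA=A=T^{-k}A$ (mod $\mu$), so it restricts to a $\mu$-preserving bijection of $A$. The one genuinely substantive point is the measure quantization on $\cI_k$ in the first paragraph; I would also emphasize that $A$ must be taken to be an \emph{atom} of $\cI_k$, rather than merely one block of a $T$-cyclic partition of $\Omega$, since that is exactly what makes $T^k$ (and not just $T^l$) ergodic on it. Everything else is routine bookkeeping with invariant sets under an ergodic transformation.
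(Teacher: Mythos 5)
The paper never proves this lemma at all: it is quoted as a well-known structural fact about powers of ergodic transformations, with only an informal remark after the statement, so there is no proof of record to compare yours against. Judged on its own, your argument is correct and essentially complete. The quantization step is the right engine: for $E\in\cI_k$ the function $\psi=\sum_{i=0}^{k-1}\mathds{1}_{T^{-i}E}$ is $T$-invariant a.e., hence a.e.\ constant, and being integer-valued with $\int\psi\,d\mu=k\mu(E)$ it forces $\mu(E)\in\{0,1/k,\dots,1\}$; from this the atomicity of $\cI_k$ with at most $k$ atoms, the single-cycle structure via ergodicity of $T$, the divisibility $l\mid k$ from the order of the cycle, and the ergodicity of $T^k$ on the atom $A$ (using $T^{-k}B\subseteq T^{-k}A=A$ mod $\mu$, so relative invariance really puts $B$ in $\cI_k$) all follow as you say. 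Two small points deserve to be made explicit if this were written out. First, when you assert that $T^{-1}$ induces an automorphism of the finite Boolean algebra of atoms, surjectivity is not automatic for non-invertible $T$; it follows precisely from your observation that $T^{-k}$ acts as the identity on $\cI_k$, since then every $E\in\cI_k$ equals $T^{-1}\bigl(T^{-(k-1)}E\bigr)$ with $T^{-(k-1)}E\in\cI_k$, and only then does the induced map permute the atoms. Second, your construction delivers the disjointness of $A,T^{-1}(A),\dots,T^{-(l-1)}(A)$, the identity $A=T^{-l}(A)$, and the invariance $T^k A=A$ only modulo $\mu$-null sets, whereas the statement asserts them exactly; either read the lemma mod $\mu$ (which is all the paper uses, Lyapunov exponents being a.e.\ notions) or perform a standard null-set modification of $A$ at the end. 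Neither point is a genuine gap.
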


The general point of this lemma is to illustrate the structure of powers of an ergodic map; the finest ergodic decompositions have a maximum number of ergodic components with positive measure, and the components map to each other cyclically via the original map. Moreover, the number of components must be a divisor of the power.

\begin{lem}
\label{lem:multiple-iter-LEs}
Let $L = L_{\omega}^{(n)} : X\to X$ be a cocycle of operators on $X$ over the ergodic, invertible base dynamical system $(\Omega,\mu,\sigma)$, where Lyapunov exponents are well-behaved. Let $P_{\omega} = L_{\omega}^{(k)}$ be the $k$-th iterate cocycle over $\sigma^k : \Omega_1 \to \Omega_1$, where $\Omega_1 \subset \Omega$ and $(\Omega_1,\restr{\mu}{\Omega_1},\sigma^k)$ is ergodic. Then the map $\cdot k : \mathrm{spec}(L) \to \mathrm{spec}(P)$ is an order-preserving bijection, i.e.\ the Lyapunov exponents of $L$ are just the Lyapunov exponents of $P$ scaled by $\frac{1}{k}$.
\end{lem}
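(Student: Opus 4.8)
```latex
The plan is to reduce the statement about Lyapunov exponents of the $k$-th iterate cocycle to a statement purely about Birkhoff/subadditive ergodic averages, using the hypothesis that ``Lyapunov exponents are well-behaved'' (i.e.\ the relevant subadditive ergodic limits exist $\mu$-a.e.\ and the full Oseledets-type filtration is available, say via an MET applicable to both $L$ over $\sigma$ and $P$ over $\sigma^k$). First I would note the basic algebraic identity: for $\omega \in \Omega_1$ and $m \geq 1$, the cocycle property gives
\[ P_{\omega}(m) \;=\; L_{\omega}^{(k)}\big|_{m\text{ times}} \;=\; L\big(km,\omega\big), \]
that is, the $m$-th iterate of the $k$-step cocycle $P$ is literally the $(km)$-th iterate of the original cocycle $L$. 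This is the only structural input; everything else is bookkeeping with limits.

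Next I would establish the correspondence of \emph{top} exponents. For $x \in X$ and $\omega \in \Omega_1$, along the subsequence $n = km$ we have $\tfrac{1}{km}\log\norm{L(km,\omega)x} = \tfrac1k\cdot\tfrac1m\log\norm{P_\omega(m)x}$; taking $m \to \infty$ shows that whenever the $P$-limit exists it equals $\tfrac1k$ times the $L$-limit along multiples of $k$. To see that the limit along multiples of $k$ agrees with the full limit for $L$, I would use that the base system is ergodic and that $\log^+\norm{L(1,\cdot)}$ is integrable (the standing integrability hypothesis, inherited from the ambient setup), so that $\tfrac1n\log\norm{L(1,\sigma^{n}\omega)} \to 0$ a.e.\ (temperedness of the generator), hence the maximal growth rate cannot change by more than $o(1)$ between consecutive times; thus $\lim_{n}\tfrac1n\log\norm{L(n,\omega)}$ exists and equals $\lim_{m}\tfrac1{km}\log\norm{L(km,\omega)}$. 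Combining, $\lambda_1(P) = k\,\lambda_1(L)$. The same argument applied to $L$ restricted to any equivariant subspace in the Oseledets filtration (which is $\sigma^k$-equivariant as well, being $\sigma$-equivariant) shows each exponent scales by $k$, so $\cdot k$ is well-defined on the full spectrum and clearly order-preserving.

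Then I would argue bijectivity and completeness of the correspondence. Injectivity and order-preservation are immediate from scaling by the positive constant $k$. For surjectivity, I would invoke the MET for $P$ over $(\Omega_1,\restr{\mu}{\Omega_1},\sigma^k)$ --- which applies because $(\Omega_1,\restr{\mu}{\Omega_1},\sigma^k)$ is ergodic (and invertible, by Lemma~\ref{lem:cyclic-decomp}) and $\log^+\norm{P_\omega(1)} = \log^+\norm{L(k,\cdot)} \in L^1$ by subadditivity of $n \mapsto \log^+\norm{L(n,\cdot)}$ --- to produce the Oseledets spaces for $P$; each such space has a $P$-exponent which, by the computation above, equals $k$ times an $L$-exponent, and conversely each $L$-exponent arises. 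A small point to check is that the Oseledets decomposition for $L$, a priori defined $\mu$-a.e.\ on $\Omega$, restricts correctly to $\Omega_1$: since $\Omega_1$ has positive measure and is $\sigma^k$-invariant, and the $L$-spaces are measurable and $\sigma$-equivariant, the restriction to $\Omega_1$ is $\sigma^k$-equivariant with the same exponents, and one uses ergodicity of $\sigma$ to transport information between $\Omega_1$ and its $\sigma$-translates.

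The main obstacle I expect is \emph{not} the top-exponent computation (that is routine) but rather the careful matching of multiplicities and the full filtration between the two cocycles when one only has the hypothesis ``Lyapunov exponents are well-behaved'' rather than a clean quasi-compactness assumption; in particular one must ensure that no exponent of $P$ fails to come from an $L$-exponent (e.g.\ that passing to the subsequence $n = km$ does not create spurious growth rates) and that the dimension counts of the Oseledets spaces match. The clean way around this is to cite the MET in the form used earlier in the paper (e.g.\ \cite{flq-2,gt-quas-semi-inv}) for \emph{both} $L$ and $P$, obtain both filtrations, and then the scaling identity $P_\omega(m) = L(km,\omega)$ forces the two filtrations to coincide as subspaces with exponents related by the factor $k$; uniqueness of the Oseledets decomposition then gives the order-preserving bijection on spectra. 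I would keep the proof short, essentially: (1) state the identity $P_\omega(m)=L(km,\omega)$; (2) invoke temperedness to pass from the subsequence $km$ to all $n$ for $L$; (3) invoke the MET for both cocycles; (4) conclude by uniqueness of Oseledets spaces.
```
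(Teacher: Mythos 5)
The paper offers no proof of this lemma to compare against: it is stated (alongside Lemma \ref{lem:cyclic-decomp}) as a well-known fact and used as a black box, so your proposal can only be judged on its own merits. On those terms it is correct and is the standard argument. The three load-bearing points are all present: (i) the cocycle identity $P_\omega(m)=L(km,\omega)$ for $\omega\in\Omega_1$; (ii) the interpolation step, where for $km\le n<k(m+1)$ one sandwiches $\log\norm{L(n,\omega)x}$ between $\log\norm{L(km,\omega)x}$ and $\log\norm{L(k(m+1),\omega)x}$ up to at most $k$ gap terms of the form $\log^+\norm{L(1,\sigma^j\omega)}$, each $o(n)$ almost everywhere because $\log^+\norm{L(1,\cdot)}\in L^1(\mu)$; note that only upper bounds on the gap factors are needed, so this also handles vectors with exponent $-\infty$; and (iii) the matching of the two spectra with multiplicities, which you correctly reduce to the observation that the Oseledets filtration of $L$, restricted to the $\sigma^k$-invariant set $\Omega_1$, is $P$-equivariant and realizes growth rates $k\lambda_j$, so uniqueness of the filtration for $P$ (the MET applies over the ergodic, invertible system $(\Omega_1,\restr{\mu}{\Omega_1},\sigma^k)$ since $\log^+\norm{L(k,\cdot)}\in L^1$ by subadditivity) forces $\mathrm{spec}(P)=k\cdot\mathrm{spec}(L)$, ruling out spurious $P$-exponents. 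Two small remarks: the vague hypothesis ``Lyapunov exponents are well-behaved'' is precisely the licence to invoke an MET for both cocycles, so your closing strategy is the intended reading; and constancy of the $L$-exponents on $\Omega_1$ needs nothing beyond their a.e.\ constancy on $\Omega$ (ergodicity of $\sigma$) together with $\mu(\Omega_1)>0$ --- no separate transport argument between $\Omega_1$ and its $\sigma$-translates is required.
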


In our situation, we have $k=2$, and so either $\sigma^2$ is ergodic on $\Omega$ or it is ergodic on a set $\Omega_1$ with $\mu(\Omega_1) = 1/2$, and its complement $\Omega_2$ satisfies $\sigma(\Omega_1) = \Omega_2$. This plays a role later, because as we will see, the expansion factors of the second iterate depend on both the $\epsilon_i(\omega)$ and $\epsilon_i(\sigma(\omega))$ terms. We will then be able to recover inequalities for the second-largest Lyapunov exponent for the original cocycle from those for the second iterate cocycle. From here on, let $\Omega_1$ be a set on which $\sigma^2$ is ergodic (potentially all of $\Omega$, but possibly only half of the space), and let $\Omega_2$ be its complement.

\subsection{Uniform Balanced Lasota-Yorke-type Inequality}
\label{subsect:app-ly-ineq}

For notation, let $S_{\omega}$ be given by $S_{\omega} := T_{\omega}^{(2)} = T_{\sigma(\omega)}\circ T_{\omega}$, and let the associated Perron-Frobenius operator be $P_{\omega} := L_{\omega}^{(2)} = L_{\sigma(\omega)}\circ L_{\omega}$. $S_\omega$ is the composition of two piecewise linear maps, and is therefore piecewise linear. Moreover, it has finitely many branches, because the two maps in the composition both have finitely many branches. Thus $S_{\omega}$ is piecewise $C^{1}$ on $[-1,1]$, and it is straightforward to verify that $S_{\omega}$ and the space $([-1,1],\lambda)$ satisfy the assumptions in $(\cM)$; one such $S_{\omega}$ is pictured in Figure \ref{fig:sec-tent-small-eps}. We prove the following proposition by applying Proposition \ref{prop:ly-ineq} with explicit numbers.

\begin{figure}[tbp]
\includegraphics[width=0.75\textwidth]{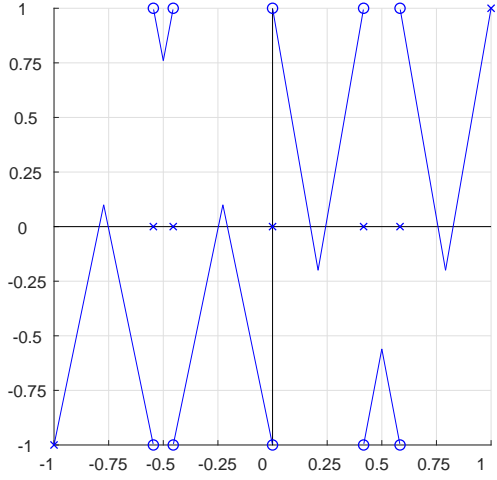}
\caption{The second iterate of the coupled tent map, $S_{\omega}$, with parameters $\epsilon_1(\omega) = 0.1$, $\epsilon_2(\omega) = 0.2$, $\epsilon_1(\sigma(\omega)) = 0.1$, and $\epsilon_2(\sigma(\omega)) = 0.2$.}
\label{fig:sec-tent-small-eps}
\end{figure}

\begin{figure}[tbp] 
\includegraphics[width=0.75\textwidth]{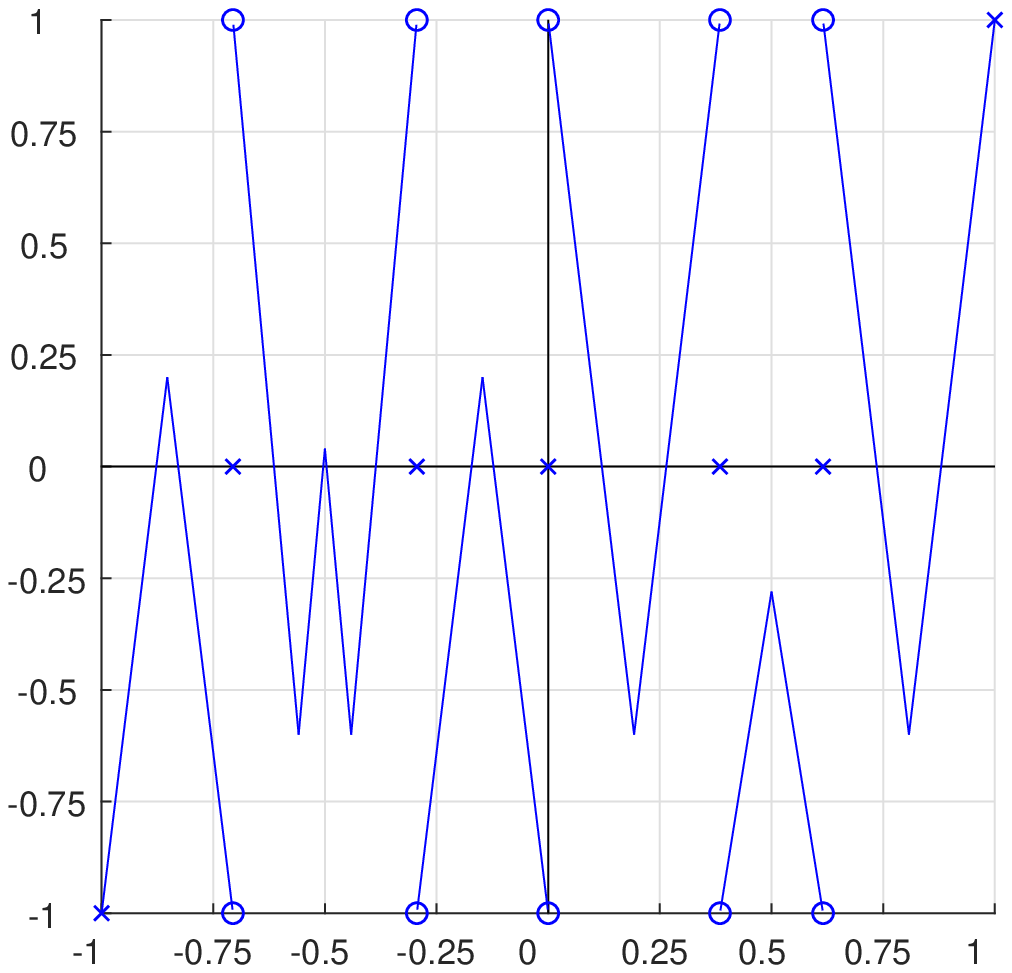}
\caption{The second iterate of the coupled tent map, $S_{\omega}$, with parameters $\epsilon_1(\omega) = 0.7$, $\epsilon_2(\omega) = 0.3$, $\epsilon_1(\sigma(\omega)) = 0.2$, and $\epsilon_2(\sigma(\omega)) = 0.6$.}
\label{fig:sec-tent-large-eps}
\end{figure}

\begin{prop}
\label{prop:tent-map-ly-ineq}
For any paired tent map cocyle $T_{\omega}$ over $\sigma$ and associated second-iterate P-F operator $P_{\omega}$, we have that for any $f\in BV[-1,1]$: \[ \Var(P_{\omega}(f)) \leq \frac{3}{4}\Var(f) + 6\norm{f}_1. \] If moreover $\epsilon_1,\epsilon_2 \leq 1/2$ on $\Omega$, then we have the sharper estimate \[ \Var(P_{\omega}(f)) \leq \frac{1}{2}\Var(f) + 4\norm{f}_1. \]
\end{prop}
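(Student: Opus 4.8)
The plan is to apply Proposition~\ref{prop:ly-ineq} to the second-iterate map $S_{\omega}=T_{\sigma(\omega)}\circ T_{\omega}$. First I would note that $S_{\omega}$ together with $([-1,1],\lambda)$ satisfies the assumptions in $(\cM)$ --- immediate, since $S_{\omega}$ is piecewise linear with finitely many branches --- and that the two summability hypotheses of Proposition~\ref{prop:ly-ineq} hold: because $S_{\omega}$ is piecewise linear its weight $g$ equals $1/\abs{S_{\omega}'}$, which is \emph{constant on each interval of monotonicity}, so $\Var_{I_n^o}(g)=0$ for every $n$, and $S_{\omega}$ has only finitely many hanging points, so $\sum_{(z,s)\in H}g(z^{s})<\infty$. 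With those variation terms vanishing, the balanced inequality collapses to
\[
\Var(P_{\omega}f)\ \le\ \Big(\sup_{n}\norm{\restr{g}{I_n^o}}_{\infty}+\max_{m}h_{\cJ}(m)\Big)\Var(f)\ +\ \Big(\max_{m}\frac{h_{\cJ}(m)}{\lambda(J_m)}\Big)\norm{f}_{1},
\]
and since $\abs{S_{\omega}'}\ge 2\cdot 2=4$ everywhere (chain rule, both factors $\ge 2$) we get $\sup_{n}\norm{\restr{g}{I_n^o}}_{\infty}\le\tfrac14$. Everything therefore reduces to choosing, for each fixed $\omega$, a good finite cover $\cJ$ of the hanging set $H=H(\omega)$ and controlling $\max_{m}h_{\cJ}(m)$ and $\max_{m}h_{\cJ}(m)/\lambda(J_m)$.

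The core of the argument is an explicit description of $H(\omega)$. The turning and jump points of $S_{\omega}$ are the critical points $\{0,\pm\tfrac12\}$ of $T_{\omega}$ together with the preimages $T_{\omega}^{-1}(\{0,\pm\tfrac12\})$, and $(x,s)\in H$ precisely when $S_{\omega}(x^{s})\notin\{-1,1\}$. A short case check shows that the jump points (those over $0$) and the domain endpoints contribute nothing, so every hanging point lies at $\pm\tfrac12$ or at a $T_{\omega}$-preimage of $\pm\tfrac12$, whose branchwise locations are given by explicit linear formulas. These group into six clusters that are pairwise separated by a definite amount inside $[-1,1]$: four ``simple'' clusters, each the $\pm$-pair at a single turning point of $S_{\omega}$ and each of total $g$-weight at most $\tfrac14$, and two clusters near $-\tfrac12$ and near $+\tfrac12$. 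The decisive structural fact is that the cluster near $-\tfrac12$ (respectively $+\tfrac12$) contains more than two hanging points only when $\epsilon_1(\omega)\ge\tfrac12$ (respectively $\epsilon_2(\omega)\ge\tfrac12$); in that case it contains at most six hanging points, all lying in a short interval about that peak, every $g$-weight there is at most $\tfrac16$ --- because the relevant branch of $T_{\omega}$ then has slope magnitude at least $3$ while $\abs{T_{\sigma(\omega)}'}\ge 2$ --- and splitting the cluster at the peak itself leaves at most three hanging points on each side.

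For the first (general) bound I would take $\cJ$ to consist of one interval of $\lambda$-measure at least $\tfrac1{12}$ around each of the four simple clusters, and, for each of the two peak clusters, two intervals of $\lambda$-measure at least $\tfrac1{12}$ obtained by splitting at the peak; the cluster separation and the smallness of the peak clusters make such a choice fit inside $[-1,1]$. Then every $J_m$ carries either at most two hanging points of weight $\le\tfrac14$ or at most three of weight $\le\tfrac16$, so $h_{\cJ}(m)\le\tfrac12$, and since $\lambda(J_m)\ge\tfrac1{12}$ we get $h_{\cJ}(m)/\lambda(J_m)\le 6$; plugging in yields $\Var(P_{\omega}f)\le(\tfrac14+\tfrac12)\Var(f)+6\norm{f}_{1}$. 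For the sharper bound, the hypothesis $\epsilon_1,\epsilon_2\le\tfrac12$ forces each peak cluster back down to a single $\pm$-pair, so all six clusters are simple with $g$-weight $\le\tfrac14$; I would then split every cluster at its turning point into two singleton intervals of $\lambda$-measure at least $\tfrac1{16}$, so $h_{\cJ}(m)\le\tfrac14$ and $h_{\cJ}(m)/\lambda(J_m)\le 4$, giving $\Var(P_{\omega}f)\le(\tfrac14+\tfrac14)\Var(f)+4\norm{f}_{1}$.

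The main obstacle is the hanging-point analysis, and within it the regime where some $\epsilon_i(\omega)$ is just above $\tfrac12$: there, extra turning points of $S_{\omega}$ crowd into a neighborhood of a peak that shrinks as $\epsilon_i(\omega)\to\tfrac12^{+}$, and a careless cover gives a variation coefficient above $1$. The inequality is saved only by the compensation noted above --- those extra turning points appear exactly when the relevant slope is large enough to bring the corresponding $g$-weights down to $\tfrac16$ --- together with splitting such clusters precisely at the peak, so that each resulting interval, although possibly thin, has total hanging weight at most $3\cdot\tfrac16=\tfrac12$. Checking that the covers really do fit inside $[-1,1]$ with the stated lower bounds on $\lambda(J_m)$ is routine but slightly delicate, and uses the explicit cluster positions.
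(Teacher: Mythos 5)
Your proposal is correct and follows essentially the same route as the paper's proof: apply the balanced Lasota--Yorke inequality to the second-iterate map, use that $g$ is piecewise constant with $\sup g \le \tfrac14$ so the $\Var_{I_n^o}(g)$ terms vanish, and run a case analysis on the hanging points in which the decisive compensation is exactly the paper's --- the extra hanging points crowding near $\pm\tfrac12$ occur only when the relevant $\epsilon_i(\omega) > \tfrac12$, where the slopes force their weights down to at most $\tfrac16$ --- leading to covers with $h_{\cJ}(m)\le\tfrac12$, $\lambda(J_m)\ge\tfrac1{12}$ in general and $h_{\cJ}(m)\le\tfrac14$, $\lambda(J_m)\ge\tfrac1{16}$ when $\epsilon_1,\epsilon_2\le\tfrac12$, i.e.\ the same constants (your case-3 cover, splitting at the peak, differs only cosmetically from the paper's intervals delimited by the jump points). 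One small slip: a simple $\pm$-pair cluster has total $g$-weight up to $\tfrac12$ (each one-sided weight is $\le\tfrac14$), not $\le\tfrac14$ as you assert, but since your final accounting uses the per-point bound with two points per interval, the stated constants are unaffected.
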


\begin{proof}
We first compute the function $g_{\omega} = \abs{DS_{\omega}}^{-1}$. Set $D_i = \left[ \frac{i-3}{2},\frac{i-2}{2} \right]$ for $i=1,2,3,4$; these are the intervals of monotonicity for the maps $T_{\omega}$. Then let $I_{ij}(\omega) = D_i\cap T_{\omega}^{-1}(D_j)$; these are the intervals of monotonicity for $S_{\omega}$. Note that depending on $\epsilon_1(\omega)$ and $\epsilon_2(\omega)$, some of the $I_{ij}(\omega)$ may be empty (or single points, but single point intervals may as well be considered empty from the perspective of hanging points and integrals). When $\omega$ is understood, we will just write $I_{ij}$, and the index set is \[ N = N_{\omega} = \set{ij}{i,j\in \{1,2,3,4\},\ D_i \cap T_{\omega}^{-1}(D_j)\ \text{is non-trivial}}. \] The intervals $I_{ij}$ are ordered left-to-right along $[-1,1]$ as follows: \[ I_{11}, I_{12}, I_{13}, I_{14}, I_{24}, I_{23}, I_{22}, I_{21}, I_{34}, I_{33}, I_{32}, I_{31}, I_{41}, I_{42}, I_{43}, I_{44}. \] In Figure \ref{fig:sec-tent-large-eps}, the only empty intervals are $I_{31}$ and $I_{41}$, since the map does not make a ``W'' shape near $x=1/2$. In Figure \ref{fig:sec-tent-small-eps}, all of $I_{14}$, $I_{24}$, $I_{31}$, and $I_{41}$ are empty. When both $\epsilon_1$ and $\epsilon_2$ are zero, %and in Figure \ref{fig:sec-tent-zero-eps}, 
all of the intervals $I_{ij}$ surrounding $-1/2$ and $1/2$ (two on each side of each point) are empty; this is where exactly one of $i,j$ is at most $2$ are empty.

For notation, let $\eta_1 = \eta_2 = \epsilon_1$ and $\eta_3 = \eta_4 = \epsilon_2$. By the formula in Definition \ref{defn:paired-tent} and the Chain Rule, we see that for $x\in I_{ij}^o$, \[ g_{\omega}(x) = \frac{1}{4(1+\eta_j(\sigma(\omega))) (1+\eta_i(\omega))}. \] Of course, $g_{\omega} = 0$ at endpoints of the $I_{ij}$.

For different $\omega$, $S_{\omega}$ can look somewhat different, as shown in Figures %\ref{fig:sec-tent-zero-eps},
\ref{fig:sec-tent-small-eps} and \ref{fig:sec-tent-large-eps}. These differences turn out to be very important for our analysis, due to the differing numbers of branches and hanging points, and the measures of the branches. This is why the balanced (and looser) Lasota-Yorke-type inequality in Proposition \ref{prop:ly-ineq} becomes useful: we may find a common inequality for all of these maps, regardless of the exact form of the maps, by appropriate choice of intervals $J_m$ for each subcollection of maps.

Note that the map $S_{\omega}$ depends on all four quantities $\epsilon_1(\omega)$, $\epsilon_1(\sigma(\omega))$, $\epsilon_2(\omega)$, and $\epsilon_2(\sigma(\omega))$, but only the two quantities $\epsilon_1(\omega)$ and $\epsilon_2(\omega)$ affect the complexity of the map. By this, we mean that $\epsilon_1(\omega)$ and $\epsilon_2(\omega)$ change the branch structure of the map, since the intervals of monotonicity are $I_{ij} = D_i \cap T_{\omega}^{-1}(D_j)$, and $T_{\omega}$ only depends on $\epsilon_1(\omega)$ and $\epsilon_2(\omega)$. The other two quantities affect the expansion of the branches (how much of the space the branches cover), but they do not change the branch structure. Moreover, the quantities $\epsilon_1(\omega)$ and $\epsilon_2(\omega)$ only affect the behaviour of $S_{\omega}$ on $[-1,0]$ and $[0,1]$, respectively. To simplify the exposition, we will investigate the bounds for different ranges of $\epsilon_1$, obtain those for $\epsilon_2$ by symmetry, and then obtain the complete bound by taking the maximum over the pairings.

So, fix $\omega$; we will consider the intervals in $[-1,0]$, which means we will look at the three cases of $\epsilon_1(\omega)$: $\epsilon_1 = 0$, $\epsilon_1 \in (0,1/2]$, and $\epsilon_1 \in (1/2,1]$. Note that in every case, $g$ is constant on the (interiors of the) intervals of monotonicity $I_{ij}^o$, so that each of the $\Var_{I_{ij}^o}(g)$ terms is zero.

\begin{enumerate}
\item First, assume that $\epsilon_1(\omega) = 0$; this is where in Figure \ref{fig:sec-tent-small-eps}, there would only be the two large tents on $[-1,0]$, and nothing special happening near $x=-1/2$. The hanging points in $[-1,0]$ are $(-3/4,-)$, $(-3/4,+)$, $(-1/4,-)$, and $(-1/4,+)$. Choose \[ J_1 = [-1,-3/4],\ J_2 = [-3/4,-1/2],\ J_3 = [-1/2,-1/4],\ J_4 = [-1/4,0]. \] These happen to be $I_{11}$, $I_{12}$, $I_{22}$, and $I_{21}$, respectively; it is easy to see that each interval contains exactly one of the hanging points. Each of these intervals has measure $\lambda(J_m) = \frac{1}{4}\cdot\frac{1}{2} = \frac{1}{8}$, and we may compute $h_{\cJ}(m)$ for each $m$ using the definition: 
\begin{gather*}
h_{\cJ}(1) = g((-3/4)^-) = \frac{1}{4(1+\epsilon_1(\sigma(\omega)))} = g((-3/4)^+) = h_{\cJ}(2), \\
h_{\cJ}(3) = g((-1/4)^-) = \frac{1}{4(1+\epsilon_1(\sigma(\omega)))} = g((-1/4)^+) = \quad h_{\cJ}(4).
\end{gather*}

\item Next, assume that $\epsilon_1(\omega) \in (0,1/2]$, as in Figure \ref{fig:sec-tent-small-eps}. Here, there are two new hanging points at $x = -1/2$, because the map $T_{\omega}$ leaks mass near $-1/2$ into $[0,1]$ (which one can see in Figure \ref{fig:tent-graph}). The hanging points in $[-1,0]$ are $(t_1,\pm)$, $(1/2,\pm)$, and $(t_2,\pm)$, where \[ t_1(\omega) = -\frac{(3/4+\epsilon_1(\omega))}{1+\epsilon_1(\omega)}, \qquad t_2(\omega) = -\frac{1}{4(1+\epsilon_1(\omega))}; \] the points $t_1$, $1/2$, and $t_2$ are in order, left-to-right. We need to cover the six hanging points with intervals $J_m$, but we have to do this in such a way that the measures of the intervals cannot potentially vanish with shrinking $\epsilon_1$. Doing some simple analysis of $t_1$ and $t_2$ with respect to $\epsilon_1$ shows us that $t_1 \in \left[-\frac{5}{3}, -\frac{3}{4}\right]$ and $t_2 \in \left[-\frac{1}{4}, -\frac{1}{6}\right]$. Using this information, we can choose the following intervals:
\begin{alignat*}{6}
J_1 & {}={} [-1,t_1], & \quad J_2 & {}={} [t_1,-5/8], & \quad J_3 & {}={} [-5/8,-1/2], & \\
J_4 & {}={} [-1/2,-3/8], & \quad J_5 & {}={} [-3/8,t_2], & \quad J_6 & {}={} [t_2,0]. &
\end{alignat*}
Each of these intervals has length at least $\frac{1}{16}$, and contains exactly one hanging point. Moreover, at each hanging point $(x,s)$, \[ g_{\omega}(x^s) = \frac{1}{4(1+\eta_i(\omega))(1+\eta_j(\sigma(\omega)))} \leq \frac{1}{4(1+\epsilon_1(\omega))} \leq \frac{1}{4}, \] which is the best uniform bound in this case, because $\epsilon_1(\omega)$ can approach $0$. This means that each $h_{\cJ}(m)$ is bounded above by $\frac{1}{4}$.

\item Finally, assume that $\epsilon_1(\omega) \in (1/2,1]$, as in Figure \ref{fig:sec-tent-large-eps}. The complexity of the map has increased again, in the sense that there are more branches and there are now ten hanging points. Moreover, this case poses a new difficulty, over the previous cases: depending on $\epsilon_1$, the hanging points in the middle branches can become arbitrarily close together. This means that we have two choices: either we change the intervals based on $\epsilon_1$ and end up with arbitrarily small intervals (as Rychlik does for a single map in \cite{rychlik}), or we restrict the size of our intervals and let there be multiple hanging points in some of the intervals. We cannot use the former option, because we need uniformity in the inequality, and choosing ever smaller intervals means the $\norm{\cdot}_1$ coefficient explodes. Thus, we choose intervals in such a way as to attempt to minimize the contributions from the $h_{\cJ}(m)$, while keeping the intervals from being too small. 

We choose the intervals \[ J_1 = [-1,j_1], \quad J_2 = [j_1,-1/2], \quad J_3 = [-1/2,j_2], \quad J_4 = [j_2,0], \] where $j_1$ and $j_2$ are the two jumps that the map $S_{\omega}$ takes in $[-1,0]$. The intervals $J_1$ and $J_4$ have measures at least $\frac{1}{8}$; the intervals $J_2$ and $J_3$ have measures at least $\frac{1}{12}$. Each of the intervals $J_1$ and $J_4$ contain two hanging points, and we have $h_{\cJ}(1),\ h_{\cJ}(4) \leq \frac{1}{3}.$ For the intervals $J_2$ and $J_3$, each of them contain three hanging points, but we obtain $h_{\cJ}(2),\ h_{\cJ}(3) \leq \frac{1}{2}$. The contribution to $h_{\cJ}$ remains controlled because the larger expansion rate for this $\omega$ counteracts the increase in complexity (in terms of number of branches). This is a key feature of the approach.
\end{enumerate}

As mentioned earlier, all of these computations would be analogous when performed on the other interval, $[0,1]$. So fix $\epsilon_1(\omega)$ and $\epsilon_2(\omega)$, and consider all of the bounds on the measures of the intervals and $h_{\cJ}$ together. We have:
\begin{gather*}
\sup_n\left\{ \norm{\restr{g}{I_n^o}}_\infty + \Var_{I_n^o}(g) \right\} \leq \max\left\{ \frac{1}{6}+0,\frac{1}{4}+0\right\} = \frac{1}{4}, \\
\max_m\left\{ h_{\cJ}(m) \right\} \leq \max\left\{ \frac{1}{4},\frac{1}{3},\frac{1}{2} \right\} = \frac{1}{2}, \\
\sup_{n\in N}\left\{ \frac{\Var_{I_n^o}(g)}{\lambda(I_n^o)} \right\} = 0, \\
\max_m\left\{ \frac{h_{\cJ}(m)}{\lambda(J_m)} \right\} \leq \max\left\{ \frac{1/4}{1/8}, \frac{1/4}{1/16},\frac{1/3}{1/8},\frac{1/2}{1/12} \right\} = 6,
\end{gather*}
allowing for every possible pairing of $\epsilon_1$ and $\epsilon_2$. This means that by Proposition \ref{prop:ly-ineq}, for any $f\in BV[-1,1]$ we have: \[ \Var(P_{\omega}(f)) \leq \left(\frac{1}{4}+\frac{1}{2}\right)\Var(f) + (0+6)\norm{f}_1 = \frac{3}{4}\Var(f) + 6\norm{f}_1. \] This is uniform in $\omega$. For the specific situation of $\epsilon_1,\epsilon_2 \leq 1/2$ on $\Omega$, doing the same for only cases 1 and 2 yields the inequality \[ \Var(P_{\omega}(f)) \leq \frac{1}{2}\Var(f) + 4\norm{f}_1, \] as desired.
\end{proof}

\subsection{Covering Properties}
\label{subsect:covering}

We briefly describe the notions of covering to be used, following Buzzi \cite{buzzi-doc}, and supply the main lemmas. We work in the $\lambda$-almost everywhere setting as Buzzi does, which differs from Liverani in \cite{liverani-bv-doc}.

\begin{defn}
\label{defn:covering}
Let $T_{\omega} : [-1,1]\to [-1,1]$ generate a cocycle of maps over $(\Omega,\mu,\sigma)$, each map satisfying the assumptions $(\cM)$, and suppose that for almost every $\omega$, $\lambda(T_{\omega}([-1,1])) = 1$. Let $L_{\omega}$ be the associated Perron-Frobenius operator. We say that the cocycle has the \emph{dynamical covering} property when for $\mu$-almost every $\omega$ and every interval $I\subset [-1,1]$ with positive measure, there exists an $m = m(\omega,I)$ such that for all $m' \geq m$, $\lambda(T^{m'}_{\omega}(I)) = 1$. We say that the cocycle has the \emph{functional covering} property when for $\mu$-almost every $\omega$ and every interval $I\subset [-1,1]$ with positive measure, there exists an $m = m(\omega,I)$ such that for all $m' \geq m$, \[ \essinf(L^{(m')}_{\omega}(\mathds{1}_{I})) > 0. \]
\end{defn}

In the two definitions of covering, we required an almost-onto condition: for almost every $\omega$, $\lambda(T_{\omega}([-1,1])) = 1$. Without this condition, neither covering property can be true. The next lemma indicates the relative strengths of the conditions and provides a computational estimate.

\begin{lem}
\label{lem:covering}
Let $T_{\omega} : [-1,1]\to [-1,1]$ generate a cocycle of maps over $(\Omega,\mu,\sigma)$, where each $T_{\omega}$ satisfies $(\cM)$ and has $\lambda(T_{\omega}([-1,1])) = 1$. If the cocycle has the functional covering property, then it also has the dynamical covering property. If the cocycle has the dynamical covering property and for $\mu$-almost every $\omega$, $\essinf(g_{\omega}) > 0$, then it has the functional covering property. In either case, if $m_{DC}$ and $m_{FC}$ are the integers in the definitions of dynamical and functional covering, respectively, then $m_{DC}(\omega,I) = m_{FC}(\omega,I)$; moreover, if in this case we have that for $\mu$-almost every $\omega$, $\essinf(g_{\omega}) > 0$, then for all $m' \geq m_{FC}(\omega,I)$, we have \[ \essinf\left(L^{(m')}_{\omega}(\mathds{1}_{I})\right) \geq \prod_{j=0}^{m'-1} \essinf(g_{\sigma^{j}(\omega)}) > 0. \]
\end{lem}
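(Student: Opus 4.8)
The plan is to prove the three assertions of Lemma~\ref{lem:covering} in the order they are stated, treating each as a short consequence of the definitions together with the basic identities for $L$ recorded in Lemma~\ref{lem:L-props} and Lemma~\ref{lem:bv-properties}.

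\textbf{Step 1: Functional covering implies dynamical covering.} Suppose the cocycle has the functional covering property, and let $I\subset[-1,1]$ have positive measure. Pick $\omega$ in the full-measure set where functional covering holds, and let $m=m_{FC}(\omega,I)$. For $m'\geq m$ we have $\essinf\big(L^{(m')}_\omega(\mathds{1}_I)\big)>0$, so in particular $L^{(m')}_\omega(\mathds{1}_I)>0$ $\lambda$-almost everywhere. By Lemma~\ref{lem:L-props}, $L^{(m')}_\omega$ is the Perron--Frobenius operator of $T^{(m')}_\omega$, so its support is (up to $\lambda$-null sets) $T^{(m')}_\omega(I)$; hence $\lambda\big(T^{(m')}_\omega(I)\big)=1$. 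Thus dynamical covering holds with $m_{DC}(\omega,I)\leq m_{FC}(\omega,I)$.

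\textbf{Step 2: Dynamical covering plus $\essinf(g_\omega)>0$ implies functional covering, with the quantitative bound.} Assume dynamical covering holds and that $\essinf(g_\omega)>0$ for $\mu$-a.e.\ $\omega$; work on the intersection of these two full-measure sets. Fix $I$ of positive measure and set $m=m_{DC}(\omega,I)$. The key is the pointwise lower bound: for any $h\in L^1$ with $h\geq 0$, the formula $L_\omega(h)(x)=\sum_{y\in T_\omega^{-1}(x)}g_\omega(y)h(y)$ gives, for $x\in T_\omega(\{h>0\})$,
\[
L_\omega(h)(x)\ \geq\ \essinf(g_\omega)\cdot\esssup_{y\in T_\omega^{-1}(x)}h(y)\ \geq\ \essinf(g_\omega)\cdot\big(\text{any value $h$ takes on a branch covering }x\big).
\]
More precisely, if $h\geq c>0$ $\lambda$-a.e.\ on its support and $\lambda(T_\omega(\{h>0\}))=1$, then $L_\omega(h)\geq \essinf(g_\omega)\cdot c$ $\lambda$-a.e.; here one uses that each $x$ has at least one preimage $y$ in the support of $h$ (by the onto condition and the branch structure) and $g_\omega(y)\geq\essinf(g_\omega)$ there. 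Starting from $\mathds{1}_I$ and iterating this estimate along the orbit $\omega,\sigma(\omega),\dots,\sigma^{m'-1}(\omega)$ for $m'\geq m$ (using that $\lambda(T^{(j)}_\omega(I))=1$ for each intermediate $j\geq m$, which follows from dynamical covering, and that the iterated Perron--Frobenius operator factors as $L^{(m')}_\omega=L_{\sigma^{m'-1}(\omega)}\circ\cdots\circ L_\omega$), we obtain
\[
\essinf\big(L^{(m')}_\omega(\mathds{1}_I)\big)\ \geq\ \prod_{j=0}^{m'-1}\essinf(g_{\sigma^j(\omega)})\ >\ 0 .
\]
This simultaneously establishes functional covering and the displayed inequality, and shows $m_{FC}(\omega,I)\leq m_{DC}(\omega,I)$.

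\textbf{Step 3: Equality of the two constants.} Combining Steps 1 and 2, under the hypotheses of the last two sentences of the lemma we have $m_{FC}(\omega,I)\leq m_{DC}(\omega,I)$ and $m_{DC}(\omega,I)\leq m_{FC}(\omega,I)$; both are defined as the least integer for which the respective ``for all $m'\geq m$'' condition holds, so they coincide. The final quantitative claim is exactly the inequality derived in Step 2, now valid for all $m'\geq m_{FC}(\omega,I)=m_{DC}(\omega,I)$.

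\textbf{Main obstacle.} The delicate point is Step 2: making rigorous the pointwise lower bound for $L_\omega$ applied to a function that is bounded below on its support, while respecting the $\lambda$-almost-everywhere framework (the endpoints of intervals of monotonicity, where $g_\omega=0$, form a $\lambda$-null set and can be ignored, but one must be careful that the ``at least one preimage in the support'' claim holds off a null set). Concretely, one writes $L_\omega(h)=\sum_n F_{K_n}(g_\omega\circ T_n^{-1})\,F_{K_n}(h\circ T_n^{-1})$ as in Lemma~\ref{lem:bv-properties}; since the terms are non-negative, $L_\omega(h)(x)$ is at least the contribution from any single branch $K_n$ containing $x$ with $I_n\cap\{h>0\}$ of full measure in $I_n$, giving $L_\omega(h)(x)\geq \essinf(g_\omega)\cdot\essinf_{I_n\cap\{h>0\}}(h)$, and then the onto hypothesis $\lambda(T_\omega([-1,1]))=1$ together with dynamical covering guarantees that $\lambda$-a.e.\ $x$ lies in such a branch. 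Everything else is bookkeeping with the cocycle identity and iteration.
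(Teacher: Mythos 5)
The paper states Lemma \ref{lem:covering} without proof, so there is no internal argument to compare against; judged on its own, your proposal follows the natural route and is essentially correct: functional covering forces $\lambda(T^{(m')}_\omega(I))=1$ because the (canonical version of the) support of $L^{(m')}_\omega(\mathds{1}_I)$ is contained in $T^{(m')}_\omega(I)$, and conversely one good preimage per point plus $\essinf(g_{\sigma^j(\omega)})>0$ along the orbit gives the product lower bound, whence $m_{FC}\le m_{DC}$, $m_{DC}\le m_{FC}$, and equality under the combined hypotheses.

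Two places need tightening, though neither is a fatal gap. First, your displayed ``more precise'' statement ($h\ge c$ a.e.\ on its support and $\lambda(T_\omega(\{h>0\}))=1$ imply $L_\omega(h)\ge \essinf(g_\omega)\,c$ a.e.) cannot be iterated from $\mathds{1}_I$ as written, because for $j<m_{DC}(\omega,I)$ the image $T^{(j)}_\omega(I)$ need not have full measure; at intermediate steps you must use the localized version you state in your first display, namely $L_\omega(h)\ge \essinf(g_\omega)\,c$ a.e.\ \emph{on} $T_\omega(\{h>0\})$ together with the fact that the support of $L_\omega(h)$ is essentially $T_\omega(\{h>0\})$, invoking full measure only at the final step $m'\ge m_{DC}$. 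Second, the sufficient condition in your ``main obstacle'' paragraph --- a branch $K_n\ni x$ with $I_n\cap\{h>0\}$ of full measure in $I_n$ --- is stronger than needed and can fail along the iteration; the correct mechanism is that a single good preimage suffices: the sets where $g_{\sigma^j(\omega)}<\essinf(g_{\sigma^j(\omega)})$ (or where $h$ drops below its essential lower bound on its support) are $\lambda$-null, their preimages are null by non-singularity (Lemma \ref{lem:L-props}(3)) and their images are null by Lemma \ref{lem:L-props}(6), so a.e.\ $x\in T^{(m')}_\omega(I)$ admits a preimage $y\in I$ whose whole orbit segment avoids these null sets, and the pointwise sum formula then gives $L^{(m')}_\omega(\mathds{1}_I)(x)\ge\prod_{j=0}^{m'-1}\essinf(g_{\sigma^j(\omega)})$. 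One also needs the full-measure set of $\omega$ to be intersected over the orbit, i.e.\ $\bigcap_{j\ge 0}\sigma^{-j}\{\essinf(g_\cdot)>0\}$, which is harmless since $\sigma$ preserves $\mu$. Finally, you are right to read the equality $m_{DC}(\omega,I)=m_{FC}(\omega,I)$ as holding under both hypotheses simultaneously (each inequality uses one direction), which is the sensible interpretation of the lemma's slightly loose phrasing.
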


Alone, the functional covering property does not allow us to perform explicit computations with the Perron-Frobenius operator acting on the cone $\cC_a$, because it only tells us about push-forwards of characteristic functions. The following lemma allows us to say something about functions in the cone $\cC_a$. It is similar to Lemma 3.2 in \cite{liverani-bv-doc}.

\begin{lem}
\label{lem:strong-func-cover}
Suppose that $T_{\omega} : [-1,1] \to [-1,1]$ generates a cocycle of almost-onto maps satisfying $(\cM)$, and suppose that it satisfies the functional covering property. Let $a > 0$, and let $\cQ = \{Q_k\}_{k=1}^K$ be a partition of $[-1,1]$ into closed intervals with $\lambda(Q_k) \leq (2a)^{-1}$ for each $k$. Let $m_C(\omega,\cQ) = \max_k\{m_{DC}(\omega,Q_k)\}.$ Then for $\mu$-almost every $\omega$, for all $h\in \cC_a$, and for $m \geq m_C(\omega,\cQ)$, we have \[ \essinf(L^{(m)}_{\omega}(h)) \geq \frac{1}{2} \min_k \{ \essinf(L^{(m)}_{\omega}(\mathds{1}_{Q_k})) \} \norm{h}_1. \] If moreover, $\essinf(g_\omega) > 0$ for almost every $\omega$, then \[ \essinf(L^{(m)}_{\omega}(h)) \geq \frac{1}{2} \left( \prod_{i=0}^{m-1} \essinf(g_{\sigma^i(\omega)}) \right) \norm{h}_1. \]
\end{lem}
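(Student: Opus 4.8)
The idea is to reduce the statement about an arbitrary $h \in \cC_a$ to a statement about a single characteristic function, and then invoke the functional covering estimate. By positive homogeneity of both sides of the claimed inequality, it suffices to prove it for $h \in \cC_a$ with $\norm{h}_1 = 1$, so that $\Var(h) \leq a$. The first and main step is a pigeonhole argument: I claim that some member $Q_{k^*}$ of the partition satisfies $\essinf_{Q_{k^*}}(h) \geq \tfrac12$. For each $k$, the localized form of the first estimate in Lemma~\ref{lem:bv-properties} (which holds with $\Var_{Q_k}$ in place of the total variation, by the same argument, since $\lambda$ charges no point, cf.\ Lemma~\ref{lem:L-props}) gives $\frac{1}{\lambda(Q_k)}\int_{Q_k} h\, d\lambda \leq \essinf_{Q_k}(h) + \Var_{Q_k}(h)$. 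Summing over $k$, using $\sum_k \lambda(Q_k) = 1$, the hypothesis $\lambda(Q_k) \leq (2a)^{-1}$, and $\sum_k \Var_{Q_k}(h) \leq \Var(h) \leq a$ (super-additivity of the variation over disjoint interiors, Lemma~\ref{lem:bv-properties}(4)), one obtains
\[
1 = \sum_k \int_{Q_k} h\, d\lambda \leq \sum_k \lambda(Q_k)\essinf_{Q_k}(h) + \frac{1}{2a}\cdot a = \sum_k \lambda(Q_k)\essinf_{Q_k}(h) + \frac12,
\]
so $\max_k \essinf_{Q_k}(h) \geq \sum_k \lambda(Q_k)\essinf_{Q_k}(h) \geq \tfrac12 = \tfrac12\norm{h}_1$. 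Fix $k^*$ attaining this maximum.

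The second step is the transfer. Since $h \geq 0$ we have $h \geq h\mathds{1}_{Q_{k^*}} \geq \essinf_{Q_{k^*}}(h)\,\mathds{1}_{Q_{k^*}} \geq \tfrac12\norm{h}_1\mathds{1}_{Q_{k^*}}$ $\lambda$-almost everywhere, so positivity of $L^{(m)}_\omega$ gives $L^{(m)}_\omega(h) \geq \tfrac12\norm{h}_1 L^{(m)}_\omega(\mathds{1}_{Q_{k^*}})$ $\lambda$-almost everywhere; taking essential infima and then bounding below by the minimum over all $k$ yields
\[
\essinf\big(L^{(m)}_\omega(h)\big) \geq \tfrac12 \norm{h}_1 \essinf\big(L^{(m)}_\omega(\mathds{1}_{Q_{k^*}})\big) \geq \tfrac12 \min_k\big\{\essinf\big(L^{(m)}_\omega(\mathds{1}_{Q_k})\big)\big\}\norm{h}_1.
\]
For this to have content we need each $\essinf(L^{(m)}_\omega(\mathds{1}_{Q_k}))$ to be genuinely positive; for $m \geq m_C(\omega,\cQ) = \max_k m_{DC}(\omega,Q_k)$ this is exactly the functional covering property applied to each $Q_k$ (a $\mu$-almost everywhere statement, hence the restriction on $\omega$), using Lemma~\ref{lem:covering} to identify $m_{DC}(\omega,Q_k) = m_{FC}(\omega,Q_k)$. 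This proves the first inequality.

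For the ``moreover'' clause, assume $\essinf(g_\omega) > 0$ for $\mu$-a.e.\ $\omega$. Then the final display of Lemma~\ref{lem:covering} gives $\essinf(L^{(m)}_\omega(\mathds{1}_{Q_k})) \geq \prod_{i=0}^{m-1}\essinf(g_{\sigma^i(\omega)})$ for every $k$ and every $m \geq m_C(\omega,\cQ)$; substituting into the bound from the second step yields $\essinf(L^{(m)}_\omega(h)) \geq \tfrac12\big(\prod_{i=0}^{m-1}\essinf(g_{\sigma^i(\omega)})\big)\norm{h}_1$, as desired. I expect the only mildly delicate point to be the localized oscillation bound in the first step: one must genuinely use the \emph{local} variation $\Var_{Q_k}(h)$ rather than the total variation, since it is precisely the balance between $\lambda(Q_k) \leq (2a)^{-1}$ and $\Var(h) \leq a$ that produces the factor $\tfrac12$; and one must keep track of the fact that all estimates are $\lambda$-almost-everywhere and that the shared endpoints of adjacent $Q_k$ contribute nothing, which is routine because $\lambda$ assigns zero mass to points.
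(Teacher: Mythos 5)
Your proof is correct, and it is essentially the argument the paper intends: the paper omits the proof and points to Lemma 3.2 of Liverani \cite{liverani-bv-doc}, whose proof is exactly your pigeonhole step (the balance $\lambda(Q_k)\leq (2a)^{-1}$ versus $\Var(h)\leq a\norm{h}_1$ forcing $\essinf_{Q_{k^*}}(h)\geq\tfrac12\norm{h}_1$ on some $Q_{k^*}$) followed by positivity of $L^{(m)}_{\omega}$ and the covering estimate from Lemma \ref{lem:covering}. Your care with the localized variation bound and the measure-zero endpoints is exactly the right bookkeeping, so nothing further is needed.
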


Thanks to this lemma, we have an almost-everywhere positive lower bound for the images of elements of the cone $\cC_a$ with unit integral after $m \geq m_C(\omega,\cQ)$ steps: \[ \frac{1}{2}\prod_{j=0}^{m-1} \essinf(g_{\sigma^{j}(\omega)}). \] We call $m_C(\omega,\cQ)$ the \emph{covering time} (with respect to $\cQ$).

\subsection{Preservation and Contraction of the Cone}
\label{subsect:app-contr-cone}

We now state and prove the precise version of Theorem \ref{thm:spec-gap}, quoted in the Introduction as Theorem B. 

\begin{thm}
\label{thm:spec-gap}
Let $(\Omega,\mu,\sigma)$ be an ergodic, invertible, probability-preserving transformation, and let $\epsilon_1, \epsilon_2 : \Omega \to [0,1]$ be measurable functions which are both not $\mu$-a.e.\ equal to $0$ and which both have countable range. Let
\begin{gather*} 
M_{\epsilon_1,\epsilon_2} = \frac{1}{2}\min\left\{ \int_{\Omega}\epsilon_1\ d\mu, \int_{\Omega}\epsilon_2\ d\mu \right\}, \\
D_{\epsilon_1,\epsilon_2} = 4(1+\max\{\esssup(\epsilon_1),\esssup(\epsilon_2)\})^2.
\end{gather*} 
Let $T_{\omega} = T_{\epsilon_1(\omega),\epsilon_2(\omega)}$ be generate a cocycle $T_{\omega}^{(n)}$ of paired tent maps, and suppose that $\sigma^2$ is ergodic on $\Omega_1 \subset \Omega$ with the restriction of $\mu$ to $\Omega_1$, with $\mu(\Omega_1) > 0$. Then there exists an explicit set $G_P\subset \Omega_1$ with positive measure and explicit numbers $a > 0$, $\nu \in (0,1)$, and $d\in \Z_{\geq 1}$ such that upon setting \[ k_P = \left\lceil\frac{\log(a)}{\log(1.5)}\right\rceil + 1 + d + \left\lceil -\frac{\log(M_{\epsilon_1,\epsilon_2})}{\log(4)} \right\rceil \] and $C = C(\epsilon_1,\epsilon_2,a,\nu,d)$ equal to \[ \frac{\restr{\mu}{\Omega_1}(G_P)}{2k_P}\log\left( \tanh\left( -\frac{1}{4}\log\left( \frac{2(1+\nu)}{1-\nu}(1 + \nu{}a)\right) + \frac{1}{4}k_P\log(D_{\epsilon_1,\epsilon_2})\right) \right) \] we have: \[ \lambda_2 \leq C(\epsilon_1,\epsilon_2,a,\nu,d) < 0 = \lambda_1, \] where $\lambda_1$ and $\lambda_2$ are the largest and second-largest Lyapunov exponents for the cocycle of Perron-Frobenius operators for $T_{\omega}^{(n)}$.
\end{thm}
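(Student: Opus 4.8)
The plan is to verify the hypotheses of Theorem \ref{thm:pf} applied to the second-iterate cocycle $P_\omega = L^{(2)}_\omega$ acting on $BV[-1,1]$, and then translate the resulting bound back to the original cocycle via Lemma \ref{lem:multiple-iter-LEs}. First I would use the uniform Lasota-Yorke inequality of Proposition \ref{prop:tent-map-ly-ineq}, which gives $\Var(P_\omega f) \le \tfrac34 \Var(f) + 6\|f\|_1$, to select the cone $\cC_a$: following the computation sketched before the statement, with $C_{\Var} = 3/4$ and $C_1 = 6$, one picks $\nu \in (3/4, 1)$ and then $a = 6/(\nu - 3/4)$, so that $P_\omega \cC_a \subset \cC_{\nu a} \subset \cC_a$ for all $\omega$; Appendix \ref{append:d-adapt} supplies that $\cC_a$ is a nice $D$-adapted cone. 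Since $\epsilon_1,\epsilon_2$ have countable range, the map $\omega \mapsto P_\omega$ is $\mu$-continuous into $(\cB(BV),\|\cdot\|_{\mathrm{op}})$ (the operator norm is constant on each of the countably many cells), so we are in the $\mu$-continuous case of Theorem \ref{thm:pf}; integrability of $\log^+\|P(1,\cdot)\|$ is automatic because the norms are uniformly bounded by the Lasota-Yorke constants.

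The core of the argument is producing the set $G_P$, the integer $k_P$, and the diameter bound $D_P$, i.e.\ showing that on a positive-measure set, a fixed iterate of the cocycle maps $\cC_a$ into a set of uniformly bounded $\theta_a$-diameter. This is where the covering machinery of Section \ref{subsect:covering} enters. The strategy: first, because $\epsilon_1$ and $\epsilon_2$ are not a.e.\ zero, the cocycle is almost-onto and (using that $\mathds{1}$ is an interior point of $\cC_a$ together with the mixing supplied by the leaking) has the functional covering property, with $\essinf(g_\omega) \ge \tfrac{1}{D_{\epsilon_1,\epsilon_2}}$ from the derivative bounds. Fixing a partition $\cQ$ into intervals of length $\le (2a)^{-1}$ — this is where the $\lceil \log(a)/\log(1.5)\rceil + 1$ term comes from, since each map expands by at least $3/2$ so $d$ further steps past a covering time give uniform control — Lemma \ref{lem:strong-func-cover} yields for $h \in \cC_a$ with $\|h\|_1 = 1$ the bound $\essinf(P^{(m)}_\omega h) \ge \tfrac12 \prod_{i=0}^{2m-1}\essinf(g_{\sigma^i\omega}) > 0$ once $m$ exceeds the covering time. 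Combined with the upper bound $\esssup(P^{(m)}_\omega h) \le \essinf + \Var \le \essinf + \nu a$ from Lemma \ref{lem:bv-properties} and the cone condition, one gets that $P^{(m)}_\omega h$ lies between two explicit multiples of $\mathds{1}$, hence $\theta_a(P^{(m)}_\omega h, \mathds{1})$ is bounded by $\tfrac12\log\big(\tfrac{2(1+\nu)}{1-\nu}(1+\nu a)\big) + \tfrac{m}{2}\cdot 2\log(D_{\epsilon_1,\epsilon_2})$ or similar; this forces the finite $\theta_a$-diameter $D_P$ appearing in the formula for $C$. The set $G_P$ is taken inside $\Omega_1$ (where $\sigma^2$ is ergodic) to be those $\omega$ whose second-iterate covering time is at most a fixed value, which has positive $\restr\mu{\Omega_1}$-measure by a standard exhaustion argument (covering times are a.e.\ finite), and $k_P$ is that value plus the $d$ extra expansion steps plus the term $\lceil -\log(M_{\epsilon_1,\epsilon_2})/\log 4\rceil$ needed so that the geometric-mean lower bound on $\prod \essinf(g)$ is controlled by the Birkhoff average $\tfrac12\min\{\int\epsilon_1, \int\epsilon_2\}$.

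With $G_P, k_P, D_P$ in hand, Theorem \ref{thm:pf}(\ref{thm:pf-le}) gives the one-dimensionality of the top Oseledets space for $P_\omega$ and the bound $\lambda_2^{(P)} \le -\tfrac{\restr\mu{\Omega_1}(G_P)}{k_P}\log(\tanh(\tfrac14 D_P)^{-1})$. Since the equivariant density is a probability density and $P_\omega$ preserves total mass, $\lambda_1 = 0$; by Lemma \ref{lem:multiple-iter-LEs} the exponents of the original cocycle are half those of $P_\omega$, giving $\lambda_2 \le \tfrac12 \lambda_2^{(P)} = C < 0$ after substituting the expression for $D_P$ into the formula for $C$ in the statement. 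I expect the main obstacle to be the bookkeeping in the covering step: carefully tracking how the covering time, the partition mesh $(2a)^{-1}$, the uniform expansion factor $3/2$, and the geometric-mean bound on $\prod\essinf(g)$ combine to yield exactly the stated $k_P$, and verifying that $G_P$ genuinely has positive measure inside $\Omega_1$ (as opposed to only in $\Omega$) — this requires that the covering behaviour is controlled by finitely many of the countably many cells with positive probability, which follows from ergodicity of $\sigma^2$ on $\Omega_1$ but needs to be stated cleanly.
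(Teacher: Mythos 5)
Your overall architecture is the same as the paper's: uniform Lasota--Yorke inequality to pick $\nu\in(3/4,1)$ and $a\approx 6/(\nu-3/4)$ with $P_\omega\cC_a\subset\cC_{\nu a}$, $\mu$-continuity from the countable range of $\epsilon_1,\epsilon_2$, a covering argument giving $\essinf(P^{(k_P)}_\omega h)\geq\tfrac12\prod\essinf(g)$ and $\esssup\leq 1+a$, hence a finite $\theta_a$-diameter $D_P$ via Lemma \ref{lem:proj-met-bound}, then Theorem \ref{thm:pf} and Lemma \ref{lem:multiple-iter-LEs} with $\lambda_1=0$. That part is fine.

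The genuine gap is in how you produce $G_P$ and $k_P$. You take $G_P$ to be a sub-level set of the covering time and argue positive measure "by exhaustion, since covering times are a.e.\ finite." That argument only proves the qualitative statement (existence of \emph{some} $G_P,k_P,D_P$, i.e.\ essentially Corollary \ref{cor:suff-cond-primitive} and hence some negative bound on $\lambda_2$); it does not yield the explicit $k_P=\lceil\log(a)/\log(1.5)\rceil+1+d+\lceil-\log(M_{\epsilon_1,\epsilon_2})/\log 4\rceil$ nor the formula for $C$, which is the actual content of the theorem. The missing ingredient is the leakage mechanism: one must first show (Lemma \ref{lem:interval-exp}) that any interval of measure $\geq(2a)^{-1}$ covers one of $[-1,0]$ or $[0,1]$ within $m_1=\lceil\log(a)/\log(1.5)\rceil+1$ steps of the second-iterate cocycle; then construct, using ergodicity of $\sigma^2$ on $\Omega_1$, an explicit set $\cG\subset\Omega_1$ and the \emph{smallest} $d$ such that on $\cG$ leakage of mass at least $M_{\epsilon_1,\epsilon_2}$ occurs in \emph{both} directions within $d$ further steps (Lemma \ref{lem:leakage-time}, via the first $d$ with $G_1\cap\sigma^{-2d}(G_2)$ and $G_2\cap\sigma^{-2d}(G_1)$ of positive measure); finally the leaked mass, being anchored intervals, expands at rate $4$ and covers the other half in $m_3=\lceil-\log(M_{\epsilon_1,\epsilon_2})/\log 4\rceil$ steps. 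Taking $G_P=\sigma^{-2m_1}(\cG)$ makes the waiting time for leakage vanish on $G_P$ and gives exactly the stated $k_P$ as a uniform covering time there. Your sketch misassigns the roles of these constants: $d$ is not "extra expansion steps past a covering time" but the both-directions leakage waiting time, and $m_3$ has nothing to do with controlling $\prod\essinf(g)$ by a Birkhoff average (no Birkhoff argument is needed in this theorem; that device appears only in the sharper asymptotic bound of Theorem \ref{thm:LE-asympt}) — it is the time for the leaked interval of size $\geq\tfrac12 M_{\epsilon_1,\epsilon_2}$ to cover a half of $[-1,1]$. Without this three-stage quantitative covering estimate, the stated formulas for $k_P$ and $C$ are not obtained.
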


Note that this upper bound for $\lambda_2$ is not unique; it is simply the result of our particular method of proof. In particular, in the next section we use a smaller $G_P$ and a larger $k_P$ to obtain a better diameter bound $D_P$. The result is an upper bound which happens to have a much nicer asymptotic property as the $\epsilon_1$ and $\epsilon_2$ parameters are scaled towards $0$, but which requires a usage of Birkhoff's ergodic theorem to obtain the asymptotic relationship, holding only for sufficiently small scaling parameters. Here, we have chosen to use the smallest $k_P$, and the set $G$ is computable simply from the maps $\epsilon_1$ and $\epsilon_2$.

The proof will proceed via Lemmas \ref{lem:interval-exp} and \ref{lem:leakage-time} and Corollary \ref{cor:tent-dyn-cover}, specific to these maps, that bound the $m_{DC}(\omega,I)$ terms, which will be combined with Lemmas \ref{lem:proj-met-bound} and \ref{lem:strong-func-cover} to bound the diameter of the image of the cone $\cC_a$. The computation involved allows us to explicitly control the covering time $m_C(\omega,\cQ)$ for a fixed choice of $\cQ$ and on a positive measure set of $\omega$, which is enough to apply our general theory to obtain explicit bounds on the second Lyapunov exponent; the covering time turns out to be our contraction time.

The following lemma describes an upper bound for the distance from an element $f$ in the subcone $\cC_{\nu a} \subset \cC_a$ to the constant $\mathds{1}$, in terms of the essential infimum and supremum of $f$. (This inequality is why we need estimates on both of these quantities, as well as requiring the cone to be mapped into the subcone $\cC_{\nu a}$.) It is essentially Lemma 3.1 in Liverani \cite{liverani-bv-doc}. 

\begin{lem}
\label{lem:proj-met-bound}
Let $f\in \cC_{\nu a} \subset BV(X)$, with $a > 0$ and $\nu \in (0,1)$, such that $\norm{f}_1 = 1$. Then we have: \[ \theta_{a}(f,\mathds{1}) \leq \log\left( \frac{1+\nu}{1-\nu}\cdot\frac{\esssup(f)}{\essinf(f)} \right). \]
\end{lem}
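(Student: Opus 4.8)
The plan is to estimate the projective metric $\theta_a(f,\mathds{1})$ directly from its definition in terms of $\alpha$ and $\beta$ for the cone $\cC_a$. Recall $\theta_a(f,\mathds{1}) = \log(\beta(f,\mathds{1})/\alpha(f,\mathds{1}))$ where $\alpha(f,\mathds{1}) = \sup\{\lambda \geq 0 : \lambda f \preceq_{\cC_a} \mathds{1}\}$ and $\beta(f,\mathds{1}) = \inf\{\mu \geq 0 : \mathds{1} \preceq_{\cC_a} \mu f\}$. So the task reduces to two inequalities: an upper bound $\beta(f,\mathds{1}) \leq \tfrac{1+\nu}{1-\nu}\cdot\tfrac{1}{\essinf(f)}$ and a lower bound $\alpha(f,\mathds{1}) \geq \tfrac{1-\nu}{1+\nu}\cdot\tfrac{1}{\esssup(f)}$; dividing these gives exactly the claimed bound on $\theta_a$.

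First I would unwind what $g \preceq_{\cC_a} h$ means: it requires $h - g \in \cC_a \cup \{0\}$, i.e.\ $h \geq g$ pointwise a.e.\ and $\Var(h-g) \leq a\norm{h-g}_1$. To bound $\beta$, I want to find the smallest $\mu > 0$ so that $\mu f - \mathds{1} \in \cC_a \cup \{0\}$. Pointwise positivity forces $\mu \geq 1/\essinf(f)$. For the variation condition, I would use that $f \in \cC_{\nu a}$ means $\Var(f) \leq \nu a \norm{f}_1 = \nu a$ (using $\norm{f}_1 = 1$), so $\Var(\mu f - \mathds{1}) = \mu \Var(f) \leq \mu \nu a$, while $\norm{\mu f - \mathds{1}}_1 \geq \mu \norm{f}_1 - 1 = \mu - 1$ by the reverse triangle inequality. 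Thus it suffices to choose $\mu$ with $\mu \nu a \leq a(\mu - 1)$, i.e.\ $\mu \geq 1/(1-\nu)$, but we also need $\mu \geq 1/\essinf(f)$; combining and using $\essinf(f) \leq \norm{f}_1 = 1$ (so $1/\essinf(f) \geq 1$), the binding choice should be arranged as $\mu = \tfrac{1+\nu}{1-\nu}\cdot\tfrac{1}{\essinf(f)}$, which exceeds both thresholds since $\tfrac{1+\nu}{1-\nu} \geq \tfrac{1}{1-\nu}$ and $\tfrac{1+\nu}{1-\nu}\ge 1$. The symmetric argument for $\alpha$: I need the largest $\lambda$ with $\mathds{1} - \lambda f \in \cC_a \cup \{0\}$; pointwise positivity needs $\lambda \leq 1/\esssup(f)$, the variation condition $\lambda \Var(f) \leq a\norm{\mathds{1} - \lambda f}_1$ with $\norm{\mathds{1}-\lambda f}_1 \geq 1 - \lambda$ works once $\lambda \nu a \leq a(1-\lambda)$ i.e.\ $\lambda \leq 1/(1+\nu)$, and taking $\lambda = \tfrac{1-\nu}{1+\nu}\cdot\tfrac{1}{\esssup(f)} \le \tfrac{1}{1+\nu} \wedge \tfrac{1}{\esssup(f)}$ works (here $\esssup(f)\ge 1$).

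The main obstacle I anticipate is bookkeeping the direction of the inequalities in the variation condition, especially making sure the reverse-triangle-inequality bounds $\norm{\mu f - \mathds{1}}_1 \geq \mu - 1$ and $\norm{\mathds{1} - \lambda f}_1 \geq 1 - \lambda$ go the right way and that the constants $\tfrac{1\pm\nu}{1\mp\nu}$ genuinely dominate both the pointwise thresholds and the variation thresholds simultaneously (this uses $\essinf(f) \leq 1 \leq \esssup(f)$, which follows from $\norm{f}_1 = 1$ and $f \geq 0$). A secondary subtlety is the degenerate case where $\mu f = \mathds{1}$ (or $\lambda f = \mathds{1}$) exactly, i.e.\ $f$ constant, in which case $\mu f - \mathds{1} = 0 \in \cC_a \cup \{0\}$ and $\theta_a(f,\mathds{1}) = 0 \leq \log(\tfrac{1+\nu}{1-\nu})$, so the bound holds trivially; I would note this to cover the edge case cleanly. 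Otherwise the whole proof is a few lines once the definitions are spelled out.
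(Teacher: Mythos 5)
Your overall strategy is the right one, and it is essentially the standard argument (the paper itself does not prove this lemma but cites Liverani's Lemma 3.1, which proceeds just as you do): bound $\beta(f,\mathds{1})$ and $\alpha(f,\mathds{1})$ straight from the definition of $\preceq_{\cC_a}$, using $\Var(f)\leq \nu a\norm{f}_1=\nu a$, the identities $\norm{\mu f-\mathds{1}}_1\geq\mu-1$, $\norm{\mathds{1}-\lambda f}_1\geq 1-\lambda$, and $\essinf(f)\leq 1\leq\esssup(f)$. However, the final step fails as written. From your two intermediate bounds $\beta(f,\mathds{1})\leq\frac{1+\nu}{1-\nu}\cdot\frac{1}{\essinf(f)}$ and $\alpha(f,\mathds{1})\geq\frac{1-\nu}{1+\nu}\cdot\frac{1}{\esssup(f)}$, dividing gives
\[
\theta_a(f,\mathds{1})\;\leq\;\log\left(\left(\frac{1+\nu}{1-\nu}\right)^{2}\frac{\esssup(f)}{\essinf(f)}\right),
\]
not the claimed bound: the prefactor $\frac{1+\nu}{1-\nu}$ appears once in \emph{each} of your estimates, so it gets squared when you divide. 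Your verification of the thresholds themselves (for $\beta$: $\mu\geq 1/\essinf(f)$ and $\mu\geq 1/(1-\nu)$ suffice; for $\alpha$: $\lambda\leq 1/\esssup(f)$ and $\lambda\leq 1/(1+\nu)$ suffice) is correct; the error is only that the specific $\mu$ and $\lambda$ you then select are more generous than necessary, so the proof as written establishes a strictly weaker inequality than the lemma.

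The fix is to allocate one factor of $1\mp\nu$ to each side: take $\mu=\frac{1}{(1-\nu)\essinf(f)}$ and $\lambda=\frac{1}{(1+\nu)\esssup(f)}$. Since $\essinf(f)\leq 1$, this $\mu$ satisfies both $\mu\geq\frac{1}{\essinf(f)}$ and $\mu\geq\frac{1}{1-\nu}$, so $\mathds{1}\preceq\mu f$ by exactly your computation (indeed, once $\mu f\geq\mathds{1}$ a.e.\ you even have equality $\norm{\mu f-\mathds{1}}_1=\mu-1$, the measure being a probability measure); since $\esssup(f)\geq 1$, this $\lambda$ satisfies both $\lambda\leq\frac{1}{\esssup(f)}$ and $\lambda\leq\frac{1}{1+\nu}$, so $\lambda f\preceq\mathds{1}$. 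Then $\beta(f,\mathds{1})/\alpha(f,\mathds{1})\leq\mu/\lambda=\frac{1+\nu}{1-\nu}\cdot\frac{\esssup(f)}{\essinf(f)}$, which is exactly the statement. Your treatment of the degenerate constant case is fine; you should also note that if $\essinf(f)=0$ the right-hand side is $+\infty$ and there is nothing to prove.
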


The next lemma describes how quickly intervals expand under the action of $S_{\omega}$ to cover one of $[-1,0]$ or $[0,1]$.

\begin{lem}
\label{lem:interval-exp}
Let $T_{\omega}$ generate a cocycle of paired tent maps over $(\Omega,\mu,\sigma)$, and let $S_{\omega}$ generate the cocycle of second iterate maps over $(\Omega_1,\restr{\mu}{\Omega_1},\sigma^2)$.
\begin{enumerate}
\item If $I \subset [-1,1]$ is an interval with positive measure of the form $[-1,b]$, $[b,0]$, $[0,b]$, or $[b,1]$, then $S_{\omega}^{(m)}(I)$ contains $[-1,0]$ (if $I$ is $[-1,b]$ or $[b,0]$) or $[0,1]$ (if $I$ is $[0,b]$ or $[b,1]$) in at most \[ m = \left\lceil \frac{-\log(2\lambda(I))}{\log(4)} \right\rceil \] steps.
\item If $I \subset [-1,1]$ is an interval with positive measure, then $S_{\omega}^{(m)}(I)$ contains one of $[-1,0]$ or $[0,1]$ in at most \[ m = \left\lceil \frac{-\log(2\lambda(I))}{\log(1.5)} \right\rceil + 1 \] steps.
\end{enumerate}
\end{lem}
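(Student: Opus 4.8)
The plan is to bound below the rate at which an interval's length grows under iterates of $S_\omega$, and then count how many steps are needed to reach length $\geq 1/2$ (at which point the interval must contain one of the two halves, since the only obstruction to an interval of large measure spanning $[-1,0]$ or $[0,1]$ is straddling the point $0$). For part (1), the intervals considered are already anchored at one of the ``boundary'' points $-1$, $0$, or $1$, and the paired tent map sends each of these to $-1$ or $1$; so the image $T_\omega(I)$ of such an interval is again anchored at a boundary point, and inductively $S_\omega^{(m)}(I)$ stays anchored. On a single branch of $T_\omega$ inside $[-1,0]$ the slope has magnitude $2(1+\epsilon_1(\omega)) \geq 2$, and similarly $\geq 2$ on $[0,1]$; composing two maps gives $S_\omega$ an expansion factor of at least $4$ on each branch. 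Crucially, an anchored interval of measure $\leq 1/4$ (say $[-1,b]$ with $b \leq -1/2$, so that $[-1,b] \subset D_1$) lies in a single monotonicity branch of $T_\omega$ — one checks this against the breakpoints $-1/2, 0, 1/2$ — and hence $\lambda(S_\omega(I)) \geq 4\lambda(I)$ as long as we have not yet overflowed. So after $m$ steps, $\lambda(S_\omega^{(m)}(I)) \geq \min\{1/2,\, 4^m \lambda(I)\}$, and $4^m \lambda(I) \geq 1/2$ exactly when $m \geq \log(2\lambda(I))^{-1}/\log 4$; taking the ceiling gives the stated bound. The only subtlety is handling the step where the interval first exceeds measure $1/4$ and may cross a breakpoint: but once $\lambda(S_\omega^{(j)}(I)) > 1/4$ for an anchored interval, one more application of $T_\omega$ (expansion $\geq 2$ on each of the at most two subpieces, both still anchored or reaching an endpoint) already forces the image to contain $[-1,0]$ or $[0,1]$, so the counting is not disturbed.

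For part (2), the interval $I$ is arbitrary, so it may sit anywhere and may be chopped by the breakpoints of $T_\omega$. The strategy is: first get $I$ into the ``anchored'' situation of part (1) in a bounded number of extra steps, then invoke part (1). Concretely, $S_\omega$ has expansion at least $4$ on each of its (at most sixteen) monotonicity intervals $I_{ij}$, but $I$ may be split among several of them; still, one of the pieces of $I$ under $S_\omega$ — in fact the image of the largest sub-piece — has measure at least $\tfrac{1}{16}\cdot 4\lambda(I)\cdot(\text{number of pieces})^{-1}$... this naive bound is too weak, so instead I would argue as follows. Because $T_\omega$ restricted to each of $[-1,0]$ and $[0,1]$ is a (possibly leaky) tent-type map with full-height branches, the image $T_\omega(I)$ of any positive-measure interval $I$ contains an interval anchored at $-1$ or $1$ of measure at least, say, $\tfrac12\lambda(I)$ — one sees this because $I$ meets some branch $D_i$ in a sub-interval of measure $\geq \lambda(I)/4$, whose image under the linear branch map (slope magnitude $\geq 2$) is an interval of measure $\geq \lambda(I)/2$ reaching one of $\pm 1$. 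Actually the cleanest route uses the number $1.5$ appearing in the statement: on a single branch the expansion is $2(1+\eta_i) \geq 2$, so even accounting for the loss from splitting $I$ into at most two pieces at the single interior breakpoint of a tent on $[-1,0]$ or $[0,1]$, the larger image piece has measure $\geq 2 \cdot \tfrac12 \lambda(I) = \lambda(I)$ — no net loss — but to be safe against the global breakpoint at $0$ and the paired structure, one settles for a guaranteed factor of $1.5$ per step on the ``best piece'' until that piece is anchored and large. Tracking the best piece, after $\lceil \log(2\lambda(I))^{-1}/\log 1.5\rceil$ steps it has measure $\geq 1/2$ and (being anchored by then) contains one of the halves; the ``$+1$'' absorbs the single initial step needed to create an anchored piece. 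Then part (1) is not even separately needed — it is subsumed — but stating it separately gives the sharper constant $\log 4$ for the already-anchored case.

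The main obstacle is the bookkeeping in part (2): making precise the claim that one can track a single ``good sub-interval'' of $S_\omega^{(j)}(I)$ whose measure grows by a definite factor ($1.5$ suffices, $4$ once anchored) at every step, while ensuring it becomes anchored quickly and never suffers a catastrophic split. The delicate case is when $I$ is tiny and centered near the interior breakpoint $-1/2$ or $1/2$ where, depending on $\epsilon_1(\omega),\epsilon_2(\omega)$, the branch structure of $S_\omega$ is most intricate (as in Figure~\ref{fig:sec-tent-large-eps}); there one must verify that even the smallest resulting piece, once pushed forward once more, lands in a single branch and thereafter expands cleanly. I expect this to require a short case analysis on which of the intervals $I_{ij}(\omega)$ meet $I$, using that $\lambda(T_\omega([-1,1])) = 1$ (the almost-onto hypothesis implicit via $\epsilon_i \geq 0$) so that no measure is lost to the image failing to be full-height. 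Everything else — the expansion estimates, the geometric-series counting, taking ceilings — is routine.
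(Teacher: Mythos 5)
Your argument for part (1) is essentially the paper's: an interval anchored at $-1$, $0$, or $1$ either lies inside the extreme monotonicity branch of $S_{\omega}$ (on which $S_{\omega}$ is affine with slope at least $4$ and keeps the interval anchored), or it contains that whole branch, in which case one more step covers $[-1,0]$ or $[0,1]$; iterating and solving $4^n\lambda(I)\geq 1/2$ gives the stated count. Your phrasing in terms of branches of $T_{\omega}$ rather than of $S_{\omega}$ is loose (an anchored interval of measure $\leq 1/4$ can meet several branches $I_{1j}$ of $S_{\omega}$), but the dichotomy you invoke at the overflow step repairs this, so part (1) is fine.

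Part (2) has a genuine gap. First, your intermediate claim that $T_{\omega}(I)$ always contains an interval anchored at $\pm 1$ of measure at least $\tfrac12\lambda(I)$ is false: if $I$ sits in the interior of a single branch $D_i$ away from its endpoints, its image is an interval strictly inside $T_{\omega}(D_i)$ and contains no anchored subinterval at all, so the plan ``get anchored in one step, then invoke part (1)'' does not work. Second, the guaranteed growth factor $1.5$ per $S_{\omega}$-step for the ``best piece'' is asserted rather than derived; your own per-$T_{\omega}$ bookkeeping (slope $\geq 2$, split into two pieces, track the larger) only yields a factor $\geq 1$, which gives no bound on the number of steps. The actual source of the constant is the point your sketch never touches: restricting to a single interval of continuity of $S_{\omega}$, the worst case is the four-branch regions near $\pm 1/2$, which exist only when the relevant $\epsilon$ exceeds $1/2$, so there the slope of $S_{\omega}$ is at least $4(1+\tfrac12)=6$ and at worst fourfold overlap gives $6/4=1.5$; the two-branch regions give $4/2=2$. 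Third, the ``$+1$'' is not for ``creating an anchored piece'': in the paper it absorbs the step at which the image splits into two pieces across $0$ instead of covering a half; those pieces are anchored, so part (1) (which is genuinely used, not subsumed) gives factor-$4$ growth of the larger piece, and the inequality $4\cdot 0.75>(1.5)^2$ shows the count is not worsened. Finally, intervals that straddle several continuity intervals of $S_{\omega}$ need the separate short argument (a piece of measure $\geq \tfrac13\lambda(I)$ in one continuity interval, either containing a full middle branch whose image is already a half, or expanding to an anchored interval of measure $\geq \tfrac23\lambda(I)$ with $4\cdot\tfrac23>(1.5)^2$). You flag the bookkeeping as the main obstacle, but that bookkeeping, together with the $6/4$ estimate, is exactly the content of the proof, so as it stands the proposal does not establish part (2).
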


\begin{proof}
First, let $I \subset [-1,1]$ be an interval with positive measure of the form $[-1,b]$; the other cases are analogous. If $I$ contains $I_{11}$, then we have \[ S_{\omega}(I) \supset S_{\omega}(I_{11}) = T_{\sigma(\omega)}\big([-1,0]\big) \supset [-1,0]. \] If $I$ is contained in $I_{11}$, then because $S_{\omega}$ restricted to $I_{11}$ is an affine map with expansion at least $4$, we have that $\lambda(S_{\omega}(I)) \geq 4\lambda(I).$ If $S_{\omega}(I)$ covers $I_{11}$, then we apply the next iterate and are finished, but if not we continue to iterate, to obtain \[ \lambda(S_{\omega}^{(n)}(I)) \geq 4^n\lambda(I). \] We are looking for $S_{\omega}^{(n)}(I)$ to cover $[-1,0]$, which has measure $1/2$, so we solve $4^n\lambda(I) \geq 1/2$ to obtain \[ n \geq \left\lceil \frac{-\log(2\lambda(I))}{\log(4)} \right\rceil. \] Potentially, it could take fewer steps than that because the expansion rate could be larger. The other cases are analogous.

For the second part, we begin by restricting $I$ to lie inside of one of the intervals of continuity for $S_{\omega}$, which in the index notation are:
\begin{gather*} 
I_{11}\cup I_{12}, \quad I_{13}\cup I_{14} \cup I_{24} \cup I_{23}, \quad I_{22}\cup I_{21}, \\
I_{34}\cup I_{33}, \quad I_{32}\cup I_{31} \cup I_{41} \cup I_{42}, \quad I_{43}\cup I_{44}.
\end{gather*}
When we push $I$ forward by $S_{\omega}$, $I$ expands according to the slope of $S_{\omega}$, but there might be overlap in the image due to the number of monotonicity branches. Where there are two branches (for example, in $I_{11}\cup I_{12}$), we know that the slope is at least $4$, and there is at worst the possibility that the interval image exactly overlaps on each branch, so we observe that \[ \lambda(S_{\omega}(I)) \geq \frac{4}{2}\lambda(I) = 2\lambda(I). \] On the other hand, where there are four branches (for example, in $I_{13}\cup I_{14} \cup I_{24} \cup I_{23}$ where the middle branches are non-trivial), the scale factor is at least $6$ (by looking at the formula for the derivative of $S_{\omega}$), and at worst there are four overlapping sections of the image. This means that \[ \lambda(S_{\omega}(I)) \geq \frac{6}{4}\lambda(I) = 1.5\lambda(I). \] Thus in all restricted situations, $\lambda(S_{\omega}(I)) \geq 1.5\lambda(I)$.

Continuing to work with the restricted intervals $I$, write $S_{\omega}(I)$ as \[ S_{\omega}(I) = \big( S_{\omega}(I)\cap [-1,0] \big) \cup \big( S_{\omega}(I)\cap [0,1] \big). \] We have two cases: in one case, as we apply the cocycle to $I$ the resulting set does not split into two pieces of positive measure (one in $[-1,0]$, the other in $[0,1]$). The scale factor is at least $1.5$ at each step, and so we solve $(1.5)^{n}\lambda(I) \geq 1/2$ to find a bound on the number of steps it takes to cover one of $[-1,0]$ or $[1,0]$: \[ n \leq \left\lceil \frac{-\log(2\lambda(I))}{\log(1.5)} \right\rceil. \] In the other case, at some point the image of $I$ splits into two intervals, one contained in $[-1,0]$ and the other in $[0,1]$. By the first part of the lemma, the resulting intervals scale in length by a factor of $4$. If we only look at the larger of the two intervals, the size of the interval is at least $\frac{1.5}{2}\lambda(S_{\omega}^{(k)}(I)) = 0.75\lambda(S_{\omega}^{(k)}(I))$, and in the next step the length scales by $4$, and $4\cdot 0.75 > (1.5)^2$. We therefore see that the number of steps it takes the image of $I$ under $S^{(m)}_{\omega}$ to cover one of $[-1,0]$ or $[0,1]$ is no more than in the case where the interval does not split, except if there is a split in the interval instead of covering all of $[-1,0]$ or $[0,1]$. In this case, the next iterate produces the covering, and so we simply add $1$ to our previous bound.

Finally, suppose that $I$ is an interval that overlaps at least two adjacent intervals of continuity, listed above. If it contains at least two adjacent intervals of continuity, one of them must have an image that covers $[-1,0]$ or $[0,1]$ in one step. So, suppose that $I$ overlaps at most three adjacent intervals of continuity. Then $I$ intersects one of the intervals with more than one-third of its measure; call the intersection $I'$. If $I'$ has more than two branches of monotonicity of $S_{\omega}$, then $I$ contains a full branch of monotonicity over one of $I_{13}$, $I_{23}$, $I_{32}$, or $I_{42}$, and the images of these intervals are $[0,1]$ or $[-1,0]$, respectively, because at least one of $I_{14}$, $I_{24}$, $I_{31}$, or $I_{41}$ is non-trivial. Finally, suppose that $I'$ has at most two branches of monotonicity of $S_{\omega}$. Then we have \[ \lambda(S_{\omega}(I')) \geq \frac{1}{2}\cdot 4 \cdot \frac{1}{3}\lambda(I) = \frac{2}{3}\lambda(I). \] This image interval is of the form $[-1,b]$ or $[b',1]$, so in the next step its measure scales with factor $4$, and $4\cdot \frac{2}{3} > (1.5)^2$, so in two steps $I'$ expands faster than expanding under the $1.5$ scale factor. The argument from the case where $I$ was contained in an interval of continuity now concludes the proof.
\end{proof}

A key aspect of the paired tent maps, when $\epsilon_1$ and $\epsilon_2$ take non-zero values, is that mass can ``leak'' from $[-1,0]$ to $[0,1]$ and vice versa. The next lemma describes how long it can take for leakage to occur in both direction, for certain $\omega$.

\begin{lem}
\label{lem:leakage-time}
Let $T_{\omega}$ generate a cocycle of paired tent maps over $(\Omega,\mu,\sigma)$ with $\epsilon_1,\epsilon_2 : \Omega \to [0,1]$ having positive integrals, and let $S_{\omega}$ generate the cocycle of second iterate maps over $(\Omega_1,\restr{\mu}{\Omega_1},\sigma^2)$. Let \[ M_{\epsilon_1,\epsilon_2} = \frac{1}{2}\min\left\{\int_{\Omega} \epsilon_1\ d\mu,\int_{\Omega} \epsilon_2\ d\mu \right\}. \] Then there exists an explicit set $\cG \subset \Omega_1$ with positive measure and an explicit positive integer $d$ such that for every $\omega \in \cG$, \[ 2\lambda\big(S_{\omega}^{(d)}([-1,0]) \cap [0,1]\big) \geq M_{\epsilon_1,\epsilon_2} \quad \text{and}\quad 2\lambda\big(S_{\omega}^{(d)}([0,1]) \cap [-1,0]\big) \geq M_{\epsilon_1,\epsilon_2}. \] In particular, $S_{\omega}^{(d)}([-1,0]) \cap [0,1]$ and $S_{\omega}^{(d)}([0,1]) \cap [-1,0]$ each contain intervals of the form $[0,a]$, $[a,1]$, $[-1,a]$, and $[a,0]$ with total measure at least $\frac{1}{2}M_{\epsilon_1,\epsilon_2}$.
\end{lem}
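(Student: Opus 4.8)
The plan is to exploit the elementary ``leaking'' geometry of a single paired tent map and then upgrade it, via a Poincar\'e/hitting-time argument, to a uniform statement over a positive-measure set.

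\emph{One-step leaking and its persistence.} First I would record the direct computation from Definition~\ref{defn:paired-tent}: $T_{\omega}([-1,0]) = [-1,\epsilon_1(\omega)]$ and $T_{\omega}([0,1]) = [-\epsilon_2(\omega),1]$. In particular $T_\omega([-1,0]) \supseteq [-1,0]\cup[0,\epsilon_1(\omega)]$, so one application of $T_\omega$ leaks the interval $[0,\epsilon_1(\omega)]$ from $[-1,0]$ into $[0,1]$, and symmetrically $\epsilon_2$ leaks $[-\epsilon_2(\omega),0]$ the other way. Since $T_{\omega'}([-1,0])\supseteq[-1,0]$ for every $\omega'$, an easy induction shows that for all $\omega$ and all $k\ge1$ the $k$-fold composition $W_k := T_{\sigma^{k-1}\omega}\circ\cdots\circ T_{\omega}([-1,0])$ contains both $[-1,0]$ and $[0,\epsilon_1(\sigma^{k-1}\omega)]$; note $W_{2d}=S_\omega^{(d)}([-1,0])$. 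Because $A\subseteq B$ implies $T(A)\subseteq T(B)$, the leaked interval appearing at step $k\le 2d$ persists, together with all its forward images under $T_{\sigma^{k}\omega},\dots,T_{\sigma^{2d-1}\omega}$, inside $S_\omega^{(d)}([-1,0])$; tracking such a leaked interval $[0,b]$ forward, it stays an interval of one of the special forms listed in the statement (an interval abutting $1$), remains inside $[0,1]$, and has non-decreasing measure --- indeed it expands by a factor $\ge 2$ per $T$-step, which is the computation underlying Lemma~\ref{lem:interval-exp}(1) --- until it contains all of $[0,1]$. The symmetric statements hold for $[0,1]$ and $\epsilon_2$.

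\emph{Ergodic reduction, choice of $d$ and $\cG$, and conclusion.} Since $\epsilon_1$ is not $\mu$-a.e.\ zero, $\bar\epsilon_1:=\int_\Omega\epsilon_1\,d\mu>0$, and a short averaging argument gives $\mu(\{\epsilon_1\ge\bar\epsilon_1\})>0$; since $M_{\epsilon_1,\epsilon_2}=\tfrac12\min\{\bar\epsilon_1,\bar\epsilon_2\}\le\bar\epsilon_1$, the set $B_1:=\{\epsilon_1\ge M_{\epsilon_1,\epsilon_2}\}\supseteq\{\epsilon_1\ge\bar\epsilon_1\}$ has positive measure, and likewise $B_2:=\{\epsilon_2\ge M_{\epsilon_1,\epsilon_2}\}$. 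By ergodicity of $\sigma$, the first-hitting times $\tau_i(\omega):=\min\{j\ge0:\sigma^j\omega\in B_i\}$ are finite $\mu$-a.e., so $\mu(\tau_i\le N)\to1$; I would fix $N_1,N_2$ with $\mu(\tau_i\le N_i)$ so close to $1$ that, using $\mu(\Omega_1)\ge\tfrac12$ from Lemma~\ref{lem:cyclic-decomp}, the set $\cG:=\{\tau_1\le N_1\}\cap\{\tau_2\le N_2\}\cap\Omega_1$ has positive measure, and set $d:=\big\lceil\tfrac12(\max\{N_1,N_2\}+1)\big\rceil$, so $N_i+1\le 2d$. For $\omega\in\cG$, put $k:=\tau_1(\omega)+1\le N_1+1\le 2d$; then $W_k\supseteq[0,\epsilon_1(\sigma^{\tau_1(\omega)}\omega)]\supseteq[0,M_{\epsilon_1,\epsilon_2}]$, so by the persistence step this interval, or its forward image (an interval of one of the stated forms with measure $\ge M_{\epsilon_1,\epsilon_2}$, or all of $[0,1]$), lies in $S_\omega^{(d)}([-1,0])\cap[0,1]$; hence $2\lambda\big(S_\omega^{(d)}([-1,0])\cap[0,1]\big)\ge 2M_{\epsilon_1,\epsilon_2}$, and this interval has the claimed form and measure at least $\tfrac12 M_{\epsilon_1,\epsilon_2}$. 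The inequality for $S_\omega^{(d)}([0,1])\cap[-1,0]$ follows identically from $\tau_2$, and $\cG$, $d$ are explicit in terms of $\sigma,\epsilon_1,\epsilon_2$.

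\emph{Main obstacle.} The geometric bookkeeping of the first step is elementary but must be handled carefully: one must check that a leaked interval is never destroyed by a later branch cut --- which is why it is convenient that it always sits as an interval abutting an endpoint --- so that it survives inside $S_\omega^{(d)}([-1,0])$ with measure at least that of the original leak. The other delicate point is that a \emph{single} $d$ and a \emph{single} positive-measure $\cG$ must serve both leaking directions at once; one cannot simply require $\epsilon_1$ and $\epsilon_2$ to be simultaneously large at one point of the orbit, since $\{\epsilon_1\text{ large}\}$ and $\{\epsilon_2\text{ large}\}$ may be carried into complementary halves of $\Omega$, and this is what forces the use of $\sigma$-ergodicity together with two independent hitting times rather than $\sigma^2$-ergodicity on $\Omega_1$ alone.
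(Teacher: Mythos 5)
Your argument is correct, and its core is the same as the paper's: a single $T$-step leaks an interval $[0,\epsilon_1]$ (resp.\ $[-\epsilon_2,0]$) across the origin, leaked intervals abutting an endpoint persist and expand (by a factor at least $2$ per $T$-step) inside the forward images of $[-1,0]$ and $[0,1]$, and ergodicity supplies a positive-measure set $\cG$ and a uniform $d$ by which both directions have leaked at least $M_{\epsilon_1,\epsilon_2}$. Where you differ is the ergodic bookkeeping. The paper defines $G_k = \big(\Omega_1\cap\epsilon_k^{-1}[M_{\epsilon_1,\epsilon_2},1]\big)\cup\sigma^{-1}\big(\Omega_2\cap\epsilon_k^{-1}[M_{\epsilon_1,\epsilon_2},1]\big)\subset\Omega_1$, so that membership in $G_k$ records largeness of $\epsilon_k$ at one of the two $\sigma$-times inside a single $\sigma^2$-step, applies ergodicity of $\sigma^2$ on $\Omega_1$ to find the minimal lags $d_{12},d_{21}$ for which $G_1\cap\sigma^{-2d_{12}}(G_2)$ and $G_2\cap\sigma^{-2d_{21}}(G_1)$ have positive measure, and takes $\cG$ to be the resulting union; one leak then occurs at the very first $S$-step and the other within $d$ further $S$-steps, computed from the explicit formula for $S_{\omega}([-1,0])\cap[0,1]$. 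You instead use $\sigma$-hitting times into $\{\epsilon_1\ge M_{\epsilon_1,\epsilon_2}\}$ and $\{\epsilon_2\ge M_{\epsilon_1,\epsilon_2}\}$, truncate them on a set of nearly full measure, intersect with $\Omega_1$ (using $\mu(\Omega_1)\ge 1/2$), and let both leaks occur at arbitrary times in the $2d$-window; this buys you freedom from the even/odd bookkeeping that the paper's definition of $G_k$ is designed to absorb, at the price of a less canonical $d$ (depending on the chosen truncation levels rather than minimal lags). Two small corrections: your intermediate claim $2\lambda\big(S_{\omega}^{(d)}([-1,0])\cap[0,1]\big)\ge 2M_{\epsilon_1,\epsilon_2}$ overstates by a factor of $2$ in the edge case where the leak happens at the last step of the window (the leaked interval $[0,M_{\epsilon_1,\epsilon_2}]$ has normalized measure $M_{\epsilon_1,\epsilon_2}/2$, giving exactly the bound $\ge M_{\epsilon_1,\epsilon_2}$ the lemma asks for, which is all you need); and your closing remark that one is ``forced'' away from $\sigma^2$-ergodicity on $\Omega_1$ is not accurate, since the paper does run the argument with $\sigma^2$-ergodicity alone, precisely by building the possible odd-time largeness of $\epsilon_k$ into $G_k$.
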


\begin{proof}
Recall that $\Omega_2 = \Omega\setminus \Omega_1$. We define $G_1$ and $G_2$ by \[ G_k = \left( \Omega_1\cap \epsilon_k^{-1}\left[ M_{\epsilon_1,\epsilon_2}, 1 \right] \right) \cup \left( \sigma^{-1}\left( \Omega_2\cap \epsilon_k^{-1}\left[ M_{\epsilon_1,\epsilon_2}, 1 \right] \right) \right), \] for $k=1,2$. By ergodicity of $\sigma^{2}$ on $\Omega_1$, there exists a smallest $d_{12}\in \Z_{\geq 1}$ such that $G_1 \cap \sigma^{-2d_{12}}(G_2)$ has positive measure, and there exists a smallest $d_{21}$ such that $G_2 \cap \sigma^{-2d_{21}}(G_1)$ has positive measure. Set $d = \max\{d_{12},d_{21}\}$, and define \[ \cG = \left( G_1 \cap \bigcup_{i=0}^{d-1} \sigma^{-2i}(G_2) \right) \cup \left( G_2 \cap \bigcup_{i=0}^{d-1} \sigma^{-2i}(G_1) \right). \] This set has positive measure, as it contains both $G_1 \cap \sigma^{-2d_{12}}(G_2)$ and $G_2 \cap \sigma^{-2d_{21}}(G_1)$. 

To show that $\cG$ has the required property, observe from the form of $S_{\omega}$ that:
\begin{gather*}
S_{\omega}([-1,0])\cap [0,1] = [0,\epsilon_1(\sigma(\omega))] \cup [1-2(1+\epsilon_2(\sigma(\omega)))\epsilon_1(\omega),1], \\
S_{\omega}([0,1])\cap [-1,0] = [-1,-1+2(1+\epsilon_1(\sigma(\omega)))\epsilon_2(\omega)] \cup [-\epsilon_2(\sigma(\omega)),0].
\end{gather*}
Let $\omega \in \cG$. Then we have two cases. Suppose first that $\omega \in G_1 \cap \bigcup_{i=0}^{d-1} \sigma^{-2i}(G_2)$. By definition of $G_1$, we have $\max\{\epsilon_1(\omega),\epsilon_1(\sigma(\omega)) \} \geq M_{\epsilon_1,\epsilon_2}$, and since $2(1+\epsilon_2(\sigma(\omega)))\epsilon_1(\omega) \geq \epsilon_1(\omega)$, we see that \[ 2\lambda(S_{\omega}([-1,0])) \geq M_{\epsilon_1,\epsilon_2}. \] Then, we know that $\sigma^{d_{12}}(\omega) \in G_2$, so that \[ 2\lambda(S_{\omega}^{(2d_{12})}([0,1])\cap [-1,0]) \geq \max\{ \epsilon_2(\sigma^{2d_{12}}(\omega)), \epsilon_2(\sigma^{2d_{12}+1}(\omega)) \} \geq M_{\epsilon_1,\epsilon_2}. \] All intervals making up $S_{\omega}([-1,0])\cap [0,1]$ and $S_{\omega}^{(d_{12})}([0,1])\cap [-1,0]$ continue to expand under future iterates of the cocycle, and so we have \[ \min\{2 \lambda(S_{\omega}^{(d)}([-1,0])\cap [0,1]), 2\lambda(S_{\omega}^{(d)}([0,1])\cap [-1,0]) \} \geq M_{\epsilon_1,\epsilon_2}. \] The analogous argument holds for $\omega \in G_2 \cap \bigcup_{i=0}^{d-1} \sigma^{-2i}(G_1)$. This completes the proof.
\end{proof}

The previous two lemmas allow us to show that intervals with positive measure cover the whole space by first covering one of $[-1,0]$ or $[0,1]$ and then leaking to the other side and expanding. The following corollary gives a precise statement with a quantitative estimate on the covering time.

\begin{cor}
\label{cor:tent-dyn-cover}
Let $T_{\omega}$ generate a cocycle of paired tent maps over $(\Omega,\mu,\sigma)$ with $\epsilon_1,\epsilon_2 : \Omega \to [0,1]$ having positive integrals, and let $S_{\omega}$ generate the cocycle of second iterate maps over $(\Omega_1,\restr{\mu}{\Omega_1},\sigma^2)$. Let $M_{\epsilon_1,\epsilon_2}$, $\cG$, and $d$ be as in Lemma \ref{lem:leakage-time}. For all $\tau \in (0,1)$, let \[ m_1(\tau) = \left\lceil \frac{-\log(2\tau)}{\log(1.5)} \right\rceil + 1, \] let $m_2(\omega,\tau) = \min\set{m\geq 0}{\sigma^{2(m_1(\tau)+m)}(\omega) \in \cG}$, and let \[ m_3 = \left\lceil \frac{-\log(M_{\epsilon_1,\epsilon_2})}{\log(4)} \right\rceil. \] Then $m_2(\omega,\tau)$ is finite for almost every $\omega$, and if $I \subset [-1,1]$ is an interval with $\lambda(I) \geq \tau$, we have \[ m_{DC}(\omega,I) \leq m_1(\tau) + m_2(\omega,\tau) + d + m_3. \] Thus $S_{\omega}^{(n)}$ has the dynamical covering property.
\end{cor}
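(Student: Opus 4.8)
The plan is to assemble the three quantitative ingredients --- interval expansion (Lemma \ref{lem:interval-exp}), the guaranteed two-directional leakage on $\cG$ (Lemma \ref{lem:leakage-time}), and a bookkeeping argument for the waiting time $m_2$ --- into a single estimate on $m_{DC}(\omega,I)$. First I would fix an interval $I$ with $\lambda(I) \geq \tau$ and apply the second part of Lemma \ref{lem:interval-exp}: after $m_1(\tau) = \lceil -\log(2\tau)/\log(1.5)\rceil + 1$ steps of $S_{\omega}$, the image $S_{\omega}^{(m_1(\tau))}(I)$ contains at least one of $[-1,0]$ or $[0,1]$. Call this step $n_0 = m_1(\tau)$, and let $\omega' = \sigma^{2n_0}(\omega)$; the situation is now that $S_{\omega'}^{(0)}$(applied to a half-interval) contains a full half of the space. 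Since the bound in Lemma \ref{lem:interval-exp} is a worst case and $S$ never contracts measure below these rates, once the image contains a half it continues to contain that half under all further iterates.

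Next I would handle the leakage. The obstacle is that Lemma \ref{lem:leakage-time} only gives two-directional leakage starting from points in $\cG$, not from an arbitrary $\omega'$; so I introduce the waiting time $m_2(\omega,\tau) = \min\{m \geq 0 : \sigma^{2(m_1(\tau)+m)}(\omega) \in \cG\}$. Because $\sigma^2$ is ergodic on $\Omega_1$ and $\cG$ has positive measure, by Poincar\'e recurrence (or just ergodicity) the orbit of $\mu$-almost every $\omega$ enters $\cG$ along the subsequence of indices $m_1(\tau) + m$, so $m_2(\omega,\tau) < \infty$ a.e. After those $m_2$ additional steps the image of $I$ still contains a full half of the space (monotonicity of the covering, as noted above), and we are now at a point $\omega'' = \sigma^{2(n_0 + m_2)}(\omega) \in \cG$ whose image contains, say, $[-1,0]$. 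Applying Lemma \ref{lem:leakage-time} to $\omega''$: after $d$ more steps, $S_{\omega''}^{(d)}([-1,0]) \cap [0,1]$ (and the symmetric statement) has measure at least $\tfrac{1}{2}M_{\epsilon_1,\epsilon_2}$ and in fact contains half-intervals (of the form $[0,a]$ or $[a,1]$) of total measure $\geq \tfrac12 M_{\epsilon_1,\epsilon_2}$. So after $n_0 + m_2 + d$ steps, the image of $I$ contains all of one half and a half-interval inside the other half of measure $\geq \tfrac12 M_{\epsilon_1,\epsilon_2}$.

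Finally, I apply the \emph{first} part of Lemma \ref{lem:interval-exp} to that half-interval: an interval of the form $[0,a]$, $[a,1]$, $[-1,a]$ or $[a,0]$ with measure $\geq \tfrac12 M_{\epsilon_1,\epsilon_2}$ expands to cover the corresponding half of $[-1,1]$ in at most $\lceil -\log(2 \cdot \tfrac12 M_{\epsilon_1,\epsilon_2})/\log 4\rceil = \lceil -\log(M_{\epsilon_1,\epsilon_2})/\log 4 \rceil = m_3$ further steps. After these $m_3$ steps the image of $I$ contains both $[-1,0]$ and $[0,1]$, hence has full measure; and since each successive $S$-image retains full measure (it is onto on each half already covered), the image has measure $1$ for all larger iterates too. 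Adding up, $m_{DC}(\omega,I) \leq m_1(\tau) + m_2(\omega,\tau) + d + m_3$, which is the claimed bound, and the dynamical covering property of $S_{\omega}^{(n)}$ follows because this quantity is finite for $\mu$-a.e.\ $\omega$ and every positive-measure interval $I$ (such an $I$ has $\lambda(I) \geq \tau$ for some $\tau > 0$).

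The main obstacle, and the point requiring the most care, is the interface between the two halves: tracking that "the image contains a full half plus a controlled-size piece of the other half" is preserved through the $m_2$ waiting steps and the $d$ leakage steps, and correctly invoking each part of Lemma \ref{lem:interval-exp} at the right place (the second part for a generic positive-measure interval, the first part for the specially-shaped half-intervals produced by the leakage). I would also want to be slightly careful that when $I$ already straddles or exceeds a half at step $0$, the bound still holds --- but this only makes the covering happen sooner, so the stated upper bound is safe. One should also note that $m_2(\omega,\tau)$ depends on $\omega$ only through which iterate first lands in $\cG$, and is measurable, so the resulting bound is a genuine (measurable, a.e.-finite) function as required by Definition \ref{defn:covering}.
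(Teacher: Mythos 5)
Your proposal is correct and follows essentially the same route as the paper's proof: Lemma \ref{lem:interval-exp}(b) to cover a half in $m_1(\tau)$ steps, ergodicity of $\sigma^2$ on $\Omega_1$ to make the waiting time $m_2(\omega,\tau)$ almost surely finite, Lemma \ref{lem:leakage-time} for the $d$-step leakage of measure at least $\tfrac12 M_{\epsilon_1,\epsilon_2}$, and Lemma \ref{lem:interval-exp}(a) to expand the leaked piece in $m_3$ further steps. The only cosmetic remark is that the persistence of half-covering through the waiting and leakage steps is most cleanly justified by the structural fact that each paired tent map sends each half onto a set containing that half (rather than by a measure-non-contraction argument), which is also the fact implicitly used in the paper.
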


\begin{proof}
Fix $\tau \in (0,1)$, and let $I\subset [-1,1]$ be an interval with $\lambda(I) \geq \tau$. By Lemma \ref{lem:interval-exp}(b), we see that $S_{\omega}^{(m)}(I)$ contains one of $[-1,0]$ or $[0,1]$ in at most \[ \left\lceil \frac{-\log(2\lambda(I))}{\log(1.5)} \right\rceil + 1 \leq \left\lceil \frac{-\log(2\tau)}{\log(1.5)} \right\rceil + 1 = m_1(\tau) \] steps, which can be rephrased as saying $S_{\omega}^{(m_1(\tau))}(I)$ covers one of $[-1,0]$ or $[0,1]$ for all $k$. By Lemma \ref{lem:leakage-time}, there exists an explicit set $\cG \subset \Omega_1$ with positive measure and a positive integer $d$ such that for every $\omega \in \cG$, \[ 2\lambda(S_{\omega}^{(d)}([-1,0]) \cap [0,1]) \geq M_{\epsilon_1,\epsilon_2} \quad \text{and}\quad 2\lambda(S_{\omega}^{(d)}([0,1]) \cap [-1,0]) \geq M_{\epsilon_1,\epsilon_2}. \] By ergodicity of $\sigma^2$ on $\Omega_1$, we have that $\bigcup_{m=m_1(\tau)}^{\infty} \sigma^{-2m}(\cG)$ is a set of full measure, and thus $m_2(\omega,\tau) = \min\set{m\geq 0}{\sigma^{2(m_1(\tau)+m)}(\omega) \in \cG}$ is finite for almost every $\omega$. Since $S_{\omega}^{(m_1(\tau))}(I)$ contains one of $[-1,0]$ or $[0,1]$ for all $\omega$ and $S_{\omega}$ is onto for all $\omega$, we have that $S_{\omega}^{(m_1(\tau)+m_2(\omega,\tau))}(I)$ also contains one of $[-1,0]$ or $[0,1]$, for almost every $\omega$. Then in $d$ more iterates, this set is guaranteed to leak into the other half of the space $[-1,1]$, with minimum measure $\frac{1}{2}M_{\epsilon_1,\epsilon_2}$. This leakage takes the form of one or two intervals with one endpoint being $-1$, $0$, or $1$, as appropriate. By Lemma \ref{lem:interval-exp}(a), we see that the leaked mass expands to cover the remainder of the space $[-1,1]$ in at most \[ m_3 = \left\lceil \frac{-\log\left(2\cdot\frac{1}{2}M_{\epsilon_1,\epsilon_2}\right)}{\log(4)} \right\rceil = \left\lceil \frac{-\log(M_{\epsilon_1,\epsilon_2})}{\log(4)} \right\rceil \] steps. Putting all of this together, we get that for almost every $\omega \in \Omega_1$, \[ S_{\omega}^{(m_1(\tau) + m_2(\omega,\tau) + d + m_3)}(I) = [-1,1]. \] Hence $m_{DC}(\omega,I) \leq m_1(\tau) + m_2(\omega,\tau) + d + m_3 < \infty$, and so $S_{\omega}^{(n)}$ has the dynamical covering property.
\end{proof}

\begin{proof}[Proof of Theorem \ref{thm:spec-gap}.]
We are now equipped to prove Theorem \ref{thm:spec-gap}. Choose $\nu \in (3/4,1)$ and $a = \left\lceil \frac{6}{\nu-3/4} \right\rceil$, so that $S_{\omega}(\cC_a) \subset \cC_{\nu a}$ for all $\omega$. Since $a$ is an integer, let $\cQ = \{Q_k\}_{k=1}^{2a}$ be a uniform partition of $[-1,1]$ into closed intervals, so that $\lambda(Q_k) = (2a)^{-1}$ for all $k$. In Corollary \ref{cor:tent-dyn-cover}, set $\tau = (2a)^{-1}$, and let $G_P = \sigma^{-2m_1(\tau)}(\cG)$, where $\cG$ is the set from Lemma \ref{lem:leakage-time}. For $\omega \in G_P$, we have $m_2(\omega,\tau) = 0$. Thus for each $k$, we have
\begin{align*} 
m_{DC}(\omega,Q_k) & \leq m_1((2a)^{-1}) + m_2(\omega,\tau) + d + m_3 \\
& = \left\lceil\frac{\log(a)}{\log(1.5)}\right\rceil + 1 + d + \left\lceil -\frac{\log(M_{\epsilon_1,\epsilon_2})}{\log(4)} \right\rceil = k_P.
\end{align*} 

In the proof of Proposition \ref{prop:tent-map-ly-ineq}, we computed $g_{\omega}$, the weight function for the Perron-Frobenius operator $P_{\omega}$ associated to $S_{\omega}$. In particular, we have \[ \essinf(g_{\omega}) \geq \frac{1}{4(1+\max\{\esssup(\epsilon_1),\esssup(\epsilon_2)\})^2} = D_{\epsilon_1,\epsilon_2} > 0 \] for all $\omega$. Therefore, by Lemma \ref{lem:covering} and Corollary \ref{cor:tent-dyn-cover} the cocycle $S_{\omega}^{(n)}$ has the functional covering property. Let $m_{C}(\omega,\cQ) = \max_k\{m_{DC}(\omega,Q_k)\}$ as in Lemma \ref{lem:strong-func-cover}, and observe that $k_P \geq m_C(\omega,\cQ)$ for every $\omega \in G_P$. Applying Lemma \ref{lem:strong-func-cover} gives us that for every $h\in \cC_a$ with $\norm{h}_1 = 1$ and $\omega \in G_P$, we have \[ \essinf(P^{(k_P)}_{\omega}(h)) \geq \frac{1}{2} \prod_{i=0}^{k_P-1} \essinf(g_{\sigma^i(\omega)}) \geq \frac{1}{2}D_{\epsilon_1,\epsilon_2}^{k_P}. \] By Lemma \ref{lem:bv-properties}, we have that \[ \esssup(P_{\omega}^{(k_P)}(h)) \leq \essinf(P_{\omega}^{(k_P)}(h)) + \Var(P_{\omega}^{(k_P)}(h)). \] In general, $\essinf(\cdot) \leq \norm{\cdot}_1$, and we know $P_{\omega}^{(k_P)}(h) \in \cC_{\nu{}a}$, so that \[ \esssup(P_{\omega}^{(k_P)}(h)) \leq \norm{P_{\omega}^{(k_P)}(h)}_1 + a\norm{P_{\omega}^{(k_P)}(h)}_1 = (1+a)\norm{h}_1 = 1+a, \] since $P_{\omega}$ preserves $\norm{\cdot}_1$. Inserting these bounds into Lemma \ref{lem:proj-met-bound} gives us
\begin{align*}
\theta_a(P_{\omega}^{(k_P)}(h),\mathds{1}) & \leq \log\left( \frac{1+\nu}{1-\nu}\cdot\frac{\esssup(P_{\omega}^{(k_P)}(h))}{\essinf(P_{\omega}^{(k_P)}(h))} \right) \\
& \leq \log\left( \frac{2(1+\nu)}{1-\nu}(1 + \nu{}a)\right) + k_P\log(D_{\epsilon_1,\epsilon_2}).
\end{align*} Using the triangle inequality and the scale-invariance of the projective metric $\theta_a$, we obtain that for all $\omega \in G_P$,
\begin{align*}
\diam_{\theta_a}(P_{\omega}^{k_P}(\cC_a)) & = \sup\set{\theta_a(f_1,f_2)}{f_1,f_2 \in P_{\omega}^{m_C}(\cC_a)} \\
& \leq 2\sup\set{\theta_a(P_{\omega}^{(k_P)}(h),\mathds{1})}{h \in \cC_a,\ \norm{h}_1 = 1} \\
& \leq 2\log\left( \frac{2(1+\nu)}{1-\nu}(1 + \nu{}a)\right) + 2k_P\log(D_{\epsilon_1,\epsilon_2}) =: D_P.
\end{align*}

We now apply the cocycle Perron-Frobenius theorem, Theorem \ref{thm:pf}, to $S_{\omega}^{(n)}$ with parameters $G_P$, $k_P$, and $D_P$. The result is that if $\lambda_1'$ and $\lambda_2'$ are the first and second largest Lyapunov exponents for the cocycle $S_{\omega}^{(n)}$, we have \[ \lambda_2' \leq \frac{\restr{\mu}{\Omega_1}(G_P)}{k_P}\log\left( \tanh\left( -\frac{1}{4}\log\left( \frac{2(1+\nu)}{1-\nu}(1 + \nu{}a)\right) + \frac{1}{4}k_P\log(D_{\epsilon_1,\epsilon_2})\right) \right) + \lambda_1'. \] We know that $\lambda_1' = 0$, because $\norm{h}_1 \leq \norm{h}_{BV} \leq (1+a)\norm{h}_1$ for all $h\in \cC_a$ and $P_{\omega}$ preserves the integral of non-negative functions; every function in $\cC_a$ has Lyapunov exponent $\lambda_1'$ for $P_{\omega}$, so we get $0 \leq \lambda_1' \leq 0$. Finally, to convert this to a statement about the Lyapunov exponents for $T_{\omega}^{(n)}$, $\lambda_1 = 0$, and $\lambda_2$, we apply Lemma \ref{lem:multiple-iter-LEs} in the case of $k=2$, to obtain: \[ \lambda_2 \leq \frac{\restr{\mu}{\Omega_1}(G_P)}{2k_P}\log\left( \tanh\left( -\frac{1}{4}\log\left( \frac{2(1+\nu)}{1-\nu}(1 + \nu{}a)\right) + \frac{1}{4}k_P\log(D_{\epsilon_1,\epsilon_2})\right) \right) < 0. \] All of the terms that make up the bound here are explicitly computable for specific examples. This concludes the proof.
\end{proof}

\subsection{Perturbation Asymptotics}
\label{subsect:app-pert-asympt}

The map $T_{0,0}$ that generates a cocycle over the one-point space by taking powers of the map has a two-dimensional eigenspace corresponding to the invariant densities $\mathds{1}_{[-1,0]}$ and $\mathds{1}_{[0,1]}$. If instead, we allow $\epsilon_1$ and $\epsilon_2$ to be larger than zero, then $T_{\epsilon_1,\epsilon_2}$ only has a single invariant probability density, rather than two, and there is a spectral gap between the eigenvalue $1$ and the next largest eigenvalues. If $\epsilon_1$ and $\epsilon_2$ are both non-zero but tend to zero, the spectral gap shrinks towards zero (see \cite{keller-liverani}). We can say something similar in the situation of our cocycle of paired tent maps, with the Lyapunov exponents, where we can identify the order of the convergence rate. For notation, we say that $f(x) \sim_{0} g(x)$ when $\lim\limits_{x \to 0^+} \frac{f(x)}{g(x)} = 1$, and we say that $a(x) \lesssim_{0} b(x)$ when there exist $c(x)$ and $d(x)$ such that $a(x) \sim_{0} c(x)$, $b(x) \sim_{0} d(x)$, and $c(x) \leq d(x)$.

\begin{thm}
\label{thm:LE-asympt}
Let $(\Omega,\mu,\sigma)$, $\epsilon_1$, and $\epsilon_2$ be as in Theorem \ref{thm:spec-gap}. Let $\kappa \in (0,1]$, and define $T_{\omega} := T_{\kappa\epsilon_1(\omega),\kappa\epsilon_2(\omega)}$. Then there exists $c > 0$ such that the second-largest Lyapunov exponent $\lambda_2$ of the cocycle of Perron-Frobenius operators satisfies $\lambda_2(\kappa) \lesssim_{0} -c\kappa$. This estimate is sharp, in the following sense: there is a sequence $(\kappa_n)_{n=1}^{\infty} \subset (0,1/2)$ such that $\kappa_n \to 0$, each $T_{\kappa_n}$ is Markov, and if we take $(\Omega,\mu,\sigma) = (\{*\},\delta_{*},\id)$ with $T_{*} = T_{\kappa_n}$, then $\lambda_2(\kappa_n) \sim_{0} -2\kappa_n$.
\end{thm}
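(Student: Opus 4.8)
The plan is to treat the two assertions separately. For the inequality $\lambda_2(\kappa)\lesssim_0-c\kappa$, I would re-run the argument of Theorem~\ref{thm:spec-gap}, but make the data $(G_P,k_P,D_P)$ depend on $\kappa$ in a way opposite to that used there: instead of taking the smallest possible $k_P$ (which forces $D_P\asymp\log(1/\kappa)$ and only yields a bound of order $\kappa/\log(1/\kappa)$), take $k_P=k_P(\kappa)=\lceil C_0/\kappa\rceil$ for a constant $C_0$ depending only on $M_{\epsilon_1,\epsilon_2}$, $a$ and $\nu$, and arrange that $D_P$ can be chosen \emph{independent of $\kappa$}. Restricting to $\kappa<1/2$ so that all parameters $\kappa\epsilon_i(\omega)$ are $\le 1/2$, Proposition~\ref{prop:tent-map-ly-ineq} gives the uniform inequality $\Var(P_\omega f)\le\tfrac12\Var f+4\norm{f}_1$, which fixes $\nu\in(1/2,1)$, the cone exponent $a$, and the partition $\cQ$ once and for all. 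Granting the \emph{key claim} that there is a $\kappa$-independent $D_P$ and, for each small $\kappa$, a set $G_P(\kappa)\subset\Omega_1$ with $\restr{\mu}{\Omega_1}(G_P(\kappa))$ bounded below and $\diam_{\theta_a}\!\big(P_\omega^{(k_P)}\cC_a\big)\le D_P$ for all $\omega\in G_P(\kappa)$, Theorem~\ref{thm:pf} applied to the second-iterate cocycle yields $\lambda_2'\le-\frac{\restr{\mu}{\Omega_1}(G_P)}{k_P}\log\big(\tanh(\tfrac14 D_P)^{-1}\big)$; since $\tanh(\tfrac14 D_P)^{-1}$ is a fixed number $>1$ and $k_P\le C_0/\kappa+1$, the right-hand side is $\le-c'\kappa$ for small $\kappa$. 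Lemma~\ref{lem:multiple-iter-LEs} with $k=2$ then transfers this to $\lambda_2(\kappa)\le-\tfrac12 c'\kappa$, which is the claim.

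The key claim is where the real work is. Fix $h\in\cC_a$ with $\norm{h}_1=1$ and decompose $h=h_L+h_R$ according to $[-1,0]$ and $[0,1]$. The covering estimates (Lemma~\ref{lem:interval-exp}, Corollary~\ref{cor:tent-dyn-cover}, Lemma~\ref{lem:covering}, Lemma~\ref{lem:strong-func-cover}) show that after $m_0(\kappa)=O(\log(1/\kappa))\ll k_P$ steps each branch of $S_\omega^{(m)}$ has covered all of $[-1,1]$, and---using that $\essinf g_\omega\ge(4(1+\kappa)^2)^{-1}$ together with $(1+\kappa)^{\log(1/\kappa)}\to1$---that a single interval of length $\asymp\kappa$ leaked across $0$ has, after those $m_0(\kappa)$ steps, already spread to density $\gtrsim\kappa$. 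Separately, Birkhoff's theorem applied to $\epsilon_1$ and $\epsilon_2$ along the $\sigma^2$-orbit on $\Omega_1$ shows that for $\omega$ outside a set whose measure vanishes as $\kappa\to0$, the number of leaking events in $k_P=\lceil C_0/\kappa\rceil$ steps is $\asymp k_P$ and the total $L^1$-mass transferred from $[-1,0]$ to $[0,1]$ (and vice versa) over those steps is bounded below by a $\kappa$-independent constant, since each event transfers a fraction $\asymp\kappa$ of an order-one amount of mass and these accumulate. Combining these two facts with the positivity and linearity of $P_\omega$ gives $\essinf\!\big(P_\omega^{(k_P)}h\big)\ge c_0>0$ uniformly in $\kappa$ and $h$, while $\esssup\!\big(P_\omega^{(k_P)}h\big)\le 1+a$ because $P_\omega^{(k_P)}h\in\cC_{\nu a}$ and $P_\omega$ preserves $\norm{\cdot}_1$; Lemma~\ref{lem:proj-met-bound} and the triangle inequality then bound $\diam_{\theta_a}\!\big(P_\omega^{(k_P)}\cC_a\big)$ by a $\kappa$-independent $D_P$. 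The main obstacle here is the mass-transfer bookkeeping: one must check that the mass on each half stays bounded below long enough for the cumulative leaked mass to reach order one rather than draining back, and that $G_P(\kappa)$ keeps measure bounded below uniformly in $\kappa$; both are estimates on the time-inhomogeneous two-state mass balance driven by the Birkhoff averages of $\epsilon_1,\epsilon_2$.

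For the sharpness statement, take $\epsilon_1\equiv\epsilon_2\equiv1$ and $(\Omega,\mu,\sigma)=(\{*\},\delta_*,\id)$, so the cocycle is iteration of $T_{\kappa,\kappa}$. First construct the sequence: $T_{\kappa,\kappa}$ is odd with $T_{\kappa,\kappa}(0)=0$ and $T_{\kappa,\kappa}(\pm1)=\pm1$, so the only turning points whose orbits matter are $\pm\tfrac12$, and $T_{\kappa,\kappa}(\mp\tfrac12)=\pm\kappa$. Tracking the orbit of $-\kappa$ one computes it to be $2^{j}\kappa(1+\kappa)^{j}-1$ as long as it stays in $[-1,-\tfrac12]$, so choosing $\kappa_n\in(0,\tfrac12)$ to solve $2^{n}\kappa_n(1+\kappa_n)^{n}=1$ makes this orbit reach $0$ in exactly $n$ steps; by oddness the orbit of $+\kappa$ does as well, the resulting finite set of points together with $\{0,\pm\tfrac12,\pm1\}$ is forward-invariant under $T_{\kappa_n,\kappa_n}$, the partition it generates is Markov, and $\kappa_n\to0$. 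Next, reduce $\lambda_2$ to a finite eigenvalue: the Lasota--Yorke inequality makes $L_{\kappa_n}$ on $BV[-1,1]$ quasi-compact with essential spectral radius $\le(2(1+\kappa_n))^{-1}<1$, and since $T_{\kappa_n,\kappa_n}$ is piecewise linear and Markov the space $V_n$ of functions constant on the partition intervals is $L_{\kappa_n}$-invariant and carries, as a standard fact for such maps, all eigenvalues of modulus exceeding the essential spectral radius; hence $\lambda_1=0$ and $\lambda_2(\kappa_n)=\log\rho_2(A_n)$, where $A_n=L_{\kappa_n}|_{V_n}$ and $\rho_2(A_n)$ is the second-largest modulus of an eigenvalue of $A_n$. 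Finally, compute $\rho_2(A_n)$: the oddness of $T_{\kappa_n,\kappa_n}$ gives a splitting $V_n=V_n^+\oplus V_n^-$ that $A_n$ respects, the invariant density lies in $V_n^+$ (eigenvalue $1$), and $A_n$ restricted to $V_n^-$ is, to first order in $\kappa_n$, a perturbation of the restriction of the (non-ergodic) operator $L_0$ to $\subspan_{\R}\{\mathds{1}_{[-1,0]},\mathds{1}_{[0,1]}\}$, whose first-order generator is $\left(\begin{smallmatrix}-1&1\\1&-1\end{smallmatrix}\right)$ with eigenvalues $0$ and $-2$ (the identity $L_\kappa\mathds{1}_{[-1,0]}=\tfrac1{1+\kappa}\mathds{1}_{[-1,\kappa]}$ gives this directly). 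Thus $\rho_2(A_n)=1-2\kappa_n+O(\kappa_n^2)$ and $\lambda_2(\kappa_n)=\log\!\big(1-2\kappa_n+O(\kappa_n^2)\big)\sim_0-2\kappa_n$. Since the $A_n$ are explicit---a short ``chain'' coming from the turning-point orbit, coupled to the two central branches---this last step is a finite linear-algebra computation; the one delicate point is justifying the first-order perturbation across the $n$-dependent partitions, handled either by the explicit form of $A_n$ or by a Keller--Liverani comparison of $L_{\kappa_n}$ with $L_0$ on $BV$.
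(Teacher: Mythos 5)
Your overall strategy for the upper bound is the paper's: reduce to the second-iterate cocycle, use the uniform Lasota--Yorke inequality (Proposition \ref{prop:tent-map-ly-ineq}) to fix a cone $\cC_a$ and partition $\cQ$ once and for all, use Birkhoff along the $\sigma^2$-orbit to produce a positive-measure set of $\omega$ with many leaking events in a $\kappa$-dependent window, lower-bound $\essinf\big(P_\omega^{(k_P)}h\big)$ by accumulating the leaked mass, convert to a diameter bound via Lemma \ref{lem:proj-met-bound}, and finish with Theorem \ref{thm:pf} and Lemma \ref{lem:multiple-iter-LEs}. Where you genuinely differ is the parameter regime: you take $k_P\asymp 1/\kappa$ and aim for a $\kappa$-independent $D_P$, whereas the paper takes $k_P=m_1+2m_3(\kappa)\asymp\log(1/\kappa)$, accepts $\essinf\gtrsim\gamma(\kappa)\asymp\kappa\,m_3(\kappa)$, hence $D_P'\asymp\log(1/\kappa)$, and recovers linearity from $\log\tanh(\tfrac14D_P')\sim-2e^{-D_P'/2}$, the counted leaks supplying the factor $m_3(\kappa)$ that cancels $1/k_P$. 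Your regime also works and is cleaner at the final asymptotic step, but the accumulation estimate you ``grant'' is exactly where the paper's work lies, and your mass-balance framing (``the mass must not drain back'') is the wrong frame: the paper's Lemma \ref{lem:Q-omega-props} sums over itineraries (stay in $I^s$, leak once at time $n_i$, then stay in $I^{-s}$), and these lower bounds simply add by positivity, with no mass balance needed; each admissible itinerary contributes $\asymp 4^{-m_1-m_3(\kappa)}(1+\kappa B_{\epsilon_1,\epsilon_2})^{-2k_P}\asymp\kappa$ (your choice $k_P\asymp C_0/\kappa$ keeps the last factor bounded), and there are $\gtrsim f\,k_P\asymp 1/\kappa$ admissible leak times, so the sum is of order one. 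With that replacement your key claim is sound and delivers the constant $D_P$ you want.

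For sharpness you choose the same Markov family (your equation $2^n\kappa(1+\kappa)^n=1$ is the paper's $(2+2\kappa)^n\kappa=1$) and the same reduction to the finite invariant subspace of functions constant on the Markov partition; the difference is in how the second eigenvalue is extracted. The paper computes the characteristic polynomial of the adjacency matrix explicitly, $x^2\big(x^n(x-2)-2\big)\big(x^n(x-2)+2\big)$, and locates the second-largest root $2-2r_n$ with $r_n\sim\kappa_n$; you instead propose an odd/even splitting and a first-order two-state perturbation built on $L_\kappa\mathds{1}_{[-1,0]}=\tfrac1{1+\kappa}\mathds{1}_{[-1,\kappa]}$. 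Be aware that Keller--Liverani stability alone gives continuity of the spectral data near $1$, not the coefficient $-2\kappa_n$; to make your route rigorous you need either a genuine first-order eigenvalue formula (as in the metastability literature the paper cites) or the explicit matrix, which is the paper's computation. Since Proposition \ref{prop:markov-tent-LE} is itself only outlined in the paper, your sketch is at a comparable level of detail, but the ``standard fact'' that $V_n$ carries all eigenvalues outside the essential spectral radius, and the first-order perturbation across the $n$-dependent partitions, are the two points you would actually have to write out.
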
 

The corollary says that there is an upper bound for $\lambda_2(\kappa)$ that is asymptotically linear in $\kappa$, and there are examples of these cocycles where $\lambda_2(\kappa)$ is exactly asymptotically linear in $\kappa$. The statement of Theorem C in the Introduction is obtained from Theorem \ref{thm:LE-asympt} by unravelling the limits involved in the asymptotic calculations and making $c$ closer to $0$ if necessary.

It turns out that the $C(\kappa)$ from Theorem \ref{thm:spec-gap} is asymptotically proportional to $\kappa/\log(\kappa)$, using standard asymptotic equivalences. The improvement in Theorem \ref{thm:LE-asympt} comes from allowing the wait time $k_P$ to increase and meanwhile being much more careful about the mass that leaks between the two intervals. As mentioned before Theorem \ref{thm:spec-gap}, we choose to present both upper bounds to illustrate multiple outcomes of the same technique that serve somewhat different purposes.

The proof of the theorem is spread over the following two sections; the asymptotic upper bound is proven in the remainder of the current section, and the example providing an exact rate for $\lambda_2(\kappa)$ is provided by Proposition \ref{prop:markov-tent-LE} in Section \ref{subsect:markov-paired-tent}. 

We begin with simple computational equalities for the $M$ and $D$ terms.

\begin{lem}
\label{lem:D-M-eps-props}
We have, for $\kappa \in (0,1]$:
\begin{gather*}
M_{\kappa\epsilon_1,\kappa\epsilon_2} = \kappa M_{\epsilon_1,\epsilon_2}, \\
D_{\kappa\epsilon_1,\kappa\epsilon_2} = 4(1+\kappa B_{\epsilon_1,\epsilon_2})^2,
\end{gather*}
where $B_{\epsilon_1,\epsilon_2} = \max\{\esssup(\epsilon_1),\esssup(\epsilon_2)\}$.
\end{lem}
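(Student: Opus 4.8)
This is a purely computational lemma, following directly from the definitions of $M_{\epsilon_1,\epsilon_2}$ and $D_{\epsilon_1,\epsilon_2}$ given in Theorem \ref{thm:spec-gap}, so the plan is simply to substitute $\kappa\epsilon_1$ and $\kappa\epsilon_2$ in place of $\epsilon_1$ and $\epsilon_2$ and simplify. I expect no real obstacle here; the only point requiring a moment's care is the behaviour of $\esssup$ under scaling by a positive constant.

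First I would handle the $M$ term. By definition,
\[
M_{\kappa\epsilon_1,\kappa\epsilon_2} = \frac{1}{2}\min\left\{ \int_\Omega \kappa\epsilon_1\ d\mu,\ \int_\Omega \kappa\epsilon_2\ d\mu \right\}.
\]
Since $\kappa > 0$ is a constant, it pulls out of each integral, and then out of the minimum (the minimum of two positive quantities scales by a positive constant), giving $\kappa \cdot \frac{1}{2}\min\{\int_\Omega \epsilon_1\ d\mu,\int_\Omega \epsilon_2\ d\mu\} = \kappa M_{\epsilon_1,\epsilon_2}$.

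Next I would handle the $D$ term. Again by definition,
\[
D_{\kappa\epsilon_1,\kappa\epsilon_2} = 4\left(1 + \max\{\esssup(\kappa\epsilon_1),\esssup(\kappa\epsilon_2)\}\right)^2.
\]
Here I would note that for $\kappa > 0$ we have $\esssup(\kappa\epsilon_i) = \kappa\,\esssup(\epsilon_i)$ for $i=1,2$, since multiplication by a positive constant is an increasing bijection of $[0,\infty)$ and so commutes with the essential supremum. Hence $\max\{\esssup(\kappa\epsilon_1),\esssup(\kappa\epsilon_2)\} = \kappa\max\{\esssup(\epsilon_1),\esssup(\epsilon_2)\} = \kappa B_{\epsilon_1,\epsilon_2}$, and substituting gives $D_{\kappa\epsilon_1,\kappa\epsilon_2} = 4(1+\kappa B_{\epsilon_1,\epsilon_2})^2$, as claimed. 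This completes the proof.
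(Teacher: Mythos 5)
Your computation is correct and is exactly what the lemma requires. The paper itself supplies no proof of Lemma~\ref{lem:D-M-eps-props}, treating it as an immediate consequence of the definitions in Theorem~\ref{thm:spec-gap}; your proposal simply writes out that routine substitution, using that a positive constant $\kappa$ pulls out of the integral, the minimum, the essential supremum, and the maximum, which is precisely the justification an unwritten proof would rest on.
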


In general, one would expect multiple leakage times and mixing within the halves as the system runs. Choosing $k_P$ to be larger and accounting for the times at which leakage occurs means that we can account for more leakage terms. This requires a better choice of the set $G_P$, as well; we will choose $G_P$ using Birkhoff's pointwise ergodic theorem to find a set where the frequencies of leaking (in both directions) is high.

The last ingredient in the proof is a firm understanding of how to account for preimages and leaking in the lower bound. Set $I^- = [-1,0]$ and $I^+ = [0,1]$, and for each $\omega$, define two operators on $L^1[-1,1]$: \[ Q_{\omega}^- = \mathds{1}_{I^-}\cdot P_{\omega}, \quad Q_{\omega}^+ = \mathds{1}_{I^+}\cdot P_{\omega}. \] If $f$ is a density on $[-1,1]$ and $s\in \{-,+\}$, then $Q_{\omega}^s(f)$ is the restriction of the density $P_{\omega}(f)$ to the interval $I^s$; $Q_{\omega}^s$ captures information about mass staying in the same interval or leaking over. The next lemma collects the required properties of $Q_{\omega}^{\pm}$.

\begin{lem}
\label{lem:Q-omega-props}
Let $\omega \in \Omega$.
\begin{enumerate}
\item The operators $Q_{\omega}^-$ and $Q_{\omega}^+$ are linear and satisfy $P_{\omega} = Q_{\omega}^- + Q_{\omega}^+$.
\item For non-negative $f\in L^{1}[-1,1]$ and $s\in \{-,+\}$, $Q_{\omega}^s(f) \geq 0$.
\item For $n\geq 1$, let $\Gamma \subset \{-,+\}^n$ be a collection of strings of the form $b = (b_0b_1\dots b_{n-1})$, and for such a string $b$ let $Q_{\omega}^{(b)}$ be defined by \[ Q_{\omega}^{(b)} = Q_{\sigma^{2(n-1)}(\omega)}^{b_{n-1}} \circ \cdots \circ Q_{\omega}^{b_0}. \] Then for any non-negative $f \in L^1[-1,1]$, $P_{\omega}^{(n)}(f) \geq \sum_{b\in \Gamma} Q_{\omega}^{(b)}(f).$
\item For $n \geq 1$, $b = (b_0b_1\dots b_{n-1})\in \{-,+\}^n$, and $x\in [-1,1]$, define \[ \Phi_{\omega}^{b}(x) = \left(S_{\omega}^{(n)}\right)^{-1}\{x\} \cap \bigcap_{i=1}^{n} \left(S_{\omega}^{(i)}\right)^{-1}(I^{b_{i-1}}). \] This is the set of points $z$ that in $n$ steps of the cocycle $S$ are mapped to $x$ and that at each intermediate step $i$ are found in $I^{b_i}$. Then for any $f\in L^{1}[-1,1]$ and almost every $x\in [-1,1]$, we have \[ Q_{\omega}^{(b)}(f)(x) = \sum_{z\in \Phi_{\omega}^{b}(x)} \left( \prod_{i=0}^{n-1} g_{\sigma^{2i}(\omega)}(S_{\omega}^{(i)}(z)) \right) f(z), \] and in particular for any non-trivial interval $I \subset [-1,1]$, we have: \[ Q_{\omega}^{(b)}(\mathds{1}_I)(x) \geq \abs{I\cap \Phi_{\omega}^{b}(x)}\cdot \prod_{i=0}^{n-1} \essinf(g_{\sigma^{2i}(\omega)}), \] where $\abs{I\cap \Phi_{\omega}^{b}(x)}$ is the (finite) cardinality of $I\cap \Phi_{\omega}^{b}(x)$.
\end{enumerate}
\end{lem}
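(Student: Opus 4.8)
The plan is to reduce everything to the explicit formula for the Perron-Frobenius operator together with its positivity, so that parts (1)--(3) are immediate and (4) becomes a bookkeeping computation. For (1): $P_\omega$ is linear and multiplication by the fixed function $\mathds{1}_{I^s}$ is linear, so $Q_\omega^-$ and $Q_\omega^+$ are linear; since $\mathds{1}_{I^-}+\mathds{1}_{I^+}=\mathds{1}_{[-1,1]}$ off the $\lambda$-null set $\{0\}$, we get $Q_\omega^-+Q_\omega^+=P_\omega$ as operators on $L^1[-1,1]$. For (2): $P_\omega$ is a Perron-Frobenius operator, hence positivity-preserving, and $\mathds{1}_{I^s}\geq 0$, so $Q_\omega^s(f)\geq 0$ whenever $f\geq 0$.

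For (3) I would iterate the decomposition of (1) along the cocycle. Writing $P_\omega^{(n)}=P_{\sigma^{2(n-1)}(\omega)}\circ\cdots\circ P_\omega$ and substituting $P_{\sigma^{2i}(\omega)}=Q_{\sigma^{2i}(\omega)}^-+Q_{\sigma^{2i}(\omega)}^+$ for each $i$, then distributing the composition over these sums, yields exactly $P_\omega^{(n)}=\sum_{b\in\{-,+\}^n}Q_\omega^{(b)}$. Each $Q_\omega^{(b)}$ is a composition of positivity-preserving operators, hence positivity-preserving by (2), so for non-negative $f$ and any $\Gamma\subset\{-,+\}^n$ the omitted terms $Q_\omega^{(b)}(f)$ with $b\notin\Gamma$ are non-negative and may be discarded, giving $P_\omega^{(n)}(f)\geq\sum_{b\in\Gamma}Q_\omega^{(b)}(f)$.

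Part (4) is the substantive step. First I would record the pointwise formula for the iterated operator: since $S_\omega$ (hence each $S_{\sigma^{2i}(\omega)}$) satisfies $(\cM)$ with finitely many monotone branches and weight $g_{\sigma^{2i}(\omega)}=\abs{DS_{\sigma^{2i}(\omega)}}^{-1}$, iterating the defining formula $L(f)(x)=\sum_{y\in T^{-1}(x)}g(y)f(y)$ from Lemma \ref{lem:L-props} and chaining the Jacobians gives, for $\lambda$-almost every $x$, the finite sum $P_\omega^{(n)}(f)(x)=\sum_{z\in(S_\omega^{(n)})^{-1}(x)}\big(\prod_{i=0}^{n-1}g_{\sigma^{2i}(\omega)}(S_\omega^{(i)}(z))\big)f(z)$. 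Now building $Q_\omega^{(b)}$ from the innermost factor $Q_\omega^{b_0}$ outward, the characteristic factor inserted at the $j$-th stage (that is, $\mathds{1}_{I^{b_{j-1}}}$ applied after $j$ applications of the cocycle) restricts the surviving preimages $z$ to those with $S_\omega^{(j)}(z)\in I^{b_{j-1}}$, and it does not alter the accumulated weight. The conjunction of these constraints over $j=1,\dots,n$ together with $S_\omega^{(n)}(z)=x$ is precisely the defining condition of $\Phi_\omega^{b}(x)$, which yields the stated identity $Q_\omega^{(b)}(f)(x)=\sum_{z\in\Phi_\omega^{b}(x)}\big(\prod_{i=0}^{n-1}g_{\sigma^{2i}(\omega)}(S_\omega^{(i)}(z))\big)f(z)$. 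The lower bound for $Q_\omega^{(b)}(\mathds{1}_I)$ then follows by restricting the sum to $z\in I\cap\Phi_\omega^{b}(x)$ (a finite set, since $S_\omega^{(n)}$ has finitely many branches) and bounding each explicit, piecewise-continuous factor $g_{\sigma^{2i}(\omega)}(S_\omega^{(i)}(z))$ below by $\essinf(g_{\sigma^{2i}(\omega)})$.

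I expect the only real point of care to be the index-tracking in (4): verifying that the characteristic factor inserted at composition stage $j$ corresponds exactly to the constraint $S_\omega^{(j)}(z)\in I^{b_{j-1}}$ and that the weight $\prod_{i=0}^{n-1}g_{\sigma^{2i}(\omega)}(S_\omega^{(i)}(z))$ is neither lost nor double-counted when the $\mathds{1}_{I^{b_{j-1}}}$'s are interleaved with the $P$'s. The secondary technical point is that the pointwise formulas hold only $\lambda$-almost everywhere, which is exactly the regime in which the lemma is stated, so nothing further is needed there.
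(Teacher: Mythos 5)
Your proof is correct and rests on the same underlying ideas as the paper's: the explicit preimage-sum formula for the Perron--Frobenius operator, positivity, and careful tracking of which preimages survive the interleaved characteristic-function restrictions. The only difference is presentational — the paper proves parts (3) and (4) by induction on $n$, whereas you establish the full expansion $P_\omega^{(n)} = \sum_{b\in\{-,+\}^n} Q_\omega^{(b)}$ directly by distributing the composition over $P = Q^- + Q^+$ and then read off the constraint $S_\omega^{(j)}(z)\in I^{b_{j-1}}$ at each stage of the iterated formula; both come to the same bookkeeping.
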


\begin{proof}
The first two parts follow directly from the definition. For the third part, observe that for $n = 1$ this is clear, by the first two parts. Suppose that it holds for some $n \geq 1$. Let $\Gamma \subset \{-,+\}^{n+1}$, and let \[ \Gamma' = \set{b' \in \{-,+\}^n}{ (b-) \text{ or } (b+) \in \Gamma}. \] Then we have, for $f\in L^1[-1,1]$:
\begin{align*}
P_{\omega}^{(n+1)}(f) & = Q_{\sigma^{2n}(\omega)}^-(P_{\omega}^{(n)}(f)) + Q_{\sigma^{2n}(\omega)}^+(P_{\omega}^{(n)}(f)) \\
& \geq Q_{\sigma^{2n}(\omega)}^- \left( \sum_{b'\in \Gamma'} Q_{\omega}^{(b)}(f) \right) + Q_{\sigma^{2n}(\omega)}^+ \left( \sum_{b'\in \Gamma'} Q_{\omega}^{(b)}(f) \right) \\
& = \sum_{b\in (\Gamma'-)\cup(\Gamma'+)} Q_{\omega}^{(b)}(f) \geq \sum_{b\in \Gamma} Q_{\omega}^{(b)}(f). 
\end{align*}
By induction, the inequality holds for all $n \geq 1$.

Lastly, let $f\in L^1[-1,1]$, $x\in [-1,1]$, and consider $s \in \{-,+\}$. Then \[ Q_{\omega}(f)(x) = \mathds{1}_{I^s}(x) \sum_{z\in S_{\omega}^{-1}\{x\}} g_{\omega}(z) f(z) = \sum_{z\in \Phi_{\omega}^{s}(x)} g_{\omega}(z) f(z), \] where the equality holds because $\Phi_{\omega}^{s}(x) = S_{\omega}^{-1}\{x\}\cap S_{\omega}^{-1}(I^s)$ and $x\in I^s$ if and only if $z\in S_{\omega}^{-1}(I^s)$. Suppose that the equality holds for all $b' \in \{-,+\}^n$, and consider $b = (b'b_n) \in \{-,+\}^{n+1}$. We then have: 
\begin{align*} 
Q_{\omega}^{(b)}(f)(x) & = Q_{\sigma^{2n}(\omega)}^{b_n}(Q_{\omega}^{(b')}(f))(x) \\
& = \mathds{1}_{I^{b_n}}(x) \sum_{y\in S_{\sigma^{2n}(\omega)}^{-1}\{x\}} g_{\sigma^{2n}(\omega)}(y) \sum_{z\in \Phi_{\omega}^{b'}(y)} \left( \prod_{i=0}^{n-1} g_{\sigma^{2i}(\omega)}(S_{\omega}^{(i)}(z)) \right) f(z) \\
& = \sum_{z\in \Phi_{\omega}^{b}(x)} \left( \prod_{i=0}^{n} g_{\sigma^{2i}(\omega)}(S_{\omega}^{(i)}(z)) \right) f(z),
\end{align*}
where we see that the intermediate $y$ had to be in $I^{b_{n-1}}$ and $x\in I^{b_n}$, so $z\in \Phi_{\omega}^{b}(x)$. This proves the form of $Q_{\omega}^{b}(f)$. Specifying $f = \mathds{1}_I$, we have that the sum is over preimages $z \in \Phi_{\omega}^{b}(x)\cap I$, so we easily obtain: \[ Q_{\omega}^{(b)}(\mathds{1}_I)(x) = \sum_{z\in I\cap \Phi_{\omega}^{b}(x)} \prod_{i=0}^{n-1} g_{\sigma^{2i}(\omega)}(S_{\omega}^{(i)}(z)) \geq \abs{I\cap \Phi_{\omega}^{b}(x)}\cdot \prod_{i=0}^{n-1} \essinf(g_{\sigma^{2i}(\omega)}), \] where the absolute value term denotes cardinality (in this case, the number of $n$-th preimages of $x$ that are in $I$ and are in $I^{b_i}$ at the $i+1$-st time). 
\end{proof}

\begin{proof}[Proof of the upper bound in Theorem \ref{thm:LE-asympt}]
As in the proof of Lemma \ref{lem:leakage-time}, denote $\Omega_2 = \Omega\setminus \Omega_1$, and define subsets $G_1$ and $G_2$ of $\Omega_1$ by \[ G_k = \left( \Omega_1\cap \epsilon_k^{-1}\left[ M_{\epsilon_1,\epsilon_2}, 1 \right] \right) \cup \left( \sigma^{-1}\left( \Omega_2\cap \epsilon_k^{-1}\left[ M_{\epsilon_1,\epsilon_2}, 1 \right] \right) \right), \] for $k=1,2$. These two sets are the sets where leakage happens from $[-1,0]$ to $[0,1]$ and vice versa, respectively, with some minimum amount. Note that by Lemma \ref{lem:D-M-eps-props}, we have \[ \kappa \epsilon_i(\omega) \geq M_{\kappa\epsilon_1,\kappa\epsilon_2} = \kappa M_{\epsilon_1,\epsilon_2} \quad \iff \quad \epsilon_i(\omega) \geq M_{\epsilon_1,\epsilon_2}. \] Because of this, $G_1$ and $G_2$ are independent of $\kappa \in (0,1]$, even though the amount of leakage scales with $\kappa$. 

By Birkhoff's pointwise ergodic theorem, we know that \[ \frac{1}{n} \sum_{i=0}^{n-1} \mathds{1}_{G_k}\circ \sigma^{2i} \conv{n\to\infty} \restr{\mu}{\Omega_1}(G_k) \] $\mu$-almost everywhere for $k = 1,2$. We can find $N_1,\N_2 \in \Z_{\geq 1}$ and sets $C_1,C_2 \subset \Omega_1$ such that $\restr{\mu}{\Omega_1}(C_1\cap C_2) \geq 1/2$ and when $n\geq N_k$ and $\omega \in C_k$, then for $f = \frac{1}{2}\min\{\restr{\mu}{\Omega_1}(G_1),\restr{\mu}{\Omega_1}(G_2)\}$, the orbit $(\sigma^{2i}(\omega))_{i=0}^{n-1}$ is in $G_k$ at least $fn$ times. (See Lemma \ref{lem:ae-conv}.) Set $N_0 = \max\{N_1,N_2\}$ and $\cG' = C_1 \cap C_2$ and observe that for all $n \geq N_0$ and $\omega \in \cG'$, the orbit $(\sigma^{2i}(\omega))_{i=0}^{n-1}$ is in $G_k$ at least $fn$ times for each $k=1,2$.

Let $\kappa_0 \in (0,1)$ be a solution of $\displaystyle N_0 = \left\lceil -\frac{\log(M_{\kappa\epsilon_1,\kappa\epsilon_2})}{\log(4)} \right\rceil$, and for $\kappa \leq \kappa_0$, set \[ k_P = m_1 + 2\left\lceil -\frac{\log(M_{\kappa\epsilon_1,\kappa\epsilon_2})}{\log(4)} \right\rceil = m_1 + 2m_3(\kappa), \] where $m_1 = m_1((2a)^{-1})$ is the waiting time until any interval of measure $(2a)^{-1}$ covers one of $I^-$ or $I^+$ and $m_3(\kappa) = \left\lceil -\frac{\log(M_{\kappa\epsilon_1,\kappa\epsilon_2})}{\log(4)} \right\rceil$ is the number of steps it takes for a leaked interval of size $M_{\kappa\epsilon_1,\kappa\epsilon_2}$ to expand to cover one of $I^+$ or $I^-$, both as in Corollary \ref{cor:tent-dyn-cover}. Let $G'_P = \sigma^{-2m_1}(\cG')$ and let $\cQ = \{Q_k\}_{k=1}^{2a}$ be the same partition of $[-1,1]$ into closed intervals, each with measure $\lambda(Q_k) = (2a)^{-1}$.

Observe that each $Q_k$ is contained in one of $I^-$ and $I^+$; we claim that $S_{\omega}^{(m_1)}(Q_k)$ is contained in the same half of $[-1,1]$. To see this, note that the computation for $m_1((2a)^{-1})$ involved following an interval along its orbit under $S_{\omega}^{(n)}$, and if the interval ever intersected both sets, we only followed the larger piece. For small $\kappa$, we have $\kappa \ll (2a)^{-1}$, which means that the amount of leakage at any step of the orbit is small compared to the measure of the larger piece. Hence the bulk of the set $S_{\omega}^{(i)}(Q_k)$ lies in the same $I^s$ in which it starts, and so indeed $S_{\omega}^{(m_1)}(Q_k)$ covers that $I^s$.

For fixed $\kappa \in (0,\kappa_0]$, we will be using the lower bound \[ \essinf(g_{\omega}) \geq \frac{1}{D_{\kappa\epsilon_1,\kappa\epsilon_2}} = \frac{1}{4(1+\kappa B_{\epsilon_1,\epsilon_2})^2}. \]
We will now find a lower bound for $P_{\omega}^{(k_P(\kappa))}(\mathds{1}_{Q_k})$; the primary tool is Lemma \ref{lem:Q-omega-props}.

\begin{claim*}
Suppose that $Q_k \subset I^s$, for $s\in \{-,+\}$.
\begin{itemize}
\item If $x\in I^s$, then \[ P_{\omega}^{(k_P(\kappa))}(\mathds{1}_{Q_k})(x) \geq (1+\kappa B_{\epsilon_1,\epsilon_2})^{-2k_P(\kappa)} \cdot 4^{-m_1}.\]
\item If $x\in I^{-s}$, then \[ P_{\omega}^{(k_P(\kappa))}(\mathds{1}_{Q_k})(x) \geq \frac{1}{(1+\kappa B_{\epsilon_1,\epsilon_2})^{2k_P(\kappa)}} \cdot \frac{1}{2\cdot 4^{m_1}} \cdot \frac{f\cdot m_3(\kappa)}{4^{m_3(\kappa)}}. \]
\end{itemize}
\end{claim*}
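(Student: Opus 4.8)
The plan is to estimate $P_\omega^{(k_P)}(\mathds{1}_{Q_k})(x)$ from below using Lemma \ref{lem:Q-omega-props}: part (3) lets us discard all but a chosen family $\Gamma$ of itineraries, $P_\omega^{(k_P)}(\mathds{1}_{Q_k})\geq\sum_{b\in\Gamma}Q_\omega^{(b)}(\mathds{1}_{Q_k})$, and part (4) gives $Q_\omega^{(b)}(\mathds{1}_{Q_k})(x)\geq\abs{Q_k\cap\Phi_\omega^b(x)}\prod_{i=0}^{k_P-1}\essinf(g_{\sigma^{2i}(\omega)})$. Since $\essinf(g_\omega)\geq(4(1+\kappa B_{\epsilon_1,\epsilon_2})^2)^{-1}$ for every $\omega$, the product is at least $4^{-k_P}(1+\kappa B_{\epsilon_1,\epsilon_2})^{-2k_P}$, so the whole problem reduces to counting constrained preimages $\abs{Q_k\cap\Phi_\omega^b(x)}$. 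Two facts about $S_\omega$, valid for all $\omega$, drive the count: (i) $S_\omega(I^-)\supset I^-$ and $S_\omega(I^+)\supset I^+$, which follows directly from the explicit form of $T_\omega$; and (ii) $S_\omega$ restricted to $I^-$ (resp.\ $I^+$) has at least four branches whose images contain $I^-$ (resp.\ $I^+$) --- in the notation of the proof of Proposition \ref{prop:tent-map-ly-ineq}, the branches $I_{11},I_{12},I_{22},I_{21}$ and their mirror images. Iterating (ii) with (i): if an orbit is constrained to a half $I^t$ and already covers $I^t$ at some time, then at every later time it still covers $I^t$ and the covering multiplicity is multiplied by at least $4$.

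For the first bullet ($Q_k\subset I^s$, $x\in I^s$), take the single itinerary $b=(s\cdots s)$. By the covering discussion preceding the Claim --- valid because for small $\kappa$ the per-step leakage is $O(\kappa)$, hence negligible against $\lambda(Q_k)=(2a)^{-1}$ --- there is a sub-piece $Q_k'\subset Q_k$ whose orbit stays in $I^s$ for $m_1$ steps and with $S_\omega^{(m_1)}(Q_k')\supset I^s$. Applying the multiplication consequence of (i)--(ii) over the remaining $2m_3(\kappa)$ steps shows $\abs{Q_k\cap\Phi_\omega^{(s\cdots s)}(x)}\geq 4^{2m_3(\kappa)}$ for every $x\in I^s$. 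Hence $P_\omega^{(k_P)}(\mathds{1}_{Q_k})(x)\geq 4^{2m_3(\kappa)}\cdot 4^{-k_P}(1+\kappa B_{\epsilon_1,\epsilon_2})^{-2k_P}$, which equals $4^{-m_1}(1+\kappa B_{\epsilon_1,\epsilon_2})^{-2k_P}$ because $k_P=m_1+2m_3(\kappa)$.

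For the second bullet ($x\in I^{-s}$), note first that since $\omega\in G'_P=\sigma^{-2m_1}(\cG')$ and $m_3(\kappa)\geq N_0$ whenever $\kappa\leq\kappa_0$, among the indices $i\in\{0,\dots,m_3(\kappa)-1\}$ there are at least $f\,m_3(\kappa)$ for which $\sigma^{2(m_1+i)}(\omega)$ lies in the leakage set for the direction $I^s\to I^{-s}$ ($G_1$ if $s=-$, $G_2$ if $s=+$); enumerate these as $i_1<\dots<i_r$. For each $\ell$ let $b^{(\ell)}$ be the itinerary reading $s$ for the first $m_1+i_\ell$ steps and $-s$ afterwards. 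Following $b^{(\ell)}$: after $m_1$ steps $Q_k'$ covers $I^s$, after $m_1+i_\ell$ steps it covers $I^s$ with multiplicity at least $4^{i_\ell}$, and the leaking branch of $S_{\sigma^{2(m_1+i_\ell)}(\omega)}$ carries a sub-interval of $I^s$ homeomorphically onto an interval $J\subset I^{-s}$ that is anchored at an endpoint of $I^{-s}$, has $\lambda(J)\geq\tfrac12 M_{\kappa\epsilon_1,\kappa\epsilon_2}$, and is covered with multiplicity at least $4^{i_\ell}$ (these facts come from the explicit leakage formulas in the proof of Lemma \ref{lem:leakage-time}). By Lemma \ref{lem:interval-exp}(a), $J$ covers $I^{-s}$ within $m_3(\kappa)$ further steps, after which (i)--(ii) multiply the covering multiplicity by $4$ at each step; since there are $k_P-(m_1+i_\ell+1)$ post-leak steps the dependence on $i_\ell$ cancels, and one obtains $\abs{Q_k\cap\Phi_\omega^{b^{(\ell)}}(x)}\geq\tfrac12 4^{m_3(\kappa)}$ for all $x\in I^{-s}$, uniformly in $\ell$. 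Summing over $\ell$ with $\Gamma=\{b^{(1)},\dots,b^{(r)}\}$, using $r\geq f\,m_3(\kappa)$ and $4^{m_3(\kappa)-k_P}=4^{-m_1-m_3(\kappa)}$, gives $P_\omega^{(k_P)}(\mathds{1}_{Q_k})(x)\geq(1+\kappa B_{\epsilon_1,\epsilon_2})^{-2k_P}(2\cdot 4^{m_1})^{-1}f\,m_3(\kappa)\,4^{-m_3(\kappa)}$, as claimed.

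The main obstacle is the multiplicity bookkeeping in the second bullet. One must check that the leaked piece stays a single interval anchored at an endpoint of $I^{-s}$ throughout its expansion (so that Lemma \ref{lem:interval-exp}(a) applies verbatim and, at the step it first fills $I^{-s}$, each point of $I^{-s}$ is covered with the advertised multiplicity), that the ``in $I^s$ for $m_1+i_\ell$ steps, then in $I^{-s}$'' constraint really contributes a factor $4^{i_\ell}$ that exactly compensates the later onset of leakage, and --- since we are forcing orbits to remain in the target half while the genuine cocycle leaks a little at every step --- that the cumulative $O(\kappa)$-per-step loss over the $O(\log(1/\kappa))$ remaining steps costs only a bounded factor, which may force a further shrinking of $\kappa_0$. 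Fact (ii) --- the uniform count of four return branches --- must also be extracted from the case analysis of the intervals $I_{ij}$ in the proof of Proposition \ref{prop:tent-map-ly-ineq}, but it holds for every admissible parameter value.
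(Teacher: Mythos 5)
Your strategy is essentially the paper's: reduce everything to counting constrained preimages via Lemma \ref{lem:Q-omega-props}(3)--(4) together with $\essinf(g_\omega)\geq(4(1+\kappa B_{\epsilon_1,\epsilon_2})^2)^{-1}$, use the $m_1$-step covering of $I^s$ by (the surviving part of) $Q_k$, the four return branches of $S_\omega$ inside each half for a factor $4$ per mixing step, the Birkhoff set $\cG'$ to produce at least $f\,m_3(\kappa)$ leak times, and switch-time itineraries summed via part (3). Folding the first $m_1$ steps into one full-length itinerary instead of factoring through $P_\omega^{(m_1)}$ and $\mathds{1}_{I^s}$, as the paper does, is an immaterial repackaging, and your first bullet is correct.

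In the second bullet there is a concrete gap: the per-itinerary count $\abs{Q_k\cap\Phi_\omega^{b^{(\ell)}}(x)}\geq\tfrac12\,4^{m_3(\kappa)}$ does not follow from the mechanism you describe. Your own budget is: multiplicity $4^{i_\ell}$ at time $m_1+i_\ell$, a single homeomorphic leak branch (factor $1$), a possibly invertible expansion over $m_3(\kappa)$ steps (factor $1$), and then $k_P-(m_1+i_\ell+1)-m_3(\kappa)=m_3(\kappa)-i_\ell-1$ mixing steps (factor $4$ each), which gives $4^{i_\ell}\cdot 4^{m_3(\kappa)-i_\ell-1}=\tfrac14\,4^{m_3(\kappa)}$, hence only the claim with $\tfrac{1}{4\cdot 4^{m_1}}$ in place of $\tfrac{1}{2\cdot 4^{m_1}}$. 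The missing factor $2$ is exactly what the paper extracts at the leak step: by the symmetry of the paired tent map about the relevant critical point, every point of the leaked interval has at least two $S$-preimages in the source half $I^s$ (for the leaked interval anchored at $0$ there are in fact four), so the leak step multiplies the multiplicity by at least $2$, not $1$. With that observation your bookkeeping reproduces the stated constant; without it the claim as stated is not established, although the weaker constant would still suffice for the asymptotic conclusion of Theorem \ref{thm:LE-asympt}. The remaining caveats you flag (the leaked piece staying a single anchored interval during its expansion, and the $O(\kappa)$ leakage losses in the initial $m_1$-step covering) are handled, or glossed, at the same level of detail as in the paper.
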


\begin{proof}[Proof of claim]
Fix $Q_k \subset I^s$, for some $s\in \{-,+\}$, and fix $\omega \in G'_P$; we will use the fact that for such $\omega$, $\sigma^{2m_1}(\omega)\in \cG'$. By the above comment, we see that for any $x\in I^s$, there is at least one $m_1$-th preimage $z\in Q_k$ for $x$ that stays in $I^s$ along its orbit; that is, if $b = (s\dots s) \in \{-,+\}^{m_1}$, then $\Phi_{\omega}^{b}(x)$ is non-empty. By part (4) of Lemma \ref{lem:Q-omega-props}, we have \[ Q_{\omega}^{(b)}(\mathds{1}_{Q_k})(x) \geq \abs{I\cap \Phi_{\omega}^{b}(x)}\cdot \prod_{i=0}^{n-1} \essinf(g_{\sigma^{2i}(\omega)}) \geq \left(\frac{1}{4(1+\kappa B_{\epsilon_1,\epsilon_2})^2}\right)^{m_1}. \] We then obtain \[ P_{\omega}^{(k_P(\kappa))}(\mathds{1}_{Q_k}) \geq \left(\frac{1}{4(1+\kappa B_{\epsilon_1,\epsilon_2})^2}\right)^{m_1} P_{\sigma^{2m_1}(\omega)}^{(2m_3(\kappa))}(\mathds{1}_{I^s}). \]

We now consider the two cases. First, consider the case where $x\in I^s$. Then $x$ has exactly four preimages in $I^s$, and each of those preimages has four preimages in $I^s$, and so on. If we let $b \in \{-,+\}^{2m_3(\kappa)}$ be the string whose components are all $s$, then we see that $\abs{\Phi_{\omega}^{b}(x)} = 4^{2m_3(\kappa)}$. By parts (3) and (4) of Lemma \ref{lem:Q-omega-props}, we have:
\begin{align*}
P_{\omega}^{(k_P(\kappa))}(\mathds{1}_{Q_k})(x) & \geq \left(\frac{1}{4(1+\kappa B_{\epsilon_1,\epsilon_2})^2}\right)^{m_1} P_{\sigma^{2m_1}(\omega)}^{(2m_3(\kappa))}(\mathds{1}_{I^s})(x) \\
& \geq \left(\frac{1}{4(1+\kappa B_{\epsilon_1,\epsilon_2})^2}\right)^{m_1} Q_{\sigma^{2m_1}(\omega)}^{(b)}(\mathds{1}_{I^s})(x) \\
& \geq \frac{4^{2m_3(\kappa)}}{\left(4(1+\kappa B_{\epsilon_1,\epsilon_2})^2\right)^{m_1+2m_3(\kappa)}} = (1+\kappa B_{\epsilon_1,\epsilon_2})^{-2k_P(\kappa)} \cdot 4^{-m_1}.
\end{align*}

Second, consider the case where $x\in I^{-s}$; this is where we use $\sigma^{2m_1}(\omega) \in \cG'$. We will split up the remaining iterates of the cocycle into two stages: in the first stage, we will wait $m_3(\kappa)$ steps and count the number of times the map leaks $I^s$ into the other half of the space (which we will denote $I^{-s}$), and in the second stage, we will let the map mix for another $m_3(\kappa)$ steps.

Since $\sigma^{2m_1}(\omega) \in \cG'$ and $m_3(\kappa) \geq N_0$ (by choice of $\kappa$), we see that in the next $m_3(\kappa)$ steps, the cocycle leaks $[-1,0]$ to $[0,1]$ and $[0,1]$ to $[-1,0]$ at least $f\cdot m_3(\kappa)$ times each. Let $0 \leq n_1^- < \cdots < n_{l(-)}^- \leq m_3(\kappa)-1$ and $0 \leq n_1^+ < \cdots < n_{l(+)}^+ \leq m_3(\kappa)-1$ be the times when leakage happens from $[-1,0]$ to $[0,1]$ and from $[0,1]$ to $[-1,0]$, respectively, along the cocycle $S_{\sigma^{2m_1}(\omega)}^{(m_3(\kappa))}$; we know that $l(-),l(+) \geq f\cdot m_3(\kappa)$.

For $1 \leq i \leq l(s)$, let $b^i \in \{-,+\}^{2m_3(\kappa)}$ be the string whose first $n_i^s-1$ components are $s$, and the rest of the components are $-s$. This string follows points that stay in $I^s$ for a while and then switch to $I^{-s}$ and remain there. Let $\Gamma = \{b^i\}_{i=1}^{l(s)}$ be the collection of these strings. By parts (3) and (4) of Lemma \ref{lem:Q-omega-props}, we have:
\begin{align*}
P_{\sigma^{2m_1}(\omega)}^{(2m_3(\kappa))}(\mathds{1}_{I^s})(x) & \geq \sum_{i=1}^{l(s)} Q_{\sigma^{2m_1}(\omega)}^{(b^i)}(\mathds{1}_{I^s})(x) \\
& \geq \left( \frac{1}{4(1+\kappa B_{\epsilon_1,\epsilon_2})^2}\right)^{2m_3(\kappa)}\cdot \sum_{i=1}^{l(s)} \abs{I^s\cap \Phi_{\sigma^{2m_1}(\omega)}^{b^i}(x)}.
\end{align*}
We need to compute $\abs{I^s \cap \Phi_{\sigma^{2m_1}(\omega)}^{b^i}(x)}$. Here, we must identify how the map acts over the $2m_3(\kappa)$ steps. Over the first $n_i^s-1$ steps, the interval $I^s$ undergoes mixing, and at the end any point in $I^s$ will have $4^{n_i^s-1}$ preimages in $I^s$ that stay in $I^s$ for all $n_1^s-1$ steps. At the $n_i^s$-th step, we follow the leakage to $I^{-s}$; for $y \in I^{-s}\cap S_{\sigma^{2(m_1+n_1^s)}(\omega)}(I^s)$, there are at least $2$ preimages of $y$ in $I^s$ (the map is symmetric). Over the next $m_3(\kappa)$ steps, the leakage expands to cover all of $I^{-s}$, and this could potentially happen in an invertible way. Over the remaining steps, $I^{-s}$ mixes with itself in the same way that $I^s$ mixes. Putting all of this together tells us that:
\begin{align*}
\abs{I^s \cap \Phi_{\sigma^{2m_1}(\omega)}^{b^i}(x)} & \geq 4^{2m_3(\kappa)-m_3(\kappa)-n_i^s}\cdot 1\cdot 2\cdot 4^{n_i^s-1} \\
& = 4^{m_3(\kappa)}\cdot 2^{-1}. 
\end{align*}
Putting all of this information together, we obtain, for $x\in I^{-s}$ and $Q_k \subset I^{s}$:
\begin{align*}
P_{\omega}^{(k_P(\kappa))}(\mathds{1}_{Q_k})(x) & \geq \left(\frac{1}{4(1+\kappa B_{\epsilon_1,\epsilon_2})^2}\right)^{m_1} P_{\sigma^{2m_1}(\omega)}^{(2m_3(\kappa))}(\mathds{1}_{I^s})(x) \\
& \geq \frac{1}{(1+\kappa B_{\epsilon_1,\epsilon_2})^{2k_P(\kappa)}} \cdot \frac{1}{4^{m_1+2m_3(\kappa)}} \sum_{i=1}^{l(s)} 4^{m_3(\kappa)}\cdot 2^{-1} \\
& = \frac{1}{(1+\kappa B_{\epsilon_1,\epsilon_2})^{2k_P(\kappa)}} \cdot \frac{1}{2\cdot 4^{m_1}} \cdot \frac{l(s)}{4^{m_3(\kappa)}} \\
& \geq \frac{1}{(1+\kappa B_{\epsilon_1,\epsilon_2})^{2k_P(\kappa)}} \cdot \frac{1}{2\cdot 4^{m_1}} \cdot \frac{f\cdot m_3(\kappa)}{4^{m_3(\kappa)}}. \qedhere
\end{align*}
\end{proof}

The last term is clearly smaller than $1$, so this lower bound is the lower bound for all $x$. Hence, for $\omega \in G'_P$ we have: \[ \essinf(P_{\omega}^{k_P(\kappa)}(\mathds{1}_{Q_k})) \geq \frac{1}{(1+\kappa B_{\epsilon_1,\epsilon_2})^{2k_P(\kappa)}} \cdot \frac{1}{2\cdot 4^{m_1}} \cdot \frac{f\cdot m_3(\kappa)}{4^{m_3(\kappa)}} =: \gamma(\kappa). \] Note that clearly, $\gamma(\kappa) \conv{\kappa \to 0^+} 0$.

Finally, we obtain our diameter bound, apply Theorem \ref{thm:pf}, and perform the asymptotic calculations. As at the end of the proof of Theorem \ref{thm:spec-gap}, we now have that \[ \diam_{\theta_a}(P_{\omega}^{k_P(\kappa)}\cC_a) \leq 2\log\left( \frac{2(1+\nu)}{1-\nu}(1 + \nu{}a)\right) - 2\log(\gamma(\kappa)), \] using Lemma \ref{lem:proj-met-bound}. Call this diameter bound $D'_P = c_1 - 2\log(\gamma(\kappa))$, writing $c_1$ for the constant involving $a$ and $\nu$. Write \[ c_2 = \frac{e^{-c_1/2}\restr{\mu}{\Omega_1}(G'_P)fM_{\epsilon_1,\epsilon_2}}{2\cdot 4^{m_1}}. \] For $\kappa \in (0,\kappa_0]$, we apply the cocycle Perron-Frobenius theorem, Theorem \ref{thm:pf}, to the cocycle $S_{\omega}^{(n)}$ with parameters $G'_P,k_P,D'_P$, to get an upper bound $C_1(\kappa)$ for $\lambda_2(\kappa)$. We have, using standard asymptotic estimates and the definition of $k_P$,
\begin{align*}
C_1(\kappa) & = \frac{\restr{\mu}{\Omega_1}(G'_P)}{k_P(\kappa)}\log\left( \tanh\left( \frac{1}{4}\left( c_1 - 2\log(\gamma(\kappa)) \right) \right) \right) \\
& \sim_{0} \frac{-2e^{-c_1/2}\restr{\mu}{\Omega_1}(G'_P)}{m_1 + 2m_3(\kappa)}\cdot \gamma(\kappa). 
\end{align*}
Moreover, we have \[ m_3(\kappa) = \left\lceil -\frac{\log(M_{\kappa\epsilon_1,\kappa\epsilon_2})}{\log(4)} \right\rceil \sim_{0} -\frac{\log(\kappa)}{\log(4)}. \] This allows us to show that \[ \frac{1}{(1+\kappa B_{\epsilon_1,\epsilon_2})^{2k_P(\kappa)}} = \exp \big( -2(m_1+2m_3(\kappa))\log(1+\kappa B_{\epsilon_1,\epsilon_2}) \big) \sim_{0} 1, \] that $4^{-m_3(\kappa)} \sim_{0} \kappa M_{\epsilon_1,\epsilon_2}$, and that $\displaystyle \gamma(\kappa) \sim_{0} \frac{f\cdot m_3(\kappa)}{2\cdot 4^{m_1}}\cdot \kappa M_{\epsilon_1,\epsilon_2}.$ Thus we obtain: 
\begin{align*}
C_1(\kappa) & \sim_{0} \frac{-2e^{-c_1/2}\restr{\mu}{\Omega_1}(G')}{m_1 + 2m_3(\kappa)}\cdot \gamma(\kappa) \\
& \sim_{0} \frac{-2e^{-c_1/2}\restr{\mu}{\Omega_1}(G')}{m_1 + 2m_3(\kappa)}\cdot \frac{f\cdot m_3(\kappa)}{2\cdot 4^{m_1}}\cdot \kappa M_{\epsilon_1,\epsilon_2} \\
& \sim_{0} -c_2 \kappa.
\end{align*}
Thus we have $\lambda_2(\kappa) \leq C_1(\kappa) \sim_{0} -c_2\kappa$, which is the desired result in Theorem \ref{thm:LE-asympt}.
\end{proof}

\subsection{Markov Paired Tent Maps}
\label{subsect:markov-paired-tent}

It remains to show the sharpness of the upper bound in Theorem \ref{thm:LE-asympt}, in the sense of asking whether or not the second largest Lyapunov exponent for the Perron-Frobenius cocycle is ever linear in $\kappa$. \emph{A priori}, it could never be the case that $\lambda_2(\kappa)$ is linear in $\kappa$, even though there is a linear upper bound. The following example eliminates this possibility by providing a sequence of paired tent maps $T_n := T_{\kappa_n,\kappa_n}$ which, when considered as cocycles over the trivial one-point dynamics, have second-largest Lyapunov exponent of the P-F cocycle $P_{T_n}$ asymptotically equivalent to $-2\kappa_n$. These $T_n$ will be chosen to be Markov maps, allowing for explicit bounds on the spectrum of $P_{T_n}$ and hence the Lyapunov spectrum.

\begin{prop}
\label{prop:markov-tent-LE}
There exists a sequence of paired tent maps $T_n := T_{\kappa_n,\kappa_n}$ with $\kappa_n \to 0$ such that the second-largest Lyapunov exponent for each Perron-Frobenius operator $P_{T_n}$ (generating the trivial cocycle over the one-point space), denoted $\lambda_2(n)$, satisfies $\lambda_2(n) \sim_{0} -2\kappa_n$.
\end{prop}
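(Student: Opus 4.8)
The plan is to construct, for each $n$, a paired tent map $T_n = T_{\kappa_n,\kappa_n}$ with $\kappa_n = \tfrac{1}{2^n}$ (or a similar sequence) chosen so that the critical/turning points of $T_n$ and their forward orbits land on a finite set of dyadic-type points, making $T_n$ a Markov map with respect to a partition $\mathcal{P}_n$ of $[-1,1]$ into finitely many intervals. Once $T_n$ is Markov, the Perron--Frobenius operator $P_{T_n}$ leaves invariant the finite-dimensional subspace $V_n \subset BV[-1,1]$ of functions that are constant on each atom of $\mathcal{P}_n$; on $V_n$ the operator $P_{T_n}$ acts as an explicit nonnegative matrix $A_n$ whose entries are determined by the branch slopes and the combinatorics of which atoms map onto which. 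The first step is therefore to verify that the chosen $\kappa_n$ indeed yields a finite Markov partition and to write down $A_n$ explicitly (it will be close to a block-diagonal pair of tent-map matrices, with small off-diagonal ``leakage'' blocks of size proportional to $\kappa_n$).

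Next I would argue that the Lyapunov spectrum of the one-point cocycle generated by $P_{T_n}$ is captured by the spectrum of $A_n$: the largest Lyapunov exponent is $\lambda_1 = \log 1 = 0$ (the invariant density is in $V_n$), and by quasi-compactness of $P_{T_n}$ on $BV$ (a consequence of the uniform Lasota--Yorke inequality, Proposition~\ref{prop:tent-map-ly-ineq}, together with Theorem~\ref{thm:pf} applied to the trivial cocycle, which forces the essential spectral radius strictly below $1$), the second-largest Lyapunov exponent equals $\log|\lambda_2(A_n)|$, where $\lambda_2(A_n)$ is the second-largest eigenvalue of $A_n$ in modulus. So the problem reduces to a finite-dimensional eigenvalue estimate: show that $A_n$ has a simple eigenvalue $1$ and a second eigenvalue $\rho_n$ with $\rho_n \to 1$ and $\log \rho_n \sim_0 -2\kappa_n$, i.e.\ $1 - \rho_n \sim_0 2\kappa_n$.

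For the eigenvalue estimate I would exploit the near-block structure. When $\kappa_n = 0$, $A_0$ is block diagonal with two identical blocks, each the transfer matrix of a full tent map on an interval; each block has top eigenvalue $1$ (with the uniform density as eigenvector) and spectral gap, so $A_0$ has eigenvalue $1$ with multiplicity two, spanned by $\mathds{1}_{[-1,0]}$ and $\mathds{1}_{[0,1]}$. Turning on $\kappa_n > 0$ perturbs $A_n$ within this setup; restricting attention to the two-dimensional top eigenspace and applying finite-dimensional (analytic) perturbation theory, the $2\times 2$ reduced matrix governing the split of the double eigenvalue $1$ is, to leading order in $\kappa_n$, of the form $\begin{pmatrix} 1-\kappa_n & \kappa_n \\ \kappa_n & 1-\kappa_n \end{pmatrix}$ — this is exactly the stochastic matrix of the two-state Markov chain with transition probability $\kappa_n$ of leaking, matching the Dolgopyat--Wright picture referenced in the introduction. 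Its eigenvalues are $1$ and $1 - 2\kappa_n$, so $\rho_n = 1 - 2\kappa_n + O(\kappa_n^2)$ and hence $\lambda_2(n) = \log\rho_n \sim_0 -2\kappa_n$. The main obstacle I anticipate is the bookkeeping needed to show that the leakage blocks of $A_n$ really do contribute the coefficient $2$ (and not some other constant) and that the higher-order and cross terms are genuinely $O(\kappa_n^2)$ — this requires either a careful direct computation of the reduced $2\times 2$ matrix (using that leakage of mass $\kappa_n$ from each half, spread over the branches, redistributes total mass $\kappa_n$ to the other half per iterate) or an appeal to standard analytic perturbation theory for the isolated eigenvalue $1$ of $A_n$, checking that the first-order perturbation of the double eigenvalue is diagonalizable with the stated eigenvalues. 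A secondary technical point is confirming the explicit sequence $\kappa_n$ that keeps the maps Markov while still tending to $0$; the symmetric choice $\epsilon_1 = \epsilon_2 = \kappa_n$ with $\kappa_n$ chosen so that the images of the turning points are eventually periodic on a dyadic grid should suffice, and I would verify this directly from the formula in Definition~\ref{defn:paired-tent}.
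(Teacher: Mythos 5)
Your high-level plan --- pick $\kappa_n$ so that $T_{\kappa_n,\kappa_n}$ is Markov, reduce to a finite matrix $A_n$ acting on the span of characteristic functions of the partition atoms, and extract the second-largest eigenvalue --- matches the paper's. But the key step, extracting the asymptotics of that eigenvalue, is where your proposal has a genuine gap.

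You propose to treat $A_n$ as a perturbation of the $\kappa = 0$ block-diagonal matrix and apply finite-dimensional analytic perturbation theory to the double eigenvalue $1$. The trouble is that as $\kappa_n \to 0$ the Markov partition refines: the paper's partition has $2n+4$ atoms, so $A_n$ is a $(2n+4)\times(2n+4)$ matrix whose size \emph{grows without bound}, while the $\kappa=0$ Markov matrix lives in a small fixed dimension. There is no fixed-dimension one-parameter family $A(\kappa)$ containing the $A_n$'s to which analytic perturbation theory of an isolated eigenvalue can be applied, and the ``reduced $2\times 2$ matrix'' you write down is therefore a heuristic rather than a consequence of a theorem. (One could try to make it rigorous via Keller--Liverani perturbation theory for the Perron--Frobenius operators on $BV$, which is an infinite-dimensional statement that does not care about the changing matrix dimension, but that is a different argument than the one you describe and would need to be carried out; the paper cites \cite{keller-liverani} as motivation but does not rely on it here.) The paper sidesteps all of this by computing the characteristic polynomial of $A_n$ in closed form --- it factors as $x^2\bigl(x^n(x-2)-2\bigr)\bigl(x^n(x-2)+2\bigr)$ --- and then locating the two largest real roots $2+2\kappa_n$ and $2-2r_n$ with $r_n \sim \kappa_n$ by elementary calculus; after dividing by the normalization $2+2\kappa_n$, one finds $\rho_n = (1-r_n)/(1+\kappa_n)$, and the coefficient $2$ in $\log\rho_n\sim -2\kappa_n$ comes from this quotient, not from a $2\times2$ reduced matrix.

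A secondary issue: $\kappa_n = 1/2^n$ does not make $T_{\kappa_n,\kappa_n}$ Markov. The Markov condition (obtained by pushing forward the turning points $\pm 1/2$) is $(2+2\kappa)^n\kappa = 1$, which defines $\kappa_n$ implicitly. You hedged with ``or a similar sequence,'' so this is acknowledged, but it is not a cosmetic detail --- the explicit form of $\kappa_n$ is what makes the $(2n+4)$-atom partition and the closed-form characteristic polynomial come out. Finally, your observation that one must check that the second eigenvalue of $A_n$ sits above the essential spectral radius of $P_{T_n}$ on $BV$ (so that the matrix eigenvalue really is the second Lyapunov exponent) is a correct and necessary point which the paper's proof outline leaves implicit.
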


\begin{proof}[Proof outline]
The first thing to do is to find $\kappa_n$ where $T_{\kappa_n,\kappa_n}$ is Markov. This happens when the images of the intervals of monotonicity generate a finite partition of $[-1,1]$, and it is relatively easy to show by pushing forward the points $\pm 1/2$ that this occurs when $\kappa_n$ solves $(2+2\kappa)^{n}\kappa = 1$. The explicit Markov partition is found to be, for $n=1$, \[ \Big\{ \left[ -1,-\tfrac{1}{2} \right], \left[ -\tfrac{1}{2}, -\kappa_1 \right], \left[ -\kappa_1, 0 \right], \left[ 0, \kappa_1 \right], \left[ \kappa_1, \tfrac{1}{2} \right], \left[ \tfrac{1}{2}, 1 \right] \Big\} \] and for $n \geq 2$,
\begin{align*}
\Big\{ \left[ -1, T_n(-\kappa_n) \right] \Big\} & \cup \Big\{ \left[ T_n^{i}(-\kappa_n), T_n^{i+1}(-\kappa_n) \right] \Big\}_{i=1}^{n-2} \\
& \cup \Big\{ \left[ T_n^{n-1}(-\kappa_n),-\tfrac{1}{2} \right], \left[ -\tfrac{1}{2}, -\kappa_n \right], \left[ -\kappa_n, 0 \right] \Big\} \\
& \cup \Big\{ \left[ 0, \kappa_n \right], \left[ \kappa_n, \tfrac{1}{2} \right], \left[ \tfrac{1}{2}, T_n^{n-1}(\kappa_n) \right] \Big\} \\
& \cup \Big\{ \left[ T_n^{i+1}(\kappa_n), T_n^i(\kappa) \right] \Big\}_{i=1}^{n-2} \cup \Big\{ \left[ T_n(\kappa_n), 1 \right] \Big\}.
\end{align*}
The partition and the graph of $T_{\kappa_n,\kappa_n}$ for $n=4$ is pictured in Figure \ref{fig:markov-tent-4-grid-square-centered}.

\begin{figure}[tbp]
\includegraphics[width=\textwidth]{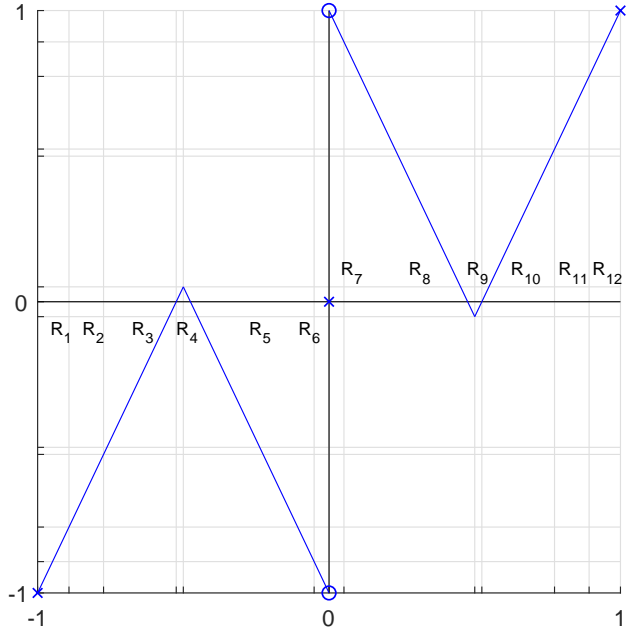}
\caption{Markov partition for $T_4$.}
\label{fig:markov-tent-4-grid-square-centered}
\end{figure}

\begin{figure}[tbp]
\[ \left[ \begin{array}{@{}c|c@{}}
\begin{matrix}
1 & 0 & 0 & & 0 & 0 & 0 & 1  \\
1 & 0 & 0 & & 0 & 0 & 1 & 0 \\
0 & 1 & 0 & & 0 & 0 & 1 & 0 \\
0 & 0 & 1 & & 0 & 0 & 1 & 0 \\
& \vdots & & \ddots & & & \vdots & \\
0 & 0 & 0 & & 1 & 0 & 1 & 0 \\
0 & 0 & 0 & & 1 & 0 & 1 & 0 \\
0 & 0 & 0 & & 1 & 0 & 1 & 0
\end{matrix}
&
\begin{matrix}
0 & 0 & 0 & 0 & & 0 & 0 & 0 \\
& & & & & & & \\
& & & & & & & \\
& & & & & & & \\
& & & & \vdots & & & \\
& & & & \hphantom{\ddots} & & & \\
0 & 0 & 0 & 0 & & 0 & 0 & 0 \\
0 & 1 & 1 & 0 & & 0 & 0 & 0
\end{matrix} \\
\hline
\begin{matrix}
0 & 0 & 0 & & 0 & 1 & 1 & 0 \\
0 & 0 & 0 & & 0 & 0 & 0 & 0 \\
& & & & & & &  \\
& & & \vdots & & & & \\
& & & & & & & \\
& & & \hphantom{\ddots} & & & & \\
& & & & & & & \\
0 & 0 & 0 & & 0 & 0 & 0 & 0
\end{matrix}
&
\begin{matrix}
0 & 1 & 0 & 1 & & 0 & 0 & 0 \\
0 & 1 & 0 & 1 & & 0 & 0 & 0 \\
0 & 1 & 0 & 1 & & 0 & 0 & 0 \\
& \vdots & & & \ddots & & \vdots & \\
0 & 1 & 0 & 0 & & 1 & 0 & 0 \\
0 & 1 & 0 & 0 & & 0 & 1 & 0 \\
0 & 1 & 0 & 0 & & 0 & 0 & 1 \\
1 & 0 & 0 & 0 & & 0 & 0 & 1
\end{matrix}
\end{array} \right] \]
\caption{General form of the $(2n+4)$-by-$(2n+4)$ adjacency matrix $A_n$.}
\label{fig:adj-matrix}
\end{figure}

Now, the Perron-Frobenius operator $P_n := P_{T_n}$ preserves the finite-dimensional subspace of characteristic functions on the Markov partition. For each $n$, let $V_n = \subspan_{\C}\set{\mathds{1}_{R_i}}{ 1 \leq i \leq 2n+4}$, where $\{R_i\}$ is the Markov partition for $T_n$ (written left-to-right). The restriction of $P_n$ to $V_n$ has representation $M_n = (2+2\kappa_n)^{-1}A_n$, where $A_n$ is the adjacency matrix for $T_n$ with respect to $\{R_i\}_{i=1}^{2n+4}$, and so the spectrum of $M_n$ is $\sigma(M_n) = (2+2\kappa_n)^{-1}\sigma(A_n)$. The general form of the adjacency matrix $A_n$ is given in Figure \ref{fig:adj-matrix}. 

The spectral radius of $A_n$ is $2+2\kappa_n$, and the characteristic polynomial of $A_n$ is
\begin{align*} 
\mathrm{char}_{A_n}(x) & = x^{2n+4}-4x^{2n+3}+4x^{2n+2}-4x^2 \\
& = x^2(x^{n+1}-2x^n - 2)(x^{n+1}-2x^n + 2) \\
& = x^2(x^n(x-2) - 2)(x^n(x-2) + 2).
\end{align*}
From here, some calculus and complex analysis determines that $2+2\kappa_n$ is the largest root, and for $n\geq 5$ there is a second-largest real root $2-2r_n$, where $r_n$ is asymptotically $\kappa_n$; there are two roots at zero, and all other roots are close to the unit circle. Relating these eigenvalues of $A_n$ to the Lyapunov exponents of $P_n$ indicates that the largest Lyapunov exponent is $0$ and that the second-largest Lyapunov exponent is asymptotically $-2\kappa_n$.
\end{proof}

\appendix

\section{Miscellaneous ergodic theory}
\label{append:misc-erg}

\begin{lem}
\label{lem:first-entry-conv}
Let $(\Omega,\cB,\mu,\sigma)$ be an ergodic probability-preserving transformation, let $G\in \cB$ have positive measure, and let $N : \Omega \to \Z_{\geq 1}\cup \{\infty\}$ be the first entry time of $\omega$ into $G$: $N(\omega) = \inf\set{n\geq 1}{\sigma^n(\omega) \in G}$. Then $N$ is measurable and for $\mu$-almost every $\omega$, $N(\sigma^n(\omega))/n \conv{n\to\infty} 0$.
\end{lem}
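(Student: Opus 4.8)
## Proof Plan for Lemma \ref{lem:first-entry-conv}

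\textbf{Measurability.} First I would note that $N$ is measurable: the set $\{N = k\}$ is $\{\sigma^k(\omega) \in G\} \setminus \bigcup_{j=1}^{k-1}\{\sigma^j(\omega) \in G\}$, a finite Boolean combination of preimages of the measurable set $G$ under the measurable maps $\sigma^j$, and $\{N = \infty\}$ is the complement of $\bigcup_k \{N=k\}$; by Poincaré recurrence (or ergodicity) this last set is null, so $N$ is finite $\mu$-a.e. This is routine and I would dispatch it in one sentence.

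\textbf{Main argument.} The key observation is that $N(\sigma^n(\omega))$ counts, roughly, the gap from time $n$ to the next visit of the $\sigma$-orbit of $\omega$ to $G$. Fix a $\mu$-generic $\omega$ for which the Birkhoff ergodic theorem applies to $\mathds{1}_G$: write $V(\omega,n) = \sum_{i=0}^{n-1} \mathds{1}_G(\sigma^i(\omega))$ for the number of visits to $G$ in the first $n$ steps, so $V(\omega,n)/n \to \mu(G) > 0$. I want to show $N(\sigma^n(\omega))/n \to 0$. The plan is: given $\varepsilon > 0$, I claim that for all large $n$, the orbit of $\omega$ visits $G$ at least once in the window $\{n+1, n+2, \dots, n + \lceil \varepsilon n\rceil\}$; this forces $N(\sigma^n(\omega)) \le \lceil \varepsilon n \rceil$, hence $N(\sigma^n(\omega))/n \le \varepsilon + o(1)$. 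To prove the claim, suppose instead that the orbit skips the whole window $\{n+1,\dots, n+\lceil \varepsilon n\rceil\}$ for infinitely many $n$. On such an $n$ we would have $V(\omega, n + \lceil\varepsilon n\rceil) = V(\omega, n)$ (no new visits in the window), so
\begin{equation*}
\frac{V(\omega, n + \lceil \varepsilon n\rceil)}{n + \lceil \varepsilon n\rceil} = \frac{V(\omega,n)}{n}\cdot\frac{n}{n+\lceil\varepsilon n\rceil}.
\end{equation*}
Letting $n \to \infty$ along this subsequence, the left side tends to $\mu(G)$ and the right side tends to $\mu(G)\cdot\frac{1}{1+\varepsilon} < \mu(G)$, a contradiction since $\mu(G)>0$. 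Hence the claim holds, and since $\varepsilon>0$ was arbitrary, $N(\sigma^n(\omega))/n \to 0$ for every $\omega$ in the full-measure set of Birkhoff-generic points (intersected with the full-measure set where $N\circ\sigma^n$ is finite for all $n$, which again holds a.e. by recurrence).

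\textbf{Anticipated obstacle.} The argument is essentially elementary once one has the right framing, so there is no deep obstacle; the one point requiring a little care is making sure that the ``skip the window for infinitely many $n$'' assumption is set up cleanly so that the contradiction is with the \emph{same} limit $\mu(G)$ on both sides — i.e. that both $V(\omega,n)/n$ and $V(\omega, n+\lceil \varepsilon n\rceil)/(n+\lceil\varepsilon n\rceil)$ are being evaluated along sequences tending to infinity, which they are. One could alternatively phrase the whole thing via the visit times $\{n_l\}$ (the increasing enumeration of $\{i : \sigma^i(\omega)\in G\}$), noting $n_l/l \to 1/\mu(G)$ so consecutive gaps $n_{l+1}-n_l = o(l) = o(n_l)$ on average, but the ``infinitely often'' version above avoids having to rule out a sparse set of anomalously large gaps directly and is cleaner to write. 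I would present the Birkhoff-counting version.
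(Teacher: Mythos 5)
Your proposal is correct; note that the paper states this lemma in its appendix without giving a proof, so there is no in-paper argument to compare against, and your Birkhoff-counting argument is a standard, valid way to establish it (a Borel--Cantelli shortcut would require integrability of $N$, which need not hold, so the route through the visit counts $V(\omega,n)/n \to \mu(G)>0$ is the right elementary one). The only bookkeeping slip is the identity $V(\omega,n+\lceil\varepsilon n\rceil)=V(\omega,n)$: since the skipped window is $\{n+1,\dots,n+\lceil\varepsilon n\rceil\}$ while $V$ counts visits at times $0,\dots,m-1$, a visit at time $n$ itself is not excluded, so one should write $V(\omega,n+\lceil\varepsilon n\rceil+1)\leq V(\omega,n)+1$ (or shift the window by one); this changes nothing in the limiting comparison $\mu(G)$ versus $\mu(G)/(1+\varepsilon)$, and the conclusion $N(\sigma^n(\omega))/n\to 0$ a.e.\ stands.
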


\begin{lem}
\label{lem:weird-birk-sum}
Let $(\Omega,\cB,\mu,\sigma)$ be an ergodic probability-preserving transformation, let $N : \Omega \to \Z_{\geq 1}$ be such that $(N\circ \sigma^n)/n$ converges to $0$ $\mu$-almost everywhere, let $h : \Omega \to \R$ be measurable, and suppose that $n^{-1}\sum_{i=0}^{n-1} h(\sigma^{i}(\omega))$ converges to $\lambda \in \R$ for $\mu$-almost every $\omega$. Then \[ \frac{1}{n} \sum_{i=0}^{N(\sigma^n(\omega))-1} h(\sigma^{i+n}(\omega)) \conv{n\to\infty} 0. \]
\end{lem}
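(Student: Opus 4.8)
\textbf{Proof proposal for Lemma \ref{lem:weird-birk-sum}.}

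The plan is to split the sum $\frac{1}{n}\sum_{i=0}^{N(\sigma^n(\omega))-1} h(\sigma^{i+n}(\omega))$ as a difference of two truncations of the full Birkhoff sum for $h$. Write $S_m(\omega) = \sum_{i=0}^{m-1} h(\sigma^i(\omega))$ with the convention $S_0 = 0$. Then the quantity in question equals $\frac{1}{n}\bigl( S_{n+N(\sigma^n(\omega))}(\omega) - S_n(\omega) \bigr)$, by re-indexing the sum $\sum_{i=0}^{N(\sigma^n(\omega))-1} h(\sigma^{i+n}(\omega)) = \sum_{j=n}^{n+N(\sigma^n(\omega))-1} h(\sigma^j(\omega))$. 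So it suffices to show that both $\frac{1}{n} S_{n+N(\sigma^n(\omega))}(\omega)$ and $\frac{1}{n} S_n(\omega)$ converge to $\lambda$ for $\mu$-a.e.\ $\omega$; their difference then tends to $0$.

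The second term is immediate: by hypothesis $\frac{1}{n} S_n(\omega) \to \lambda$ a.e. For the first term, set $m(n,\omega) = n + N(\sigma^n(\omega))$ and write
\[
\frac{1}{n} S_{m(n,\omega)}(\omega) = \frac{m(n,\omega)}{n}\cdot \frac{1}{m(n,\omega)} S_{m(n,\omega)}(\omega).
\]
First I would observe that $m(n,\omega)/n = 1 + N(\sigma^n(\omega))/n \to 1$ a.e., precisely by the hypothesis that $(N\circ\sigma^n)/n \to 0$ a.e.; in particular $m(n,\omega) \to \infty$. Then I would argue that $\frac{1}{m(n,\omega)} S_{m(n,\omega)}(\omega) \to \lambda$ a.e.: this is just the statement that a subsequence (indexed by $n$, with index value $m(n,\omega)$ diverging to $\infty$) of the convergent sequence $\frac{1}{m} S_m(\omega) \to \lambda$ still converges to $\lambda$ — here one uses that for each fixed $\omega$ in the full-measure set where $\frac1m S_m(\omega)\to\lambda$, the sequence $m(n,\omega)$ is a sequence of positive integers tending to infinity, so $\frac{1}{m(n,\omega)}S_{m(n,\omega)}(\omega)\to\lambda$. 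Multiplying the two factors gives $\frac1n S_{m(n,\omega)}(\omega)\to\lambda$ a.e., and subtracting completes the proof.

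The only genuine subtlety — the "main obstacle," though it is minor — is the measurability/pointwise bookkeeping: one must intersect the three full-measure sets (where $\frac1m S_m(\omega)\to\lambda$, where $\frac1n S_n(\omega)\to\lambda$, and where $N(\sigma^n(\omega))/n\to0$) and check that on this common full-measure set all the limiting statements hold simultaneously; measurability of $N$ (and hence of $\omega\mapsto N(\sigma^n(\omega))$ and of all the partial sums involved) is inherited from Lemma \ref{lem:first-entry-conv} or from $N$ being assumed measurable here. No uniformity in $\omega$ is needed since everything is done $\omega$-by-$\omega$, and no quantitative estimate on $N$ beyond sublinear growth is used.
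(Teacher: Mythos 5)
Your proposal is correct and follows essentially the same route as the paper: the paper's proof rewrites the tail sum via the identity $\frac1n S_n(\omega) + \frac1n\sum_{i=0}^{N(\sigma^n(\omega))-1} h(\sigma^{i+n}(\omega)) = \frac{n+N(\sigma^n(\omega))}{n}\cdot\frac{1}{n+N(\sigma^n(\omega))}S_{n+N(\sigma^n(\omega))}(\omega)$ and concludes exactly as you do, using $N(\sigma^n(\omega))/n\to 0$ so that both Birkhoff averages converge to $\lambda$ and the ratio of indices converges to $1$. Your presentation as a difference of truncated sums is just a rearrangement of the same identity, and your subsequence/bookkeeping remarks fill in the same (routine) details the paper leaves implicit.
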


\begin{proof}
Observe that for any $\omega$ and $n$, we have: 
\begin{equation*}
\begin{split}
\frac{1}{n}\sum_{i=0}^{n-1} h(\sigma^{i}(\omega)) & + \frac{1}{n}\sum_{i=0}^{N(\sigma^n(\omega))-1} h(\sigma^{i+n}(\omega)) \\
& = \frac{n+N(\sigma^n(\omega))}{n}\cdot \frac{1}{n+N(\sigma^n(\omega))}\sum_{i=0}^{n+N(\sigma^n(\omega))} h(\sigma^i(\omega)).
\end{split}
\end{equation*}
Since $N(\sigma^n(\omega))/n$ tends to $0$, we see that the desired term converges to $0$, because the two classical Birkhoff sums both converge to $\lambda$ and $(n+N(\sigma^n(\omega)))/n$ converges to $1$, by Lemma \ref{lem:first-entry-conv}.
\end{proof}

\begin{lem}
\label{lem:birk-sums-conv-L1}
Let $(\Omega,\cB,\mu,\sigma)$ be an ergodic probability-preserving transformation, and let $f : \Omega \to \R$ be a measurable function. Suppose that the Birkhoff sums $\displaystyle \frac{1}{n}\sum_{i=0}^{n-1} f(\sigma^i(\omega))$ converge to $\lambda \in \R$ for $\mu$-almost every $\omega$.
\begin{enumerate}
\item If $f \geq 0$, then $f$ is integrable.
\item If $f^+$ is integrable, then $f$ is integrable.
\end{enumerate}
\end{lem}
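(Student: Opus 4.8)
\textbf{Plan for the proof of Lemma \ref{lem:birk-sums-conv-L1}.} The plan is to reduce everything to the ergodic maximal/pointwise ergodic theorem applied to truncations of $f$. For part (1), assume $f \geq 0$. For each $M > 0$ let $f_M = \min\{f, M\}$, which is bounded and hence integrable, so by Birkhoff's theorem $\frac{1}{n}\sum_{i=0}^{n-1} f_M(\sigma^i(\omega)) \to \int_\Omega f_M\, d\mu$ for $\mu$-a.e.\ $\omega$ (using ergodicity to get a constant limit). Since $f_M \leq f$ pointwise, we have $\frac{1}{n}\sum_{i=0}^{n-1} f_M(\sigma^i(\omega)) \leq \frac{1}{n}\sum_{i=0}^{n-1} f(\sigma^i(\omega))$ for every $n$, and taking $n \to \infty$ on a full-measure set where both limits exist gives $\int_\Omega f_M\, d\mu \leq \lambda$. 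Now let $M \to \infty$; by the Monotone Convergence Theorem, $\int_\Omega f\, d\mu = \lim_{M\to\infty} \int_\Omega f_M\, d\mu \leq \lambda < \infty$, so $f$ is integrable.

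\textbf{Deducing part (2).} For part (2), write $f = f^+ - f^-$ where $f^+ = \max\{f,0\}$ and $f^- = \max\{-f,0\}$, both nonnegative. By hypothesis $f^+$ is integrable, so by Birkhoff's theorem $\frac{1}{n}\sum_{i=0}^{n-1} f^+(\sigma^i(\omega)) \to \int_\Omega f^+\, d\mu$ for $\mu$-a.e.\ $\omega$. Since $f^- = f^+ - f$, on the full-measure set where the Birkhoff averages of $f$ and of $f^+$ both converge, the Birkhoff averages of $f^-$ converge to $\int_\Omega f^+\, d\mu - \lambda \in \R$. Now $f^- \geq 0$, so part (1) applies to $f^-$ and shows $f^-$ is integrable. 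Hence $f = f^+ - f^-$ is a difference of two integrable functions, so $f$ is integrable.

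\textbf{Main obstacle.} There is no serious obstacle here; the only point requiring a little care is the order of quantifiers when combining ``$\mu$-a.e.'' statements. One must fix, once and for all, a countable exhausting sequence $M_k \to \infty$ (say $M_k = k$) so that the full-measure sets on which $\frac{1}{n}\sum f_{M_k}(\sigma^i(\omega))$ converges can be intersected over $k$ (a countable intersection) together with the given full-measure set for $f$; on this common full-measure set all the inequalities above hold simultaneously, and then the monotone limit in $M_k$ is a genuine limit of real numbers. Everything else is a direct application of Birkhoff's pointwise ergodic theorem, the Monotone Convergence Theorem, and the elementary identities relating $f$, $f^+$, $f^-$.
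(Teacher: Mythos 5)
Your argument is correct: the truncation $f_M=\min\{f,M\}$ combined with Birkhoff's theorem and monotone convergence gives part (1), and applying part (1) to $f^-=f^+-f$ gives part (2); the remark about intersecting countably many full-measure sets along $M_k=k$ handles the only delicate point. The paper states this lemma without proof in its appendix, and your proof is exactly the standard argument one would supply, so there is nothing in the paper's treatment that differs from what you have done.
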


\begin{lem}
\label{lem:ae-conv}
Suppose that $(\cM,\cB,\mu)$ is a probability space, and $f_n : \cM \to \R_{\geq 0}$ is a sequence of measurable functions converging $\mu$-almost everywhere to the constant $D > 0$. Then for each $\delta \in (0,1]$ and $D'\in [0,D)$, there exists an $N \in \Z_{\geq 1}$ such that the set \[ \bigcap_{n=N}^{\infty} f_n^{-1}\left[ D', \infty \right) \] has measure at least $1-\delta$.  
\end{lem}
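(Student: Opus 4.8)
The statement to prove is Lemma~\ref{lem:ae-conv}: given $f_n \to D > 0$ $\mu$-almost everywhere with $f_n \geq 0$, for each $\delta \in (0,1]$ and $D' \in [0,D)$ there is $N$ such that $\mu\bigl(\bigcap_{n=N}^\infty f_n^{-1}[D',\infty)\bigr) \geq 1-\delta$.

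\begin{proof}
Fix $\delta \in (0,1]$ and $D' \in [0,D)$. For each $N \in \Z_{\geq 1}$, set \[ E_N = \bigcap_{n=N}^\infty f_n^{-1}[D',\infty) = \set{x\in\cM}{f_n(x) \geq D' \text{ for all } n\geq N}. \] Each $E_N$ is measurable, being a countable intersection of measurable sets, and the sequence $(E_N)_N$ is increasing in $N$. I claim that $\mu\bigl(\bigcup_{N=1}^\infty E_N\bigr) = 1$. Indeed, let $x\in\cM$ be a point at which $f_n(x) \to D$; the set of such points has full measure by hypothesis. Since $D' < D$, there exists $N = N(x)$ such that $f_n(x) \geq D'$ for all $n \geq N(x)$, which is precisely the statement that $x \in E_{N(x)} \subset \bigcup_N E_N$. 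Hence $\bigcup_N E_N$ contains a full-measure set, so $\mu\bigl(\bigcup_N E_N\bigr) = 1$.

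By continuity of the measure $\mu$ from below along the increasing sequence $(E_N)_N$, we have $\mu(E_N) \to \mu\bigl(\bigcup_N E_N\bigr) = 1$ as $N\to\infty$. Therefore there exists $N\in\Z_{\geq 1}$ with $\mu(E_N) \geq 1-\delta$, which is exactly the desired conclusion.
\end{proof}

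\medskip\noindent\textbf{Remark on the approach.} There is essentially no obstacle here; the proof is the standard ``set of points where a convergent sequence eventually enters a neighbourhood'' argument combined with continuity of measure from below. The only point requiring a moment's care is that $D'$ may equal $0$ (in which case $f_n \geq 0$ already gives $E_N = \cM$ for every $N$, trivially) and that one needs $D' < D$ strictly so that the tail condition $f_n(x) \geq D'$ is eventually satisfied at every point of convergence — which is where the hypothesis $D' \in [0,D)$ is used. I would carry out the two steps in the order above: first identify the increasing family $E_N$ and show its union is full-measure using pointwise convergence, then invoke continuity from below to extract a single $N$ meeting the $1-\delta$ threshold.
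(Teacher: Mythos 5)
Your proof is correct, and it is the standard argument this routine lemma calls for (the paper states it without proof as an elementary fact): exhaust the almost-sure convergence set by the increasing family $E_N$ and apply continuity of $\mu$ from below. Nothing is missing.
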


\section{$D$-adaptedness of particular cones}
\label{append:d-adapt}

\begin{prop}
\label{prop:bv-cone}
Let $BV[0,1]$ be the Banach space of bounded variation functions under the norm $\norm{f}_{BV} = \max\{\norm{f}_1,\Var(f)\}$, and for $a > 0$ let \[ \cC_a = \set{f\in BV[0,1]}{f(x) \geq 0,\ \Var(f) \leq a \int_0^1 f\ dx} \setminus \{0\}. \] Then $\cC_a$ is a nice cone, with $D$-adapted constant $D = 2a + 1$.
\end{prop}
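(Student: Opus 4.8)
The plan is to verify each of the defining properties of a nice cone from Definition \ref{defn:cones} in turn, with the two substantive points being that $\cC_a$ generates $BV[0,1]$ and that it is $(2a+1)$-adapted. Closure under positive scalar multiplication is immediate since both $\Var(\cdot)$ and $\norm{\cdot}_1$ are positively homogeneous; convexity follows from the triangle inequality for $\Var$ together with additivity of $\int_0^1 f\,dx$ on non-negative functions, so that if $f,g\in\cC_a$ then $\Var(f+g)\le \Var(f)+\Var(g)\le a(\int f + \int g) = a\int(f+g)$, and $f+g\ge 0$. Bluntness ($0\notin\cC_a$) is built into the definition by removing $0$; salience holds because any non-zero $f\in\cC_a$ has $\int_0^1 f\,dx>0$ (a non-negative $BV$ function with zero integral is $0$ a.e.), so $-f\notin\cC_a$. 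For closedness of $\cC_a\cup\{0\}$: if $f_n\to f$ in $BV$-norm with $f_n\in\cC_a$, then $f_n\to f$ in $L^1$ and $\Var(f_n)\to\Var(f)$ forces $f\ge 0$ a.e.\ and $\Var(f)\le a\norm{f}_1$ by passing to the limit in the inequality.

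For the generating property, the cleanest route is to invoke Remark \ref{rem:cones}: it suffices to exhibit an interior point of $\cC_a$, and the constant function $\mathds{1}$ works. Indeed $\Var(\mathds{1})=0 < a = a\norm{\mathds{1}}_1$, so $\mathds{1}$ satisfies the cone inequality strictly; and a small $BV$-ball around $\mathds{1}$, say of radius $\delta$ with $\delta(1+a) < a$ or similar, stays inside $\cC_a$ because perturbing by $h$ with $\norm{h}_{BV}$ small keeps the function non-negative (since $\norm{h}_\infty \le \norm{h}_{BV}$ controls pointwise deviation up to a.e.\ modification) and keeps $\Var(\mathds{1}+h)=\Var(h)$ small while $\norm{\mathds{1}+h}_1$ stays near $1$. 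One should be slightly careful that the norm here is $\max\{\norm{\cdot}_1,\Var(\cdot)\}$ rather than the sum, but this only changes constants. Given an interior point, $\cC_a-\cC_a = BV[0,1]$ by the argument in Remark \ref{rem:cones}.

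The main obstacle — and the only genuinely computational step — is the $D$-adapted estimate with $D=2a+1$. Suppose $-f\preceq g\preceq f$, i.e.\ $f-g\in\cC_a\cup\{0\}$ and $f+g\in\cC_a\cup\{0\}$; we must show $\norm{g}_{BV}\le (2a+1)\norm{f}_{BV}$, which forces $f\in\cC_a$. From $f\pm g\ge 0$ a.e.\ we get $-f\le g\le f$ a.e., hence $f\ge 0$ a.e.\ and $\abs{g}\le f$ pointwise a.e., so $\norm{g}_1\le\norm{f}_1$. For the variation term, write $g = \tfrac12\big((f+g)-(f-g)\big)$ and use the triangle inequality for $\Var$ together with the cone membership of $f\pm g$: $\Var(g)\le \tfrac12\Var(f+g)+\tfrac12\Var(f-g)\le \tfrac{a}{2}\big(\norm{f+g}_1+\norm{f-g}_1\big)$, and since $\abs{g}\le f$ a.e.\ we have $\norm{f\pm g}_1 = \int(f\pm g)\le 2\norm{f}_1$, giving $\Var(g)\le 2a\norm{f}_1$. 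Combining, $\norm{g}_{BV} = \max\{\norm{g}_1,\Var(g)\}\le \max\{\norm{f}_1, 2a\norm{f}_1\} \le 2a\norm{f}_1 \le 2a\norm{f}_{BV}$, which is even a bit better than the claimed $2a+1$; the extra $+1$ presumably accommodates the variant where $BV$-norm is the sum $\norm{\cdot}_1+\Var(\cdot)$, in which case one bounds $\norm{g}_1+\Var(g)\le \norm{f}_1 + 2a\norm{f}_1 = (2a+1)\norm{f}_1\le (2a+1)\norm{f}_{BV}$. I would state the proof for whichever norm convention is in force and remark that the constant is not optimal. The remaining bookkeeping (no isolated points, $\lambda$ being Lebesgue measure, etc.) is not needed here since $[0,1]$ with Lebesgue measure trivially satisfies everything.
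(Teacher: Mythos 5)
Your proposal is correct, and for the routine properties (scalar invariance, convexity, bluntness, salience, closedness of $\cC_a\cup\{0\}$, and generation via the interior point $\mathds{1}$) it matches the paper's proof essentially line for line. The one place where you take a genuinely different route is the $D$-adapted estimate, which is also the only substantive step. The paper first proves an auxiliary reverse-triangle inequality for variation, $\abs{\Var(f)-\Var(g)}\leq \Var(f-g)$ (Lemma \ref{lem:var-ineq}), and then uses cone membership of $f-g$ alone to get $\Var(g)\leq \Var(f)+a\int_0^1(f-g)\,dx\leq \Var(f)+2a\norm{f}_1$, which is why the $\Var(f)$ term survives and the constant comes out as $2a+1$ even with the max norm. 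You instead write $g=\tfrac12\big((f+g)-(f-g)\big)$, apply subadditivity of $\Var$, and use cone membership of \emph{both} $f+g$ and $f-g$; this avoids the auxiliary lemma entirely and, if you add the two integrals exactly rather than bounding each by $2\norm{f}_1$, gives $\Var(g)\leq \tfrac{a}{2}\big(\int(f+g)+\int(f-g)\big)=a\norm{f}_1$, i.e.\ a strictly better constant $\max\{1,a\}$ for the max norm (and $a+1$ for the sum norm), so your closing remark that $2a+1$ is not optimal is borne out by your own argument. Two small points of care: the intermediate claim $\max\{\norm{f}_1,2a\norm{f}_1\}\leq 2a\norm{f}_1$ is only valid when $2a\geq 1$, so you should keep the maximum (or the $+1$) in the final statement to cover small $a$; and when invoking cone membership of $f\pm g$ you should note that the hypothesis only gives $f\pm g\in\cC_a\cup\{0\}$, though the inequality $\Var(f\pm g)\leq a\int(f\pm g)$ holds trivially in the degenerate case, so nothing breaks. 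Neither issue affects the validity of the claimed bound $D=2a+1$.
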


\begin{proof}
If $f\in \cC_a$ and $c\geq 0$, then \[ \Var(cf) = c\Var(f) \leq ca\int_0^1 f\ dx = a\int_0^1 cf\ dx, \] so $cf\in \cC_a$. If $f,g\in \cC_a$ and $\lambda \in [0,1]$, then
\begin{align*} 
\Var(\lambda f + (1-\lambda) g) & \leq \Var(\lambda f) + \Var((1-\lambda)g) \\
& = \lambda\Var(f) + (1-\lambda)\Var(g) \\
& \leq \lambda a\int_0^1 f\ dx + (1-\lambda) a\int_0^1 g\ dx = a\int_0^1 (\lambda f + (1-\lambda)g)\ dx,
\end{align*}
so $\lambda f + (1-\lambda)g \in \cC_a$. If $f_n\in \cC_a$ for each $n\geq 1$ and $f_n \conv{n\to\infty} f$ in $\norm{\cdot}_{BV}$, then \[ \abs{\int_0^1 f_n\ dx - \int_0^1 f\ dx} \leq \int_0^1 \abs{f_n-f}\ dx \conv{n\to\infty} 0, \] and \[ \abs{\Var(f_n)-\Var(f)} \leq \Var(f_n-f) \conv{n\to\infty} 0, \] so we obtain: \[ \Var(f) = \lim_{n\to\infty} \Var(f_n) \leq \lim_{n\to\infty} a\int_0^1 f_n\ dx = a\int_0^1 f\ dx. \] This shows that $\cC_a\cup\{0\}$ is closed, which is what we want. Clearly $\cC_a$ is blunt and salient. To see that $\cC_a$ is generating, we simply observe that $\mathds{1}$ is an interior point of $\cC_a$.

Now, suppose that $f,g \in BV[0,1]$, and $-f\preceq g \preceq f$. Then we have $f(x)-g(x) \geq 0$ and $g(x)+f(x)\geq 0$ for all $x$, and hence $f(x) \geq 0$ and $\abs{g(x)} \leq f(x)$. This forces $\norm{g}_1 \leq \norm{f}_1$. We need the following lemma:

\begin{lem}
\label{lem:var-ineq}
Let $f,g\in BV[0,1]$. Then \[ \abs{\Var(f)-\Var(g)} \leq \Var(f-g). \]
\end{lem}

\begin{proof}
For a fixed partition $0 = x_0 < \dots < x_n = 1$, we have:
\begin{align*}
\Var(f-g) & \geq \sum_{k=1}^n \abs{f(x_k)-g(x_k)-(f(x_{k-1})-g(x_{k-1}))} \\
& \geq \sum_{k=1}^n \abs{\abs{f(x_k)-f(x_{k-1})}-\abs{g(x_k)-g(x_{k-1})}} \\
& \geq \abs{\sum_{k=1}^n \abs{f(x_k)-f(x_{k-1})} - \sum_{k=1}^n \abs{g(x_k)-g(x_{k-1})}}.
\end{align*}
where the inequalities are justified by the definition, the reverse triangle inequality, and the triangle inequality again, respectively. Then, we have 
\begin{align*} 
\sum_{k=1}^n \abs{f(x_k)-f(x_{k-1})} - \Var(g) & \leq \sum_{k=1}^n \abs{f(x_k)-f(x_{k-1})} - \sum_{k=1}^n \abs{g(x_k)-g(x_{k-1})} \\
& \leq \Var(f-g),\
\end{align*} 
using the above inequalities, so that upon taking the supremum over all finite partitions of $[0,1]$ we get $\Var(f)-\Var(g) \leq \Var(f-g)$. Similarly, \[ \sum_{k=1}^n \abs{g(x_k)-g(x_{k-1}) - \Var(f)} \leq \Var(f-g), \] which implies $\Var(g)-\Var(f) \leq \Var(f-g)$, and so \[ \abs{\Var(f)-\Var(g)} \leq \Var(f-g). \qedhere \]
\end{proof}

Using Lemma \ref{lem:var-ineq}, we get: 
\begin{equation*}
\begin{split} 
\Var(g) - \Var(f) & \leq \Var(f-g) \leq a\int_0^1 f-g\ dx \\
& = a\int_0^1 f\ dx - a\int_0^1 g\ dx \\
& \leq a\int_0^1 f\ dx + a\int_0^1 f\ dx = 2a\norm{f}_1. 
\end{split}
\end{equation*}
Hence, we have: 
\begin{align*} 
\norm{g}_{BV} & = \max\{\norm{g}_1,\Var(g)\} \\
& \leq \max\{\norm{f}_1,2a\norm{f}_1+\Var(f)\} \\
& \leq \max\{\norm{f}_{BV},(2a+1)\norm{f}_{BV}\} \leq (2a+1)\norm{f}_{BV},
\end{align*} 
which is the desired inequality.
\end{proof}

\bibliographystyle{amsplain}
\bibliography{cocycle-pf-refs}

\end{document}